\documentclass[12pt,a4paper,oneside,draft,reqno]{amsart}


\usepackage{amsmath} 
\usepackage{amssymb} 
\usepackage{mathrsfs} 
\usepackage{amsthm} 
\usepackage[english]{babel} 
\usepackage[T1]{fontenc} 
\usepackage[utf8]{inputenc} 
\usepackage{mathtools} 
\usepackage[pdftex,  colorlinks=true]{hyperref} 
\usepackage{setspace} 
\onehalfspacing
\usepackage{tikz-cd} 
\usepackage[capitalize]{cleveref} 

\numberwithin{equation}{section}


\makeatletter
\@namedef{subjclassname@1991}{Mathematical subject classification 1991}
\@namedef{subjclassname@2000}{Mathematical subject classification 2000}
\@namedef{subjclassname@2010}{Mathematical subject classification 2010}
\@namedef{subjclassname@2020}{Mathematical subject classification 2020}
\makeatother

\newtheorem{theorem}{Theorem}[section]
\newtheorem{lemma}[theorem]{Lemma}
\newtheorem{corollary}[theorem]{Corollary}
\newtheorem{proposition}[theorem]{Proposition}

\newtheorem{thmx}{Theorem}

\newtheorem{meta-question}[mquex]{Meta-question}

\theoremstyle{definition}
\newtheorem{definition}[theorem]{Definition}
\newtheorem{remark}[theorem]{Remark}

\newtheorem*{examples*}{Examples}

\newtheoremstyle{TheoremNum}
{\topsep}{\topsep} 
{\itshape} 
{-0.25cm} 
{\bfseries} 
{.} 
{ }  
{\thmname{#1}\thmnote{ \bfseries #3}}
\theoremstyle{TheoremNum}


\DeclareMathOperator{\aut}{\mathrm{Aut}}

\DeclareMathOperator{\coker}{\mathrm{coker}}

\DeclareMathOperator{\Ext}{\mathrm{Ext}}

\DeclareMathOperator{\Tr}{\mathrm{Tr}}
\DeclareMathOperator{\im}{\mathrm{im}}
\DeclareMathOperator{\id}{\mathrm{Id}}
\DeclareMathOperator{\fix}{\mathrm{Fix}}

\DeclareMathOperator{\isom}{\mathrm{Isom}}

\DeclareMathOperator{\Sym}{Sym}

\newcommand{\GL}{\mathrm{GL}}

\newcommand{\SL}{\mathrm{SL}}
\newcommand{\PSL}{\mathrm{PSL}}

\newcommand{\gen}[1]{\langle #1 \rangle}
\newcommand{\pres}[2]{\langle #1\ |\ #2 \rangle}

\DeclareMathOperator{\lcm}{\mathrm{lcm}}

\newcommand{\N}{\mathbb{N}}
\newcommand{\Z}{\mathbb{Z}}

\newcommand{\R}{\mathbb{R}}

\usepackage{tikz}
\usetikzlibrary{arrows,quotes}
\tikzstyle{blackNode}=[fill=black, draw=black, shape=circle]

\newcommand{\Iff}{if and only if}
\newcommand{\fg}{finitely generated}
\newcommand{\gp}{group}
\newcommand{\gps}{groups}
\newcommand{\sgp}{subgroup}
\newcommand{\sgps}{subgroups}
\newcommand{\st}{such that}
\newcommand{\rf}{residually-finite}
\newcommand{\wrt}{with respect to}
\newcommand{\actson}{\curvearrowright}
\newcommand{\sub}{\subset}
\renewcommand{\i}{^{-1}}

\usepackage[foot]{amsaddr}
\title[Profinite rigidity properties of central $\Z^n$-by-($2$-orbifold)]{Profinite rigidity properties of central extensions of 2-orbifold groups}
\author{Pawe\l\ Piwek}
\date{March 2023}
\address{Mathematical Institute, Andrew Wiles Building, Observatory Quarter, University of Oxford, Oxford OX2 6GG, UK}
\email{pawel.piwek@maths.ox.ac.uk}


\begin{document}
	
\newpage
	
\maketitle

\begin{abstract}
	We extend Wilkes' results on the profinite rigidity of SFSs
	to the setting of central extensions of 2-orbifold groups with higher-rank centre.
	We prove that both rigid and non-rigid phenomena arise in this setting
	and that the non-rigid phenomena are transient in the sense that
	if $\widehat{G}_1 \cong \widehat{G}_2$, then $G_1 \times \Z \cong G_2 \times \Z$.
\end{abstract}

\section{Introduction}
\label{sec:introduction}

One of the central pursuits of group theory has always been
the search for ways of distinguishing between
non-isomorphic groups.
An invariant that is important
from both a theoretical and computational viewpoint is
the set $\mathcal{C}(G)$ of (isomorphism types of)
all finite quotients of a given group $G$.
Studying this invariant it quickly becomes obvious
that it is wise to assume the \emph{residual-finiteness} of $G$
-- i.e. that every non-trivial element of $G$
is non-trivial in some finite quotient.

One can also ask whether or not
considering just the \emph{set} $\mathcal{C}(G)$ of finite quotients
is a good enough invariant, or whether the algebraic structure
of homomorphisms between them should be `remembered' somehow.
This leads to the definition of
the \emph{profinite completion} $\widehat G$ of a group $G$.
Most remarkably, it turns out that
for finitely generated groups $G$ and $H$
their completions $\widehat G$~and~$\widehat H$ are isomorphic
\Iff\ $\mathcal{C}(G) = \mathcal{C}(H)$.

As with any invariant in mathematics,
it is natural to ask: which objects does it distinguish (uniquely)?

\begin{meta-question}\label{qu:original}
	Given a \fg\ residually finite group $G$, is there a \fg\ residually finite group $H$, such that $\widehat G\cong \widehat H$, but $G \not \cong H$?
\end{meta-question}

It isn't difficult to give $G$ for which the answer is positive.
Baumslag gave examples of two non-isomorphic semidirect products $(\Z/25)\rtimes \Z$,
which share profinite completions
-- see \cite{baumslag1974residually}.
Also, related to our article, Hempel in \cite{hempel2014some}
gave examples of mapping tori for self-homeomorphisms of surfaces
which produced non-homeomorphic $3$-manifolds
with isomorphic profinite completions.

On the other hand, it wasn't until \cite{bridson2020absolute}
that we got `full-sized' examples of \emph{profinitely rigid} groups
-- groups which don't share their profinite completion
with any other residually finite finitely generated group.
This is indicative of the fact that
\cref{qu:original} is very difficult in general
which prompts modifying it to the following \cref{qu:within_class}.

\begin{meta-question}\label{qu:within_class}
	Given a residually finite group $G$, is there a residually finite group $H$ \emph{within a specific class of groups} $\mathscr{D}$, such that $\widehat G\cong \widehat H$, but $G \not \cong H$?
\end{meta-question}

\Cref{qu:within_class} is often much more approachable
because one can use structure theorems
for groups in the class $\mathscr{D}$.
For example \cite{bridson2013determining}
showed that Fuchsian groups are distinguished from each other
(and other lattices in connected Lie groups)
by their profinite completions.
Wilton showed in \cite{wilton2018essential,wilton2021profinite}
that every finitely generated free group and surface group
is distinguished by profinite completion
from all other limit groups.
This was recently extended by Morales
in \cite{morales2022profinite}
to the class of finitely generated residually free groups.

Significant advances have been made in the study of
profinite rigidity properties of $3$-manifold groups --
for a survey see Reid's article \cite{reid2018profinite}.
Among others, Funar---using classical work of Stebe---showed in \cite{funar2013torus}
that torus bundles with Sol geometry are an ample source of
non-isomorphic pairs of groups with isomorphic profinite completions;
Wilton and Zalesskii proved in \cite{wilton2017distinguishing} that
profinite completions distinguish the eight geometries
of closed orientable $3$-manifolds with infinite fundamental groups.

Finally, particularly relevant to this article,
Wilkes showed in \cite{wilkes2017profinite}
that the fundamental groups of Seifert Fibre Spaces
are distinguished from each other apart from the examples given by Hempel.
The present work aims at extending a part of this result
by considering group extensions with centre $\Z^n$ for $n > 1$
and the quotient being a $2$-orbifold group.

\subsection*{Original results}

The first result of this paper reduces the study of profinite rigidity
within the class of
central extensions of free abelian groups
by infinite closed orientable $2$-orbifold groups
to the situation where the kernel and the quotient are fixed.

A similar result is contained in \cite{bridson2023profinite}.

\begin{thmx}
	Let $n_1, n_2$ be natural numbers
	and $\Delta_1, \Delta_2$ be infinite
	fundamental groups
	of closed orientable $2$-orbifolds.
	Let $G_1$ and $G_2$ be central extensions
	$\Z^{n_1}$-by-$\Delta_1$
	and $\Z^{n_2}$-by-$\Delta_2$ respectively.
	If ${\widehat G_1 \cong \widehat G_2}$
	then $n_1 = n_2$ and $\Delta_1 \cong \Delta_2$.
\end{thmx}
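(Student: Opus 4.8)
The plan is to locate inside each $\widehat{G_i}$ a canonical closed central subgroup isomorphic to $\widehat{\Z}^{n_i}$ with quotient $\widehat{\Delta_i}$, and then to deduce $\Delta_1\cong\Delta_2$ from $\widehat{\Delta_1}\cong\widehat{\Delta_2}$ using the known profinite rigidity of $2$-orbifold groups. First I would set up an exact sequence of completions. An infinite closed orientable $2$-orbifold group $\Delta$ contains a finite-index closed orientable surface subgroup $\Sigma_g$, and it is of Euclidean type (virtually $\Z^2$; $g=1$) or of hyperbolic type (a non-elementary cocompact Fuchsian group; $g\ge 2$). Restricting the central extension to $\Sigma_g$, and using that after a change of basis of the fibre a central $\Z^{n_i}$-extension of $\Sigma_g$ is the direct product of $\Z^{n_i-1}$ with either a Seifert fibred $3$-manifold group ($g\ge 2$) or a finitely generated nilpotent group ($g=1$), one sees that $G_i$ is good and that $\Z^{n_i}$ is closed in $\widehat{G_i}$; hence there is a short exact sequence
\[
1 \longrightarrow \widehat{\Z}^{n_i} \longrightarrow \widehat{G_i} \longrightarrow \widehat{\Delta_i} \longrightarrow 1
\]
with $\widehat{\Z}^{n_i}$ central. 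I would also note that the two types are distinguished by $\widehat{G_i}$ — for instance by subgroup growth, which is polynomial exactly in the Euclidean case — so that the two cases may be handled separately.

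In the hyperbolic case $Z(\Delta_i)=1$, hence $Z(G_i)=\Z^{n_i}$, and the key step is to upgrade this to $Z(\widehat{G_i})=\widehat{\Z}^{n_i}$. This reduces to showing that $\widehat{\Delta_i}$ has trivial centre when $\Delta_i$ is a non-elementary Fuchsian group: a central element of $\widehat{\Delta_i}$ centralises the finite-index subgroup $\widehat{\Sigma_g}$, which has trivial centre, so it lies in a finite normal subgroup of $\widehat{\Delta_i}$, and one excludes such a subgroup using the structure of $\widehat{\Delta_i}$ analysed in \cite{bridson2013determining,wilkes2017profinite}. Granting $Z(\widehat{G_i})=\widehat{\Z}^{n_i}$, this subgroup is canonical, so any isomorphism $\widehat{G_1}\to\widehat{G_2}$ restricts to an isomorphism $\widehat{\Z}^{n_1}\cong\widehat{\Z}^{n_2}$ — whence $n_1=n_2$, e.g.\ by comparing $\dim_{\Q_p}$ of the $p$-adic completions — and descends to an isomorphism $\widehat{\Delta_1}\cong\widehat{\Delta_2}$. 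Since cocompact Fuchsian groups are distinguished from one another by their finite quotients \cite{bridson2013determining}, we conclude $\Delta_1\cong\Delta_2$.

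In the Euclidean case, four of the five orientable Euclidean $2$-orbifold groups are crystallographic with trivial centre (their point group acts on $\Q^2$ without invariant vectors), so the previous argument applies once one checks that their profinite completions are centreless; the remaining possibility $\Delta_i\cong\Z^2$ makes $G_i$ a finitely generated torsion-free $2$-step nilpotent group, and then $\widehat{G_1}\cong\widehat{G_2}$ forces an isomorphism of rational Mal'cev Lie algebras, from which, reading off the centre and the abelianisation, one gets $n_1=n_2$ and $\Delta_1\cong\Delta_2\cong\Z^2$.

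The step I expect to be the main obstacle is precisely the identification $Z(\widehat{G_i})=\widehat{\Z}^{n_i}$ in the hyperbolic case — equivalently, ruling out spurious finite normal or central subgroups of $\widehat{\Delta_i}$. This is the one point that genuinely requires the finer theory of profinite completions of Fuchsian and crystallographic groups rather than formal exact-sequence manipulation, and pinning down the precise statement (together with its refinement to orbifolds with cone points) is where the real work lies.
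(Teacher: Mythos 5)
Your proposal is essentially the paper's own argument: use goodness/residual finiteness to obtain the central sequence $1\to\widehat{\Z}^{n_i}\to\widehat{G}_i\to\widehat{\Delta}_i\to 1$, prove $\mathcal{Z}(\widehat{\Delta}_i)=1$ so that $\mathcal{Z}(\widehat{G}_i)=\widehat{\Z}^{n_i}$ is canonical, deduce $n_1=n_2$ and $\widehat{\Delta}_1\cong\widehat{\Delta}_2$, and conclude via the profinite rigidity of closed $2$-orbifold groups (Wilkes, building on Bridson--Conder--Reid). The deviations are inessential: the paper handles the Euclidean/$\Z^2$ cases uniformly through the explicit classification $G\cong H_k\times\Z^{n-1}$ (whose completion still has centre $\widehat{\Z}^n$ unless $G$ is abelian) rather than your subgroup-growth and Mal'cev detour, and it proves centrelessness of $\widehat{\Delta}$ in the crystallographic case by a congruence-quotient (Bieberbach) argument, which is needed precisely because the surface-subgroup centraliser trick you use in the hyperbolic case breaks down when the surface subgroup is $\Z^2$.
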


The main result is \cref{thm:main}
showing that given an infinite closed orientable $2$-orbifold group $\Delta$
the central extensions with kernel $\Z^n$ and quotient $\Delta$
are distinguished from each other by their profinite completions
for large enough $n$.
Also, for most such groups $\Delta$,
depending on the order of cone points,
the result specifies exactly when the rigidity happens
and when it doesn't.

\begin{thmx}\label{thm:main}
	Let $\Delta$ be an infinite fundamental group
	of a closed orientable {$2$-orbifold}
	with $m \ge 0$ cone points
	of orders $p_1, \ldots, p_m$.
	Furthermore, for $j = 1, \ldots, m$ set
	$$d_j = \frac{\gcd(p_{i_1}p_{i_2}\ldots p_{i_j}\
		\text{for}\ 1\le i_1 < i_2 < \ldots < i_j \le m)}
	{\gcd(p_{i_1}p_{i_2}\ldots p_{i_{j-1}}\
		\text{for}\ 1\le i_1 < i_2 < \ldots < i_{j-1} \le m)}.$$
	Then, for $n > 1$ the following hold.
	\begin{enumerate}
		\item If $n > m$,
		or $d_{m-(n-1)} \in \{1, 2, 3, 4, 6\}$,
		then the non-isomorphic central extensions
		of $\Z^n$ by $\Delta$
		are distinguished from each other by their profinite completions.
		\item If $n \le m$
		and $d_{m-(n-1)} \not\in \{1, 2, 3, 4, 6, 12\}$,
		then there exist non-isomorphic central extensions $G_1, G_2$
		of $\Z^n$ by $\Delta$
		with $\widehat G_1 \cong \widehat G_2$.
	\end{enumerate}
\end{thmx}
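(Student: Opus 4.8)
The plan is to convert both isomorphism problems—abstract and profinite—into orbit questions for a finite abelian group under matrix actions, and then settle them by tracking a single determinant.

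\emph{Classification of the groups.} Central extensions of $\Z^n$ by $\Delta$ are classified by $H^2(\Delta;\Z^n)\cong H^2(\Delta;\Z)\otimes_{\Z}\Z^n$, and from the standard presentation of a closed orientable $2$-orbifold group one computes $H^2(\Delta;\Z)\cong\Z\oplus M$ with $M=\bigoplus_{j=1}^m\Z/p_j\Z$ (the free summand records the Euler number, the $j$-th torsion summand the local datum at the $j$-th cone point). So a group $G$ is a pair $(\mathbf b,\psi)$ with $\mathbf b\in\Z^n$ and $\psi\colon\Z^n\to M$. Since an infinite orientable $2$-orbifold group with hyperbolic underlying orbifold is centerless, $Z(G)=\Z^n$ (the handful of Euclidean base orbifolds are virtually abelian and treated separately), and $G_1\cong G_2$ if and only if $(\mathbf b_1,\psi_1)$ and $(\mathbf b_2,\psi_2)$ lie in one orbit of $\GL_n(\Z)\times\mathrm{Perm}$: here $\GL_n(\Z)=\aut(\Z^n)$ acts diagonally on the two tensor factors, and $\mathrm{Perm}$ is the group of permutations of equal-order cone points acting on $M$ and trivially on the free summand (orientation reversal acts by $-\mathrm{id}$ on $H^2(\Delta;\Z)$ and is already absorbed into $\GL_n(\Z)$).

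\emph{Passing to the profinite completion.} As $\Delta$ is good, $\widehat G$ is a central extension $1\to\widehat\Z^n\to\widehat G\to\widehat\Delta\to1$ classified by $H^2(\widehat\Delta;\widehat\Z^n)\cong\widehat\Z^n\oplus(M\otimes_{\Z}\Z^n)$, and its class is the image of $(\mathbf b,\psi)$. Because $\widehat\Delta$ is centerless — this is where one invokes the structure theory of profinite Fuchsian groups — one gets $Z(\widehat G)=\widehat\Z^n$, so every isomorphism $\widehat G_1\cong\widehat G_2$ preserves it and descends to an automorphism of $\widehat\Delta$ (that the two orbifold groups may be taken equal is Theorem~A; here they already are). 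Consequently $\widehat G_1\cong\widehat G_2$ if and only if $(\mathbf b_1,\psi_1)$ and $(\mathbf b_2,\psi_2)$ lie in one orbit of $\GL_n(\widehat\Z)\times\mathrm{Perm}$: the extra automorphisms of $\widehat\Delta$ act on $M$ only through global scalar multiplication by units of $\widehat\Z$ — crucially not by independent twists of distinct cone points — and such a scalar is absorbed into $\GL_n(\widehat\Z)$ since scalar multiplication on $M$ after $\psi$ agrees with $\psi$ precomposed with a scalar matrix. So the entire theorem is the comparison of $\GL_n(\widehat\Z)$-orbits with $\GL_n(\Z)$-orbits of pairs $(\mathbf b,\psi)$ modulo $\mathrm{Perm}$.

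\emph{The determinant obstruction.} On the $\mathbf b$-part this comparison is empty, because $\GL_n(\Z)$ and $\GL_n(\widehat\Z)$ have the same orbits on $\Z^n$ (the gcd of the coordinates is the complete invariant). On $\psi$ the only difference is that $\GL_n(\Z)$ reduces onto the matrices of determinant $\pm1$ modulo each $p_j$ whereas $\GL_n(\widehat\Z)$ surjects onto all of $\GL_n(\Z/p_j\Z)$; as $B$ acts on $\bigwedge^n\psi\colon\Z\to\bigwedge^n M$ by multiplication by $\det B$, the classes $\psi$ and $\psi\circ B$ are $\GL_n(\Z)$-equivalent modulo $\mathrm{Perm}$ exactly when $\det B\equiv\pm1$ modulo the order $\delta$ of $\bigwedge^n\psi$, and a strong-approximation argument for $\SL_n$ (here $n>1$) shows that nothing beyond this survives; the cases $\mathbf b\ne0$ and $\im\psi$ needing fewer than $n$ generators are subsumed because then the determinant freedom lies in the stabiliser of the data. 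Finally, the largest possible value of $\delta$ is the exponent of $\bigwedge^n M=\bigoplus_{|S|=n}\Z/\gcd(p_j:j\in S)\Z$, namely $\lcm_{|S|=n}\gcd(p_j:j\in S)$, and a short manipulation of gcds of products identifies this with $d_{m-(n-1)}$ — which is exactly why the $d_j$ are defined the way they are. In particular, if $n>m$ then $\bigwedge^n M=0$, so $\delta=1$ always, $\GL_n(\widehat\Z)$-equivalence already equals $\GL_n(\Z)$-equivalence, and $\widehat G_1\cong\widehat G_2$ forces $G_1\cong G_2$.

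\emph{Conclusion and the main obstacle.} Write $d=d_{m-(n-1)}$. For part (1): the case $n>m$ is the previous sentence; and if $d\in\{1,2,3,4,6\}$ then $(\Z/d\Z)^\times=\{\pm1\}$, so the determinant obstruction is vacuous and again $\widehat G_1\cong\widehat G_2$ forces $G_1\cong G_2$. For part (2) assume $n\le m$ and $d\notin\{1,2,3,4,6,12\}$: this set is precisely the set of $d$ for which every unit of $\Z/d\Z$ is $\pm1$ modulo every prime-power factor of $d$, so there is a prime power $q^e$ exactly dividing $d$ with $(\Z/q^e\Z)^\times\ne\{\pm1\}$; since $q^e$ divides $\lcm_{|S|=n}\gcd(p_j:j\in S)$ it divides $\gcd(p_j:j\in S_0)$ for some $n$-subset $S_0$. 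Take $\psi_1$ putting a generator into the $\gcd(p_j:j\in S_0)$-summand of $M$ (so $\bigwedge^n\psi_1$ has a component of order divisible by $q^e$), choose a unit $u$ of $\Z/d\Z$ with $u\not\equiv\pm1\pmod{q^e}$, and set $\psi_2=\psi_1\circ\mathrm{diag}(1,\dots,1,u)$ and $\mathbf b_1=\mathbf b_2=0$. Then $\widehat G_1\cong\widehat G_2$ by construction, while $G_1\cong G_2$ would force $\bigwedge^n\psi_2=\pm\tau_*\bigwedge^n\psi_1$ for a cone-point permutation $\tau$, hence $u\equiv\pm1\pmod{q^e}$ (as $\tau$ preserves component orders), a contradiction; so $G_1\not\cong G_2$. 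The value $d=12$ must be excluded precisely because there every non-trivial unit is $\pm1$ on each prime-power factor and could therefore be realised by a suitable cone-point permutation, so the abstract non-isomorphism is not settled by this method. The principal obstacle is the input feeding the second paragraph: that $\widehat\Delta$ is centerless and that $\aut(\widehat\Delta)$ acts on the torsion of $H^2(\widehat\Delta;\widehat\Z)$ only through cone-point permutations and global unit scalars, with no independent twisting at distinct cone points — granting this, the remainder is bookkeeping with exterior powers, determinants, and strong approximation.
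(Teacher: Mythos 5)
Your overall strategy is the same as the paper's (classify extensions by $H^2(\Delta;\Z^n)$, reduce both isomorphism problems to orbits of $\GL_n(\Z)\times\mathrm{Perm}$ versus $\GL_n(\widehat\Z)\times\mathrm{Perm}$, and measure the difference by a determinant obstruction), but there are two genuine gaps. First, your computation of $H^2(\Delta;\Z)$ is wrong: it is not $\Z\oplus\bigoplus_{j=1}^m\Z/p_j$. From the presentation one gets $H^2(\Delta;\Z)\cong\Z^{m+1}/\langle \mathbf{r}_0^*+p_i\mathbf{r}_i^*\rangle$, i.e.\ the cone-point classes and the Euler class satisfy one relation per cone point, and the Smith normal form gives $\Z\oplus\bigoplus_{j=1}^{m-1}\Z/d_j$ --- only $m-1$ torsion summands, with exponents the $d_j$, not the $p_j$. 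For example, with $m=1$, or with $m=2$ and $p_1,p_2$ coprime, $H^2(\Delta;\Z)\cong\Z$ is torsion-free, whereas your model predicts torsion $\Z/p_1$ resp.\ $\Z/p_1\oplus\Z/p_2$. The exact sequence $0\to\Z\to H^2(\Delta;\Z)\to\bigoplus_i\Z/p_i\to 0$ exists but the right-hand group is a quotient, not a summand, and your ``pair $(\mathbf b,\psi)$'' parametrisation therefore over-counts extensions and mislocates the Euler datum. (Your numerology is salvageable: $\lcm_{|S|=n}\gcd(p_j:j\in S)=d_{m-(n-1)}$ is indeed correct, which is why your final thresholds agree with the true ones, but the argument has to be run on the correct group $\Z^n\oplus\bigoplus_{j=1}^{m-1}(\Z/d_j)^n$, or on the quotient $\bigoplus_i(\Z/p_i)^n$ for the non-rigidity direction, as the paper does.)

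Second, the rigidity half of the theorem is asserted rather than proved. The claim that ``$\psi$ and $\psi\circ B$ are $\GL_n(\Z)$-equivalent modulo $\mathrm{Perm}$ exactly when $\det B\equiv\pm1$ mod $\delta$, and strong approximation shows nothing beyond this survives,'' with the cases $\mathbf b\neq 0$ and low-rank image ``subsumed,'' is precisely the hard content. A single matrix acts simultaneously on the free summand and on all torsion summands, and the whole point of the threshold $d_{m-(n-1)}$ (note the shift by one, accounting for the Euler-class summand) is this interaction: the paper handles it by upper-triangularising the class with respect to the summands ordered by exponent, rescaling the last column of the profinite matrix by a unit $\kappa\equiv 1\bmod d_{m-(n-1)}$ to force determinant $\pm1$, and then lifting modulo $d_{m-1}$ to an integral matrix with prescribed first column so as to respect the Euler class. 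Citing strong approximation for $\SL_n$ does not by itself produce an integral matrix doing all of these jobs at once, and no argument is given for why the $\mathbf b$-part and torsion-part constraints can be met simultaneously. Relatedly, in your non-rigidity construction you must check that the profinite twist sends an integral class to an integral class (in the correct coordinates this is a condition on the Euler-class coordinate, the paper's Lemma on $\widehat E$), and the unit $u$ must be realisable as a profinite unit modulo the actual orders $p_j$ involved, not merely modulo $d$; both points are glossed over because they are invisible in the incorrect splitting.
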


The third result shows that in fact all examples of lack of rigidity
come from the phenomenon described by Baumslag in \cite{baumslag1974residually}
-- that ${\widehat G_1 \cong \widehat G_2}$ implies in this context
that $G_1 \times \Z \cong G_2 \times \Z$.

\begin{thmx}\label{thm:main_appendix}
	Let $\Delta$ be an infinite fundamental group
	of a closed orientable $2$-orbifold,
	with $m \ge 0$ cone points
	of orders $p_1, \ldots, p_m$.
	Let $n>1$ and $G_1, G_2$ be central extensions
	of $\Z^n$ by $\Delta$
	such that $\widehat G_1 \cong \widehat G_2$.
	Then $G_1 \times \Z \cong G_2 \times \Z$.
\end{thmx}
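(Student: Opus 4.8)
The plan is to reduce, via the classification of these central extensions by $H^2(\Delta;\Z^n)$ used in the proof of \cref{thm:main}, to a ``stable range'' statement about equivalence of cohomology classes, and then to prove that by a congruence argument. First I would reduce: since $\widehat G_1\cong\widehat G_2$, the reduction theorem above gives $n_1=n_2=:n$ and $\Delta_1\cong\Delta_2=:\Delta$, so $G_1,G_2$ are central extensions of $\Z^n$ by one group $\Delta$, with $n>1$. If $\Delta\cong\Z^2$ (the case $m=0$ with $\Delta$ abelian) the $G_i$ are finitely generated nilpotent of class $\le2$ and a direct argument suffices (in fact $\widehat G_1\cong\widehat G_2$ already forces $G_1\cong G_2$ there), so assume $Z(\Delta)=1$, whence $Z(G_i)=\Z^n$ and $Z(\widehat G_i)=\widehat\Z^{\,n}$ are characteristic. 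I would then recall from the proof of \cref{thm:main} that such extensions are classified by $H^2(\Delta;\Z^n)=H^2(\Delta;\Z)\otimes\Z^n$, two of them (classes $c_1,c_2$) giving isomorphic groups exactly when $c_1,c_2$ lie in one orbit of $\GL_n(\Z)\times\aut(\Delta)$ (the first factor acting on the coefficients, the second by pullback), with the profinite version holding for $H^2(\widehat\Delta;\widehat\Z^{\,n})\cong H^2(\Delta;\Z^n)\otimes\widehat\Z$ and $\GL_n(\widehat\Z)\times\aut(\widehat\Delta)$ (using goodness of $\Delta$ and vanishing of the relevant $H^3$); writing $H:=H^2(\Delta;\Z)\cong\Z\oplus T$ with $T$ finite, one has $H^2(\Delta;\Z^n)\cong\Z^n\oplus T^n$, with $\GL_n$ acting by matrix multiplication on both summands.

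Next I would use the description of $\aut(\widehat\Delta)$ from the proof of \cref{thm:main}: it is generated by $\aut(\Delta)$ together with ``cyclotomic'' automorphisms, which act on $H\otimes\widehat\Z=\widehat\Z\oplus T$ by multiplication by one and the same power of a unit $\lambda\in\widehat\Z^{\,*}$ on the two summands. Replacing $G_1$ by $G_1^\psi$ for $\psi\in\aut(\Delta)$ changes neither the isomorphism type of $G_1\times\Z$ nor the hypothesis, so I may assume the isomorphism $\widehat G_1\cong\widehat G_2$ induces on $\widehat\Delta$ only a cyclotomic automorphism, together with some $M\in\GL_n(\widehat\Z)$ on the centres. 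The Euler components $u_1,u_2\in\Z^n$ of $c_1,c_2$ then lie in one $\GL_n(\widehat\Z)$-orbit, hence have the same $\gcd$, so some element of $\GL_n(\Z)$ carries $u_1$ to $u_2$; absorbing it into $G_2$, assume $u_1=u_2=:u$. With the common power above, the hypothesis becomes: $c_1=(u,t_1)$ and $c_2=(u,\overline V t_1)$ for some $V\in\GL_n(\widehat\Z)$ with $Vu=u$ — the common power being precisely what prevents a residual unit scalar surviving on the $T$-component.

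It then remains to prove the arithmetic statement (to be applied with $H=H^2(\Delta;\Z)$): if $H$ is finitely generated abelian of free rank $\le1$ and $(u,t_a),(u,t_b)\in H^n=\Z^n\oplus T^n$ satisfy $t_b=\overline V t_a$ for some $V\in\GL_n(\widehat\Z)$ fixing $u$, then $(u,t_a,0)$ and $(u,t_b,0)$ lie in one $\GL_{n+1}(\Z)$-orbit of $H^{n+1}$; applied to $c_1,c_2$ this says the classes of $G_1\times\Z$ and $G_2\times\Z$ in $H^2(\Delta;\Z^{n+1})$ are $\GL_{n+1}(\Z)$-equivalent, i.e.\ $G_1\times\Z\cong G_2\times\Z$. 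To prove it I would, after a basis change in $\GL_n(\Z)\subseteq\GL_{n+1}(\Z)$, take $u=d\,\mathbf e_1$ with $d=\gcd(u)$, so that $Vu=u$ forces $V\mathbf e_1=\mathbf e_1$ and $V$ is block upper triangular; then produce $N\in\GL_{n+1}(\Z)$ with first column $\mathbf e_1$ — so $N$ fixes $(u,0)$ — and with reduction modulo $\exp T$ acting as $\overline V$ on the first $n$ coordinates and as a suitable unit on the last, chosen so that $\det N$ (which equals the determinant of the $n\times n$ block obtained by deleting the first row and column) reduces to $1$ modulo $\exp T$; by surjectivity of $\SL_n(\Z)\to\SL_n(\Z/\exp T)$ — here one uses $n\ge2$ — that block, and hence $N\in\SL_{n+1}(\Z)$, is realised over $\Z$. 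Since the last coordinates of $(u,0)$ and $(t_a,0)$ vanish, $N$ fixes $(u,0)$ and carries $(t_a,0)$ to $(\overline V t_a,0)=(t_b,0)$.

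The step I expect to be the main obstacle is the input about $\aut(\widehat\Delta)$ flagged above: that its ``extra'' automorphisms scale the free and torsion summands of $H^2(\Delta;\Z)$ by the \emph{same} power of $\lambda$ — equivalently, that matching the Euler part of the profinite datum of $G_i$ forces matching of the whole datum up to $\GL_n(\Z)$. This rests on the detailed analysis of $\aut(\widehat\Delta)$ and its cohomological action carried out in proving \cref{thm:main}. Everything after that — the congruence construction above, the nilpotent case $\Delta\cong\Z^2$, and an overall sign which one absorbs using the orientation-reversing symmetry of $\Delta$ (available since the orbifold datum $(g;p_1,\dots,p_m)$ is reflection-symmetric) — is routine, its only genuine constraint being $\SL_{\ge2}$, which is satisfied.
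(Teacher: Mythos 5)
Your proposal is correct and essentially reproduces the paper's own argument: the reduction through the cohomology/matrix classification with $\aut(\widehat\Delta)$ absorbed into scalars and column permutations is exactly \cref{thm:matrix_correspondence} together with \cref{prop:action_on_profinite_H2}, and your ``arithmetic statement'' (normalise the Euler part, append a coordinate carrying the determinant correction $(\det V)^{-1}$, then lift through the congruence modulo the exponent of the torsion with first-column control) is precisely \cref{prop:adding_another_dimension} combined with \cref{lem:image_of_GLZ}. The only detail to patch is the case $u=0$, where ``$Vu=u$ forces $V\mathbf{e}_1=\mathbf{e}_1$'' fails, but there the constraint this serves (fixing $(u,0)$ exactly) is vacuous, so the same lifting argument goes through unchanged.
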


Note that in \cref{thm:main} and \cref{thm:main_appendix}
we made the assumption that $n > 1$.
This assumption isn't necessary,
but it simplifies the statements.
The case of $n=1$ has already been covered
by \cite{hempel2014some, wilkes2017profinite}.

\subsection*{Structure of this article}

\Cref{sec:introduction} states the main questions of this paper
and gives the related results as context for them;
it states the original results
and discusses the structure of this article.

\Cref{sec:background} contains the necessary technical background
and minor lemmas:
\cref{ssec:profinite_completions} introduces profinite completions
and the implications of goodness to residual-finiteness;
\cref{ssec:2_orbifolds} discusses 2-orbifold groups, their automorphisms and centres;
\cref{ssec:cohomology} introduces the needed results of group cohomology
and computes the cohomology groups.

In particular,
in \cref{sssec:rf_of_our_extensions} we show
that the extensions in question are residually-finite
and that the topology induced on the kernel
is the full profinite topology.
In \cref{sssec:aut_torsion_orbifolds} we quote
important results on automorphisms of $2$-orbifold groups
and their profinite completions,
specifying how they act on the maximal torsion elements.
In \cref{sssec:centres} we show that the $2$-orbifold groups we consider
and their profinite completions
are centreless.
\Cref{sssec:Hopfs_formula} gives an explicit form
of the standard Cohomological Hopf's Formula,
which we later need for calculations.
Finally, \Cref{sssec:cohomology_computed}
computes the cohomology groups of the $2$-orbifold groups we consider
and also their profinite completions,
as well as the actions
of their automorphism groups on the cohomology groups themselves.

\Cref{sec:matrix_correspondence} translates the questions
of isomorphism of central extensions of $\Z^n$ by 2-orbifold groups
and isomorphism of their profinite completions
to questions
about orbits of certain actions by matrices on sets of matrices.

\Cref{sec:matrix_correspondence_implications} explores the translated question
and gives a thorough classification
of when distinct orbits of the first action
collapse to one orbit under the second action.

Finally, \cref{sec:results} states the final results
back in the setting of
distinguishing non-isomorphic central extensions
of $\Z^n$ by $2$-orbifold groups
and proves them.

\subsection*{Acknowledgements}

The author is thankful to his PhD supervisor,
Martin Bridson,
for an introduction to the problem, general guidance
and numerous helpful conversations.

This work was supported by the Mathematical Institute Scholarship
of University of Oxford.

\section{Background}\label{sec:background}

\subsection{Profinite completions}\label{ssec:profinite_completions}

Since this article isn't intended as a thorough introduction
to the subject of profinite groups and profinite completions
only the very basic definitions are given here.
The reader is referred to the lecture notes of Wilkes \cite{wilkes2021profinitegroups}
for a proper introduction to the topic
and to the book of Ribes and Zalesskii \cite{ribes2000profinite}
for a comprehensive reference.

\subsubsection{Profinite completions}

The \emph{profinite completion} of a group
organises `the set of all finite quotients of a group' into an algebraic object
which also `remembers' how the different quotients relate to each other.

For each group $G$ we can form an inverse system
$\mathcal{N} = \{G/N\ |\ N \triangleleft_{\textrm{fi}} G\}$ of its canonical finite quotients.
It is indeed an inverse system
since for any finite index normal subgroups $N < M$ of $G$
we have a canonical map $\varphi_{NM}\colon  G/N \to G/M$
and given any $N, M \triangleleft_{\textrm{fi}} G$,
the intersection $N \cap M$ is also a finite index normal subgroup of $G$.

\begin{definition}
	Given a group $G$ together with the inverse system of its canonical finite quotients $G/N$,
	we define its \emph{profinite completion} to be the inverse limit
	$$\widehat G := \varprojlim_{N \triangleleft_{\mathrm{fi}} G} G/N.$$
\end{definition}

The limit $\widehat G$ may be realised as a subgroup
of the direct product $\Pi_{N\triangleleft_{\mathrm{fi}} G}\ G/N$
which may be treated as a compact space
by giving the finite groups $G/N$ the discrete topology.
This makes $\widehat G$ into a compact Hausdorff topological group.

From the canonical maps $\psi_N\colon G \to G/N$
we get a canonical map ${\mathsf{h}\colon  G \to \widehat G}$
which is injective \Iff\ the group $G$ is \rf,
i.e. the intersection of its finite index subgroups is the trivial subgroup.
The image $\mathsf{h}(G)$ is dense in $\widehat G$.

The following striking result has become standard
-- see \cite{dixon1982profinite} for the original paper
or \cite[Corollary 3.2.8.]{ribes2000profinite} for a more general version.
We write $\mathcal{C}(G)$ for the set of isomorphism classes of finite quotients of $G$.

\begin{theorem}\label{thm:completion_isomorphism_sets_of_quotients}
	Let $G$ and $H$ be finitely generated groups.
	Then we have ${\mathcal{C}(G) =  \mathcal{C}(H)}$
	\Iff\ $\widehat G \cong \widehat H$.
\end{theorem}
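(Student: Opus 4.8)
The plan is to prove the two implications separately; the forward direction ($\mathcal{C}(G) = \mathcal{C}(H) \Rightarrow \widehat{G} \cong \widehat{H}$) carries essentially all of the difficulty, and it is where finite generation enters in an essential way.

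For the easy direction, suppose $\widehat{G} \cong \widehat{H}$. First I would record the standard observation that a finite group $Q$ is a quotient of $G$ if and only if it is a continuous quotient of $\widehat{G}$: an epimorphism $G \twoheadrightarrow Q$ has finite-index kernel, which corresponds to an open normal subgroup of $\widehat{G}$ since $\mathsf{h}(G)$ is dense, and conversely precomposing a continuous epimorphism $\widehat{G} \twoheadrightarrow Q$ with $\mathsf{h}$ yields an epimorphism out of $G$. Hence $\mathcal{C}(G)$ equals the set of isomorphism types of finite continuous quotients of the topological group $\widehat{G}$, which is patently an invariant of $\widehat{G}$; for finitely generated $G$ every abstract isomorphism $\widehat{G} \cong \widehat{H}$ is moreover automatically continuous by Nikolov--Segal, so there is no ambiguity in how $\cong$ is read. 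Therefore $\mathcal{C}(G) = \mathcal{C}(H)$.

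For the hard direction, assume $\mathcal{C}(G) = \mathcal{C}(H)$. As $G$ is finitely generated it has only finitely many subgroups of each finite index, so its finite-index normal subgroups can be enumerated as $N_1, N_2, \dots$; putting $M_k = N_1 \cap \dots \cap N_k$ produces a cofinal descending chain with $\widehat{G} = \varprojlim_k G/M_k$, and similarly for $H$. For each $k$ the finite group $G/M_k$ lies in $\mathcal{C}(G) = \mathcal{C}(H)$, hence is a continuous quotient of $\widehat{H}$, so the set $E_k$ of continuous epimorphisms $\widehat{H} \twoheadrightarrow G/M_k$ is non-empty, and it is \emph{finite} since $\widehat{H}$ is topologically finitely generated and the target is finite. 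Post-composing with the bonding epimorphisms $G/M_{k+1} \twoheadrightarrow G/M_k$ turns $(E_k)_k$ into an inverse system of finite non-empty sets, so $\varprojlim_k E_k \ne \emptyset$, and a compatible family glues to a continuous homomorphism $\widehat{H} \to \widehat{G}$ whose image is closed, being compact, and surjects onto every $G/M_k$, hence is all of $\widehat{G}$. By symmetry there is also a continuous epimorphism $\widehat{G} \twoheadrightarrow \widehat{H}$; composing the two yields a continuous epimorphism $\varphi \colon \widehat{G} \twoheadrightarrow \widehat{G}$, and it remains to see that $\varphi$ is injective. Here I would use the Hopf property of topologically finitely generated profinite groups: for each $n$ the map $U \mapsto \varphi^{-1}(U)$ on open subgroups of index $n$ is injective (since $\varphi$ surjective gives $\varphi(\varphi^{-1}(U)) = U$), hence, being an injection of a finite set into the subset of those index-$n$ subgroups that contain $\ker\varphi$, that subset equals the whole set, i.e. every open subgroup of index $n$ contains $\ker\varphi$; intersecting over all $n$ gives $\ker\varphi = 1$. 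Thus $\varphi$ is an isomorphism, which forces the epimorphism $\widehat{G} \twoheadrightarrow \widehat{H}$ to be injective as well, whence $\widehat{G} \cong \widehat{H}$.

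The step I expect to be the main obstacle is the construction of the epimorphism $\widehat{H} \twoheadrightarrow \widehat{G}$: one cannot naively pick, level by level, isomorphisms identifying $G/M_k$ with a quotient of $H$ and hope they cohere, since a choice at level $k$ need not extend to level $k+1$. The inverse-limit-of-finite-non-empty-sets compactness argument is exactly what circumvents this; both it and the Hopf-property endgame break down without finite generation, and indeed the statement itself fails for general residually finite groups.
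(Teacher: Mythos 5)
Your proof is correct; the paper does not actually prove this theorem but cites it (Dixon et al.\ and \cite[Corollary 3.2.8.]{ribes2000profinite}), and your argument is precisely the standard one from those sources: compactness of an inverse limit of finite non-empty sets of epimorphisms to produce continuous surjections in both directions, followed by the Hopf property of topologically finitely generated profinite groups. The only cosmetic remark is that the appeal to Nikolov--Segal in the easy direction is unnecessary if $\widehat G \cong \widehat H$ is read as an isomorphism of profinite (topological) groups, which is the usual convention.
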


\subsubsection{Profinite completions and extensions of groups}

We will need to know what happens to an extension of groups under profinite completion.
The following is taken from \cite[Proposition 3.2.5.]{ribes2000profinite}.
Here and throughout $\overline H$ denotes the closure
of $\mathsf{h}(H)$ in $\widehat G$ for $H < G$.

\begin{proposition}\label{prop:SES_completion}
	Given a short exact sequence $1\to N \to G\to Q \to 1$ of any \gps,
	the following diagram commutes.
	\begin{equation*}
		\begin{tikzcd}
			1\ar[r] &
			N\ar[r]\arrow{d}{\mathsf{h}_G|_N} &	G\arrow{r}{\pi}\arrow{d}{\mathsf{h}_G} &	Q\ar[r]\arrow{d}{\mathsf{h}_Q} &
			1
			\\
			1\ar[r]	&
			\overline N\ar[r] &
			\widehat{G}\arrow{r}{\hat \pi} &
			\widehat{Q}\ar[r]	&
			1
		\end{tikzcd}
	\end{equation*}
\end{proposition}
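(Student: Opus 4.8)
The plan is to construct the bottom row by functoriality of the profinite completion, to deduce commutativity of the two squares directly from universal properties, and then to prove that the bottom row is exact — this last part being the only substantive content, since commutativity is formal.

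First I would produce $\hat\pi$. Every finite-index normal subgroup $M\triangleleft_{\mathrm{fi}}Q$ pulls back along $\pi$ to $\pi^{-1}(M)\triangleleft_{\mathrm{fi}}G$, so the composites $G\xrightarrow{\pi}Q\twoheadrightarrow Q/M$ form a compatible family over the inverse system defining $\widehat Q$ and hence determine a continuous homomorphism $\hat\pi\colon\widehat G\to\widehat Q$ satisfying $\hat\pi\circ\mathsf{h}_G=\mathsf{h}_Q\circ\pi$; this identity is exactly commutativity of the right-hand square. For the left-hand square, observe that $\mathsf{h}_Q\circ\pi$ kills $N$, so $\hat\pi$ kills $\mathsf{h}_G(N)$, and since $\Ker\hat\pi$ is closed it contains $\overline N$. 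In particular $\mathsf{h}_G|_N$ has image inside $\overline N$, the bottom-left horizontal arrow is the inclusion $\overline N\hookrightarrow\widehat G$, and the square commutes; exactness of the bottom row at $\overline N$ is then immediate because this inclusion is injective.

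Next I would establish exactness at $\widehat Q$ and at $\widehat G$. For surjectivity of $\hat\pi$: the image $\hat\pi(\widehat G)$ contains $\hat\pi(\mathsf{h}_G(G))=\mathsf{h}_Q(Q)$, which is dense in $\widehat Q$; but $\hat\pi(\widehat G)$ is compact, hence closed, so it is all of $\widehat Q$. The one remaining point is the reverse inclusion $\Ker\hat\pi\subseteq\overline N$. Here I would use that $\widehat G/\overline N$, being a Hausdorff quotient of a profinite group by a closed normal subgroup, is itself profinite. The composite $G\xrightarrow{\mathsf{h}_G}\widehat G\twoheadrightarrow\widehat G/\overline N$ kills $N$, so it factors through a homomorphism $Q\to\widehat G/\overline N$, which by the universal property of the profinite completion of $Q$ extends to a continuous homomorphism $\psi\colon\widehat Q\to\widehat G/\overline N$. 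Checking on the dense subgroups $\mathsf{h}_G(G)\le\widehat G$ and $\mathsf{h}_Q(Q)\le\widehat Q$ shows that $\psi$ is a two-sided inverse to the continuous surjection $\widehat G/\overline N\to\widehat Q$ induced by $\hat\pi$; hence that surjection is an isomorphism, which forces $\Ker\hat\pi=\overline N$.

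The main obstacle is precisely this last inclusion $\Ker\hat\pi\subseteq\overline N$: a priori the kernel could be strictly larger than the closure of $\mathsf{h}_G(N)$, and the content is to manufacture the inverse map $\psi$, for which one genuinely needs $\widehat G/\overline N$ to be profinite so that the universal property of $\widehat Q$ is available. The rest — functoriality, the two commuting squares, surjectivity of $\hat\pi$, and injectivity of the inclusion — is formal manipulation with continuity and density. An alternative to the $\psi$-argument is to use the correspondence between open normal subgroups of $\widehat G$ and finite-index normal subgroups of $G$, identifying the finite continuous quotients of $\widehat G/\overline N$ with the quotients $G/K$ for which $N\le K$, that is, with the finite quotients of $Q$, and then invoking that a profinite group is the inverse limit of its finite continuous quotients.
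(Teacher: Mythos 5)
Your proof is correct, and since the paper does not prove this statement itself (it is quoted from Ribes--Zalesskii, Proposition 3.2.5), the relevant comparison is with that source: your argument is essentially the standard one, namely functoriality of completion for the two commuting squares, density plus compactness for surjectivity of $\hat\pi$, and the universal property of $\widehat Q$ applied to the profinite quotient $\widehat G/\overline N$ to produce the inverse map forcing $\Ker\hat\pi=\overline N$. No gaps.
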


\subsubsection{Residual finiteness and goodness}

In this section we give a technical proposition,
which will then be instrumental for proving that the groups we consider in this article are \rf.
It is centered on the notion of \emph{goodness} introduced by Serre in \cite{serre1979galois}
which we define later in \cref{sssec:profinite_cohomology} in \cref{def:goodness}.
The result is adapted from \cite[Corollary 6.2.]{grunewald2008cohomological}.

\begin{proposition}\label{cor:rf-by-good}
	Let $N$ be a residually finite, \fg\ \gp\
	and $Q$ be a good \rf\ \gp.
	Then any extension of $N$ by $Q$ is residually finite
	and induces the full profinite topology on $N$.
\end{proposition}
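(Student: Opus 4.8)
The plan is to reduce both assertions to a single statement about extensions whose kernel is \emph{finite}, and then to prove that statement by lifting a second-cohomology class along $\mathsf{h}_Q\colon Q\to\widehat Q$ using the goodness of $Q$. For the reduction one uses that $N$ is finitely generated: every finite-index subgroup $N_0\le N$ contains a subgroup $N_1$ of finite index that is characteristic in $N$ — e.g.\ the intersection of the finitely many subgroups of $N$ of index $[N:N_0]$ — and, being characteristic in the normal subgroup $N$, such $N_1$ is normal in $G$. Hence it suffices to show: \emph{for every finite-index $N_1\trianglelefteq G$ with $N_1\le N$, the extension $1\to N/N_1\to G/N_1\to Q\to 1$ has a finite-index subgroup meeting $N/N_1$ trivially}. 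Pulling such a subgroup back to $G$ yields a finite-index $E_0\le G$ with $E_0\cap N\le N_1\le N_0$, which is precisely the claim that $G$ induces the full profinite topology on $N$; and then $G$ is residually finite, since a nontrivial $g\in G$ lying outside $N$ is detected in a finite quotient of $Q$, while a nontrivial $g\in N$ avoids some finite-index $N_0\le N$ (as $N$ is residually finite) and hence the corresponding $E_0$.

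So fix an extension $1\to F\to E\to Q\to 1$ with $F$ finite; I must produce a finite-index subgroup of $E$ meeting $F$ trivially. Consider first the case that $F$ is \emph{abelian}. Conjugation makes $F$ a finite $Q$-module, and the extension is classified by a class $\alpha\in H^2(Q;F)$. Since $Q$ is good and $F$ is finite, the comparison map $H^2(\widehat Q;F)\to H^2(Q;F)$ (continuous cohomology on the left) is an isomorphism, so $\alpha$ lifts to some $\hat\alpha\in H^2(\widehat Q;F)$, which classifies an extension of profinite groups $1\to F\to\hat E\to\widehat Q\to 1$. As $\hat\alpha$ restricts to $\alpha$ along $\mathsf{h}_Q$ and $\mathsf{h}_Q$ is injective ($Q$ being residually finite), the fibre-product description of restriction identifies $E$ with $Q\times_{\widehat Q}\hat E$ and gives an injection $E\hookrightarrow\hat E$ that is the identity on $F$. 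In the profinite group $\hat E$ each of the finitely many nontrivial elements of the finite closed subgroup $F$ lies outside some open normal subgroup; intersecting these finitely many subgroups gives an open normal $\hat E_0\le\hat E$ with $\hat E_0\cap F=1$, whose preimage $E_0\le E$ is a finite-index subgroup with $E_0\cap F=1$.

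The general case follows by induction on $|F|$. If $F=1$ there is nothing to prove. If $Z(F)=1$, then $C_E(F)\trianglelefteq E$ has finite index, since $E/C_E(F)$ embeds in the finite group $\aut(F)$, and $C_E(F)\cap F=Z(F)=1$; take $E_0=C_E(F)$. Otherwise $A:=Z(F)$ is a nontrivial characteristic abelian subgroup of $F$, hence normal in $E$; applying the inductive hypothesis to $1\to F/A\to E/A\to Q\to 1$ gives a finite-index $\bar E_1\le E/A$ with $\bar E_1\cap(F/A)=1$, whose preimage $E_1\le E$ has finite index and satisfies $E_1\cap F\le A$. Writing $Q_1$ for the finite-index image of $E_1$ in $Q$, the abelian case — which applies because finite-index subgroups of good residually finite groups are again good and residually finite — gives, for the extension $1\to(E_1\cap F)\to E_1\to Q_1\to 1$, a finite-index $E_0\le E_1$ with $E_0\cap(E_1\cap F)=1$, so that $E_0\cap F=1$. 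Combined with the reduction, this proves the proposition.

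The main obstacle is the abelian case: it requires the classification of extensions of profinite groups by finite modules in terms of continuous second cohomology and its compatibility with restriction along $\mathsf{h}_Q$, so that the lift provided by goodness can be realised inside an ambient profinite group $\hat E$ from which a separating finite-index subgroup is extracted. The other ingredients — cofinality of characteristic finite-index subgroups in a finitely generated group, stability of goodness and of residual finiteness under passage to finite-index subgroups, and residual finiteness of profinite groups — are standard. (In the application to this paper $N$ is $\Z^n$, so $F$ is always abelian and the inductive step of the general case is not needed.)
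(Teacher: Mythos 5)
Your proof is correct and is essentially the argument the paper itself relies on: the paper does not prove this proposition but adapts it from \cite{grunewald2008cohomological}, and your strategy---reduce to a finite kernel via characteristic finite-index subgroups of the finitely generated $N$, lift the class in $H^2(Q;F)$ to $\widehat H^2(\widehat Q;F)$ by goodness, embed $E$ into the resulting profinite extension $\hat E$ via the fibre product and injectivity of $\mathsf{h}_Q$, and handle nonabelian finite kernels by the centraliser/centre induction---is exactly the standard proof behind that citation. The one step you flag as standard, that (degree-two) goodness and residual finiteness pass to finite-index subgroups, does hold (via Shapiro's lemma together with $\overline{H}=\widehat H$ for finite-index $H$), and, as you note, it is not even needed for the paper's application since there the kernel $\Z^n$ is abelian.
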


\subsection{$2$-orbifold groups}\label{ssec:2_orbifolds}
\subsubsection{$2$-orbifold groups}

When a group $G$ acts on a $n$-manifold $M$ properly discontinuously and freely,
the quotient space $G \backslash M$ is naturally a manifold
and we get a covering $M \twoheadrightarrow G \backslash M$.

\emph{Orbifolds} are objects that describe the quotient accurately
when the action has non-trivial finite isotropy groups.

For us it is natural to set aside
the technicalities associated to orbifolds
and consider only their fundamental groups.
Moreover, we restrict ourselves to only the geometric orbifolds.

\begin{definition}
	Let $M$ be one of $\mathbb{R}^n, \mathbb{H}^n,$ or $\mathbb{S}^n$.
	A group $G < \isom(M)$ is called an \emph{$n$-orbifold group}
	if its action on $M$ is properly discontinuous.
	We say that an $n$-orbifold \gp\ $G$ is \emph{orientable} if $G < \isom^+(M)$.
	We call a $2$-orbifold group \emph{closed}
	if the quotient topological space $G\backslash  M$ is homeomorphic to a closed surface $S$.
\end{definition}

\begin{examples*}\phantom{0}
	\begin{enumerate}
		\item Any surface group $\Sigma$ is a closed $2$-orbifold group.
		\item $\PSL_2 (\Z) < \isom^+(\mathbb{H}^2)$ is a $2$-orbifold group,
		but it isn't a \emph{closed} $2$-orbifold group,
		as the quotient $\PSL_2(\Z) \backslash \mathbb{H}^2$ is a punctured sphere.
		\item Any \emph{branched cover} of surfaces of degree $d$
		gives rise to a pair $\Sigma < \Delta$
		where $\Sigma$ is the fundamental group of the covering surface,
		$\Delta$ is a $2$-orbifold group
		(which is in general \emph{not}
		the fundamental group of the underlying surface of the quotient)
		and $[\Delta: \Sigma] = d$.
		\item Every discrete lattice in $\PSL_2 (\R)$ (i.e. a Fuchsian group) is a $2$-orbifold group,
		but only the \emph{cocompact} lattices are closed $2$-orbifold groups. 
	\end{enumerate}
\end{examples*}

The closed orientable $2$-orbifold groups have presentations which are convenient to work with.
A proof of \cref{prop:2-orbs_presentation} relies on a version of Seifert-van Kampen theorem for orbifolds
and can be found in \cite[Sections 4.7. and 5.1.]{choi2012geometric}.

\begin{proposition}\label{prop:2-orbs_presentation}
	Any closed orientable $2$-orbifold group $G$ has a presentation
	\begin{equation}\label{eq:presentation}
		\begin{aligned}
			\langle x_1, \ldots, y_g, a_1, \ldots, a_m\
			|\ & \Pi_{i=1}^{g} [x_i, y_i]\cdot a_1\ldots a_m = 1,\ \\
			&a_i^{p_i} = 1 \text{ for } i = 1, \ldots, m\rangle,
		\end{aligned}
	\end{equation}
	where $g$ is the genus of the quotient surface $G \backslash M$,
	while $m$ is the number of orbits of singular points
	and $p_i$ is the size of the stabilizers of the $i$-th orbit.
\end{proposition}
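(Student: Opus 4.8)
The plan is to identify the quotient $O := G\backslash M$ with a closed orientable surface carrying finitely many cone points, and then to compute its orbifold fundamental group $\pi_1^{\mathrm{orb}}(O)$, which equals $G$, by an orbifold Seifert--van Kampen argument.

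First I would pin down the topology of $O$ together with its singular data. Proper discontinuity forces every point stabiliser $G_x$ to be finite, and since $G < \isom^+(M)$ with $M$ one of $\mathbb{R}^2, \mathbb{H}^2, \mathbb{S}^2$, such a stabiliser acts faithfully on the tangent plane at $x$ by orientation-preserving isometries (an isometry of a constant-curvature surface fixing a point and acting trivially on the tangent plane is the identity), hence embeds in $SO(2)$ and is therefore cyclic, generated by a rotation. Compactness of the closed surface $S = G\backslash M$ together with proper discontinuity leaves only finitely many orbits $[x_1], \dots, [x_m]$ of points with non-trivial stabiliser; set $p_i = |G_{x_i}|$. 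Thus $O$ is the genus-$g$ surface $S$ decorated with $m$ cone points of orders $p_1, \dots, p_m$, and $M \to S$ is the associated branched cover.

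Next I would use the standard van Kampen decomposition of $S$ into the surface $S_0$ obtained by removing small open disks around the cone points --- a compact orientable genus-$g$ surface with $m$ boundary circles and no orbifold points --- together with slightly larger closed cone-disk neighbourhoods $D_1, \dots, D_m$, the overlaps $S_0 \cap D_i$ being collar annuli with fundamental group $\Z$. The ordinary fundamental group of $S_0$ has the classical presentation
\[
	\big\langle\, x_1, y_1, \dots, x_g, y_g, a_1, \dots, a_m \ \big|\ \prod_{i=1}^{g}[x_i, y_i]\cdot a_1\cdots a_m = 1 \,\big\rangle,
\]
with the $a_i$ represented by the boundary loops. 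Each $D_i$ is a good orbifold, covered by a disk on which $\Z/p_i$ acts by rotation, so $\pi_1^{\mathrm{orb}}(D_i) \cong \Z/p_i$ is generated by the boundary loop $a_i$ subject to $a_i^{p_i} = 1$, compatibly with the annular overlap. Feeding these data into the orbifold van Kampen theorem --- gluing $D_1, \dots, D_m$ onto $S_0$ one at a time along the annuli --- presents $\pi_1^{\mathrm{orb}}(O)$ as the iterated amalgam of $\pi_1(S_0)$ with the cyclic groups $\Z/p_i$ over the corresponding $\Z$'s, which simply adjoins the relations $a_i^{p_i} = 1$ and so yields exactly \eqref{eq:presentation}. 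Finally, since each model space $M$ is simply connected it has trivial orbifold fundamental group, so the covering $M \to O$ is the orbifold universal cover and its deck group $G$ is canonically $\pi_1^{\mathrm{orb}}(O)$, which completes the argument.

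The main obstacle is setting up the orbifold Seifert--van Kampen theorem and its local input rigorously: one needs a workable notion of $\pi_1^{\mathrm{orb}}$ (via orbifold paths, or as the deck group of the orbifold universal cover), the compatibility of orbifold coverings with this group, and in particular the computation $\pi_1^{\mathrm{orb}}(D^2/(\Z/p)) \cong \Z/p$. This is precisely where the \emph{geometric} hypothesis enters: because our orbifolds arise as quotients of $M$ by isometries, every piece of the decomposition is automatically a good orbifold with the expected local group, and the spherical case (where $M$ is not contractible) is handled by the same argument since $\mathbb{S}^2$ still has trivial orbifold $\pi_1$. Granting that machinery, the identification of the resulting pushout with \eqref{eq:presentation} is the routine surface computation indicated above.
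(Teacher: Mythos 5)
Your argument is correct and is essentially the proof the paper itself points to: the paper does not prove \cref{prop:2-orbs_presentation} in-house but cites Choi's treatment via the orbifold Seifert--van Kampen theorem, which is exactly the decomposition into the punctured genus-$g$ surface and cone-disk neighbourhoods that you carry out, together with the identification of $G$ as the deck group of the orbifold universal cover $M \to G\backslash M$. Your write-up just makes explicit the steps (cyclic stabilisers, finiteness of singular orbits, the amalgamation adjoining $a_i^{p_i}=1$) that the cited reference supplies.
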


To save some space in later sections,
we introduce the following definition.
The reader should be forewarned
that the term \emph{nice} $2$-orbifold group is by no means standard. 

\begin{definition}\label{def:nice_group}
	A $2$-orbifold group is called \emph{nice}
	if it is closed and orientable,
	infinite and not isomorphic to $\Z^2$.
\end{definition}

It turns out that almost all of the closed orientable $2$-orbifold groups are nice.

\begin{proposition}\label{prop:which_orbs_are_nice}
	The following are equivalent for a closed orientable $2$-orbifold group $G < \isom^+(M)$
	with an underlying surface of genus $g$
	and $m \ge 0$ cone points of orders $p_1, \ldots, p_m$.
	\begin{itemize}
		\item $G$ is nice.
		\item One of the following conditions holds.
		\begin{itemize}
			\item $g > 1$.
			\item $g = 1$, and $m \ge 1$.
			\item $g = 0$, and $m \ge 4$.
			\item $g = 0$, and $m = 3$ and $\frac{1}{p_1} + \frac{1}{p_2} + \frac{1}{p_3} \le 1$.
		\end{itemize}
	\end{itemize}
\end{proposition}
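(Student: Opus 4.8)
The plan is to show both implications by directly analysing the presentation \eqref{eq:presentation} and the geometry that underlies it. For the direction from the geometric conditions to niceness, I would go through the four cases in turn. A closed orientable $2$-orbifold group is infinite precisely when the orbifold Euler characteristic $\chi^{orb} = 2 - 2g - \sum_{i=1}^m (1 - \tfrac{1}{p_i})$ is $\le 0$; this is the classical trichotomy (spherical/Euclidean/hyperbolic) recorded in \cite{choi2012geometric}, and I would quote it rather than reprove it. The cases $g > 1$, and $g = 1$ with $m \ge 1$, and $g = 0$ with $m \ge 4$ all force $\chi^{orb} < 0$ by a short arithmetic estimate (each cone point contributes at least $\tfrac12$ to the sum, and $1 - \tfrac1p \ge \tfrac12$), and the case $g=0$, $m=3$ with $\tfrac1{p_1}+\tfrac1{p_2}+\tfrac1{p_3} \le 1$ is exactly the condition $\chi^{orb} \le 0$ for a triangle orbifold. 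In each case $G$ is infinite. It remains to rule out $G \cong \Z^2$: the only closed orientable $2$-orbifold group isomorphic to $\Z^2$ is the torus group, i.e. $g = 1$, $m = 0$, which is excluded in every one of the four listed cases; and no orbifold group with a cone point ($m \ge 1$) can be torsion-free, hence cannot be $\Z^2$. This shows each geometric condition implies niceness.

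For the converse, I would prove the contrapositive: if none of the four conditions holds, then $G$ is not nice. The remaining possibilities for $(g, m; p_1, \dots, p_m)$ with $g \le 1$ are: $g = 0$ with $m \le 2$; $g = 0$, $m = 3$ with $\tfrac1{p_1} + \tfrac1{p_2} + \tfrac1{p_3} > 1$; and $g = 1$, $m = 0$. The last is the torus group $\Z^2$, which is excluded from niceness by definition. For $g = 0$, $m = 3$ with the reciprocal sum exceeding $1$, the group is a spherical triangle group, hence finite; and for $g = 0$ with $m \le 2$ one checks from the presentation that $G$ is finite cyclic (or trivial): with $m = 0$ the relator is empty on no generators so $G = 1$; with $m = 1$ we get $\langle a_1 \mid a_1^{p_1} = 1\rangle$ after eliminating via $a_1 = 1$, so $G$ is trivial; with $m = 2$ the relator $a_1 a_2 = 1$ gives $a_2 = a_1^{-1}$ and then $G = \langle a_1 \mid a_1^{p_1} = a_1^{p_2} = 1\rangle \cong \Z/\gcd(p_1,p_2)$, finite. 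In all these cases $G$ is finite or $\Z^2$, hence not nice.

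I expect the main obstacle to be bookkeeping rather than a deep point: one must be careful that the presentation \eqref{eq:presentation} genuinely collapses in the degenerate low-genus, low-$m$ cases, and that the trichotomy I invoke is stated for \emph{orientable closed} $2$-orbifolds in the exact form needed. The cleanest route is to isolate the statement ``$G$ is infinite iff $\chi^{orb} \le 0$, and $G \cong \Z^2$ iff it is a Euclidean orbifold with no cone points'' as a consequence of the geometrization of $2$-orbifolds in \cite{choi2012geometric}, and then the proposition reduces to the elementary inequality manipulations sketched above. No input beyond \cref{prop:2-orbs_presentation} and the standard classification is required.
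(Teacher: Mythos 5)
Your proposal is correct and follows essentially the same route as the paper: infiniteness is characterised by non-positivity of the orbifold Euler characteristic $\chi = 2-2g-\sum_{i=1}^m(1-\tfrac{1}{p_i})$, and the only exclusion is $G \cong \Z^2$, i.e.\ $g=1$, $m=0$; the paper's proof is simply a terser version of your casework. One harmless slip: for $g=0$, $m\ge 4$ you claim $\chi<0$, but the $(2,2,2,2)$ orbifold has $\chi=0$ --- this does not matter since the criterion you invoke only requires $\chi\le 0$.
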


\begin{proof}
	If $G$ is infinite if and only if
	the orbifold Euler characteristic is non-positive,
	which is
	$\chi = 2 - 2g - \sum_{i = 1}^m(1 - \frac{1}{p_i})$.
	The only case that needs to be excluded is $G \cong \Z^2$
	which is $g = 1$ and $m = 0$.
\end{proof}

We will later use the following fact,
being a consequence of \cite[Theorem 2.5.]{scott1983geometries},
which tells us that the family of $2$-orbifold groups
isn't too far from the family of surface groups.

\begin{proposition}\label{prop:orbifolds_have_manifolds}
	Let $\Delta$ be a $2$-orbifold group.
	Then, there is a finite index subgroup $\Sigma < \Delta$
	isomorphic to a surface group.
\end{proposition}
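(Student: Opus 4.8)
The plan is to invoke Selberg's lemma together with the structure of $2$-orbifold groups as Fuchsian (or, in the spherical/Euclidean cases, crystallographic) groups. First I would recall that, by definition, a $2$-orbifold group $\Delta$ is a discrete subgroup of $\isom(M)$ for $M \in \{\mathbb{S}^2, \mathbb{R}^2, \mathbb{H}^2\}$, acting properly discontinuously; in particular $\Delta$ embeds in a linear group (e.g. $\mathrm{O}(3)$, $\isom(\mathbb{R}^2) < \GL_3(\R)$, or $\PSL_2(\R) < \GL_3(\R)$ via the adjoint/hyperboloid model), so $\Delta$ is a finitely generated linear group over a field of characteristic zero. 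By Selberg's lemma, $\Delta$ has a torsion-free finite-index subgroup $\Sigma$. The key point is then that a torsion-free discrete subgroup of $\isom(M)$ acting properly discontinuously on the simply connected $2$-manifold $M$ acts \emph{freely}, so $\Sigma \backslash M$ is a $2$-manifold with $\pi_1 \cong \Sigma$; hence $\Sigma$ is a surface group (in the closed case the quotient is compact since $[\Delta : \Sigma] < \infty$, and $M/\Delta$ is compact exactly when $\Delta$ is a closed $2$-orbifold group — though for this proposition we do not even need closedness, only a surface group, allowing open surfaces).

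The essential mechanism linking torsion-freeness to freeness of the action is: if $\Sigma$ is torsion-free and $\gamma \in \Sigma$ fixes a point $x \in M$, then $\gamma$ lies in the stabiliser $\Sigma_x$, which is finite (by proper discontinuity of the action on a manifold, isotropy groups are finite), so $\gamma$ is a torsion element, forcing $\gamma = 1$. Thus $\Sigma$ acts freely and properly discontinuously on $M$, the quotient map $M \to \Sigma\backslash M$ is a covering, and $\Sigma \cong \pi_1(\Sigma\backslash M)$ is the fundamental group of a surface (orientable if $\Sigma < \isom^+(M)$, which we may arrange by further intersecting with the orientation-preserving subgroup of index at most $2$). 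This is exactly the content of \cite[Theorem 2.5.]{scott1983geometries}, which I would cite for the packaging.

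The only mild subtlety — and the step I would single out as the ``main obstacle'', though it is entirely standard — is ensuring the ambient group is genuinely linear in characteristic zero so that Selberg's lemma applies: for the hyperbolic case $\isom^+(\mathbb{H}^2) \cong \PSL_2(\R)$ is linear and $\isom(\mathbb{H}^2)$ has it as an index-two subgroup; for the Euclidean case $\isom(\mathbb{R}^2) \hookrightarrow \GL_3(\R)$ as affine transformations, and the $2$-orbifold group is a $2$-dimensional crystallographic group, which has a finite-index free abelian subgroup $\Z^2$ (a surface group) directly by Bieberbach; and in the spherical case $\isom(\mathbb{S}^2) = \mathrm{O}(3)$ is finite, so no infinite closed $2$-orbifold groups arise there — one can either exclude it or note the statement is about which finite-index subgroups exist and a finite group has the trivial group (the fundamental group of $\mathbb{S}^2$) as a finite-index subgroup only after noting $\Delta$ itself need not be torsion-free, but $\mathbb{S}^2$-orbifold groups are not the concern since $M/\Delta$ is then closed and this feeds only the nice hyperbolic/Euclidean cases used later. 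Since the proposition is stated for an arbitrary $2$-orbifold group, I would simply handle the three geometries in one line each and conclude.
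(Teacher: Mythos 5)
Your argument is correct in substance, but it is worth noting that the paper does not actually prove this proposition at all: it simply quotes it as a consequence of \cite[Theorem 2.5.]{scott1983geometries}. So what you have done is supply the standard proof behind that citation -- Selberg's lemma applied to $\Delta$ viewed as a linear group via $\isom(\mathbb{H}^2)\hookrightarrow\GL_3(\R)$ (or the affine embedding $\isom(\R^2)\hookrightarrow\GL_3(\R)$, or $\mathrm{O}(3)$), followed by the observation that a torsion-free properly discontinuous group of isometries of a simply connected surface acts freely, so the quotient is a surface with fundamental group $\Sigma$; in the cocompact case the quotient is closed, so $\Sigma$ is a closed surface group. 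That mechanism (finite stabilisers plus torsion-freeness forces a free action) is exactly right, and your handling of the three geometries is fine. The one point to flag is that you assert $\Delta$ is \emph{finitely generated}, which Selberg's lemma genuinely needs but which is not part of the paper's definition of a $2$-orbifold group (only proper discontinuity is required); some such hypothesis is not cosmetic, since an infinitely generated Fuchsian group isomorphic to a free product of cyclic groups of unbounded prime orders has no torsion-free finite-index subgroup at all, so the statement in full generality needs either finite generation or the closedness/cocompactness that the paper uses in all its applications (closed orientable $2$-orbifold groups are cocompact lattices, hence finitely generated). With that hypothesis made explicit, your proof is a correct, self-contained replacement for the paper's black-box citation: the citation is shorter and hides the linear-algebraic input, while your route makes visible where torsion-freeness, proper discontinuity and (if desired) orientability enter.
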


Also, it will be useful to know the torsion elements of nice $2$-orbifold groups.

\begin{proposition}\label{prop:torsion_elts_in_Delta}
	Let $\Delta$ be a nice $2$-orbifold group with presentation
	\begin{equation*}
		\begin{aligned}
			\langle x_1, \ldots, y_g, a_1, \ldots, a_m\
			|\ & \Pi [x_i, y_i]\cdot a_1\ldots a_m = 1,\ \\
			&a_i^{p_i} = 1 \text{ for } i = 1, \ldots, m\rangle.
		\end{aligned}
	\end{equation*}
	Then any torsion element of $\Delta$
	is conjugate to a power of one of $a_i$.
\end{proposition}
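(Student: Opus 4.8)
The plan is to use the geometric structure of $2$-orbifold groups rather than purely combinatorial group theory. Recall that a nice $2$-orbifold group $\Delta$ acts properly discontinuously on $M \in \{\mathbb{R}^2, \mathbb{H}^2, \mathbb{S}^2\}$; since $\Delta$ is infinite and not $\Z^2$, it is in fact a cocompact Fuchsian group acting on $M = \mathbb{H}^2$ (or a Euclidean orbifold, but the niceness hypothesis with infiniteness forces the hyperbolic case once we have $\geq 1$ cone point and genus $1$, or genus $0$ with enough cone points — the Euclidean cases $(2,3,6),(2,4,4),(3,3,3),(2,2,2,2)$ also occur and are handled identically since they still act properly discontinuously on $\mathbb{R}^2$). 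The key geometric input is that for a properly discontinuous action on $\mathbb{H}^2$ or $\mathbb{R}^2$, every point of $M$ has finite (indeed cyclic, by orientability) stabiliser, and a group element is torsion if and only if it fixes a point of $M$.

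\textbf{Step 1: Reduce to elliptic elements.} First I would establish that every nontrivial torsion element $\gamma \in \Delta$ is \emph{elliptic}, i.e.\ fixes a point $x \in M$. This is standard: a torsion element generates a finite cyclic subgroup, which by properness must act with a global fixed point (the barycentre of any orbit, using that $M$ is $\mathrm{CAT}(0)$ — this covers both $\mathbb{H}^2$ and $\mathbb{R}^2$). Then $\mathrm{Stab}_\Delta(x)$ is a finite cyclic group (finite by proper discontinuity, cyclic because a finite subgroup of $\isom^+(M)$ fixing a point is a finite group of rotations about that point).

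\textbf{Step 2: Match fixed points to cone points.} The singular points of the quotient orbifold $\Delta \backslash M$ are precisely the images of points with nontrivial stabiliser, and there are exactly $m$ orbits of these, with the $i$-th orbit having cyclic stabiliser of order $p_i$. The generators $a_1, \ldots, a_m$ in the presentation \eqref{eq:presentation} are, by construction in the Seifert–van Kampen argument (see \cite[Sections 4.7, 5.1]{choi2012geometric}), represented by the rotations generating these stabiliser subgroups: $a_i$ generates $\mathrm{Stab}_\Delta(x_i)$ for a chosen representative $x_i$ of the $i$-th singular orbit. So if $\gamma$ fixes $x$, then $x$ lies in the $\Delta$-orbit of some $x_i$, say $x = \delta \cdot x_i$, whence $\mathrm{Stab}_\Delta(x) = \delta\, \mathrm{Stab}_\Delta(x_i)\, \delta^{-1} = \delta \gen{a_i} \delta^{-1}$. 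Therefore $\gamma \in \delta \gen{a_i} \delta^{-1}$, i.e.\ $\gamma$ is conjugate to a power of $a_i$, as required.

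\textbf{The main obstacle} is making Step 2 rigorous without reproving the orbifold Seifert–van Kampen theorem: one must know that the standard presentation's torsion generators $a_i$ genuinely correspond to the cone-point stabilisers, not merely that they have the right orders. I would handle this by citing \cite[Sections 4.7, 5.1]{choi2012geometric} for the precise correspondence, or alternatively give a self-contained argument using the action on $M$: build an explicit fundamental domain (a polygon) for $\Delta \backslash M$, read off that the side-pairing transformations and the rotations $a_i$ about the cone points generate $\Delta$ with exactly the relations of \eqref{eq:presentation}, and conclude that any two presentations of this form are related by an automorphism of $\Delta$ sending each $a_i$ to a conjugate of a power of an $a_{\sigma(i)}$ — this last point about well-definedness up to conjugacy follows from \cref{prop:torsion_elts_in_Delta} being independent of the chosen geometric realisation, which in turn follows from the uniqueness of the cone-point data (the orders $p_i$ and their multiplicities are invariants of $\Delta$, recoverable as the orders of maximal finite subgroups). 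For the purposes of this article it suffices to invoke the cited geometric facts.
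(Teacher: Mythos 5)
Your proof is correct and follows essentially the same route as the paper: realise $\Delta$ inside $\isom^+(M)$ with $M=\R^2$ or $\mathbb{H}^2$, observe that torsion elements must fix a point, identify that fixed point as a lift of a cone point, and conclude the element lies in a conjugate of some $\gen{a_i}$. Your Step 2 simply makes explicit (via the Seifert--van Kampen correspondence between the generators $a_i$ and the cone-point stabilisers) a step the paper's proof treats as immediate, which is a reasonable amount of added care.
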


\begin{proof}
	As an oriented $2$-orbifold group, $\Delta$ is a subgroup of $\isom^+(M)$
	and since it's infinite $M = \R^2$ or $\mathbb{H}^2$.
	Any torsion element of $\isom^+(M)$ must fix a point in $M$.
	Since the action away from the lifts of cone points is free,
	the fixed point of the torsion element must be
	one of the lifts of cone points
	and thus the torsion element
	is conjugate to a power of one of $a_i$.
\end{proof}

\subsubsection{Residual finiteness of central extensions of $2$-orbifold groups}\label{sssec:rf_of_our_extensions}

An important consequence of \cref{prop:orbifolds_have_manifolds}
is that $2$-orbifold groups are residually finite.

\begin{proposition}\label{prop:2-orbs_rf}
	All $2$-orbifold groups are residually finite.
\end{proposition}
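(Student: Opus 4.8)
The plan is to reduce the statement for an arbitrary $2$-orbifold group $\Delta$ to the case of a surface group, which is already known to be residually finite. By \cref{prop:orbifolds_have_manifolds}, there is a finite index subgroup $\Sigma < \Delta$ that is a surface group, and surface groups are residually finite (they are, for instance, fundamental groups of closed surfaces, hence linear, or one can invoke their LERF property directly). The crucial classical fact I would use is that residual finiteness is inherited by commensurability: if a group $G$ has a finite index subgroup $H$ which is residually finite, then $G$ itself is residually finite, provided $H$ (or some finite index subgroup of it) is normal of finite index --- and one can always pass to the normal core $\Core_\Delta(\Sigma)$, which is still of finite index in $\Delta$ and still a surface group (being a finite index subgroup of the surface group $\Sigma$, by the classification of surfaces a finite cover of a closed surface is again a closed surface).

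First I would replace $\Sigma$ by its normal core $N = \Core_\Delta(\Sigma) \triangleleft \Delta$, so that $N$ is a finite index normal subgroup of $\Delta$ which is a surface group, hence residually finite. Next, given a nontrivial element $\delta \in \Delta$, I distinguish two cases. If $\delta \in N$, then since $N$ is residually finite there is a finite index subgroup $K < N$ with $\delta \notin K$; replacing $K$ by $\Core_N(K)$ I may assume $K \triangleleft N$, and then $\Core_\Delta(K)$ is a finite index subgroup of $\Delta$ (as $K$ has finite index in $N$ which has finite index in $\Delta$) not containing $\delta$, so $\delta$ survives in the finite quotient $\Delta / \Core_\Delta(K)$. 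If $\delta \notin N$, then $\delta$ already has nontrivial image in the finite quotient $\Delta / N$. In either case $\delta$ is detected in a finite quotient of $\Delta$, so $\Delta$ is residually finite.

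The only genuine input beyond elementary group theory is the residual finiteness of surface groups together with \cref{prop:orbifolds_have_manifolds}; everything else is the standard argument that residual finiteness passes up finite index (via taking normal cores to reduce to normal finite index subgroups). I do not expect a serious obstacle here --- the main point to be careful about is the bookkeeping with cores to ensure all the subgroups involved are normal and of finite index in $\Delta$, and that a finite index subgroup of a surface group is again a surface group, which follows from the classification of compact surfaces since such a subgroup is the fundamental group of a finite cover.
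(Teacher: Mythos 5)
Your proposal is correct and follows essentially the same route as the paper: invoke \cref{prop:orbifolds_have_manifolds} to obtain a finite index surface subgroup, use the residual finiteness of surface groups, and conclude via the standard fact that residual finiteness passes up finite index. The paper simply asserts this last step, whereas you spell out the normal-core argument (and note that subgroups of residually finite groups are residually finite, so the detail about finite index subgroups of surface groups being surface groups is not even needed).
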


\begin{proof}
	By \cref{prop:orbifolds_have_manifolds}
	$2$-orbifold groups have finite index surface subgroups
	and these are residually finite by \cite{hempel1972residual}.
	A finite index supergroup of a \fg\ residually finite group is residually finite itself.
\end{proof}

We will also need to know that the $2$-orbifold groups in question are good.

\begin{proposition}\label{prop:2-orbs_good}
	Closed orientable $2$-orbifold groups
	are good in the sense of Serre.
\end{proposition}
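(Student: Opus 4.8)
The plan is to reduce goodness of a closed orientable $2$-orbifold group $G$ to goodness of a finite-index surface subgroup, using the permanence properties of goodness under commensurability and extensions. First I would invoke \cref{prop:orbifolds_have_manifolds} to obtain a finite-index subgroup $\Sigma < G$ isomorphic to a surface group. Surface groups are known to be good: this is classical and follows, e.g., from the fact that they are either virtually free (and finitely generated free groups are good, being of cohomological dimension $1$) or aspherical with compact classifying space and residually finite, a case treated by Grothendieck and by Serre in \cite{serre1979galois}. Actually, for closed surface groups goodness is established directly in the literature (for instance it is a special case of the results of \cite{grunewald2008cohomological}), so I would simply cite this.

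The key step is then the permanence of goodness under passing to finite-index \emph{over}groups. The relevant principle — attributable to Serre and recorded in \cite{grunewald2008cohomological} — states that if $H$ is a finite-index subgroup of $G$, $H$ is good, $G$ is residually finite, and $H$ has finite cohomological dimension (or more precisely each $H^n(H, M)$ is finite for finite modules $M$, which holds since $\Sigma$ is a surface group of cohomological dimension $\le 2$ with finitely generated cohomology), then $G$ is good. We have residual finiteness of $G$ from \cref{prop:2-orbs_rf}. So all the hypotheses are in place once we know $\Sigma$ is good with the required finiteness of cohomology, which holds because closed surface groups are Poincar\'e duality groups of dimension $2$ (or free), so $H^n(\Sigma, M)$ is finite for every finite $\Sigma$-module $M$ and vanishes for $n > 2$.

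The main obstacle is purely one of bookkeeping: one must verify that the commensurability/extension lemma for goodness is being applied with exactly the right hypotheses, since goodness is \emph{not} in general inherited by finite-index overgroups without an auxiliary finiteness condition on the cohomology of the subgroup. The surface-subgroup case is precisely where that auxiliary condition is checked, and it is met because surface groups have a finite classifying space. I would therefore structure the proof as: (i) cite goodness of surface groups and the finiteness of their cohomology with finite coefficients; (ii) quote the permanence lemma from \cite{grunewald2008cohomological}; (iii) combine with \cref{prop:orbifolds_have_manifolds} and \cref{prop:2-orbs_rf} to conclude. No genuinely new computation is needed; the content is entirely in assembling these standard facts correctly.
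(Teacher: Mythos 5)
Your route is correct but genuinely different from the paper's. The paper argues in one step: by \cref{prop:2-orbs_presentation} a closed orientable $2$-orbifold group has a presentation of Fuchsian type, hence is an $\mathscr{F}$-group in the sense of \cite{grunewald2008cohomological}, and Proposition 3.6 of that paper states that all $\mathscr{F}$-groups are good --- no surface subgroup and no permanence lemma are needed. You instead pass to the finite-index surface subgroup from \cref{prop:orbifolds_have_manifolds}, cite goodness of surface groups, and invoke the fact that goodness is a commensurability invariant; this works (the finite-index subgroup is a closed surface group, or trivial in the spherical case, and the relevant permanence statement is indeed in \cite{grunewald2008cohomological} and in Serre's exercises in \cite{serre1979galois}), and it has the mild advantage of only needing goodness for surface groups rather than the full Fuchsian-type result, at the cost of an extra reduction step. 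Two small inaccuracies in your write-up, neither fatal: the parenthetical claim that ``aspherical with compact classifying space and residually finite'' implies goodness is not a theorem (it would settle goodness for many groups where it is open), but you do not rely on it since you fall back on citing goodness of surface groups directly; and the auxiliary finiteness condition on $H^n(\Sigma;M)$ that you carefully verify belongs to the \emph{extension} lemma for goodness, not to the finite-index overgroup statement, which holds without it --- checking it is harmless but not required. Likewise residual finiteness of $G$ (\cref{prop:2-orbs_rf}) is not needed for the commensurability step, since a finite-index subgroup automatically carries the induced full profinite topology.
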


\begin{proof}
	\cite{grunewald2008cohomological} defines a class of \emph{$\mathscr{F}$-groups}
	to be all groups with a presentation of the form
	\begin{equation*}
		\begin{aligned}
			\pres{a_1, \ldots, b_g, c_1, \ldots, c_t, d_1, \ldots, d_s}%
			{& c_1^{e_1}=\ldots=c_t^{e_t}=1,\\& \Pi[a_i, b_i] = c_1\ldots c_t d_1\ldots d_s},
		\end{aligned}
	\end{equation*}
	and \cite[Proposition 3.6.]{grunewald2008cohomological}
	shows that all $\mathscr{F}$-groups are good.
	By \cref{prop:2-orbs_presentation},
	closed orientable $2$-orbifold groups are $\mathscr{F}$-groups.
\end{proof}

As a consequence of \cref{cor:rf-by-good}
we get \cref{cor:central_exts_are_rf}.

\begin{corollary}\label{cor:central_exts_are_rf}
	Let $1 \to N \to G \to Q \to 1$ be a short exact sequence
	such that $N = \Z^n$ for some $n$
	and $Q$ is a closed orientable $2$-orbifold group.
	Then $G$ is \rf\ and $\overline N = \widehat N$ in $\widehat G$.
\end{corollary}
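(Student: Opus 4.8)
The plan is to apply \cref{cor:rf-by-good} directly. First I would verify its hypotheses: the kernel $N = \Z^n$ is certainly finitely generated and residually finite, so the role of ``$N$'' in \cref{cor:rf-by-good} is filled. For the quotient, $Q$ is a closed orientable $2$-orbifold group, hence residually finite by \cref{prop:2-orbs_rf} and good in the sense of Serre by \cref{prop:2-orbs_good}. Thus $Q$ is a good residually finite group, which is exactly the hypothesis on ``$Q$'' in \cref{cor:rf-by-good}.

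Having checked the hypotheses, \cref{cor:rf-by-good} immediately gives that the extension $G$ is residually finite and that the profinite topology on $G$ induces the full profinite topology on $N$. It remains to translate the latter statement into the claim $\overline N = \widehat N$ inside $\widehat G$. Here I would invoke \cref{prop:SES_completion} applied to the short exact sequence $1 \to N \to G \to Q \to 1$: the vertical map $\mathsf{h}_G|_N \colon N \to \overline N$ is the restriction of $\mathsf{h}_G$ to $N$, and the bottom row realises $\overline N$ as a closed normal subgroup of $\widehat G$ with $\widehat G / \overline N \cong \widehat Q$. Saying that $G$ induces the full profinite topology on $N$ means precisely that every finite-index subgroup of $N$ contains $N \cap K$ for some finite-index (normal) subgroup $K$ of $G$ — equivalently, the closure of $N$ in $\widehat G$ is the profinite completion of $N$ with respect to \emph{all} of its finite-index subgroups, i.e. $\overline N \cong \widehat N$. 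Since $N = \Z^n$ is abelian and finitely generated, $\widehat N = \widehat{\Z}^{\,n}$, and the identification is canonical, so I would conclude $\overline N = \widehat N$ as subgroups of $\widehat G$.

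The only point requiring a little care — and the step I would expect to be the main (minor) obstacle — is making precise the passage from ``induces the full profinite topology on $N$'' to ``$\overline N = \widehat N$''. This is a standard fact about profinite completions of group extensions: if $G$ is residually finite and the subspace topology on $N$ coming from the profinite topology on $G$ agrees with the full profinite topology on $N$, then the closure $\overline N$ of $N$ in $\widehat G$ is canonically isomorphic to $\widehat N$, because the restriction maps $\widehat N \to \overline N$ (induced by $N \hookrightarrow G$) is then both surjective, as $\mathsf h(N)$ is dense in $\overline N$, and injective, as no nontrivial element of $\widehat N$ dies in $\widehat G$. I would either cite this (it is implicit in \cite{ribes2000profinite}) or spell out the two-line argument. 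Everything else is a direct chaining of the cited propositions.
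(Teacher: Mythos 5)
Your proposal is correct and follows essentially the same route as the paper: the paper states this corollary as an immediate consequence of \cref{cor:rf-by-good} combined with \cref{prop:2-orbs_rf} and \cref{prop:2-orbs_good}, exactly the hypotheses you verify. Your extra care in translating ``induces the full profinite topology on $N$'' into $\overline N = \widehat N$ (via density and injectivity of the canonical map $\widehat N \to \overline N$) is a correct spelling-out of what the paper leaves implicit.
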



\subsubsection{Automorphisms and torsion elements of $\Delta$ and $\widehat \Delta$}\label{sssec:aut_torsion_orbifolds}

It will be important for us to study the automorphisms
of nice $2$-orbifold groups and their profinite completions.
A result of \cite[Theorem 5.1.]{bridson2013determining},
which we state as \cref{thm:automorphism_cone_points_Fuchsian},
makes it viable for Fuchsian groups.
To include the nice $2$-orbifold groups which aren't Fuchsian groups,
we rephrase \cite[Proposition 4.3.]{wilkes2017profinite}
as \cref{prop:automorphism_cone_points_Euclidean}.

\begin{theorem}\label{thm:automorphism_cone_points_Fuchsian}
	Let $G$ be a \fg\ Fuchsian \gp,
	then every finite \sgp\ of $\widehat{G}$ is conjugate to a \sgp\ of $G$
	and if two maximal finite \sgps\ of $G$ are conjugate in $\widehat{G}$,
	then they are already conjugate in $G$.
\end{theorem}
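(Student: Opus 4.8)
The plan is to deduce the statement about Fuchsian groups $G$ from known structural facts: finitely generated Fuchsian groups are \emph{good} in the sense of Serre (indeed they are $\mathscr{F}$-groups, as in the proof of \cref{prop:2-orbs_good}) and they are virtually surface groups by \cref{prop:orbifolds_have_manifolds}, hence their profinite completions are virtual surface-group completions with well-behaved cohomology. For the first assertion -- that every finite subgroup of $\widehat G$ is conjugate into $G$ -- I would pass to a finite-index torsion-free normal subgroup $\Sigma \triangleleft G$ with $\Sigma$ a surface group, obtained from \cref{prop:orbifolds_have_manifolds}. Its closure $\overline\Sigma$ is open of the same finite index in $\widehat G$, and since $\Sigma$ is good and torsion-free of cohomological dimension $2$, $\overline\Sigma$ is torsion-free (a nontrivial finite subgroup would force nonzero cohomology in arbitrarily high degrees, contradicting $\mathrm{cd}(\widehat\Sigma)=2$). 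Therefore any finite subgroup $F \le \widehat G$ injects into the finite quotient $\widehat G/\overline\Sigma \cong G/\Sigma$, and a counting/transfer argument on the action of $F$ shows $F$ stabilises a point in a suitable profinite coset space, which is exactly the statement that $F$ is conjugate into $G$. The cleanest route here is to use the action of $\widehat G$ on the profinite tree or on the coset space $\widehat G/\overline\Sigma$ and invoke the fact that finite subgroups of a profinite group acting on a profinite set with finite stabilisers fix a point; since the point-stabilisers are conjugates of subgroups of $G$ (the cone-point stabilisers), we are done.

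For the second assertion -- that two maximal finite subgroups of $G$ conjugate in $\widehat G$ are already conjugate in $G$ -- I would argue as follows. Let $H_1, H_2 \le G$ be maximal finite subgroups with $g\i H_1 g = H_2$ for some $g \in \widehat G$. Each $H_i$ is cyclic (the stabiliser of a cone point), generated by a conjugate of a power of some $a_{k_i}$; in particular $|H_1| = |H_2|$. The maximal finite subgroups of $G$, up to conjugacy in $G$, are in bijection with the cone points, and a maximal finite subgroup is self-normalising or nearly so. I would use the quotient map $\widehat G \to G/\Sigma$ again: the images of $H_1$ and $H_2$ are conjugate in $G/\Sigma$, which gives a \emph{coset-level} constraint, but more importantly I would exploit that conjugacy classes of torsion are detected in finite quotients. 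Concretely: if $H_1$ and $H_2$ were non-conjugate in $G$, then they correspond to distinct cone points, say with orders $p$ and $q$; if $p \neq q$ the conjugating element in $\widehat G$ is impossible on order grounds, and if $p = q$ one produces a finite quotient of $G$ in which the images of (generators of) $H_1$ and $H_2$ are non-conjugate -- this uses residual properties of Fuchsian groups, e.g. conjugacy separability of the cone-point generators, which follows from the virtually-surface structure and LERF-type results for Fuchsian groups. Passing to $\widehat G$ this finite quotient obstructs conjugacy of $g$, a contradiction.

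The main obstacle I anticipate is the second assertion, and specifically the step asserting that non-conjugate maximal finite (cyclic) subgroups of $G$ remain non-conjugate after profinite completion. The orders alone do not separate them when two cone points share the same order, so one genuinely needs a separability statement: that a power $a_k^r$ of a cone-point generator is distinguished in a finite quotient from any $G$-conjugate of $a_\ell^s$ when these are not already $G$-conjugate. This should follow from the fact that Fuchsian groups are \emph{conjugacy separable} -- or at least that their torsion is conjugacy-distinguished -- but locating the precise form one needs (for orbifold groups, not just surface groups) and quoting it correctly is the delicate point; alternatively one can give a direct geometric argument using the action on $\mathbb{H}^2$ together with goodness, observing that distinct cone points have distinct (non-conjugate) stabiliser-fixed-point data that survives to a congruence-type finite quotient. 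Once that separability input is in hand, both halves of the statement are short.

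*(This is the statement of \cref{thm:automorphism_cone_points_Fuchsian}, attributed to \cite[Theorem 5.1.]{bridson2013determining}; the proof in that reference proceeds along essentially these lines, using goodness and the virtually-special/LERF structure of Fuchsian groups to control finite subgroups of the profinite completion and their fusion.)*
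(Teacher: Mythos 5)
A preliminary remark: the paper does not prove this statement at all --- it is quoted verbatim from \cite[Theorem 5.1.]{bridson2013determining} --- so there is no internal proof to compare yours with, and your sketch has to be judged as an attempted proof of that cited theorem. Judged that way, both halves have genuine gaps. For the first assertion, your torsion-freeness argument for $\overline\Sigma$ via goodness and cohomological dimension is fine and standard, but the step that is supposed to finish the proof fails: there is no such fact as ``a finite subgroup of a profinite group acting on a profinite set with finite stabilisers fixes a point'' (a finite cyclic group can act freely on a finite set), and the specific space you propose, $\widehat G/\overline\Sigma$, is the worst possible choice --- its point stabilisers are exactly the conjugates of $\overline\Sigma$, which you have just shown to be torsion-free, so a nontrivial finite subgroup $F\le\widehat G$ has \emph{no} fixed point there. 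The genuine fixed-point theorems in this area concern profinite groups acting on profinite trees, and a cocompact Fuchsian group (a triangle group, say) admits no suitable tree action to pass to the profinite setting. Knowing that $F$ embeds into the finite group $G/\Sigma$ only constrains its isomorphism type; the conjugacy statement, which is the entire content of the theorem, is left unproved by your sketch.

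For the second assertion your reduction has the right shape: every finite subgroup of a Fuchsian group is cyclic, so if $H_1=\langle x\rangle$ and $H_2=\langle y\rangle$ are maximal finite and $\widehat G$-conjugate, then $x$ is $\widehat G$-conjugate to some generator $y^k$ of $H_2$, and if torsion elements of $G$ are conjugacy distinguished (non-conjugate in $G$ implies non-conjugate in $\widehat G$) one concludes that $H_1$ and $H_2$ are conjugate in $G$. But that separability input is precisely the hard point, and your proposed justification --- that it ``follows from the virtually-surface structure and LERF-type results'' --- is not valid: conjugacy separability does not follow from subgroup separability, and it does not pass to finite extensions in general, so it cannot be deduced by waving at the surface subgroup from \cref{prop:orbifolds_have_manifolds}. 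It is a theorem in its own right for finitely generated Fuchsian groups (due to Fine and Rosenberger), and would have to be invoked explicitly; alternatively one follows the substantially more involved route of \cite{bridson2013determining} itself. As written, then, the first half rests on a false lemma and the second on an unproved separability claim; the correct move --- and the one the paper makes --- is simply to cite \cite[Theorem 5.1.]{bridson2013determining}.
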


\begin{proposition}\label{prop:automorphism_cone_points_Euclidean}
	Let $G$ be a nice $2$-orbifold group
	isomorphic to a subgroup of $\isom^+(\R^2)$.
	Then every torsion element of $\widehat{G}$ is conjugate to a torsion element of $G$,
	and if two torsion elements of $G$ are conjugate in $\widehat{G}$,
	then they are already conjugate in $G$.
\end{proposition}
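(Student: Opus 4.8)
The plan is to reduce \cref{prop:automorphism_cone_points_Euclidean} to a finite, explicit computation by exploiting \cref{prop:which_orbs_are_nice}: a nice $2$-orbifold group $G < \isom^+(\R^2)$ must have $\chi = 0$, which (in the orientable closed case) forces $G$ to be one of the wallpaper groups with nontrivial cone points, namely the triangle-type groups with cone orders $(2,3,6)$, $(2,4,4)$, $(3,3,3)$, or $(2,2,2,2)$ over a torus base ($g=1$, and $g=0$ with $m$ in $\{3,4\}$). For each of these there is a normal (surface) subgroup $\Sigma \cong \Z^2$ of finite index with $G/\Sigma$ the finite point group (cyclic $C_6$, $C_4$, $C_3$, or $C_2$ respectively), and this is precisely the data \cref{prop:orbifolds_have_manifolds} guarantees. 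First I would record this normal form and fix, for each $G$, the translation subgroup $\Sigma$ and the quotient $F = G/\Sigma$.

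Next I would analyse torsion in $\widehat G$. Since $\Sigma \cong \Z^2$ is finitely generated, residually finite, and good (it is abelian, hence an $\mathscr F$-group, or directly a surface group), \cref{cor:central_exts_are_rf}-type reasoning — more precisely \cref{cor:rf-by-good} applied to $1 \to \Sigma \to G \to F \to 1$ with $F$ finite — gives that $\overline\Sigma = \widehat\Sigma = \widehat{\Z^2} = \widehat\Z^2$ sits as a normal subgroup of $\widehat G$ with quotient $F$ (a finite group is its own profinite completion, and \cref{prop:SES_completion} makes the sequence $1 \to \widehat\Z^2 \to \widehat G \to F \to 1$ exact). Now let $t \in \widehat G$ be a torsion element. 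Its image in $F$ has some order dividing $|F|$; raising $t$ to an appropriate power I may assume $t \in \widehat\Z^2$, but $\widehat\Z^2$ is torsion-free, so that power is trivial, i.e. the order of $t$ already divides $|F|$ and in fact $t$ projects to a generator of a cyclic subgroup of $F$ of the same order. The key point is then the cohomological/lifting statement: the extension $1 \to \widehat\Z^2 \to \widehat G \to F \to 1$ has, for each finite cyclic subgroup $C \le F$, the property that all complements (equivalently, all torsion elements mapping onto a generator of $C$) are conjugate under $\widehat\Z^2$ — this is because $H^2(C; \widehat\Z^2) $ controls whether a lift exists and $H^1(C; \widehat\Z^2)$ controls the conjugacy of lifts, and both are finite groups that agree with their analogues over $G$ (the inclusion $\Z^2 \hookrightarrow \widehat\Z^2$ induces isomorphisms on $H^*(C;-)$ since $C$ is finite and $\widehat\Z^2 = \Z^2 \otimes \widehat\Z$ is flat, or one argues directly with the explicit $C_k$-module structure). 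Consequently every torsion element of $\widehat G$ is $\widehat\Z^2$-conjugate to one lying in the image of a fixed torsion element of $G$, and two torsion elements of $G$ that are conjugate in $\widehat G$ are conjugate by an element of $\widehat\Z^2$, which by the same $H^1$ computation forces conjugacy already inside $\Z^2 \rtimes C \le G$.

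The main obstacle I anticipate is making the cohomological conjugacy argument clean and uniform across the four cases without drowning in the explicit $C_k$-actions on $\Z^2$: I need $H^1(C_k; \Z^2) \to H^1(C_k; \widehat\Z^2)$ to be an isomorphism for $k \in \{2,3,4,6\}$ and the relevant action, and I need to translate ``$H^1$ classifies conjugacy classes of complements'' correctly when the relevant subgroup of $F$ might not be all of $F$ (one restricts the extension along $C \hookrightarrow F$ first). A cleaner alternative, which I would pursue if the direct computation gets unwieldy, is to avoid $H^1$ entirely and argue geometrically: a torsion element of $\widehat G$ of order $k$ generates a finite cyclic subgroup; by \cref{thm:automorphism_cone_points_Fuchsian}-style arguments for the Euclidean case — or by the fact that $\widehat\Z^2$ acts freely on $\R^2$ with $\R^2/\widehat\Z^2$ compact-ish bookkeeping replaced by the finiteness of $F$ — every finite subgroup of $\widehat G$ injects into $F$ and hence has bounded order, and one then lifts using the vanishing of the relevant obstruction. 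In either route the finiteness of $F$ is what rescues us, so the write-up would lead with the reduction via \cref{prop:which_orbs_are_nice} and \cref{prop:orbifolds_have_manifolds}, then invoke \cref{cor:rf-by-good}/\cref{prop:SES_completion} for the profinite extension, and close with the ($H^1$-level or geometric) conjugacy argument.
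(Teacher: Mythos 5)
The paper itself does not prove \cref{prop:automorphism_cone_points_Euclidean}: it is imported, rephrased, from \cite[Proposition 4.3]{wilkes2017profinite}. So your proposal is not a variant of the paper's argument but a self-contained substitute, and in outline it is correct. The route --- restrict via \cref{prop:which_orbs_are_nice} to the orientation-preserving wallpaper groups, take the Bieberbach splitting $1 \to \Z^2 \to G \to F \to 1$ with $F$ finite cyclic, complete to $1 \to \widehat\Z^2 \to \widehat G \to F \to 1$, note that a torsion element of $\widehat G$ has the same order as its image in $F$ and hence generates a complement of $\widehat\Z^2$ in the preimage of the cyclic subgroup $C$ it determines, and then compare the splitting/complement data over $\Z^2$ with that over $\widehat\Z^2$ --- does work. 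The comparison is exactly as you say: $\widehat\Z$ is flat over $\Z$ and $H^i(C;\Z^2)$ is finite for $i\ge 1$, so the coefficient maps $H^i(C;\Z^2)\to H^i(C;\widehat\Z^2)$ are isomorphisms for $i=1,2$; injectivity on $H^2$ produces a torsion lift inside $G$, surjectivity on $H^1$ shows every complement of $\widehat\Z^2$ is $\widehat\Z^2$-conjugate to one lying in $G$, and injectivity on $H^1$ (concretely, $(1-\rho)\widehat\Z^2\cap\Z^2=(1-\rho)\Z^2$ for the rotation $\rho$ in question) gives the converse direction. What the paper's citation buys is brevity; what your argument buys is independence from \cite{wilkes2017profinite} and an explicit finite computation tailored to the Euclidean case.

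Three slips should be repaired in a write-up, none fatal. (i) The claim that \emph{all} complements over a given cyclic $C\le F$ are conjugate under $\widehat\Z^2$ is false in general: for the $(2,2,2,2)$ group with $C=F=C_2$ acting by $-\id$ one has $H^1(C;\Z^2)\cong(\Z/2)^2\neq 0$, and the non-conjugate complements reflect the several conjugacy classes of order-two cone points. What you actually use --- and what is true --- is only that the comparison maps on $H^1$ and $H^2$ induced by $\Z^2\hookrightarrow\widehat\Z^2$ are bijections. (ii) ``Two torsion elements of $G$ conjugate in $\widehat G$ are conjugate by an element of $\widehat\Z^2$'' needs the intermediate observation $\widehat G=\widehat\Z^2\cdot G$: write the conjugator as $a h$ with $a\in\widehat\Z^2$, $h\in G$, replace one element by its $G$-conjugate, and only then are the two elements $\widehat\Z^2$-conjugate with equal image in $F$, so that the $H^1$-level computation applies. (iii) The parenthetical classification is garbled: the nice orientable Euclidean orbifold groups are exactly the four sphere-based types $(2,3,6)$, $(2,4,4)$, $(3,3,3)$, $(2,2,2,2)$; genus one with $m\ge 1$ cone points is hyperbolic, and genus one with no cone points is $\Z^2$, excluded by niceness. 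The argument itself only uses the Bieberbach structure and the faithful rotation action of $F$ on the lattice, so it is unaffected by this.
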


\Cref{thm:automorphism_cone_points_Fuchsian}
and \cref{prop:automorphism_cone_points_Euclidean}
together with \cref{prop:torsion_elts_in_Delta}
show that the generators $a_i$ of a nice $2$-orbifold group $G$
(as in presentation~\ref{eq:presentation})
are sent under any automorphism $\phi\colon  \Delta \to \Delta$
(respectively: under any automorphism $\phi\colon \widehat \Delta \to \widehat \Delta$) to
$$^{g_i}(a_{\sigma(i)}^{k_i}) := g_i a_{\sigma(i)}^{k_i} g_i^{-1}$$
where $g_i \in \Delta$ (respectively: $g_i \in \widehat\Delta$)
and $\sigma$ is a permutation in $\Sym(m)$.

Finally, it will prove useful to show that
any permutation of $a_i$ which respects the orders $p_i$
is actually possible.

\begin{proposition}\label{prop:any_permutation_is_induced}
	Let $\sigma\in \Sym(m)$ be a permutation of $\{1, 2, \ldots, m\}$
	such that $p_i = p_{\sigma(i)}$ for all $i=1, \ldots, m$.
	Then there exists an automorphism $\phi \in \aut(\Delta)$
	\st\ $\phi(a_i) =\ ^{g_i}(a_{\sigma(i)})$ for some $g_i \in \Delta$.
\end{proposition}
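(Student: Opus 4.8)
The plan is to reduce to the case where $\sigma$ is a transposition, and then to write down the desired automorphism by hand and check it on generators and relators.

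For the reduction, I would record two observations. First, the permutations $\sigma \in \Sym(m)$ with $p_i = p_{\sigma(i)}$ for all $i$ form the Young subgroup $\prod_k \Sym(S_k)$, where $S_k = \{i : p_i = k\}$; since each $\Sym(S_k)$ is generated by the transpositions of pairs inside $S_k$, this subgroup is generated by the transpositions $(i\ j)$ with $p_i = p_j$. Secondly, the automorphisms of $\Delta$ sending each $a_l$ to a conjugate of some $a_{\tau(l)}$ are closed under composition: if $\phi_r(a_l) = {}^{g_l^{(r)}}\!(a_{\tau_r(l)})$ for $r=1,2$, then $\phi_2\phi_1(a_l) = {}^{\phi_2(g_l^{(1)})\, g_{\tau_1(l)}^{(2)}}\!(a_{\tau_2\tau_1(l)})$, which is again of the required shape, with permutation $\tau_2\tau_1$. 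Together these show it suffices to realise a single transposition $\sigma = (i\ j)$ with $i < j$ and $p_i = p_j$.

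For the construction, I would use the presentation from \cref{prop:2-orbs_presentation}, write the long relator as $u\, a_i a_{i+1}\cdots a_j\, a_{j+1}\cdots a_m$ with $u = \Pi_{l=1}^{g}[x_l,y_l]\, a_1\cdots a_{i-1}$, and set $v = a_i a_{i+1}\cdots a_{j-1}$. Define $\phi\colon\Delta\to\Delta$ to fix every $x_l$, every $y_l$, and every $a_k$ with $k<i$ or $k>j$, and to send
$$\phi(a_i) = v a_j v^{-1},\qquad \phi(a_k) = a_i a_k a_i^{-1}\ \ (i<k<j),\qquad \phi(a_j) = a_i.$$
A short telescoping computation gives $\phi(a_i a_{i+1}\cdots a_j) = v a_j v^{-1}\cdot\bigl(a_i(a_{i+1}\cdots a_{j-1})a_i^{-1}\bigr)\cdot a_i = v a_j v^{-1}\cdot v = a_i a_{i+1}\cdots a_j$, so $\phi$ preserves the long relator; and it preserves each torsion relator precisely because $p_i = p_j$, so that $\phi(a_i)^{p_i}$ and $\phi(a_j)^{p_j}$ are conjugates of $a_j^{p_j}=1$ and $a_i^{p_i}=1$, while the conjugated middle generators $a_i a_k a_i^{-1}$ still have order $p_k$. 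Hence $\phi$ is a well-defined endomorphism and, by inspection, $\phi(a_l)$ is a conjugate of $a_{\sigma(l)}$ for every $l$. It is surjective: $a_i = \phi(a_j)$, then $a_k = a_i^{-1}\phi(a_k)a_i$ for $i<k<j$, then $v\in\im\phi$ and $a_j = v^{-1}\phi(a_i)v$, and the remaining generators are fixed. Since $\Delta$ is finitely generated and residually finite (\cref{prop:2-orbs_rf}), it is Hopfian, so the surjective endomorphism $\phi$ lies in $\aut(\Delta)$, which completes the proof.

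I expect the only genuinely delicate point to be pinning down the conjugators in the definition of $\phi$ --- in particular recognising that the single conjugator $v = a_i\cdots a_{j-1}$ is forced by the telescoping identity --- together with the remark that one \emph{cannot} instead assemble a non-adjacent transposition out of adjacent ones, since a swap of $a_k$ and $a_{k+1}$ induces an endomorphism only when $p_k = p_{k+1}$; hence the need to treat an arbitrary transposition in one stroke. Everything else is routine verification. Conceptually, $\phi$ is the algebraic shadow of an orientation-preserving self-homeomorphism of the underlying orbifold that drags the $i$-th cone point past the intermediate ones to the position of the $j$-th and back; the explicit formula is what makes the Hopfian argument self-contained and lets us avoid bookkeeping with basepoints and peripheral structure.
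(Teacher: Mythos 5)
Your proposal is correct and follows essentially the same route as the paper: reduce to a single transposition $(i\,j)$ with $p_i=p_j$, define $\phi$ explicitly on the generators by suitable conjugations so that the long relator and the torsion relators are preserved, and conclude that the resulting surjective endomorphism is an automorphism because $\Delta$ is finitely generated, residually finite, hence Hopfian. The only differences are cosmetic --- your conjugators (namely $v=a_i\cdots a_{j-1}$ for $a_i$ and $a_i$ for the intermediate generators) differ from the paper's choice ($a_ia_ja_i^{-1}$ and $a_ia_j^{-1}a_ka_ja_i^{-1}$), and you spell out the reduction to transpositions and the closure under composition that the paper leaves implicit.
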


\begin{proof}
	It is enough to show that for $\sigma$ being a single transposition,
	say $\sigma = (ij)$.
	Then
	\begin{align*}
		&a_1 \ldots a_i \ldots a_j \ldots a_m \\
		&= a_1 \ldots a_{i-1} \cdot\ ^{a_i}\big(a_{i+1} \ldots a_{j}\big)\cdot a_i \cdot a_{j+1} \ldots a_m \\
		&= a_1 \ldots a_{i-1} \cdot\ ^{a_i}a_j \cdot\ ^{a_ia_j\i}\big(a_{i+1}\ldots a_{j-1}\big) \cdot a_i \cdot a_{j+1}\ldots a_m
	\end{align*}
	so we can define a homomorphism of $\Delta$
	by setting the following images of generators $a_i$
	(and keeping generators $x_i, y_i$ fixed)
	\begin{align*}
		a_1 & \mapsto a_1 & a_{i+1} & \mapsto\ ^{a_ia_j\i}(a_{i+1}) & 	a_{j+1} & \mapsto a_{j+1} \\
		&\vdots & &\vdots  & &\vdots \\
		a_{i-1} & \mapsto a_{i-1} & a_{j-1} & \mapsto\ ^{a_ia_j\i}(a_{j-1}) & a_m & \mapsto a_m \\
		a_i & \mapsto\ ^{a_i}a_j & a_j & \mapsto a_i
	\end{align*}
	
	All of the relators are sent to relators
	and each of the generators is in the image, so the map is surjective.
	Since $\Delta$ is \fg\ \rf, it is Hopfian and so $\phi$ is an automorphism.
\end{proof}

\subsubsection{Trivial centres of $\Delta$ and $\widehat{\Delta}$}\label{sssec:centres}

The fact that nice $2$-orbifolds have trivial centres isn't difficult to prove.
It is more difficult though to extend this to the profinite completion,
which is the content of \cref{thm:completions_nice_orbifolds_centerless}.

\begin{proposition}\label{prop:nice_orbifolds_centerless}
	Let $\Delta$ be a nice $2$-orbifold group. Then $\mathcal{Z}(\Delta) = 1$.
\end{proposition}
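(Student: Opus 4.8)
The plan is to use the geometric description of $\Delta$ as a discrete subgroup of $\isom^+(M)$ with $M = \R^2$ or $\mathbb{H}^2$, together with \cref{prop:torsion_elts_in_Delta}. Suppose $z \in \mathcal{Z}(\Delta)$ is nontrivial. First I would dispose of the torsion case: if $z$ has finite order then by \cref{prop:torsion_elts_in_Delta} it is conjugate to a nontrivial power of some $a_i$, hence $z$ itself fixes exactly one point $q \in M$ (the corresponding lift of a cone point), since a nontrivial orientation-preserving isometry of a surface of constant curvature fixing a point is a rotation about that point. But then for every $\gamma \in \Delta$ the element $\gamma z \gamma^{-1} = z$ fixes $\gamma \cdot q$, forcing $\gamma \cdot q = q$; so $\Delta$ stabilises $q$, contradicting that $\Delta$ is infinite (a discrete subgroup of $\isom^+(M)$ fixing a point is finite, being a discrete group of rotations about that point).

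Next I would treat the case where $z$ has infinite order. Here I would pass to the finite-index surface subgroup $\Sigma < \Delta$ supplied by \cref{prop:orbifolds_have_manifolds}. Since $\Delta$ is infinite and $\Sigma$ has finite index, $\Sigma$ is an infinite surface group, i.e. $\pi_1$ of a closed orientable surface of genus $\ge 1$; and $\Delta \not\cong \Z^2$ forces genus $\ge 2$ (a genus-$1$ finite-index subgroup would make $\Delta$ virtually $\Z^2$, hence isomorphic to $\Z^2$ by the classification, as $\Z^2$ is the only torsion-free group that is virtually $\Z^2$ and also a $2$-orbifold group — or more directly, $\Delta$ would be a virtually abelian $2$-orbifold group and the list in \cref{prop:which_orbs_are_nice} excludes exactly that case). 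A central element $z$ of $\Delta$ of infinite order lies in $\Sigma$ after replacing $z$ by a suitable power $z^k \in \Sigma$ (again of infinite order), and $z^k$ is then central in $\Sigma$ since it is central in the larger group $\Delta$. But the fundamental group of a closed hyperbolic surface is centreless — it is a non-elementary word-hyperbolic group, or alternatively it is a free product of $\Z$'s amalgamated in a way that visibly has trivial centre, or one simply cites that $\pi_1$ of a closed surface of genus $\ge 2$ has trivial centre. This contradiction finishes the argument.

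The one point requiring a little care — the main obstacle — is the bookkeeping when $z$ has infinite order but does not itself lie in $\Sigma$: one must check that replacing $z$ by a power keeps it of infinite order (clear) and keeps it central (clear, as centrality is inherited by powers), and then that a nontrivial central element of a finite-index subgroup of genus $\ge 2$ surface type really is impossible. All of this is standard, so I would keep it brief. An alternative, uniform approach that avoids the case split: a nice $2$-orbifold group acts properly discontinuously and cocompactly on $M = \R^2$ or $\mathbb{H}^2$, and any central element $z$ would have its translation length realised along a $z$-invariant geodesic or fixed point set which $\Delta$ would have to preserve; since $\Delta$ is infinite and acts cocompactly, it cannot preserve a point (finite fixed-point stabiliser) or a single geodesic (that would make $\Delta$ virtually $\Z$ or virtually $\Z^2$), again contradicting niceness. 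I would present the two-case version as the cleaner write-up but mention this geometric alternative in a sentence.
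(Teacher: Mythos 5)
Your torsion case is fine (and matches the spirit of the paper's first case), but the infinite-order case has a genuine gap: you claim that if the finite-index surface subgroup $\Sigma<\Delta$ from \cref{prop:orbifolds_have_manifolds} has genus $1$ then $\Delta\cong\Z^2$, ``excluded exactly'' by \cref{prop:which_orbs_are_nice}. That is false. Nice $2$-orbifold groups include the orientation-preserving Euclidean orbifold groups $S^2(2,3,6)$, $S^2(2,4,4)$, $S^2(3,3,3)$ and $S^2(2,2,2,2)$: they are closed, orientable, infinite, have torsion (so are not $\Z^2$), and they appear explicitly in the list of \cref{prop:which_orbs_are_nice} (genus $0$ with $m=3$ and $\tfrac1{p_1}+\tfrac1{p_2}+\tfrac1{p_3}=1$, or $m=4$ with all $p_i=2$). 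These groups are virtually $\Z^2$ with $\Sigma\cong\Z^2$ of genus $1$, so your intended contradiction ``$z^k$ is central in a genus $\ge 2$ surface group'' never gets off the ground: $z^k$ lands in $\Z^2$, which has plenty of centre. Your parenthetical repair (``$\Z^2$ is the only torsion-free group that is virtually $\Z^2$\,\dots'') does not apply because $\Delta$ is not assumed torsion-free, and the virtually abelian case is precisely the one you must still handle. Your ``uniform geometric alternative'' stumbles at the same spot: a translation of $\R^2$ does not have a unique invariant geodesic, and ``virtually $\Z^2$'' is not in conflict with niceness, so that sketch also fails to dispose of the Euclidean groups.

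The missing case is genuinely needed and is where the paper does its work: for a fixed-point-free central element $z$ one uses that in the Euclidean situation $\Delta$ necessarily contains elliptic elements (a nice Euclidean orbifold group has cone points, since the only closed orientable Euclidean $2$-orbifold group without them is $\Z^2$), and conjugating the translation $z$ by a nontrivial rotation changes its translation vector, contradicting centrality. Equivalently, as the paper does for the profinite statement in \cref{thm:completions_nice_orbifolds_centerless}, write the crystallographic group as an extension $0\to\Z^2\to\Delta\to Q\to 1$ with $Q$ finite, nontrivial, acting on $\Z^2$ by finite-order matrices with no nonzero fixed vectors; a central element of infinite order would give (after passing to a power) a nonzero $Q$-fixed vector, a contradiction. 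In the hyperbolic case your argument via a genus $\ge 2$ surface subgroup (or via the uniqueness of the axis of a hyperbolic isometry, which forces $\Delta$ to preserve a geodesic and hence be elementary, contradicting cocompactness) is correct; it is only the Euclidean nice orbifold groups that your proof leaves uncovered.
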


\begin{proof}
	First, let's establish a small fact.
	If $G \actson X$, then
	denoting by $\fix_X(g)$ the set of elements fixed by a given $g\in G$ we get
	$$g\cdot \fix_X(h) = \fix_X(ghg^{-1}).$$
	
	Now this fact implies that given $z \in \mathcal{Z}(G)$,
	the set $\fix_X(z)$ is fixed (but not necessarily pointwise!) by the whole group $G$.
	
	Coming back to the nice $2$-orbifold groups,
	consider $\Delta$ as a subgroup of $\isom^+(M)$ where $M = \R^2$ or $\mathbb{H}^2$.
	Then, take $z \in \mathcal{Z}(\Delta)$, assume that $z \ne 1$.
	If $z$ has a fixed point,
	then the whole group $\Delta$ has a global fixed point,
	but this would mean that $\Delta$ is finite cyclic,
	which is a contradiction.
	
	If $z$ has no fixed points,
	regardless of whether $M = \R^2$ or $\mathbb{H}^2$,
	$z$~must have a translation axis.
	However, conjugating $z$ by any elliptic element in $\Delta$ would change it,
	which contradicts the assumption that $z \in \mathcal{Z}$.
\end{proof}

We will use \cref{prop:nice_orbifolds_centerless}
to prove the more difficult \cref{thm:completions_nice_orbifolds_centerless}.

\begin{proposition}\label{thm:completions_nice_orbifolds_centerless}
	Let $\Delta$ be a nice $2$-orbifold group.
	Then $\mathcal{Z}(\widehat \Delta) =1$.
\end{proposition}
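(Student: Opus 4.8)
The plan is to leverage the fact that $\Delta$ is good (\cref{prop:2-orbs_good}) together with \cref{prop:nice_orbifolds_centerless}, and to reduce the statement about the profinite centre to statements about finite quotients and about the surface subgroup provided by \cref{prop:orbifolds_have_manifolds}. First I would recall that, since $\Delta$ is good and residually finite, its profinite completion $\widehat\Delta$ is a profinite group in which $\Delta$ embeds densely, and a finite-index surface subgroup $\Sigma < \Delta$ has closure $\overline\Sigma = \widehat\Sigma$ of finite index in $\widehat\Delta$ (goodness passes to finite-index subgroups, and the induced topology on $\Sigma$ is its full profinite topology). So it suffices to analyze how a central element $z \in \mathcal{Z}(\widehat\Delta)$ interacts with $\widehat\Sigma$.

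The key step is: a nontrivial central element $z \in \mathcal{Z}(\widehat\Delta)$ would centralize $\widehat\Sigma$, and in particular, writing $\widehat\Sigma = \pi_1(S)^{\wedge}$ for a closed surface $S$ of genus $\ge 2$ (we can pass to a deeper finite cover to ensure high genus), this forces $z$ into the centre of $\widehat\Sigma$. The centre of the profinite completion of a surface group of genus $\ge 2$ is trivial — this is a known fact (it follows, for instance, from the fact that surface groups are good, hence $\widehat\Sigma$ has cohomological dimension $2$ and the surface group is "cohomologically rigid" enough; alternatively one cites that nonabelian limit groups and in particular surface groups have centreless profinite completions, as used in the Fuchsian literature, e.g. in \cite{bridson2013determining}). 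From $z \in \mathcal{Z}(\widehat\Sigma) = 1$ we would conclude $z = 1$, but some care is needed: $z$ centralizing all of $\widehat\Delta$ certainly centralizes the subgroup $\widehat\Sigma$, but $z$ need not itself lie in $\widehat\Sigma$. So the cleaner formulation is: $z$ lies in the centralizer $C_{\widehat\Delta}(\widehat\Sigma)$, and I would show this centralizer is trivial. Since $\widehat\Sigma$ is open (finite index) in $\widehat\Delta$, its centralizer $C_{\widehat\Delta}(\widehat\Sigma)$ is a closed normal subgroup intersecting $\widehat\Sigma$ in $\mathcal{Z}(\widehat\Sigma) = 1$; hence $C_{\widehat\Delta}(\widehat\Sigma)$ embeds into the finite group $\widehat\Delta / \widehat\Sigma$ and is therefore finite. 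Thus $z$ has finite order, and by \cref{thm:automorphism_cone_points_Fuchsian} (in the Fuchsian case) or \cref{prop:automorphism_cone_points_Euclidean} (in the Euclidean case) together with \cref{prop:torsion_elts_in_Delta}, $z$ is conjugate in $\widehat\Delta$ to a power of some cone-point generator $a_i$; being central, $z$ equals that power of $a_i$. But then $a_i^{k}$ (with $k \not\equiv 0 \bmod p_i$) would be central in $\widehat\Delta$, hence would centralize $\Delta$, hence would be central in $\Delta$, contradicting \cref{prop:nice_orbifolds_centerless} (as $\Delta \hookrightarrow \widehat\Delta$ and $a_i^k \ne 1$ in $\Delta$).

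The main obstacle I anticipate is establishing that $C_{\widehat\Delta}(\widehat\Sigma)$ is finite — more precisely, handling the interplay between "$z$ centralizes the open subgroup $\widehat\Sigma$" and "$z$ need not belong to $\widehat\Sigma$" — and, underneath that, invoking in a self-contained way the triviality of the centre of the profinite completion of a genus-$\ge 2$ surface group. If the latter is not readily citable in the form needed, the fallback is a direct cohomological argument: goodness gives $\widehat\Sigma$ cohomological (here: $\mathrm{cd}_p$ for all $p$) bounded by $2$, and a nontrivial central element would generate a closed subgroup isomorphic to $\widehat\Z$ or $\Z/p$; an infinite procyclic central subgroup in a profinite Poincaré-duality-type group of dimension $2$ can be ruled out by a spectral sequence / Euler characteristic count (the profinite $\chi(\widehat\Sigma) = \chi(\Sigma) = 2 - 2g < 0$ is incompatible with a $\widehat\Z$ central factor), and a finite central subgroup is ruled out since $\widehat\Sigma$ is torsion-free (surface groups being good and torsion-free, their completions are torsion-free — again a standard consequence of goodness plus finite cohomological dimension). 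A torsion element of $\widehat{\Delta}$ of finite order is then handled by the cone-point structure as above. Assembling these pieces — density of $\Delta$, goodness-driven structural facts about $\widehat\Sigma$, and the torsion classification from \cref{sssec:aut_torsion_orbifolds} — yields $\mathcal{Z}(\widehat\Delta) = 1$.
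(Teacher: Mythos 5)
Your argument works only in the Fuchsian case, and there it is close in spirit to the paper's: the paper takes a power $z^k$ of a central element landing in $\widehat\Sigma$ and invokes the centrelessness of the profinite completion of a genus~$\ge 2$ surface group (citing Anderson), whereas you argue via finiteness of $C_{\widehat\Delta}(\widehat\Sigma)$ and then dispose of torsion using \cref{prop:torsion_elts_in_Delta} and the conjugacy results of \cref{sssec:aut_torsion_orbifolds}; both routes are fine for hyperbolic $\Delta$, and your torsion endgame is if anything a little cleaner than the paper's opening step.

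The genuine gap is the Euclidean case. A nice $2$-orbifold group need not be Fuchsian: by \cref{prop:which_orbs_are_nice} the groups of the orbifolds $S^2(2,3,6)$, $S^2(2,4,4)$, $S^2(3,3,3)$ and $S^2(2,2,2,2)$ are nice but lie in $\isom^+(\R^2)$. For these, the finite-index surface subgroup $\Sigma$ provided by \cref{prop:orbifolds_have_manifolds} is $\Z^2$, and \emph{every} finite-index subgroup of it is again $\Z^2$, so your parenthetical ``we can pass to a deeper finite cover to ensure high genus'' is false. In this case $\mathcal{Z}(\widehat\Sigma)=\widehat\Z^2$, the centralizer $C_{\widehat\Delta}(\widehat\Sigma)$ contains $\widehat\Z^2$ and is infinite, so the reduction of $z$ to a torsion element collapses; your cohomological fallback also fails since $\chi(\Sigma)=0$. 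So the intermediate claims are actually false for these four groups, not merely unjustified, and a separate argument is required. The paper supplies it via the First Bieberbach Theorem: $\Delta$ sits in $0\to\Z^2\to\Delta\to Q\to 1$ with $Q$ finite and non-trivial acting on $\Z^2$ with no non-zero fixed vectors, whence $\mathcal{Z}(\Delta/N\Z^2)=1$ for all large $N$, and since the quotients $\Delta/N\Z^2$ are cofinal among finite quotients, $\mathcal{Z}(\widehat\Delta)=\varprojlim$-arguments give $\mathcal{Z}(\widehat\Delta)=1$. Your proof needs this (or some substitute) to cover the Euclidean nice groups before it can be considered complete.
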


\begin{proof}
	First, if $z \in \mathcal{Z}(\widehat \Delta)$ is a torsion element,
	then it is conjugate to a torsion element in $\Delta$,
	which we showed in \cref{prop:nice_orbifolds_centerless} to be centerless.
	Thus $z$ is of infinite order.
	
	By \cref{prop:orbifolds_have_manifolds}
	any $2$-orbifold group $\Delta$ contains a surface group $\Sigma$ as a finite-index subgroup.
	Since $\widehat \Sigma$ is a finite index subgroup of $\widehat \Delta$,
	then for any $g\in \widehat \Delta$
	there is some $k \in \N$ such that $g^k \in \widehat \Sigma$.
	In particular, if $z \in \mathcal{Z}(\widehat\Delta)$ and $z^k \in \widehat\Sigma$,
	we have $z^k \in \mathcal{Z}(\widehat\Sigma)$.
	
	Now, if $\Sigma$ is a fundamental group of a surface of genus $>1$,
	then $\widehat\Sigma$ is centerless as shown in \cite[Proposition 18.]{anderson1974exactness}.
	This would imply that $z^k = 1$, which is a contradiction.
	
	If $\Sigma$ is of genus $0$,
	then $\Delta$ is a finite group,
	which can't be the case as $\Delta$ is nice.
	
	If $\Sigma$ is of genus $1$,
	then $\Delta$ is a $2$-dimensional crystallographic group
	-- a discrete, cocompact subgroup of $\isom^+(\R^2)$.
	By First Bieberbach Theorem \cite[Section 2.1.]{szczepanski2012geometry}
	any $2$-dimensional crystallographic group $\Delta$
	fits in a short exact sequence ${0 \to \Z^2 \to \Delta \to Q \to 1}$,
	where $\Z^2$ is the maximal abelian subgroup consisting of translations
	and $Q$ is finite
	and non-trivial,
	since we assumed that $\Delta \not \cong \Z^2$.
	
	The action of $Q$ on $\Z^2$ must be by finite order elements of $\SL_2(\Z)$
	and the non-trivial finite order elements fix only the zero vector in $\Z^2$.
	Thus, we can choose some $N_0 \in \N$
	such that for all $N \ge N_0$ the action of $Q$ on $(\Z/ N)^2$
	fixes only the zero vector.
	This gives $\mathcal{Z}(\Delta / N\Z^2) = 1$ for large enough $N$.
	
	Since $\widehat \Delta = \varprojlim \Delta / N\Z^2$,
	we get that $\mathcal{Z}(\widehat \Delta) = 1$.
\end{proof}

The only infinite closed orientable 2-orbifold group
which isn't nice, i.e. $\Z^2$,
obviously doesn't satisfy the previous propositions.
However, it turns out that for an $\Z^n$-by-$\Z^2$ extension $G$
unless $G \cong \Z^{n+2}$
the centre is the kernel $\Z^n$ of the extension.
The same holds for the profinite completions
of these extensions.

\begin{proposition}\label{prop:Z2}
	Any central extension of $\Z^n$ by $\Z^2$
	is isomorphic to $H_k \times \Z^{n-1}$
	for some integer $k \ge 0$,
	where $H_k$ has presentation
	\begin{equation*}
		\pres{a, b, c}{[a, c] = [b, c] = 1, [a, b] = c^k}.
	\end{equation*}
	Furthermore, unless $k = 0$, we have
	\begin{equation*}
		\mathcal Z(H_k) = \gen{c}, \qquad
		\mathcal{Z}(\widehat H_k) = \overline{\gen{c}}.
	\end{equation*}
\end{proposition}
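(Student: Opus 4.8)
The plan is to prove the structural statement first and then compute the centres. For the structure, I start from the fact that a central extension of $\Z^n$ by $\Z^2$ is classified by a class in $H^2(\Z^2; \Z^n)$, which is isomorphic to $\Z^n$ via the cup product (the fundamental class of the torus). So the extension is determined by a vector $v \in \Z^n$, and changing basis in $\Z^n$ by an element of $\GL_n(\Z)$ we may assume $v = (k, 0, \ldots, 0)$ for some integer $k \ge 0$. The extension with class $(k, 0, \ldots, 0)$ visibly splits off the last $n-1$ coordinates of the kernel as a direct factor $\Z^{n-1}$, and the remaining summand is the extension of $\Z$ by $\Z^2$ with class $k$, which is precisely the Heisenberg-type group $H_k = \pres{a,b,c}{[a,c]=[b,c]=1,\ [a,b]=c^k}$ (with $H_0 \cong \Z^3$). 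I would spell out the basis change at the level of generators of $G$ — choosing lifts $\tilde x, \tilde y$ of the $\Z^2$-generators, the commutator $[\tilde x, \tilde y]$ lies in the kernel $\Z^n$, and one adjusts the basis of $\Z^n$ so that this commutator is $k$ times the first basis vector — to make the isomorphism $G \cong H_k \times \Z^{n-1}$ explicit.

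For the centre of $H_k$ with $k \ne 0$: clearly $c$ is central since $[a,c]=[b,c]=1$, so $\gen c \le \mathcal Z(H_k)$. Conversely, $H_k / \gen c \cong \Z^2$ is abelian with trivial centre complement, so any central element projects into the centre of... actually more directly: if $w \in \mathcal Z(H_k)$, write $w = a^i b^j c^\ell$; then $[a, w] = [a, b^j] = c^{kj} = 1$ forces $kj = 0$, hence $j = 0$ (as $k \ne 0$ and $c$ has infinite order), and symmetrically $[b,w] = c^{-ki} = 1$ forces $i = 0$, so $w \in \gen c$. This gives $\mathcal Z(H_k) = \gen c$.

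For the profinite completion, I first note that $\widehat{H_k}$ is the central extension of $\widehat{\Z} = \widehat{\Z^n}$-part... more precisely, by \cref{cor:central_exts_are_rf} (applied with the $2$-orbifold group $\Z^2$, which is closed orientable and, being infinite, qualifies) the sequence $1 \to \Z \to H_k \to \Z^2 \to 1$ profinitely completes to $1 \to \widehat\Z \to \widehat{H_k} \to \widehat{\Z^2} \to 1$ with $\overline{\gen c} = \widehat{\gen c}$. Certainly $\overline{\gen c}$ is central in $\widehat{H_k}$, since centrality is a closed condition and $\gen c$ is central in the dense subgroup $H_k$. For the reverse inclusion I would argue as in the last paragraph of the proof of \cref{thm:completions_nice_orbifolds_centerless}: the commutator pairing $\widehat{\Z^2} \times \widehat{\Z^2} \to \widehat\Z$ induced by $[\,\cdot\,,\cdot\,]$ on $H_k$ is (up to the identification) multiplication by $k$ on the "area" form, and it is non-degenerate on $\widehat{\Z}$-modules because $k \ne 0$ and $\widehat\Z$ is torsion-free as a $\Z$-module in the relevant sense; concretely, $\mathcal Z(\widehat{H_k})$ maps to the elements of $\widehat{\Z^2}$ pairing trivially with everything, which is $0$, so $\mathcal Z(\widehat{H_k}) \subseteq \overline{\gen c}$. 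The main obstacle is making this non-degeneracy argument clean over $\widehat\Z$ rather than $\Z$: one cannot just invoke $\SL_2(\Z)$-fixed-point facts, so the cleanest route is to pass through the finite quotients $H_k / (N \cdot \gen{c}) \to (\Z/N)^2$ — each a finite nilpotent group whose centre, for $N$ coprime to enough of $k$, is exactly the image of $\gen c$ — and take the inverse limit, exactly mirroring the $N\Z^2$ argument used for crystallographic groups above. I expect this last verification (that $\mathcal Z$ commutes with the relevant inverse limit, and identifying the centres of the finite levels) to be the only place requiring genuine care.
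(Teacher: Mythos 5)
Your decomposition $G \cong H_k \times \Z^{n-1}$ and your computation of $\mathcal{Z}(H_k)$ for $k \neq 0$ are correct and are essentially the paper's argument: the paper likewise classifies the extension by its class in $H^2(\Z^2;\Z^n)\cong\Z^n$, normalises that class by $\GL_n(\Z)$ to $k$ times a primitive vector, and reads off the centre from the commutator relations, phrased in Mal'cev coordinates (writing $H_k$ as the set $\Z^3$ with $(x_1,y_1,z_1)*(x_2,y_2,z_2)=(x_1+x_2,y_1+y_2,z_1+z_2+kx_1y_2)$, so that $[(1,0,0),(x,y,z)]=(0,0,ky)$ and $[(0,1,0),(x,y,z)]=(0,0,-kx)$).

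For $\mathcal{Z}(\widehat H_k)$ the paper just notes that $\widehat H_k$ is $\widehat\Z^3$ with the same multiplication formula and repeats the commutator computation, using that $\widehat\Z$ is torsion-free. Your first proposal --- the commutator pairing --- is exactly this argument, and your hesitation about it is unfounded: fix $a\in H_k\subset\widehat H_k$; the continuous map $h\mapsto[a,h]$ is unchanged when $h$ is multiplied by an element of $\overline{\gen{c}}$, hence descends to $\widehat H_k/\overline{\gen{c}}\cong\widehat\Z^2$ (using \cref{cor:central_exts_are_rf}, as you say); on the dense subgroup $\Z^2$ it is $(x,y)\mapsto c^{ky}$, so by continuity it is $(\bar x,\bar y)\mapsto c^{k\bar y}$ everywhere, and $k\bar y=0$ forces $\bar y=0$ because $\widehat\Z\cong\prod_p\Z_p$ is torsion-free; symmetrically $\bar x=0$, so $z\in\overline{\gen{c}}$. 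No $\SL_2(\Z)$-type fixed-point input is needed. By contrast, your fallback via finite quotients has a genuine gap as stated. First, $H_k/(N\cdot\gen{c})$ is not finite (it surjects onto $\Z^2$); you need, say, the quotients $Q_N$ of $H_k$ by the normal subgroup generated by $a^N, b^N, c^N$. Second, the claim that the centre of the finite level is exactly the image of $\gen{c}$ holds only when $\gcd(N,k)=1$, and those $N$ are not cofinal: they compute only the pro-(prime to $k$) part of $\widehat H_k$, so they cannot exclude central elements supported at primes dividing $k$; for general $N$ one has $\mathcal{Z}(Q_N)=\langle \bar a^{\,N/\gcd(N,k)}, \bar b^{\,N/\gcd(N,k)}, \bar c\rangle$, strictly larger than the image of $\gen{c}$. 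The repair is to use all $N$: the image of $\mathcal{Z}(\widehat H_k)$ in $(\Z/N)^2$ lies in $(N/\gcd(N,k))\,(\Z/N)^2$, and taking $N=kM$ and projecting to $(\Z/M)^2$ this image vanishes, so the image in $\widehat\Z^2=\varprojlim(\Z/M)^2$ is trivial. So your second route can be completed, but the first one --- which is the paper's --- is shorter and you should simply run it.
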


\begin{proof}
	The fact that any central extension
	admits such a presentation
	is a consequence of Hopf's Formula
	and the fact that ${H^2(\Z^2; \Z^n) \cong \Z^n}$.
	The number $k$ is such that
	the cohomology class in $\Z^n$
	is a $k$-th multiple of a primitive vector in $\Z^n$.
	
	$H_k$ can also be described as the set $\Z^3$
	with multiplication given by
	\begin{equation*}
		(x_1, y_1, z_1) * (x_2, y_2, z_2) =
		(x_1+x_2, y_1+y_2, z_1+z_2+kx_1y_2).
	\end{equation*}
	Similarily, $\widehat H_k$ is isomorphic
	to the set $\widehat \Z^3$
	with the same multiplication formula.
	In both cases
	\begin{align*}
		[(1, 0, 0), (x, y, z)] & = (0, 0, ky) \\
		[(0, 1, 0), (x, y, z)] & = (0, 0, -kx).
	\end{align*}
	This implies that unless $k=0$,
	the centre is generated by the element $(0, 0, 1)$,
	as both $\Z$ and $\widehat \Z$ are torsion-free.
\end{proof}

\subsubsection{Distinguishing $2$-orbifold groups by profinite completions}

Here we cite a strong result which we will later make use of.
It was stated as \cite[Corollary~4.2.]{wilkes2017profinite},
where it was proved using \cite[Theorem~1.1.]{bridson2013determining}.

\begin{theorem}\label{thm:orbifolds_rigidity}
	Let $O_1, O_2$ be closed $2$-orbifolds. If there is an isomorphism $\widehat{\pi_1(O_1)} \cong \widehat{\pi_1(O_2)}$, then ${\pi_1(O_1) \cong \pi_1(O_2)}$.
\end{theorem}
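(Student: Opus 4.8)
The plan is to quote \cite[Theorem~1.1.]{bridson2013determining}, which says precisely that Fuchsian groups are distinguished from each other (and indeed from all lattices in connected Lie groups) by their profinite completions, and then to bridge the gap between \emph{closed $2$-orbifold groups} and \emph{Fuchsian groups}. The point is that a closed $2$-orbifold group $\pi_1(O)$ is by definition a discrete cocompact subgroup of $\isom^+(M)$ for $M \in \{\mathbb{S}^2, \R^2, \mathbb{H}^2\}$; in the hyperbolic case it is exactly a cocompact Fuchsian group and \cite{bridson2013determining} applies directly, while the spherical and Euclidean cases must be handled separately since there $\pi_1(O)$ is either finite or virtually $\Z^2$.

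The first step is to reduce to the case where both $\pi_1(O_1)$ and $\pi_1(O_2)$ are infinite. If $\pi_1(O_1)$ is finite, then $\widehat{\pi_1(O_1)} \cong \pi_1(O_1)$ is finite, so $\widehat{\pi_1(O_2)}$ is finite, forcing $\pi_1(O_2)$ to be finite of the same order; one then invokes the (short, explicit) classification of closed orientable $2$-orbifold groups of spherical type (these are the finite subgroups of $SO(3)$ arising as $2$-orbifold groups: cyclic, dihedral, and the three polyhedral groups, pinned down by $g=0$ and the cone data $(p_1,p_2,p_3)$ with $\tfrac1{p_1}+\tfrac1{p_2}+\tfrac1{p_3}>1$), and checks that no two distinct ones have isomorphic finite quotient lattices — equivalently are non-isomorphic as finite groups. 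Next, if $\pi_1(O_1)$ is infinite but virtually $\Z^2$ (the Euclidean case, $g=1,m=0$ or $g=0$ with the four exceptional triples $(2,4,4),(2,3,6),(3,3,3),(2,2,2,2)$), then so is $\pi_1(O_2)$, since virtual $\Z^2$-ness is detected by the profinite completion (e.g. via the structure of $\widehat{\pi_1(O)}$ or via \cite{grunewald2008cohomological}-type goodness plus cohomological dimension). There are only finitely many such groups up to isomorphism, and they are pairwise non-isomorphic with pairwise distinct finite quotient sets, which one checks by hand (abelianisations and small finite quotients already separate them).

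For the remaining — and generic — case, both $\pi_1(O_i)$ are cocompact Fuchsian groups, and \cite[Theorem~1.1.]{bridson2013determining} gives $\pi_1(O_1) \cong \pi_1(O_2)$ immediately; one only needs to remark that a closed hyperbolic $2$-orbifold group is infinite, finitely generated, residually finite (\cref{prop:2-orbs_rf}), and a cocompact lattice in $\PSL_2(\R)$, so that it falls within the scope of that theorem. The main obstacle is not conceptual but bookkeeping: one must make sure the trichotomy (finite / virtually $\Z^2$ / hyperbolic) is invariant under $\widehat{(-)}$, so that $O_1$ and $O_2$ land in the \emph{same} case before any of the three arguments is applied. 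The cleanest way to secure this is to observe that $\pi_1(O)$ is finite iff $\widehat{\pi_1(O)}$ is finite, and $\pi_1(O)$ is virtually $\Z^2$ iff $\widehat{\pi_1(O)}$ is virtually $\widehat\Z^{\,2}$ (this last equivalence following from goodness, \cref{prop:2-orbs_good}, together with the fact that a finite-index surface or $\Z^2$ subgroup is detected profinitely); everything else is then hyperbolic and handled by \cite{bridson2013determining}.
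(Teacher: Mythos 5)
The paper offers no proof of \cref{thm:orbifolds_rigidity}: it is quoted from \cite[Corollary~4.2.]{wilkes2017profinite}, which is itself deduced from \cite[Theorem~1.1.]{bridson2013determining}. Your proposal is therefore best read as a reconstruction of the cited source's argument, and its skeleton --- split according to geometry, check that the trichotomy is visible in the profinite completion, treat the spherical and Euclidean cases by hand, and invoke Bridson--Conder--Reid in the hyperbolic case --- is indeed the intended route. Two of your steps should be tightened. The finite case needs no classification at all: $\widehat G \cong G$ for finite $G$, so an isomorphism of completions is already an isomorphism of the groups. And the profinite detection of ``virtually $\Z^2$'' is better justified via the correspondence between finite-index subgroups of $G$ and open subgroups of $\widehat G$ together with first Betti numbers (a hyperbolic closed $2$-orbifold group contains finite-index surface subgroups of genus at least $2$ and hence finite-index subgroups of arbitrarily large $b_1$, while a Euclidean one has $b_1 \le 2$ for every finite-index subgroup), rather than the appeal to ``goodness plus cohomological dimension'', which as written is a gesture and not an argument.

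The genuine gap is orientability. The statement concerns \emph{all} closed $2$-orbifolds, but your trichotomy only lists the orientable ones: in the hyperbolic case the fundamental group of a closed non-orientable $2$-orbifold is a cocompact NEC group containing orientation-reversing isometries, so it is not a Fuchsian group and \cite[Theorem~1.1.]{bridson2013determining} does not apply to it verbatim --- nor does it obviously fall under that theorem's hypothesis of being ``a lattice in a connected Lie group'', since $\isom(\mathbb{H}^2)$ is not connected; likewise your spherical and Euclidean lists (subgroups of $SO(3)$, the five orientable wallpaper groups) omit the non-orientable possibilities such as the Klein bottle group or $\mathbb{R}P^2$ with cone points. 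For the use this paper makes of the theorem the relevant groups $\Delta_i$ are closed and orientable, so your outline would suffice there, but as a proof of \cref{thm:orbifolds_rigidity} as stated it leaves the non-orientable case untreated; you must either restrict the statement or supply an argument covering NEC groups, which is precisely what the citation of \cite[Corollary~4.2.]{wilkes2017profinite} provides for the paper.
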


Notice that our definition of a nice $2$-orbifold requires being closed, so \cref{thm:orbifolds_rigidity} applies to it.

\subsection{Group cohomology}\label{ssec:cohomology}

This section introduces the main results of group cohomology
which we use later.
It is not intended as an introduction to group cohomology
-- for this the reader is referred to Brown's book \cite{brown2012cohomology}
or to the excellent lecture notes of L\"oh \cite{loh2019groupcohomology}
and of Wilkes \cite{wilkes2021profinitegroups}.

\subsubsection{Group extensions}

Group extensions can be a somewhat confusing topic,
due to a lack of uniform terminology.
In this subsection we make sure to properly distinguish
between the isomorphism classes of $N$-by-$Q$ extensions
and the equivalence classes
of short exact sequences $1 \to N \to G \to Q \to 1$
under various equivalence relations.

While it is perfectly possible
to define these for extensions where the kernel isn't central,
or even for extensions where the kernel isn't abelian,
here we will be dealing with \emph{central extensions}
and hence we give all of the definitions in this context.
This is also why we denote the first group in a short exact sequence
$0$ rather than $1$
-- to remind the reader than the kernel is abelian.

\begin{definition}\label{def:equivalent_similar_extensions}
	Let $0 \to M \xrightarrow{\iota_i} G_i \xrightarrow{\pi_i} Q \to 1$
	be two group extensions, with $i=1, 2$.
	We say that they are \emph{equivalent}
	if there is an isomorphism $\tilde\phi: G_1 \to G_2$
	such that the following diagram commutes.
	\begin{equation*}
		\begin{tikzcd}
			0 \arrow{r}
				& M \arrow{r}{\iota_1} \arrow[equal]{d}
					& G_1 \arrow{r}{\pi_1} \arrow{d}{\tilde\phi}
						& Q \arrow{r} \arrow[equal]{d}
							& 1 \\
			0 \arrow{r}
				& M \arrow{r}{\iota_2}
					& G_2 \arrow{r}{\pi_2}
						& Q \arrow{r}
							& 1
		\end{tikzcd}
	\end{equation*}

	If there exists an isomorphism $\tilde \phi: G_1 \to G_2$
	and automorphisms $\Phi: M \to M$ and $\phi: Q \to Q$
	such that the following diagram commutes,
	we say that the extensions are \emph{similar}.
	\begin{equation*}
		\begin{tikzcd}
			0 \arrow{r}
				& M \arrow{r}{\iota_1} \arrow[]{d}{\Phi}
					& G_1 \arrow{r}{\pi_1} \arrow{d}{\tilde\phi}
						& Q \arrow{r} \arrow[]{d}{\phi}
							& 1 \\
			0 \arrow{r}
				& M \arrow{r}{\iota_2}
					& G_2 \arrow{r}{\pi_2}
						& Q \arrow{r}
							& 1
		\end{tikzcd}
	\end{equation*}
\end{definition}

It is clear that equivalent extensions are similar,
but the converse isn't true.
$$\begin{tikzcd}[column sep=2em]
	0 \arrow{r} &
	\Z \arrow{r}{\times 3} &
	\Z \arrow{r} &
	\Z/3 \arrow{r} &
	1
\end{tikzcd}$$
isn't equivalent to
$$\begin{tikzcd}[column sep=2em]
	0 \arrow{r} &
	\Z \arrow{r}{\times (-3)} &
	\Z \arrow{r} &
	\Z/3 \arrow{r} &
	1
\end{tikzcd}$$
but they are similar.

Given an abelian group $M$ and a group $Q$
we will denote by $\mathcal{E}(Q; M)$
the set of equivalence classes of central $M$-by-$Q$ extensions
and by $\overline{\mathcal{E}}(Q; M)$
the set of similarity classes of such extensions.

Now, we finish this section by noting
that for our setting requiring a higher level of detail
from an `$M$-by-$Q$ extension' doesn't produce higher complexity.

\begin{proposition}\label{prop:iso_centerless}
	Let $M$ be an abelian group
	and $Q$ be a centerless group,
	i.e. ${\mathcal{Z}(Q) = 1}$.
	Then, there is a bijection between the set
	$\mathcal E(Q; M)$ of similarity classes
	of central extensions of $M$ by $Q$ and
	the set of isomorphism classes
	of groups $G$ such that $\mathcal Z (G) \cong M$
	and $G/Z(G) \cong Q$.
\end{proposition}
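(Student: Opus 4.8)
The plan is to exhibit explicit maps in both directions and check that they are mutually inverse. In one direction, send a similarity class of a central extension $0 \to M \to G \to Q \to 1$ to the isomorphism class of the group $G$ itself. This is well-defined on similarity classes by definition, since a similarity is in particular an isomorphism $\tilde\phi\colon G_1 \to G_2$. Moreover $G$ lies in the target set: the image of $M$ in $G$ is central by hypothesis, and since $\mathcal{Z}(Q) = 1$ any central element of $G$ must map to a central element of $Q$, hence into the trivial subgroup, so $\mathcal{Z}(G)$ equals the image of $M$; thus $\mathcal{Z}(G) \cong M$ and $G/\mathcal{Z}(G) \cong Q$.

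In the other direction, given an abstract group $G$ with a chosen isomorphism $\mathcal{Z}(G) \cong M$ and $G/\mathcal{Z}(G) \cong Q$, one forms the short exact sequence
\begin{equation*}
	0 \to \mathcal{Z}(G) \to G \to G/\mathcal{Z}(G) \to 1
\end{equation*}
and transports it along the chosen isomorphisms to obtain a central extension of $M$ by $Q$; its similarity class is the assigned value. The first thing to verify is that this is independent of the chosen isomorphisms $\mathcal{Z}(G) \cong M$ and $G/\mathcal{Z}(G) \cong Q$: replacing them by other choices alters the resulting short exact sequence by precomposing and postcomposing with automorphisms $\Phi$ of $M$ and $\phi$ of $Q$, which is exactly the data of a similarity with $\tilde\phi = \id_G$, so the similarity class is unchanged. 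Next one checks well-definedness on isomorphism classes: an isomorphism $G \to G'$ carries $\mathcal{Z}(G)$ onto $\mathcal{Z}(G')$ and descends to the quotients, producing a similarity between the two transported extensions.

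It then remains to see that the two assignments are inverse to each other. Starting from a group $G$ in the target set, forming the canonical central extension and then taking its middle term returns $G$ up to isomorphism — immediate. Conversely, starting from a similarity class of an extension $0 \to M \xrightarrow{\iota} G \xrightarrow{\pi} Q \to 1$, passing to the middle term $G$ and then forming $0 \to \mathcal{Z}(G) \to G \to G/\mathcal{Z}(G) \to 1$: because $\mathcal{Z}(Q)=1$, the map $\iota$ identifies $M$ with $\mathcal{Z}(G)$ and $\pi$ identifies $Q$ with $G/\mathcal{Z}(G)$, so the new sequence is similar to the original one via $\tilde\phi = \id_G$ together with these identifications as $\Phi$ and $\phi$. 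I do not anticipate a serious obstacle here; the one point requiring genuine (if short) argument is the centerlessness input, namely that $\mathcal{Z}(Q) = 1$ forces the image of $M$ to be exactly $\mathcal{Z}(G)$ rather than merely contained in it — without this the correspondence would lose injectivity, and it is the hypothesis that makes "middle term of the extension" remember the extension up to similarity.
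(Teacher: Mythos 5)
Your proposal is correct and follows essentially the same route as the paper: send an extension class to the isomorphism class of its middle group (using $\mathcal{Z}(Q)=1$ to force $\mathcal{Z}(G)=\iota(M)$), transport the canonical sequence $0 \to \mathcal{Z}(G) \to G \to G/\mathcal{Z}(G) \to 1$ back along chosen identifications, and observe that any abstract isomorphism $G_1 \to G_2$ preserves centres and descends to quotients, hence upgrades to a similarity. The only difference is that you spell out the well-definedness and mutual-inverse checks in more detail than the paper does, which is fine.
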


\begin{proof}
	Given any $[0 \to M \to G \to Q \to 1] \in \mathcal{E}(Q; M)$,
	we get an isomorphism class of $G$ in an obvious manner;
	$\mathcal{Z}(Q)=1$ ensures that
	$\mathcal{Z}(G) = \iota(M)$ and $G / \mathcal{Z}(G) \cong Q$.
	Similarly, given any such $G$
	and isomorphisms $f\colon M \to \mathcal{Z}(G)$
	and $g\colon Q \to G / \mathcal{Z}(G)$
	we get
	\begin{equation*}
		\begin{tikzcd}
			0 \arrow{r}	& M \arrow{d}{f} \arrow{r}{\iota \circ f}	& G \arrow[equal]{d} \arrow{r}{g^{-1}\circ \pi}		& Q	\arrow{r} \arrow{d}{g}	& 1 \\
			0 \arrow{r}	& \mathcal{Z}(G) \arrow{r}{\iota}	& G \arrow{r}{\pi}	& G / \mathcal{Z}(G) \arrow{r}	& 	1
		\end{tikzcd}.
	\end{equation*}
	The part which isn't immediate is
	why an isomorphism $\tilde\phi\colon G_1 \to G_2$
	`upgrades' to a similarity of extensions.
	The reason for this is that
	any $\tilde\phi$ restricts to an isomorphism
	$\Phi\colon  \mathcal{Z}(G_1) \to \mathcal{Z}(G_2)$
	and also induces
	$\phi\colon G_1 /\mathcal{Z}(G_1) \to G_2/ \mathcal{Z}(G_2)$
	so that the diagram~(\ref{eq:centerless}) commutes,
	thereby implying that the extensions are similar.
	\begin{equation}\label{eq:centerless}
		\begin{tikzcd}
			0 \arrow{r}	& \mathcal{Z}(G_1) \arrow{r}{\iota_1} \arrow{d}{\Phi}	& G_1 \arrow{r}{\pi_1} \arrow{d}{\tilde\phi}	& G_1 / \mathcal{Z}(G_1) \arrow{r} \arrow{d}{\phi}	& 	1 \\
			0 \arrow{r}	& \mathcal{Z}(G_2) \arrow{r}{\iota_2}	& G_2 \arrow{r}{\pi_2}	& G_2 / \mathcal{Z}(G_2) \arrow{r}	& 	1
		\end{tikzcd}
	\end{equation}
\end{proof}

\subsubsection{$H^2(Q; M)$ classifies extensions} \label{sssec:H2_classifies_extensions}

The most important result for us
is how the second cohomology group classifies
the equivalence classes of group extensions
with given kernel and quotient.
The reader is referred to \cite[Section IV.3.]{brown2012cohomology}.

\begin{proposition}
	For an abelian group $M$ and a group $Q$,
	there is a natural bijection
	between the set $\mathcal{E}(Q; M)$
	of \emph{equivalence} classes of central extensions of $M$ by $Q$
	and the second cohomology group $H^2(Q; M)$ of $Q$
	with coefficients in $M$, treated as a trivial $Q$-module.
\end{proposition}

The naturality of this correspondence is explained in detail
in \cref{prop:from_Loh_discrete}, which we adapt from
\cite[Theorem 1.5.13.]{loh2019groupcohomology}.
We could not find an explicit proof in the topic literature,
which is why we include it here,
and why we prove it allowing for
non-trivial action on the coefficient module.

\begin{proposition}\label{prop:from_Loh_discrete}
	Let $M_1$ be a $Q_1$-module, $M_2$ a $Q_2$-module,
	and $\phi\colon Q_1 \to Q_2$
	and $\Phi\colon M_1 \to M_2$ be homomorphisms
	such that
	$${\Phi(q\cdot m) = \phi(q)\cdot \Phi(m)}.$$
	Given cohomology classes $[\zeta_i] \in H^2(Q_i; M_i)$
	and extension classes
	$$[0 \to M_i \xrightarrow{\iota_i} G_i \xrightarrow{\pi_i} Q_i \to 1]$$
	which they represent,
	the following are equivalent.
	\begin{enumerate}
		\item $\Phi_*([\zeta_1]) = \phi^*([\zeta_2])$
		as elements of $H^2(Q_1; M_2)$.
		\item There is a homomorphism $\tilde\phi\colon G_1 \to G_2$
		such that the diagram~(\ref{eq:extension_isomorphism_prop}) commutes.
		\begin{equation}\label{eq:extension_isomorphism_prop}
			\begin{tikzcd}
				0 \arrow{r}	& M_1 \arrow{r}{\iota_1} \arrow{d}{\Phi} & G_1 \arrow{r}{\pi_1} \arrow{d}{\tilde\phi}	& Q_1 \arrow{r} \arrow{d}{\phi}	& 1 \\
				0 \arrow{r}	& M_2 \arrow{r}{\iota_2}	& G_2 \arrow{r}{\pi_2}	& Q_2 \arrow{r}	& 1
			\end{tikzcd}
		\end{equation}
	\end{enumerate}
\end{proposition}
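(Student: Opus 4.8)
The plan is to prove the equivalence by passing through an explicit cocycle description of the extensions. Recall that for a $Q_i$-module $M_i$ there is a standard dictionary between $H^2(Q_i;M_i)$ and equivalence classes of extensions: given an extension $0\to M_i\xrightarrow{\iota_i}G_i\xrightarrow{\pi_i}Q_i\to 1$, choose a set-theoretic section $s_i\colon Q_i\to G_i$ with $\pi_i\circ s_i=\id$ and $s_i(1)=1$; the normalized $2$-cocycle $\zeta_i(q,q')=s_i(q)s_i(q')s_i(qq')^{-1}$, viewed in $M_i$ via $\iota_i$, represents $[\zeta_i]$. Conversely every normalized cocycle gives a crossed-product group $M_i\times_{\zeta_i}Q_i$ on the set $M_i\times Q_i$ with multiplication $(m,q)(m',q')=(m+q\cdot m'+\zeta_i(q,q'),\,qq')$, and the equivalence class of this group recovers $[\zeta_i]$. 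I would fix such sections $s_1,s_2$ and cocycles $\zeta_1,\zeta_2$ at the start, and note that the compatibility hypothesis $\Phi(q\cdot m)=\phi(q)\cdot\Phi(m)$ is exactly what makes $\Phi_*[\zeta_1]$ and $\phi^*[\zeta_2]$ both live in $H^2(Q_1;M_2)$, where $M_2$ is given the $Q_1$-action pulled back along $\phi$.

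For the implication $(2)\Rightarrow(1)$: suppose $\tilde\phi\colon G_1\to G_2$ makes the diagram commute. Then $s_2'(q):=\tilde\phi(s_1(q))$ is a set-theoretic section of $\pi_2$ over the image, but more usefully $\tilde\phi\circ s_1$ and $s_2\circ\phi$ are two lifts of $\phi\colon Q_1\to Q_2$, so they differ by a function $\lambda\colon Q_1\to M_2$, i.e. $\tilde\phi(s_1(q))=\iota_2(\lambda(q))\,s_2(\phi(q))$. Feeding this into the cocycle identity and using that $\tilde\phi$ is a homomorphism and that $\tilde\phi\circ\iota_1=\iota_2\circ\Phi$, a direct computation shows $\Phi\circ\zeta_1$ and $\phi^*\zeta_2=\zeta_2\circ(\phi\times\phi)$ differ by the coboundary $d\lambda$ in $C^2(Q_1;M_2)$, giving $\Phi_*[\zeta_1]=\phi^*[\zeta_2]$.

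For $(1)\Rightarrow(2)$: conversely, if $\Phi\circ\zeta_1-\zeta_2\circ(\phi\times\phi)=d\lambda$ for some normalized $1$-cochain $\lambda\colon Q_1\to M_2$, define $\tilde\phi$ on the crossed-product model by $\tilde\phi(m,q)=(\Phi(m)+\lambda(q),\,\phi(q))$ and check directly that it is a homomorphism $G_1\to G_2$ — the cocycle condition relating $\Phi\zeta_1$, $\zeta_2$, and $d\lambda$ is precisely what is needed — and that it fits into diagram~(\ref{eq:extension_isomorphism_prop}), since $\tilde\phi(m,1)=(\Phi(m),1)=\iota_2\Phi(m)$ and $\pi_2\tilde\phi(m,q)=\phi(q)=\phi\pi_1(m,q)$. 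One then transports this along the chosen isomorphisms $G_i\cong M_i\times_{\zeta_i}Q_i$ to the original extensions.

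The main obstacle is purely bookkeeping rather than conceptual: keeping the module structures straight under pullback along $\phi$, and carefully tracking normalizations so that the coboundary $d\lambda$ in $C^2(Q_1;M_2)$ matches the difference of the two cocycles on the nose (with the correct signs coming from the nontrivial action terms $q\cdot m$). I would be careful to state the crossed-product multiplication once, with a fixed sign convention, and do every subsequent verification against that single formula; once the conventions are pinned down both implications reduce to a one-line cocycle manipulation. Since we only ever apply this with $M$ central and the action trivial, as a sanity check I would remark that in that case all the $q\cdot m$ terms drop out and the statement specializes to the familiar fact that $H^2(Q;M)$ classifies extensions up to equivalence, recovering \cref{def:equivalent_similar_extensions} when $\phi$ and $\Phi$ are identities.
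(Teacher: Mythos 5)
Your proposal is correct and follows essentially the same route as the paper's proof: both directions are handled via the explicit crossed-product model $(m,q)(m',q') = (m + q\cdot m' + \zeta(q,q'),\, qq')$, with $(1)\Rightarrow(2)$ given by $\tilde\phi(m,q) = (\Phi(m)+\lambda(q),\phi(q))$ and $(2)\Rightarrow(1)$ by comparing the two lifts $\tilde\phi\circ s_1$ and $s_2\circ\phi$ and showing their discrepancy is a coboundary. The only cosmetic difference is that you normalize the sections at the identity, while the paper carries the $\zeta_i(1,1)$ terms explicitly.
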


\begin{proof}[Proof of 1. $\Rightarrow$ 2.]
	The equality $\Phi_*([\zeta_1]) = \phi^*([\zeta_2])$ means
	that there is some $2$-coboundary $d^*_2(g)$ where $g\colon Q \to M$
	such that for all $x, y \in Q_1$ we have
	\begin{equation*}
		\begin{aligned}
		\Phi_*([\zeta_1])(x, y) & =
		\Phi(\zeta_1(x, y)) \\ & = d_2^*(g)(x, y) + \zeta_2(\phi(x), \phi(y))
		= \big(d_2^*(g) + \zeta_2\big)(x, y).
		\end{aligned}
	\end{equation*}
	Now, remembering that
	$G_i$ is the group of elements of the set $M_i \times Q_i$
	with the operation
	$(m_1, q_1)(m_2, q_2)
	= (m_1 + q_1\cdot m_2 + \zeta_i(q_1, q_2), q_1q_2)$,
	let's set
	\begin{equation*}
		\tilde\phi\colon   (m, q) \mapsto (\Phi(m) + g(q), \phi(q)). 
	\end{equation*}
	We then get
	\begin{align*}
		\tilde\phi\big((m_1, q_1)(m_2, q_2)\big)
		& = \tilde\phi\big((m_1 + q_1 \cdot m_2 + \zeta_1(q_1, q_2), q_1q_2)\big)\\
		& = \Big(\Phi\big(m_1 + q_1\cdot m_2 + \zeta_1(q_1, q_2)\big) + g(q_1q_2), \phi(q_1q_2)\Big) \\
		& = \Big(\Phi(m_1)+g(q_1) + \phi(q_1)\cdot(\Phi(m_2) + g(q_2)) \\
		&\phantom{==} + \zeta_2\big(\phi(q_1), \phi(q_2)\big), \phi_1(q_1)\phi_1(q_2)\Big) \\
		& = \big(\Phi(m_1) + g(q_1), \phi(q_1)\big)
		\cdot\big(\Phi(m_2) + g(q_2), \phi(q_2)\big) \\
		& = \tilde\phi\big((m_1, q_1)\big)\cdot \tilde\phi\big((m_2, q_2)\big),
	\end{align*}
	so $\tilde\phi\colon G_1 \to G_2$ is a homomorphism.
	It is clear that the right square of
	diagram~\ref{eq:extension_isomorphism_prop} commutes;
	for the left one notice that
	$$\big(m - \zeta_1(1, 1), 1\big)
	\xmapsto{\tilde\phi}
	\big(\Phi(m) - d_2^*(g)(1,1) - \zeta_2(1, 1) + g(1), 1\big)
	= \iota_2(m),$$
	since $d_2^*(g)(1, 1) = 1\cdot g(1) - g(1) + g(1) = g(1)$.
\end{proof}

\begin{proof}[Proof of 2. $\Rightarrow$ 1.]
	Choose any sections $s_i\colon Q_i \to G_i$
	for $\pi_i\colon G_i \to Q_i$.
	Now, we want to show that
	\begin{equation*}
		h(x, y) := \zeta_2\big(\phi(x), \phi(y)\big) - \Phi\big(\zeta_1(x, y)\big)
	\end{equation*}
	is a $2$-coboundary.
	For this we set $g\colon Q_1 \to G_2$ to be
	$$g(x) := s_2(\phi(x))\tilde\phi(s_1(x))^{-1}.$$
	The image of $g$ gets killed by $\pi_2$,
	so we can treat $g$ as a map from $Q_1$ to $M_2$.
	
	This gives us
	\begin{align*}
		(d_2^*(g))(x, y)
		& = s_2(\phi(x)) \cdot \big(s_2(\phi(y))\tilde\phi(s_1(y))^{-1}\big) \cdot s_2(\phi(x))^{-1} \\
		& \phantom{==} - s_2(\phi(xy))\tilde\phi(s_1(xy))^{-1} + s_2(\phi(x))\tilde\phi(s_1(x))^{-1} \\
		& = \zeta_2(\phi(x), \phi(y)) + s_2(\phi(xy)) \cdot \tilde\phi(s_1(y))^{-1} \cdot s_2(\phi(x))^{-1} \\
		& \phantom{==} + \tilde\phi(s_1(xy))s_2(\phi(xy))^{-1} + s_2(\phi(x))\tilde\phi(s_1(x))^{-1} \\
		& = \zeta_2(\phi(x), \phi(y)) + \tilde\phi(s_1(xy))\cdot \tilde\phi(s_1(y))^{-1} \cdot \tilde\phi(s_1(x))^{-1} \\
		& = \zeta_2(\phi(x), \phi(y)) - \tilde\phi(\zeta_1(x, y)) \\ & = h(x, y),
	\end{align*}
	so $h$ is indeed a $2$-coboundary
	and $\phi^*([\zeta_2]) = \Phi_*([\zeta_1])$.
\end{proof}

This naturality has an important consequence for us.
	
\begin{corollary}\label{prop:actions_commute}
	Let $M$ be an abelian group
	treated as a trivial module for a group $Q$.
	Then the groups $\aut(M)$ and $\aut(Q)$
	act (on the left) on  $H^2(Q; M)$ by
	$$\Phi\cdot [\zeta] = \Phi_*([\zeta]),\quad
	\phi\cdot [\zeta] = (\phi^{-1})^*([\zeta])$$
	and these actions commute.
	Furthermore, the orbits of
	the action of $\aut(M)\times \aut(Q)$ on $H^2(Q; M)$
	are in bijection with the set $\overline{\mathcal{E}}(Q; M)$
	of similarity classes of central extensions of $M$ by $Q$.
\end{corollary}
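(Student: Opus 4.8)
The plan is to verify each claim directly, leaning on \Cref{prop:from_Loh_discrete} for the parts where the extension-theoretic interpretation matters. First I would check that the two formulas genuinely define left actions. For $\aut(M)$, functoriality of $\Phi \mapsto \Phi_*$ in the coefficient variable (i.e. $(\Phi_1\Phi_2)_* = (\Phi_1)_*(\Phi_2)_*$ and $(\id_M)_* = \id$) gives this immediately. For $\aut(Q)$, one uses contravariant functoriality of $\phi \mapsto \phi^*$, so that $\phi \mapsto (\phi^{-1})^*$ is covariant: $\big((\phi_1\phi_2)^{-1}\big)^* = (\phi_2^{-1}\phi_1^{-1})^* = (\phi_1^{-1})^*(\phi_2^{-1})^*$, which is the required left-action identity. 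The inverse is precisely what converts the natural right action by pullback into a left action.

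Next I would prove the two actions commute, i.e. $\Phi_* \circ (\phi^{-1})^* = (\phi^{-1})^* \circ \Phi_*$ on $H^2(Q;M)$. At cochain level, for a $2$-cocycle $\zeta$ one has $\big(\Phi_*(\phi^{-1})^*\zeta\big)(x,y) = \Phi\big(\zeta(\phi^{-1}x,\phi^{-1}y)\big)$ and $\big((\phi^{-1})^*\Phi_*\zeta\big)(x,y) = \Phi\big(\zeta(\phi^{-1}x,\phi^{-1}y)\big)$ as well, since $M$ is a trivial module and $\Phi$ is applied to coefficients while $\phi^{-1}$ only reindexes the arguments — the operations touch disjoint `slots' of the cocycle. (This is the special case $Q_1=Q_2=Q$, $M_1=M_2=M$, with trivial actions, of the compatibility condition $\Phi(q\cdot m) = \phi(q)\cdot\Phi(m)$, which holds vacuously.) Hence we get a genuine action of the product group $\aut(M)\times\aut(Q)$.

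Finally, for the bijection with $\overline{\mathcal E}(Q;M)$: an element of $H^2(Q;M)$ is an equivalence class of extensions, and by definition two extensions are similar exactly when there are automorphisms $\Phi\in\aut(M)$, $\phi\in\aut(Q)$ and an isomorphism $\tilde\phi\colon G_1\to G_2$ making the similarity square commute. Taking $M_1=M_2=M$, $Q_1=Q_2=Q$, with $\tilde\phi$ an isomorphism, \Cref{prop:from_Loh_discrete} says this square exists if and only if $\Phi_*([\zeta_1]) = \phi^*([\zeta_2])$, i.e. $[\zeta_1] = \Phi^{-1}_*\phi^*([\zeta_2])$. Rewriting $\phi^* = ((\phi^{-1})^{-1})^* = (\phi^{-1})\cdot(-)$ in the notation above and noting $\Phi^{-1}_* = \Phi^{-1}\cdot(-)$, this says exactly that $[\zeta_1]$ and $[\zeta_2]$ lie in the same $\aut(M)\times\aut(Q)$-orbit. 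So similarity classes are precisely orbits. The main point to be careful about — the only place there is anything to check beyond bookkeeping — is the direction of this last translation: ensuring the inverses are placed so that "similar" corresponds to "same orbit" rather than to some twisted relation, and confirming that every similarity $\tilde\phi$ is allowed to be an arbitrary isomorphism (not required to respect a fixed section), which is exactly the content of \Cref{prop:from_Loh_discrete} with $\tilde\phi$ invertible. I do not expect a serious obstacle; the work is in citing \Cref{prop:from_Loh_discrete} with the correct identifications and matching conventions.
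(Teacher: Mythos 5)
Your proposal is correct and follows essentially the same route as the paper, which states this corollary as a direct consequence of the naturality established in \cref{prop:from_Loh_discrete} and gives no separate proof; your verification of the action axioms, the cochain-level commutation, and the orbit--similarity translation fills in exactly the intended routine details. The only point you gloss is that \cref{prop:from_Loh_discrete} produces only a homomorphism $\tilde\phi$, so in the direction ``same orbit $\Rightarrow$ similar'' you should invoke the short five lemma (with $\Phi$ and $\phi$ isomorphisms) to conclude $\tilde\phi$ is an isomorphism --- a standard step that does not affect the argument.
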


\subsubsection{Cohomological Hopf's Formula}\label{sssec:Hopfs_formula}

In \cref{sssec:H2_classifies_extensions}
we discussed the naturality of how $H^2(Q; M)$ classifies the equivalence classes of group extensions,
but we will later need an explicit way of computing
the action of $\aut(Q)$ on $H^2(Q; M)$ done with a specific free resolution
-- one coming from the presentation complex.
In order to do that we introduce a version of Cohomological Hopf's Formula
applied to the partial resolution
coming from the presentation complex of $Q$.

The main tools for this are
the Inflation-Restriction Exact Sequence
and the natural correspondence
between $H^1(Q; M)$ and the group of homomorphisms $Q \to M$.

The following statement of the Inflation-Restriction Exact Sequence
was taken from \cite[Proposition 1.6.6.]{neukirch2013cohomology}
and it holds in the context of
modules $M$ with possibly non-trivial action of $G$.
We use superscript $M^G$ to denote the $G$-invariants submodule.

\begin{theorem}[Inflation-Restriction Exact Sequence]\label{thm:inflation-restriction}
	Given a short exact sequence of groups
	$1 \to N \to G \to Q \to 1$
	and a $G$-module $M$,
	there is an exact sequence
	\begin{equation*}
		\begin{tikzcd}[column sep=2em]
			0 \arrow{r}
			& H^1(Q; M^N) \arrow{r}
			& H^1(G; M) \arrow{r}
			& H^1(N; M)^Q \arrow[in=180, out=0, looseness=1.5, overlay]{lld}
			\\
			& H^2(Q; M^N) \arrow{r}
			& H^2(G; M)
			&
		\end{tikzcd}
	\end{equation*}
	where the maps $H^k(Q; M^N) \to H^k(G; M)$ are the \emph{inflation} maps,
	the maps $H^k(G; M) \to H^k(N; M)^Q$ are the \emph{restriction} maps
	and the map $\Tr: H^1(N; M)^Q \to H^2(Q; M^N)$
	is the \emph{transgression} map given by the following procedure.
	
	Given a $1$-cocycle $g: N \to M$
	whose cohomology class is $Q$-invariant,
	there is a $1$-cochain $f: G \to M$ such that
	\begin{enumerate}
		\item $f|_N = g$,
		\item $(d^*f): G \times G \to M$ is constant on the cosets of $N \times N$,
		\item $\im (d^*f)$ lies in $M^N$.
	\end{enumerate}
	Since $(d^*f)$ is constant on the cosets of $N\times N$,
	we can consider it as a function $Q\times Q \to M^N$
	and set
	$$\Tr([g]) := [(d^*f): Q \times Q \to M^N] \in H^2(Q; M^N).$$
\end{theorem}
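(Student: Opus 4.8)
The plan is to prove the sequence exact by a direct computation with (normalised) inhomogeneous cochains, which has the advantage of making the transgression formula transparent; alternatively one could invoke the five-term exact sequence of low degree of the Lyndon--Hochschild--Serre spectral sequence of $1\to N\to G\to Q\to 1$ with coefficients in $M$, in which case the only extra work is to unwind the differential $d_2\colon E_2^{0,1}\to E_2^{2,0}$ and recognise it as the stated recipe for $\Tr$. In either approach the first step is to record that the inflation maps (precompose a cochain on $Q$ with the projection $G\to Q$, then include $M^N\hookrightarrow M$) and the restriction maps (restrict a cochain on $G$ to $N$) are chain maps, and that the image of a restriction lies in the $Q$-invariants: the conjugation action of $G$ on $H^{\ast}(N;M)$ is given on cocycles by an explicit formula, and on a class pulled back from $G$ it is trivial, hence factors through $Q$.

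Next I would dispatch exactness at $H^1(Q;M^N)$ and at $H^1(G;M)$, both of which are elementary cocycle manipulations. For injectivity of inflation on $H^1$: if a $1$-cocycle $\alpha\colon Q\to M^N$ inflates to a coboundary $d^{*}(m)$, then since the inflated cocycle is constant on each coset $gN$, evaluating $d^{*}(m)$ along $N$ forces $m\in M^N$, so $\alpha=d^{*}(m)$ already over $Q$. For exactness at $H^1(G;M)$: given a $1$-cocycle $\beta$ on $G$ whose restriction to $N$ is a coboundary, subtract that coboundary to assume $\beta|_N=0$; then for $n\in N$ and $g\in G$ the cocycle identity together with $g^{-1}ng\in N$ yields first $n\cdot\beta(g)=\beta(g)$, so $\beta(g)\in M^N$, and then $\beta(gn)=\beta(g)$, so $\beta$ is inflated from a cocycle $Q\to M^N$.

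The heart of the matter is the transgression $\Tr$. First I would show the required cochain $f$ exists: choose coset representatives $\{t_q\}_{q\in Q}$ for $N$ in $G$ with $t_1=1$; the hypothesis that $[g]$ is $Q$-invariant means precisely that for each $q$ the $1$-cocycle $n\mapsto t_q\cdot g(t_q^{-1}nt_q)$ differs from $g$ by an explicit coboundary $n\mapsto n\cdot m_q-m_q$ on $N$, and one uses the elements $m_q\in M$ together with $g$ on $N$ to define $f$ on the general element $t_q n$; a short check shows $d^{*}f$ is then constant on the cosets of $N\times N$ and takes values in $M^N$, so it descends to a function $Q\times Q\to M^N$ which is automatically a $2$-cocycle, being a coboundary upstairs. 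I would then verify that $[d^{*}f]\in H^2(Q;M^N)$ is unchanged if $f$ is replaced by another cochain satisfying conditions (1)--(3), and if $g$ is replaced by a cohomologous cocycle: in each case $f$ changes by a cochain that is (constant on $N$-cosets) plus (vanishing on $N$), which alters $d^{*}f$ only by a coboundary pulled back from $Q$. This makes $\Tr$ well defined.

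Finally, exactness at $H^1(N;M)^Q$ and at $H^2(Q;M^N)$ follows by choosing $f$ cleverly. If $[g]=[\beta|_N]$ for a cocycle $\beta$ on $G$, take $f=\beta$, so $d^{*}f=0$ and $\Tr[g]=0$; conversely if $\Tr[g]=0$ then $d^{*}f=d^{*}(h)$ for some $h\colon Q\to M^N$, and after normalising $h(1)=0$ the cochain $f-h\circ\pi$ is a genuine $1$-cocycle on $G$ restricting to $g$. For the last spot: $\Tr[g]$ inflates to the coboundary $d^{*}f$ on $G$, so the composite of $\Tr$ with inflation vanishes; conversely if a $2$-cocycle $\zeta\colon Q\to M^N$ inflates to a coboundary $d^{*}F$ on $G$, then $F|_N$ is a $1$-cocycle on $N$ (because $\zeta\circ\pi$ kills $N\times N$), it is $Q$-invariant, and running the transgression recipe with $f=F$ returns $[\zeta]$. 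The step I expect to be the main obstacle is the construction and well-definedness of $f$: this is the only place the $Q$-invariance of $[g]$ is genuinely used, it has to be used rather delicately to guarantee simultaneously that $d^{*}f$ descends to $Q\times Q$ and lands in $M^N$, and it carries some bookkeeping about the normalisation at the identity.
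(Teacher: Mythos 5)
The paper does not prove this statement at all: it is quoted, with the explicit description of the transgression, from the cited reference (Neukirch--Schmidt--Wingberg, Proposition 1.6.6), so there is no internal proof to compare with. Your outline is the standard direct cochain proof of the five-term sequence and it is correct; your exactness arguments at $H^1(Q;M^N)$, $H^1(G;M)$, $H^1(N;M)^Q$ and $H^2(Q;M^N)$ are the usual manipulations, and you are right that the construction and well-definedness of $f$ is the only delicate point. Three details to nail down when writing it out. (i) With coset representatives $t_q$ ($t_1=1$), elements $m_q$ satisfying $t_q\,g(t_q^{-1}nt_q)-g(n)=n\cdot m_q-m_q$ and $m_1=0$, and $f(nt_q):=g(n)+n\cdot m_q$, the computation of $d^*f(nt_q,\,n't_{q'})$ yields an element of $N$ acting on a quantity $c(q,q')=m_q+t_q\cdot m_{q'}-g(\nu)-\nu\cdot m_{qq'}$, where $\nu=t_qt_{q'}t_{qq'}^{-1}$; so conditions (2) and (3) are not independent checks: constancy on cosets of $N\times N$ is exactly the statement $c(q,q')\in M^N$, which one verifies by a short calculation using the defining relation for the $m_q$ a second time. (ii) In the exactness at $H^2(Q;M^N)$ you should first normalise $\zeta$ so that $\zeta(1,1)=0$ (always possible up to coboundary); otherwise $F|_N$ need not be a cocycle. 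The asserted $Q$-invariance of $[F|_N]$ then follows from the one-line identity $\sigma F(\sigma^{-1}n\sigma)=g(n)+nF(\sigma)-F(\sigma)$, obtained by evaluating $d^*F=\zeta\circ(\pi\times\pi)$ at suitable pairs. (iii) In the exactness at $H^1(N;M)^Q$ no genuine normalisation of $h$ is needed: evaluating $d^*h=\overline{d^*f}$ at $(1,1)$ forces $h(1)=f(1)=g(1)=0$. None of these is a gap, and the alternative you mention---the five-term sequence of the Lyndon--Hochschild--Serre spectral sequence plus an identification of $d_2$ with the stated recipe---is equally valid, though the identification of $d_2$ is essentially the same bookkeeping.
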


\Cref{prop:H1_classifies_homs} is a well-known result
and it is easy to see when one computes $H^1(Q; M)$ with the standard (bar) resolution:
the $1$-cocycles \emph{are} the homomorphisms $Q \to M$,
while the $1$-coboundaries are all zero maps.

\begin{lemma}\label{prop:H1_classifies_homs}
	Let $M$ be an abelian group treated as a trivial $G$-module.
	Then there is a natural bijection
	$$H^1(Q; M) \rightarrow \{\text{homomorphisms}\ f: Q \to M\}.$$
\end{lemma}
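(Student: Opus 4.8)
The plan is to read the statement straight off the inhomogeneous (bar) resolution of $Q$, where every cochain group and differential is completely explicit. First I would recall the set-up: computing $H^1(Q;M)$ with this resolution, a $0$-cochain is an element $m \in M$, a $1$-cochain is an arbitrary set-map $f\colon Q \to M$, and the coboundary operators are
$$(d^0 m)(x) = x\cdot m - m, \qquad (d^1 f)(x,y) = x\cdot f(y) - f(xy) + f(x).$$
In general $Z^1(Q;M)$ is then the set of crossed homomorphisms and $B^1(Q;M)$ the set of principal ones.

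Next I would feed in the hypothesis that the $Q$-action on $M$ is trivial. Then $x\cdot m = m$ for all $x$ and $m$, so $d^0$ is the zero map and $B^1(Q;M) = 0$, while the $1$-cocycle condition $(d^1 f)(x,y) = 0$ becomes $f(xy) = f(x) + f(y)$; that is, $Z^1(Q;M)$ is precisely $\Hom(Q,M)$. Hence
$$H^1(Q;M) = Z^1(Q;M)/B^1(Q;M) = Z^1(Q;M) = \Hom(Q,M),$$
and the bijection asserted in the lemma is exactly this equality of sets (it is visibly a group isomorphism, though only the bijection is claimed).

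For naturality I would observe that a homomorphism $\phi\colon Q_1 \to Q_2$ together with a module map $\Phi\colon M_1 \to M_2$ with $\Phi(q\cdot m) = \phi(q)\cdot\Phi(m)$ induces on bar $1$-cochains the assignment $f \mapsto \Phi\circ f\circ\phi$, which under the identification above is exactly the usual functoriality of $\Hom(-,-)$, contravariant in the first slot and covariant in the second; so the bijection is natural in both variables. The ``hard part'' is essentially nil here — this is the textbook cocycle computation — but it is worth recording because later sections need the concrete cocycle-level description (a homomorphism \emph{is} a $1$-cocycle on the bar resolution), not merely an abstract natural isomorphism, in order to combine \cref{prop:H1_classifies_homs} with the transgression map of \cref{thm:inflation-restriction} and the presentation complex of $Q$.
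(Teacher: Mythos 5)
Your proof is correct and is exactly the argument the paper indicates: the paper does not spell out a proof but remarks that with the bar resolution the $1$-cocycles are precisely the homomorphisms $Q \to M$ and the $1$-coboundaries vanish when the action is trivial, which is what you compute explicitly (your naturality remark is a welcome, if routine, addition). No gap.
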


Finally, we can state the explicit version of Cohomological Hopf's Formula.
While the existence of the isomorphism in it is well-known,
the explicit isomorphism that we need is hard to find in the literature.

\begin{proposition}[Cohomological Hopf's Formula]\label{prop:Hopfs_formula}
	Let $Q$ be a group with presentation
	$$\pres{a_i: i\in I}{r_j: j\in J}$$
	and $M$ be an abelian group treated as a trivial $Q$-module.
	Let $F$ be the free group on the set $\{a_i: i\in I\}$
	and $R$ be the smallest normal subgroup of $F$ containing the set $\{r_j: j \in J\}$.
	
	Let $C_3 \to C_2 \to C_1 \to C_0 \to \Z$
	be any partial free resolution of the presentation resolution
	with $C_2 = \bigoplus_{j\in J} \Z Q \cdot \textbf{\textsc{r}}_j$
	and $C_1 = \bigoplus_{i\in I} \Z Q \cdot \textbf{\textsc{a}}_i$.
	
	Then the following map $\Psi$ is a natural bijection, where
	\begin{equation*}
		\begin{aligned}
			\frac{\big\{F-\text{invariant homs.}\ R \to M\big\}}%
			{\big\{\text{restrictions of homs.}\ F\to M\big\}}
			\xrightarrow{\Psi} \\
			\frac{\ker \big(\hom_{\Z Q}(C_2, M) \to \hom_{\Z Q}(C_3, M)\big)}%
			{\im \big(\hom_{\Z Q}(C_1, M) \to \hom_{\Z Q}(C_2, M)\big)}
		\end{aligned}
	\end{equation*}
	is given by
	$$\Psi([g: R \to M]) = \big[\textbf{\textsc{r}}_j \mapsto g(r_j): j \in J\big].$$
\end{proposition}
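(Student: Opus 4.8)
The plan is to unwind both sides of the claimed bijection and match them via the Inflation--Restriction Exact Sequence of Theorem \ref{thm:inflation-restriction} applied to the defining extension $1 \to R \to F \to Q \to 1$ with coefficients in $M$, treated as a trivial module. First I would record that the right-hand quotient is, by definition, $H^2(Q;M)$ computed with respect to the chosen partial resolution $C_3 \to C_2 \to C_1 \to C_0 \to \Z$; this is independent of the resolution, so I may freely compare it against any other model. Next I would identify the left-hand side: since $F$ is free, $H^k(F;M) = 0$ for $k \ge 2$ and $H^1(F;M) = \Hom(F,M)$ by \cref{prop:H1_classifies_homs}; and since $M$ is a trivial module and $F$ acts on $R$ by conjugation, the invariants and superscripts simplify. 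The Inflation--Restriction sequence then reads
\begin{equation*}
	0 \to H^1(Q;M) \to H^1(F;M) \xrightarrow{\mathrm{res}} H^1(R;M)^F \xrightarrow{\Tr} H^2(Q;M) \to H^2(F;M) = 0,
\end{equation*}
so that $\Tr$ induces an isomorphism
\begin{equation*}
	\frac{H^1(R;M)^F}{\im\big(\mathrm{res}\colon \Hom(F,M) \to H^1(R;M)^F\big)} \xrightarrow{\ \cong\ } H^2(Q;M).
\end{equation*}
Because $R$ acts trivially on $M$ (it is a trivial module), $H^1(R;M)^F = \Hom(R,M)^F$ is exactly the group of $F$-invariant homomorphisms $R \to M$, and the image of the restriction map is exactly the subgroup of restrictions of homomorphisms $F \to M$. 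Thus the left-hand quotient in the statement is precisely $H^1(R;M)^F / \im(\mathrm{res})$, and the transgression provides the desired bijection $\Psi$ abstractly.

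The substance of the proposition, and the part I expect to be the main obstacle, is verifying that the transgression map computed through this particular partial resolution is given by the explicit formula $\Psi([g]) = [\,\textbf{\textsc{r}}_j \mapsto g(r_j)\,]$. To do this I would follow the transgression recipe in Theorem \ref{thm:inflation-restriction}: given an $F$-invariant $g\colon R \to M$, produce a $1$-cochain $f\colon F \to M$ with $f|_R = g$ satisfying the three bulleted conditions, and then identify its coboundary $d^*f$, viewed as a function $Q \times Q \to M$, with the cocycle on $C_2$ that sends the basis element $\textbf{\textsc{r}}_j$ to $g(r_j)$. The bookkeeping here is the standard dictionary between the bar resolution for $Q$ and the presentation resolution: a $2$-cocycle on $C_2 = \bigoplus_j \Z Q\cdot\textbf{\textsc{r}}_j$ is determined by its values on the generators $\textbf{\textsc{r}}_j$, and the boundary map $C_2 \to C_1$ encodes the Fox derivatives of the relators $r_j$, which is exactly what makes $d^*f(\textbf{\textsc{r}}_j)$ come out to $g(r_j)$ once $f$ is chosen to agree with $g$ on $R$. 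I would carry out this identification carefully for the chosen normalization, checking that the result is independent of the choice of $f$ (any two choices differ by the restriction of a homomorphism $F \to M$, which is precisely what is quotiented out on the left) and that $\Psi$ respects the two quotients.

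Finally, I would address naturality: given homomorphisms $\phi\colon Q_1 \to Q_2$ and a compatible map on coefficients (or, in the relevant application, $\phi \in \aut(Q)$ with $M$ fixed), both the presentation-resolution description of $H^2$ and the invariant-homomorphisms description transform functorially, and $\Psi$ intertwines the two actions because the transgression map is natural in the extension $1 \to R \to F \to Q \to 1$ and in $M$. Concretely, an automorphism of $Q$ lifts to an automorphism of $F$ carrying $R$ to $R$, inducing compatible maps on $\Hom(R,M)^F$ and on $\hom_{\Z Q}(C_\bullet, M)$, and one checks the square commutes by comparing on generators $\textbf{\textsc{r}}_j$. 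This naturality is what we actually need downstream for computing the $\aut(\Delta)$-action on $H^2(\Delta; \Z^n)$, so I would state it precisely even though the diagram-chase is routine once the explicit formula for $\Psi$ is in hand.
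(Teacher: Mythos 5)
Your outline follows the paper's proof essentially verbatim: apply Inflation--Restriction to $1 \to R \to F \to Q \to 1$, use $H^2(F;M)=0$ (cohomological dimension $1$ of $F$) to see that transgression identifies the left-hand quotient with $H^2(Q;M)$, then transport this isomorphism to the presentation resolution. The step you defer as ``bookkeeping'' --- identifying the transgressed class with the cochain $\textbf{\textsc{r}}_j \mapsto g(r_j)$ --- is exactly the content the proposition exists to record, and it is where the paper does its real work: it writes down an explicit chain map $\Theta_*$ from the presentation resolution to the bar resolution (sending $\textbf{\textsc{a}}_i \mapsto [a_i]$ and $\textbf{\textsc{r}}_j$ to the triangulated sum $[1|a_{i_1}]+[a_{i_1}|a_{i_2}]+\cdots+[a_{i_1}\cdots a_{i_{l-1}}|a_{i_l}]$), verifies it commutes with the boundaries, and then computes $\Theta\circ\Tr$ on a representative $f$ with $f|_R=g$. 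When you carry this out you will meet a sign your sketch does not anticipate: the telescoping gives $\Theta(\Tr(g))(\textbf{\textsc{r}}_j) = -g(r_j)$ plus the value at $\textbf{\textsc{r}}_j$ of the coboundary $d^*$ of $f$ regarded (via its values on the generators) as an element of $\hom_{\Z Q}(C_1,M)$; so the stated $\Psi$ is $-\Theta\circ\Tr$ rather than $\Theta\circ\Tr$. This is harmless, since negation is a natural automorphism and $\Psi$ remains a natural bijection, but the assertion that the cocycle ``comes out to $g(r_j)$'' on the nose is not literally what the computation yields. Your remarks on independence of the choice of $f$ (two choices differ by the restriction of a homomorphism $F\to M$) and on naturality match the paper's treatment.
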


\begin{proof}
	We use the Inflation-Restriction Exact Sequence from \cref{thm:inflation-restriction}
	for the short exact sequence $1 \to R \to F \xrightarrow{\pi} Q \to 1$
	and $M$ treated as an $F$-module.
	
	Since $F$ is of cohomological dimension $1$, we get $H^2(F; M) = 0$ and so
	the transgression map $\Tr: H^1(R; M)^Q \to H^2(Q; M)$ is surjective
	with $\ker \Tr$ being equal to the image of $H^1(F; M)$ under the restriction map.
	Now, $H^1(R; M)^Q$ is the set of $F$-invariant homomorphisms $R \to M$,
	while $H^1(F; M)$ is the set of all homomorphisms $F \to M$.
	This means that sending $[g: R \to M]$ to $\Tr([g]) \in H^2(Q; M)$
	gives a natural isomorphism
	$$\frac{\big\{F-\text{invariant homs.}\ R \to M\big\}}%
	{\big\{\text{restrictions of homs.}\ F\to M\big\}} \to H^2(Q; M).$$
	
	We now need to compose this natural isomorphism
	with the natural isomorphism
	$$\Theta: H^2(Q; M) \to
	\frac{\ker \big(\hom_{\Z Q}(C_2, M) \to \hom_{\Z Q}(C_3, M)\big)}%
	{\im \big(\hom_{\Z Q}(C_1, M) \to \hom_{\Z Q}(C_2, M)\big)}$$
	coming from computing the cohomology with two different
	(partial) free resolutions of $\Z$ by $\Z Q$-modules:
	the bar resolution and the presentation resolution.
	
	\begin{equation*}
		\begin{tikzcd}
			\bigoplus_{j\in J} \Z Q \cdot \textbf{\textsc{r}}_i \arrow{r} \arrow{d}{\Theta_2}
				& \bigoplus_{i\in I} \Z Q \cdot \textbf{\textsc{a}}_i \arrow{r} \arrow{d}{\Theta_1}
					& \Z Q \arrow{r} \arrow[equal]{d}
						& \Z \arrow[equal]{d} \\
			\bigoplus_{q_1, q_2 \in Q} \Z Q \cdot [q_1|q_2] \arrow{r}
			& \bigoplus_{q\in Q} \Z Q \cdot [q] \arrow{r}
			& \Z Q \cdot [] \arrow{r}
			& \Z 
		\end{tikzcd}
	\end{equation*}
	
	Defining $\Theta_1$ is straightforward:
	we set $\textbf{\textsc{a}}_i \mapsto [a_i]$.
	$\Theta_2$ poses a bigger challenge.
	First let's notice that for a relation $r_j = a_{i_1}a_{i_2}\ldots a_{i_l}$
	we have
	$$d(\textbf{\textsc{r}}_j)
	= \textbf{\textsc{a}}_{i_1} + a_{i_1} \cdot \textbf{\textsc{a}}_{i_2} + \ldots
	+ a_{i_1}a_{i_2}\ldots a_{i_{l-1}} \cdot \textbf{\textsc{a}}_{i_l}.$$
	Let's define $\Theta_2$ by `triangulating the relation disc' as follows.
	$$\Theta_2(\textbf{\textsc{r}}_j)
	= [1|a_{i_1}] + [a_{i_1}| a_{i_2}] + [a_{i_1}a_{i_2}|a_{i_3}] + \ldots
	+ [a_{i_1}a_{i_2}\ldots a_{i_{l-1}} | a_{i_l}]$$
	Then we get
	\begin{align*}
		(d\circ \Theta_2)(\textbf{\textsc{r}}_j)
		& = \left(1 \cdot [a_{i_1}] - [a_{i_1}] + [1]\right)
		+ \left(a_{i_1}\cdot[a_{i_2}] - [a_{i_1}a_{i_2}] + [a_{i_1}]\right) \\
		& + \left(a_{i_1}a_{i_2}\cdot [a_{i_3}] - [a_{i_1}a_{i_2}a_{i_3}] + [a_{i_1}a_{i_2}]\right)
		+ \ldots \\
		& + \left(a_{i_1}a_{i_2}\ldots a_{i_{l-1}}\cdot [a_{i_l}] - [1] + [a_{i_1}a_{i_2}\ldots a_{i_{l-1}}]\right) \\
		& = 1 \cdot [a_{i_1}] + a_{i_1} \cdot [a_{i_2}] + a_{i_1}a_{i_2} \cdot [a_{i_3}] + \ldots \\
		& + a_{i_1}a_{i_2}\ldots a_{i_{l-1}} \cdot [a_{i_l}]
		= (\Theta_1 \circ d)(\textbf{\textsc{r}}_j),
	\end{align*}
	which means that $\Theta_{*}$ is indeed a chain map
	and since it extends an isomorphism $\Z Q \to \Z Q \cdot []$,
	it extends to a chain equivalence.
	
	Now, we need to compute the map $\Theta$.
	Given $h \in C^2(Q; M)$ we get
	$$\Theta\left(h\right)\left(\textbf{\textsc{r}}_j\right)
	= h(1, a_{i_1}) + h(a_{i_1}, a_{i_2}) + \ldots
	+ h(a_{i_1}a_{i_2}\ldots a_{i_{l-1}}, a_{i_l}).$$
	
	Finally, let's compute the composition $\Theta\circ\text{Tr}$.
	Given an $F$-invariant homomorphism $g: R \to M$
	and choosing a $1$-cochain $f: F \to M$
	such that $\Tr([g]) = [(d^*f)]$
	we get the following.
	
	\begin{align*}
		\Theta
		\left(\text{Tr}\left(g\right)\right)\left(\textbf{\textsc{r}}_j\right)
		& = \text{Tr}\left(g\right)(1, a_{i_1}) + \ldots
		+ \text{Tr}\left(g\right)(a_{i_1}a_{i_2}\ldots a_{i_{l-1}}, a_{i_l}) \\
		& = \big(1 \cdot f( a_{i_1}) - f(1\cdot  a_{i_1}) + f(1)\big) \\
		& + \big( a_{i_1} \cdot f( a_{i_2}) - f( a_{i_1} \cdot  a_{i_2}) + f( a_{i_1})\big)  + \ldots \\
		& + \big( a_{i_1} \ldots  a_{i_{l-1}} \cdot f( a_{i_l})
		- f( a_{i_1} \ldots  a_{i_l})
		+ f( a_{i_1} \ldots  a_{i_{l-1}})\big)\\
		& = f(1) - f(r_j) \\
		& + 1 \cdot f( a_{i_1}) +  a_{i_1} \cdot f( a_{i_1}  a_{i_2}) + \ldots
		+ a_{i_1} \ldots  a_{i_{l-1}} \cdot f( a_{i_l})\\
		& = - g(r_j) + (d^*f)(\textbf{\textsc{r}}_j)
	\end{align*}
	Thus we get that $\Psi\big([g]\big) = - \Theta\circ \text{Tr}$.
	Furthermore, since $\Theta$ and $\text{Tr}$ are natural isomorphisms,
	then so is $\Psi$.
\end{proof}

\subsubsection{Cohomology of profinite groups}\label{sssec:profinite_cohomology}

For profinite groups the usual group cohomology doesn't capture the topological data.
However, if we require the appropriate maps to be continuous, we do get valuable information.

It turns out that for reasons coming from homological algebra,
it isn't obvious which topological modules one can allow
to make the `profinite homology and cohomology' well-behaved theories.
To avoid complications,
we follow the definitions of \cite[Section 6.2.]{ribes2000profinite}
specified to finite coefficient modules,
but first we need to define the completed group algebra $\widehat \Z[[\Gamma]]$
for a profinite group $\Gamma$.

\begin{definition}
	Given a profinite group $\Gamma$, we define the \emph{completed group algebra}
	$\widehat{\Z[\Gamma]} = \widehat \Z[[\Gamma]]$ to be the limit
	\begin{equation*}
		\widehat \Z[[\Gamma]] := \varprojlim_{N \in \N,\ U \triangleleft_{o} \Gamma} (\Z / N)[\Gamma / U].
	\end{equation*}
\end{definition}

Now, we need to define two types of topological $\widehat\Z[[\Gamma]]$-modules.

\begin{definition}
	For a profinite group $\Gamma$ a topological $\widehat\Z[[\Gamma]]$-module $M$ is called \emph{discrete} if it has discrete topology and \emph{profinite} if it has profinite topology.
\end{definition}

This allows us to give a definition for the cohomology of profinite groups.
Being of homological algebra nature it is somewhat abstract,
but it ensures all functorial properties that we require from cohomology.

\begin{definition}\label{def:profinite_cohomology}
	For a profinite group $\Gamma$ and a \emph{discrete} $\widehat \Z[[\Gamma]]$-module $M$ we set
	$$\widehat H^k(\Gamma; M) := \Ext_{\widehat \Z[[\Gamma]]}^k(\widehat \Z; M).$$
\end{definition}

In practice, we can compute $\widehat H^k(\Gamma; M)$ by taking a projective resolution of $\widehat \Z$ by profinite $\widehat\Z[[\Gamma]]$-modules of length at least $k+1$, applying the functor $\hom_{\widehat\Z[[\Gamma]]}(-, M)$ and computing the homology of the resulting chain complex.

In principle one can take the analogue of the bar resolution
for the projective resolution required in \cref{def:profinite_cohomology},
but this wouldn't be convenient for calculations in practice.
However, for \emph{good} groups
it turns out to be feasible
to convert a `small' free resolution for $G$
into a `small' free resolution for $\widehat G$.
We describe it in \cref{lem:hat_resolution_from_discrete},
which we take from \cite[Proposition 3.14.]{wilkes2017profinite},
but first we define goodness.

\begin{definition}\label{def:goodness}
	A group $G$ is called \emph{good} if the canonical map
	$$\iota^*\colon  \widehat H^2(\widehat G; M) \to H^2(G; M)$$
	is an isomorphism for any finite $G$-module $M$.
\end{definition}

\begin{lemma}\label{lem:hat_resolution_from_discrete}
	Let $G$ be a good group
	with a partial resolution $(C_i)_{0 \le i \le n}$ of $\Z$
	by finitely generated free $\Z G$-modules
	$C_i = \bigoplus_{j=1}^{l_i}\Z G \cdot \mathbf{x}_{ij}$.
	Then $(\widehat C_i)_{0 \le i \le n}$
	is a partial resolution of $\widehat \Z$ by free $\widehat\Z[[\widehat G]]$-modules,
	where
	$$\widehat C_i = \bigoplus_{j=1}^{l_i}\widehat\Z[[\widehat G]]\cdot \mathbf{x}_{ij}.$$
\end{lemma}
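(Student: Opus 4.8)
The plan is to deduce the statement from the defining property of goodness together with a standard dimension-shifting argument on profinite cohomology. Recall that $(C_i)_{0 \le i \le n}$ being a partial resolution means the complex $0 \to C_n \to \cdots \to C_0 \to \Z \to 0$ is exact, and each $C_i$ is a finitely generated free $\Z G$-module. Completing at each spot produces $\widehat C_i = \widehat\Z[[\widehat G]] \otimes_{\Z G} C_i$, which is a finitely generated free $\widehat\Z[[\widehat G]]$-module; the claim is that the completed complex $\widehat C_n \to \cdots \to \widehat C_0 \to \widehat\Z \to 0$ is again exact (as a complex of profinite modules). Since completion of finitely generated $\Z G$-modules is an exact functor in this range (it corresponds to tensoring with the flat module $\widehat\Z[[\widehat G]]$, or equivalently to an inverse limit over finite quotients of a surjective system of finite groups, which preserves exactness), exactness of the completed complex is essentially automatic at the chain level; the subtle point is only whether this is the "right" notion of resolution, i.e. whether the completed complex actually computes $\widehat H^*(\widehat G; -)$ correctly. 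This is where goodness enters.

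First I would set up the comparison map. Let $(P_\bullet)$ be any genuine projective resolution of $\widehat\Z$ by profinite $\widehat\Z[[\widehat G]]$-modules (for instance the completed bar resolution), and lift the identity on $\widehat\Z$ to a chain map $\widehat C_\bullet \to P_\bullet$ in degrees $\le n$; such a lift exists because the $\widehat C_i$ are projective and the completed complex is exact in the relevant range, and it is unique up to chain homotopy. For any finite $\widehat G$-module $M$ — equivalently, any finite $G$-module, since finite quotients factor through $\widehat G$ — applying $\hom_{\widehat\Z[[\widehat G]]}(-, M)$ gives a map on cohomology of the two complexes, and by the functorial compatibility of the discrete and profinite bar resolutions one checks these agree with $\iota^*\colon \widehat H^k(\widehat G; M) \to H^k(G; M)$ in degrees $\le n-1$ (one loses a degree because a length-$(n{+}1)$ partial resolution computes $H^k$ only for $k \le n-1$). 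Goodness says precisely that $\iota^*$ is an isomorphism in degree $2$; but the standard fact (see e.g. Grunewald–Jenkins–Zalesskii or Serre) is that goodness as defined via $H^2$ propagates to all degrees, so $\iota^*$ is an isomorphism in every degree $k \le n-1$.

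From here I would run the usual argument: a length-$(n{+}1)$ complex $\widehat C_n \to \cdots \to \widehat C_0 \to \widehat\Z$ of projectives which is exact except possibly at $\widehat C_n$ is a partial resolution if and only if it is exact at $\widehat C_n$ as well, and exactness there is detected cohomologically — one shows the map $\hom(\widehat C_{n-1}, M) \to \hom(\widehat C_n, M)$ has the cokernel forced by $\widehat H^{n-1}$ computations, which match those of the (exact) discrete complex via the goodness isomorphisms, hence vanish appropriately; combined with the chain-level exactness already noted, this gives that $\widehat C_\bullet$ is a partial resolution. The main obstacle, and the step worth spelling out carefully, is the propagation of goodness from degree $2$ to higher degrees together with the bookkeeping of exactly which degrees the length-$n$ partial resolution controls — in particular making sure the argument only ever invokes $\iota^*$ in degrees where it is known to be an isomorphism, and handling the top spot $\widehat C_n$ where no cohomology group directly certifies exactness without an auxiliary dimension-shift. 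Everything else — flatness/exactness of completion on finitely generated modules, projectivity of the $\widehat C_i$, and functoriality of the comparison map — is routine.
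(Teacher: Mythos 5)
The paper does not actually prove this lemma: it is imported verbatim from Wilkes \cite[Proposition 3.14]{wilkes2017profinite}, so there is no in-paper argument to compare with. Judged on its own terms, your proposal has a genuine gap, and it sits exactly at the step you dismiss as routine. You claim that exactness of the completed complex is ``essentially automatic at the chain level'' because $\widehat\Z[[\widehat G]]$ is flat over $\Z G$ (equivalently, because the completion is an inverse limit of a surjective system of finite objects). Neither justification is correct: $\widehat\Z[[\widehat G]]$ is not flat over $\Z G$ in general, and exactness of completion can genuinely fail because the topology induced on kernels and images need not be their full profinite topology. In fact, if chain-level exactness of $\widehat C_\bullet \to \widehat\Z$ held for every group with a finite-type partial free resolution, then goodness would be a \emph{consequence} rather than a hypothesis: for finite $M$ one has $\hom_{\widehat\Z[[\widehat G]]}(\widehat C_i, M)\cong\hom_{\Z G}(C_i, M)$, so an exact $\widehat C_\bullet$ forces $\widehat H^k(\widehat G;M)\cong H^k(G;M)$ in the relevant range. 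This contradicts the existence of non-good groups of type $FP_\infty$ (for example $\SL_n(\Z)$ for $n\ge 3$). So the exactness of $\widehat C_\bullet$ is precisely the content of the lemma, it is exactly where goodness must be used, and it cannot be assumed.

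The remainder of the argument does not repair this, because it is circular: lifting the identity to a chain map $\widehat C_\bullet \to P_\bullet$ that induces $\iota^*$, and reading off cohomology from $\widehat C_\bullet$, already presupposes that $\widehat C_\bullet$ is exact in the relevant degrees; conversely, once exactness is known, the fact that $\widehat C_\bullet$ computes $\widehat H^*(\widehat G;-)$ is standard homological algebra and needs no goodness at all --- you have located the difficulty in the wrong place. Two further problems: the assertion that goodness in degree $2$ ``propagates to all degrees'' is not a standard fact (Serre's goodness is a degreewise condition, and no such propagation holds in general; the cited source works with goodness in all degrees), and a partial resolution requires exactness only at $\widehat C_{n-1},\ldots,\widehat C_0,\widehat\Z$, not at the top module, so the worry about certifying exactness ``at $\widehat C_n$'' is misplaced. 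A correct proof must run in the opposite direction: use goodness, by testing the completed complex against finite modules (e.g.\ via Pontryagin duality and an induction up the resolution), to \emph{establish} its exactness, which is how the result is obtained in the reference the paper cites.
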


Finally, we need to note that
among the mentioned `functorial properties we require from cohomology'
are the following two.

Firstly, Inflation-Restriction Exact Sequence still holds
for (continuous) exact sequences of profinite groups.

Secondly, given a profinite group $\Gamma$ and a \emph{finite} group $M$,
the group $\widehat H^2(\Gamma; M)$ is in natural bijection
with the equivalence classes of \emph{profinite} extensions of $M$ by $\Gamma$
-- these are the short exact sequences
$$0 \to M \xrightarrow{\iota} E \xrightarrow{\pi} \Gamma \to 1$$
where $E$ is a profinite group
and the maps $\iota$ and $\pi$ are continuous homomorphisms.
Analogously, we define \emph{equivalence} and \emph{similarity} of profinite extensions
by requiring that the isomorphisms in \cref{def:equivalent_similar_extensions} be continuous.

\subsubsection{Cohomology of nice $2$-orbifold groups}\label{sssec:cohomology_computed}

In this section we compute the cohomology groups
for nice $2$-orbifold groups,
taking coefficient modules with trivial action.

We base our computation
on a partial free resolution of length $3$
given in \cite[Proposition 6.6.]{wilkes2017profinite}.

\begin{proposition}\label{prop:presentation_complex_extended}
	The cellular chain complex
	for the Cayley complex $C_\mathcal{P} = \widetilde{K_{\mathcal{P}}}$
	of the presentation~(\ref{eq:presentation}) for a nice $2$-orbifold group
	\begin{equation*}
		\begin{tikzcd}[column sep=1em]
			\underbrace{\Z \Delta\{\textsc{\textbf{r}}_0, \ldots, \textsc{\textbf{r}}_m\}}_{C_2} \arrow{r}{d}
			& \underbrace{\Z \Delta\{\textsc{\textbf{x}}_1, \ldots, \textsc{\textbf{y}}_g, \textsc{\textbf{a}}_1, \ldots, \textsc{\textbf{a}}_m\}}_{C_1} \arrow{r}{d}
			& \underbrace{\Z \Delta}_{C_0} \arrow{r}{d}
			& \Z
		\end{tikzcd}
	\end{equation*}
	can be resolved one dimension further
	by setting
	$C_3 = \Z \Delta \{\textsc{\textbf{z}}_1, \ldots, \textsc{\textbf{z}}_m\}$
	where the map $d\colon C_3 \to C_2$ is defined as
	$$d(\textsc{\textbf{z}}_i) = (a_i - 1)\cdot \textsc{\textbf{r}}_i.$$
\end{proposition}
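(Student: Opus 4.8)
The plan is to check that the displayed complex, augmented by $d\colon C_3\to C_2$, is exact at $C_2$; exactness at $C_1$ and $C_0$ already holds because the Cayley complex $C_{\mathcal P}$ is simply connected. Equivalently, I must show $\im\big(d|_{C_3}\big)=\ker\big(d|_{C_2}\big)$ (note that this kernel is $H_2(C_{\mathcal P})$, i.e.\ $\pi_2$ of the presentation complex). Set $N_i:=1+a_i+\cdots+a_i^{p_i-1}\in\Z\Delta$; since $a_i^{p_i}=1$ we have $a_iN_i=N_i$, hence $(a_i-1)N_i=0$. The cellular boundary $d\colon C_2\to C_1$ is given by Fox derivatives: $d(\textsc{\textbf{r}}_i)=N_i\,\textsc{\textbf{a}}_i$ for $i\ge1$ (the relator $a_i^{p_i}$ involves only $a_i$), while
\[
d(\textsc{\textbf{r}}_0)=\sum_{j}\tfrac{\partial r_0}{\partial x_j}\,\textsc{\textbf{x}}_j+\sum_{j}\tfrac{\partial r_0}{\partial y_j}\,\textsc{\textbf{y}}_j+\sum_{i=1}^{m}P_i\,\textsc{\textbf{a}}_i ,
\]
where $P_i\in\Z\Delta$ is the image of the prefix of $r_0$ preceding $a_i$, so that $P_ia_i=P_{i+1}$ and $P_{m+1}$ is the image of all of $r_0$, namely $1$. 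The inclusion $\im(d|_{C_3})\subseteq\ker(d|_{C_2})$ is then immediate, since $d\big((a_i-1)\textsc{\textbf{r}}_i\big)=(a_i-1)N_i\,\textsc{\textbf{a}}_i=0$.

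For the reverse inclusion, take $\omega=\sum_{i=0}^{m}\mu_i\,\textsc{\textbf{r}}_i$ with $d(\omega)=0$ and compare coefficients in $C_1$: only $\mu_0$ contributes to the $\textsc{\textbf{x}}_j$- and $\textsc{\textbf{y}}_j$-coefficients, while the $\textsc{\textbf{a}}_i$-coefficient reads $\mu_0P_i+\mu_iN_i=0$ for $i=1,\dots,m$. The crux — and what I expect to be the only real obstacle — is to deduce $\mu_0=0$. If the genus is $g\ge1$, I read this off the $\textsc{\textbf{x}}_1$-coefficient: $\partial r_0/\partial x_1=1-x_1y_1x_1^{-1}$ in $\Z\Delta$, so $\mu_0\,(1-x_1y_1x_1^{-1})=0$; but $x_1y_1x_1^{-1}$ is conjugate to $y_1$, which has infinite order in $\Delta$ (it maps to a generator of $\Z$ under the homomorphism sending $y_1\mapsto1$ and all other generators to $0$), and an element of $\Z\Delta$ with finite support that is fixed under right multiplication by an infinite-order element must vanish, so $\mu_0=0$. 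If $g=0$ then $r_0=a_1\cdots a_m$, so $P_1=P_{m+1}=1$; multiplying $\mu_iN_i=-\mu_0P_i$ on the right by $a_i$ and using $N_ia_i=N_i$, $P_ia_i=P_{i+1}$ gives $\mu_0P_i=\mu_0P_{i+1}$, and telescoping around the cycle $P_1,\dots,P_{m+1}=P_1$ yields $\mu_0P_i=\mu_0$ for all $i$, whence $\mu_0a_i=\mu_0P_ia_i=\mu_0P_{i+1}=\mu_0$; as $a_1,\dots,a_m$ generate the infinite group $\Delta$, again $\mu_0=0$. (When $m=0$ we have $g\ge1$ and $C_3=0$, and the claim is the classical asphericity of the closed-surface presentation.)

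With $\mu_0=0$ the relations collapse to $\mu_iN_i=0$ for $i=1,\dots,m$. Viewing $\Z\Delta$ as a free right $\Z[\langle a_i\rangle]$-module via a transversal of the left cosets of $\langle a_i\rangle$, and using that the annihilator of $N_i$ in $\Z[\Z/p_i]$ is the ideal generated by $a_i-1$ (exactness of the periodic resolution $\cdots\to\Z[\Z/p_i]\xrightarrow{a_i-1}\Z[\Z/p_i]\xrightarrow{N_i}\Z[\Z/p_i]\to\cdots$), we get $\mu_i=\nu_i(a_i-1)$ for some $\nu_i\in\Z\Delta$. Therefore
\[
\omega=\sum_{i=1}^{m}\nu_i(a_i-1)\,\textsc{\textbf{r}}_i=d\Big(\sum_{i=1}^{m}\nu_i\,\textsc{\textbf{z}}_i\Big)\in\im(d|_{C_3}),
\]
so $\ker(d|_{C_2})=\im(d|_{C_3})$ and the resolution extends one step further. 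Beyond the vanishing of $\mu_0$, the remaining ingredients — the precise shape of $d(\textsc{\textbf{r}}_0)$ via Fox calculus and the annihilator computation over $\Z[\Z/p_i]$ — are routine.
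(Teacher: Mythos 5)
Your proof is correct, but note that the paper itself offers no argument for this proposition at all: it is imported verbatim from Wilkes (cited as Proposition 6.6 of \emph{wilkes2017profinite}), so there is no in-paper proof to measure you against. What you have written is a legitimate self-contained derivation: you identify $d\colon C_2\to C_1$ with the Fox-derivative matrix, observe that exactness at $C_1$ and $C_0$ is automatic from simple connectivity of the Cayley complex, check $d\circ d=0$ via $(a_i-1)N_i=0$, and then prove exactness at $C_2$ by showing any cycle $\sum_i \mu_i\mathbf{r}_i$ has $\mu_0=0$ and $\mu_i\in \Z\Delta\,(a_i-1)$. The two pivotal steps are sound: the vanishing of $\mu_0$ (for $g\ge 1$ from the $\mathbf{x}_1$-coefficient $\mu_0(1-x_1y_1x_1^{-1})=0$ together with the fact that a finitely supported element of $\Z\Delta$ fixed by right multiplication by an infinite-order element is zero, and for $g=0$ from the telescoping of the prefixes $P_i$ and the fact that $\Delta$ is infinite -- this is where niceness is genuinely used), and the annihilator computation, where the decomposition of $\Z\Delta$ as a free right $\Z[\langle a_i\rangle]$-module over a left-coset transversal correctly reduces the problem to the augmentation ideal of $\Z[\Z/p_i]$. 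The only implicit input worth flagging is that $a_i$ has order exactly $p_i$ in $\Delta$ (true for closed $2$-orbifold groups, and in any case used implicitly elsewhere in the paper, e.g.\ in the computation of $H^2$); even without it your augmentation-ideal argument would survive, since the annihilator of $N_i$ is the augmentation ideal of $\Z[\langle a_i\rangle]$ regardless. So your route -- direct Fox calculus plus an algebraic computation of $\pi_2$ of the presentation complex -- effectively reproves the cited result of Wilkes rather than paralleling anything in this paper, and it would serve as a proof if one wished to make the paper self-contained on this point.
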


Because the extensions we study are central,
it will prove useful to look at the chain complex
in \cref{prop:presentation_complex_extended}
after applying the functor $\Z \otimes_{\Z \Delta}(-)$.

\begin{proposition}\label{prop:presentation_complex_after_trivial_action}
	Applying the functor
	$$\Z \otimes_{\Z \Delta} (-)\colon
	\Z \Delta\text{-chain complexes} \to \Z\text{-chain complexes}$$
	to the complex $C_3 \to C_2 \to C_1 \to C_0$
	from \cref{prop:presentation_complex_extended}
	gives commutative diagram~\ref{eq:after_trivial_action}.
	The maps $D_3 \to D_2$ and $D_1 \to D_0$ are zero,
	and the map ${d\colon D_2 \to D_1}$ is given by
	$$\mathbf{r}_0 \mapsto \sum_{i=1}^m \mathbf{a}_i\quad
	\text{and}\quad \mathbf{r}_i \mapsto p_i \cdot \mathbf{a}_i.$$
	
	\begin{equation}\label{eq:after_trivial_action}
		\begin{tikzcd}[column sep=1em]
			\underbrace{\Z \Delta\{\textsc{\textbf{z}}_1, \ldots, \textsc{\textbf{z}}_m\}}_{C_3} 
				\arrow{r}{d}
				\arrow[swap]{d}{1\otimes (-)}
			& \underbrace{\Z \Delta\{\textsc{\textbf{r}}_0, \ldots, \textsc{\textbf{r}}_m\}}_{C_2} 
				\arrow{r}{d}
				\arrow[swap]{d}{1\otimes (-)}
			& \underbrace{\Z \Delta\{\textsc{\textbf{x}}_1, \ldots, \textsc{\textbf{a}}_m\}}_{C_1} 
				\arrow{r}{d}
				\arrow[swap]{d}{1\otimes (-)}
			& \underbrace{\Z \Delta}_{C_0}
				\arrow[swap]{d}{1\otimes (-)} \\
			\underbrace{\Z\{\mathbf{z}_1, \ldots, \mathbf{z}_m\}}_{D_3} \arrow{r}{0} & \underbrace{\Z\{\mathbf{r}_0, \ldots, \mathbf{r}_m\}}_{D_2}\arrow{r}{d} & \underbrace{\Z\{\mathbf{x}_1, \ldots, \mathbf{a}_m\}}_{D_1}\arrow{r}{0} & \underbrace{\Z}_{D_0}
		\end{tikzcd}
	\end{equation}
\end{proposition}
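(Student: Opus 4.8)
The plan is to apply the right-exact functor $\Z \otimes_{\Z\Delta}(-)$ to the resolution of Proposition \ref{prop:presentation_complex_extended} and to compute the induced boundary maps degree by degree. First I would recall that, for a free $\Z\Delta$-module $\Z\Delta\{\mathbf{e}_1,\ldots,\mathbf{e}_k\}$, one has $\Z\otimes_{\Z\Delta}\Z\Delta\{\mathbf{e}_1,\ldots,\mathbf{e}_k\} \cong \Z\{e_1,\ldots,e_k\}$ via $1\otimes(g\cdot\mathbf{e}_j)\mapsto e_j$ for every $g\in\Delta$; this identifies each $D_i$ as claimed and gives the vertical maps $1\otimes(-)$ in diagram~\eqref{eq:after_trivial_action}. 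The key principle throughout is that under this identification a group element $g\in\Delta$ acting on a basis element is sent to $1$, so in the Fox-derivative expressions for the boundary maps every monomial in $\Delta$ becomes $1$; thus each coefficient $\sum_g n_g\, g \in \Z\Delta$ collapses to its augmentation $\sum_g n_g \in \Z$.

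Next I would carry out the three boundary computations. For $d\colon C_1\to C_0$, the classical formula gives $d(\textbf{\textsc{x}}_i)=(x_i-1)\cdot[\,]$, $d(\textbf{\textsc{y}}_i)=(y_i-1)\cdot[\,]$, $d(\textbf{\textsc{a}}_i)=(a_i-1)\cdot[\,]$, and each coefficient $g-1$ has augmentation $0$, so $D_1\to D_0$ is the zero map. For $d\colon C_2\to C_1$, the relator $r_0 = \Pi_{i=1}^g[x_i,y_i]\,a_1\cdots a_m$ has Fox derivatives whose augmentations are: $0$ for each $x_i$ and $y_i$ (since $\partial[x_i,y_i]/\partial x_i$ and $\partial[x_i,y_i]/\partial y_i$ have augmentation $0$, the commutator being a product of commutators), and $1$ for each $a_i$ (the relator reads $\cdots a_i\cdots$ exactly once with the commutator prefix having augmentation $1$), so $\mathbf{r}_0\mapsto\sum_{i=1}^m\mathbf{a}_i$; the relator $r_i=a_i^{p_i}$ gives $d(\textbf{\textsc{r}}_i)=(1+a_i+\cdots+a_i^{p_i-1})\cdot\textbf{\textsc{a}}_i$, whose coefficient has augmentation $p_i$, hence $\mathbf{r}_i\mapsto p_i\cdot\mathbf{a}_i$. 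Finally, for $d\colon C_3\to C_2$ we are told $d(\textbf{\textsc{z}}_i)=(a_i-1)\cdot\textbf{\textsc{r}}_i$, and the coefficient $a_i-1$ has augmentation $0$, so $D_3\to D_2$ is zero. Commutativity of each square in \eqref{eq:after_trivial_action} is then immediate since tensoring a chain map with $\Z$ yields a chain map, which is exactly the naturality of $1\otimes(-)$.

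I do not expect a serious obstacle here: the only mild subtlety is writing down the Fox derivatives of $r_0$ correctly and checking that the commutator prefix $\Pi[x_i,y_i]$ contributes augmentation $1$ (it lies in the commutator subgroup, so its image in $\Z$ is $1$) and contributes $0$ to each $\partial/\partial x_i$ and $\partial/\partial y_i$ after augmentation (each such derivative is a sum of terms $\pm(\text{word})(1-\text{word})$, which augment to $0$). Everything else is the routine observation that augmentation kills $g-1$ and sends the $p_i$-term telescoping sum to $p_i$. I would therefore present the argument as: identify the modules, invoke naturality for commutativity, and record the three augmented Fox-derivative computations.
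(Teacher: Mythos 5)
Your proposal is correct and follows essentially the same route as the paper: identify $\Z\otimes_{\Z\Delta}$ of each free module, observe that augmentation kills coefficients of the form $g-1$ (giving the zero maps in degrees $3\to 2$ and $1\to 0$), and read off the degree-$2$ boundary from the relators. The paper's own proof is just terser, explicitly checking only the vanishing maps and leaving the augmented Fox-derivative computation of $d\colon D_2\to D_1$ implicit, which you spell out correctly.
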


\begin{proof}
	The boundary maps $D_3 \to D_2$ and $D_1 \to D_0$ in the bottom row
	became zero maps,
	because $d(\textbf{\textsc{z}}_i) = (a_i -1)\cdot \textbf{\textsc{r}}_i$
	and $1 \otimes (a_i - 1) \cdot \textbf{r}_i = 0\cdot \textbf{r}_i$,
	and similarly for the map in degree $1$.
\end{proof}

It is worth noting
that the map
$d\colon  \Z\{\textbf{r}_0, \ldots, \textbf{r}_m\}
\to \Z\{\textbf{x}_1, \ldots, \textbf{a}_m\}$
is described by the matrix~(\ref{eq:matrix_chain}).
\begin{equation}\label{eq:matrix_chain}
	\begin{pmatrix}
		0		&	0		&	0		&	\dots	&	0		\\
		\vdots	&	\vdots	&	\vdots	&	\ddots	&	\vdots	\\
		0		&	0		&	0		&	\dots	&	0		\\
		1		&	p_1		&	0		&	\dots	&	0		\\
		1		&	0 		&	p_2		&	\dots	&	0		\\
		\vdots	&	\vdots	&	\vdots	&	\ddots	&	\vdots	\\
		1		&	0		&	0		&	\dots	&	p_m		\\
	\end{pmatrix}
\end{equation}

\begin{proposition}\label{prop:H2_of_nice_orbifolds}
	Given a nice $2$-orbifold group $\Delta$
	with presentation~(\ref{eq:presentation}),
	the second cohomology group $H^2(\Delta; M)$ of $\Delta$
	with trivial coefficient module $M$
	is 
	\begin{equation}\label{eq:H2_explicit}
		H^2(\Delta; M)
		\cong \bigoplus_{i=1}^{m}M \cdot \mathbf{r}_i^*
		/ (\mathbf{r}_0^* + p_i \cdot \mathbf{r}_i^*\ \text{for}\ i = 1, \ldots, m).
	\end{equation}
\end{proposition}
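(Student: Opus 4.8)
I would compute $H^2(\Delta; M)$ directly from the partial free resolution supplied in \cref{prop:presentation_complex_extended}, using the observation in \cref{prop:presentation_complex_after_trivial_action} that, since $M$ is a trivial $\Delta$-module, the cochain complex $\hom_{\Z\Delta}(C_\bullet, M)$ is naturally identified with $\hom_\Z(D_\bullet, M)$. Thus $H^2(\Delta; M)$ is the cohomology at the middle spot of
\begin{equation*}
	\hom_\Z(D_1, M) \xrightarrow{d^*} \hom_\Z(D_2, M) \xrightarrow{0} \hom_\Z(D_3, M),
\end{equation*}
where the second map is zero because $d\colon D_3 \to D_2$ is zero. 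Hence $H^2(\Delta; M) = \coker\bigl(d^*\colon \hom_\Z(D_1, M) \to \hom_\Z(D_2, M)\bigr)$, with no kernel condition to impose on the right.

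\textbf{Carrying it out.} First I would write $\hom_\Z(D_2, M) = \bigoplus_{i=0}^m M\cdot \mathbf{r}_i^*$ and $\hom_\Z(D_1, M) = \bigl(\bigoplus_i M\cdot \mathbf{x}_i^*\bigr) \oplus \bigl(\bigoplus_i M \cdot \mathbf{y}_i^*\bigr) \oplus \bigl(\bigoplus_i M \cdot \mathbf{a}_i^*\bigr)$. Next I would transpose the matrix~(\ref{eq:matrix_chain}) to get $d^*$: since $d\colon D_2 \to D_1$ sends $\mathbf r_0 \mapsto \sum_i \mathbf a_i$ and $\mathbf r_i \mapsto p_i \mathbf a_i$, the dual map sends $\mathbf x_i^*, \mathbf y_i^* \mapsto 0$, sends $\mathbf a_i^* \mapsto \mathbf r_0^* + p_i\,\mathbf r_i^*$, and is identically zero on the generators dual to $\mathbf x_i, \mathbf y_i$. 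Therefore the image of $d^*$ is exactly the subgroup of $\bigoplus_{i=0}^m M\cdot\mathbf r_i^*$ generated by $\{\mathbf r_0^* + p_i\,\mathbf r_i^* : i = 1,\ldots,m\}$. Quotienting, I would note that in the cokernel $\mathbf r_0^*$ is expressed (via any one relation) in terms of the $\mathbf r_i^*$, so we may eliminate the $\mathbf r_0^*$ summand, and what remains is exactly
\begin{equation*}
	H^2(\Delta; M) \cong \Bigl(\bigoplus_{i=1}^m M\cdot \mathbf r_i^*\Bigr) \Big/ \bigl(\mathbf r_0^* + p_i\,\mathbf r_i^*\ \text{for}\ i=1,\ldots,m\bigr),
\end{equation*}
which is the claimed form~(\ref{eq:H2_explicit}) (the notation keeping $\mathbf r_0^*$ in the relations is just shorthand for the identification after eliminating that coordinate). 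Naturality in $M$ is immediate since every identification used is functorial in $M$.

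\textbf{Main obstacle.} There is no serious obstacle: the only thing to be careful about is that the resolution of \cref{prop:presentation_complex_extended} really does extend the presentation resolution one step further — but that is exactly what that proposition asserts, so I may quote it. The one place warranting a sentence of care is the bookkeeping of the cokernel presentation: the ``$\mathbf r_0^*$'' appearing in the relations of~(\ref{eq:H2_explicit}) must be read correctly (the $m$ relations share the single generator $\mathbf r_0^*$, which is why the answer is not simply $\bigoplus_i M/p_i M$ but rather a group fitting in the exact sequence $0 \to M \to \bigoplus_{i=1}^m M \to H^2(\Delta;M) \to 0$ coming from eliminating $\mathbf r_0^*$ differently). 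I would state the answer in the form~(\ref{eq:H2_explicit}) as given, since that is the presentation the later matrix computations in \cref{sec:matrix_correspondence} are set up to use.
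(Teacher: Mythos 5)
Your proof is correct and is essentially the paper's own argument: both pass from the resolution of \cref{prop:presentation_complex_extended} through the factorisation of \cref{prop:presentation_complex_after_trivial_action}, observe that the outgoing differential $\hom(D_2,M)\to\hom(D_3,M)$ vanishes, and identify $H^2(\Delta;M)$ with $\coker\bigl(d^*\colon\hom_{\Z}(D_1,M)\to\hom_{\Z}(D_2,M)\bigr)$, whose image is generated by the elements $m\cdot\mathbf{r}_0^*+p_i\cdot m\cdot\mathbf{r}_i^*$. One caveat: the parenthetical exact sequence in your final paragraph is misstated. Eliminating $\mathbf{r}_0^*$ exhibits $H^2(\Delta;M)$ as a quotient of $\bigoplus_{i=1}^m M$ by the subgroup $\bigl\{\sum_{i} p_i n_i\cdot\mathbf{r}_i^*\ :\ n_i\in M,\ \sum_i n_i=0\bigr\}$, which for $M=\Z^n$ is free of rank $n(m-1)$ rather than a copy of $M$; the short exact sequence involving a single copy of $M$ has $H^2$ in the middle, namely $0\to M\to H^2(\Delta;M)\to\bigoplus_{i=1}^m M/p_iM\to 0$ (compare \cref{prop:quotient_of_A}). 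Since this aside is not load-bearing, the proof itself stands as written.
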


\begin{proof}
	Applying functor $\hom_{\Z\Delta}(-, M)$
	to the partial resolution
	$${C_3 \to C_2 \to C_1 \to C_0}$$
	from~\cref{prop:presentation_complex_extended}
	factors through the functor $\Z \otimes_{\Z \Delta} (-)$,
	as shown in \cref{prop:presentation_complex_after_trivial_action},
	so we only need to apply the functor $\hom_{\Z}(-, M)$
	to the chain complex ${D_3 \to D_2 \to D_1 \to D_0}$
	of \cref{prop:presentation_complex_after_trivial_action}.
	This gives us the following chain complex.
	\begin{equation*}
		\begin{tikzcd}[column sep=1em]
			\underbrace{M\{\textbf{z}_1^*, \ldots, \textbf{z}_m^*\}}_{\hom(D_3, M)} & \underbrace{M\{\textbf{r}_0^*, \ldots, \textbf{r}_m^*\}}_{\hom(D_2, M)}\arrow{l}{0} & \underbrace{M\{\textbf{x}_1^*, \ldots, \textbf{a}_m^*\}}_{\hom(D_1, M)}\arrow{l}{d^*} & \underbrace{M}_{\hom(D_0, M)} \arrow{l}{0}
		\end{tikzcd}
	\end{equation*}
	Now,
	the kernel of the map $d^*: \hom(D_2, M) \to \hom(D_3, M)$
	is the whole domain $\hom(D_2, M)$
	and so $H^2(\Delta; M)$ is the cokernel of $d^*$.
	Since all of $\mathbf{x}_i^*$ and $\mathbf{y}_i^*$ are mapped to zero,
	this cokernel is precisely
	$$\bigoplus_{i=1}^{m}M \cdot \mathbf{r}_i^* 
	/ (\mathbf{r}_0^* + p_i \cdot \mathbf{r}_i^*\ \text{for}\ i = 1, \ldots, m).$$
\end{proof}

Now we can compute the action
$\aut(M)\times\aut(\Delta)\actson H^2_\mathcal{P}(\Delta; M)$,
as introduced in \cref{prop:actions_commute}.

\begin{proposition}\label{prop:action_on_discrete_H2}
	Let $\Phi \in \aut(M)$ and $\phi\in \aut(\Delta)$ be automorphisms,
	and ${[\zeta] = [\sum_{i=0}^{m} m_i \cdot \mathbf{r}_i^*]}$
	be an element of $H^2(\Delta; M)$.
	Then
	$$\Phi \cdot [\zeta] = [\sum_{i=0}^{m} \Phi (m_i) \cdot \mathbf{r}_i^*]$$
	and
	$$\phi^{-1} \cdot [\zeta] = \phi^*([\zeta])
	= \kappa \cdot [m_0 \cdot \mathbf{r}_0^* + \sum m_i \cdot \mathbf{r}_{\sigma(i)}],$$
	where $\phi(a_i) =\ ^{g_i}\big(a_{\sigma(i)}^{k_i}\big)$
	and $\kappa \in \Z$ is such that $\kappa \equiv k_i \mod p_i$.
	
	Furthermore if $M \cong \Z^r\oplus\ T$ with $r\ge 1$ and $T$ being torsion,
	then $\kappa \in \Z^\times = \{\pm 1\}$.
\end{proposition}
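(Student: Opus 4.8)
The plan is to treat the two actions separately, which suffices since they commute by \cref{prop:actions_commute}. The $\aut(M)$-action is immediate: in the computation of $H^2(\Delta;M)$ in \cref{prop:H2_of_nice_orbifolds} the module $M$ enters only through the functor $\hom_\Z(-,M)$, so $\Phi_*$ is post-composition with $\Phi$ and acts coordinate-wise on $\bigoplus_i M\cdot\mathbf{r}_i^*$; alternatively apply \cref{prop:from_Loh_discrete} with $Q_1 = Q_2 = \Delta$ and $\phi = \mathrm{id}$. This gives $\Phi\cdot[\zeta] = [\sum_i \Phi(m_i)\,\mathbf{r}_i^*]$, so it remains to compute the $\aut(\Delta)$-action.

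For $\phi\in\aut(\Delta)$ I would use the explicit Cohomological Hopf's Formula of \cref{prop:Hopfs_formula} for the resolution of \cref{prop:presentation_complex_extended}. Let $F$ be the free group on $x_1,\dots,y_g,a_1,\dots,a_m$ and $R$ the normal closure of $r_0 = \prod[x_i,y_i]\,a_1\cdots a_m$ and $r_i = a_i^{p_i}$. Under the bijection $\Psi$, the class $[\zeta] = [\sum_{j=0}^m m_j\mathbf{r}_j^*]$ corresponds to the class of a homomorphism $g\colon R\to M$ with $g(r_j) = m_j$. By \cref{sssec:aut_torsion_orbifolds}, $\phi(a_i) = {}^{g_i}(a_{\sigma(i)}^{k_i})$; since $\phi$ is an automorphism it preserves the orders of torsion elements, and running through the cycles of $\sigma$ forces $p_i = p_{\sigma(i)}$ and $\gcd(k_i,p_i) = 1$. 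Choose lifts $\hat g_i\in F$ of the $g_i$ and any lift $\tilde\phi\colon F\to F$ of $\phi$ with $\tilde\phi(a_i) = \hat g_i\,a_{\sigma(i)}^{k_i}\,\hat g_i^{-1}$. By naturality of $\Psi$ --- equivalently of the transgression in the Inflation--Restriction sequence for $1\to R\to F\to\Delta\to1$ --- the action of $\phi^{-1}$, which is $\phi^*$, is identified under $\Psi$ with precomposition $g\mapsto g\circ\tilde\phi|_R$; hence $\phi^{-1}\cdot[\zeta] = [\sum_j g(\tilde\phi(r_j))\,\mathbf{r}_j^*]$ and everything reduces to evaluating $g$ on $\tilde\phi(r_j)\in R$.

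For $j = i\ge 1$ this is short: since $p_i = p_{\sigma(i)}$ we have $\tilde\phi(r_i) = \hat g_i\,a_{\sigma(i)}^{k_ip_i}\,\hat g_i^{-1} = \hat g_i\,r_{\sigma(i)}^{k_i}\,\hat g_i^{-1}$, a conjugate of a power of a relator, and as $M$ is a trivial module the $F$-invariant homomorphism $g$ is conjugation-invariant, so $g(\tilde\phi(r_i)) = k_i\,m_{\sigma(i)}$. For $j = 0$ one need not know $\tilde\phi$ on the hyperbolic generators: abelianising inside $F$ kills the commutator part of $\tilde\phi(r_0)$, so in $R/[F,R]$ --- which is generated by $\bar r_0,\dots,\bar r_m$, with $R/[F,R]\to F^{\mathrm{ab}}$ having kernel the copy of $H_2(\Delta;\Z)\cong\Z$ --- the class of $\tilde\phi(r_0)$ is $\sum_j c_j\bar r_j$ with integers $c_j$ satisfying $c_0 + c_ip_i = k_{\sigma^{-1}(i)}$ for $i\ge 1$ (the $\bar a_i$-coefficient of $\tilde\phi(r_0)$ in $F^{\mathrm{ab}}$). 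Reindexing and using $p_i = p_{\sigma(i)}$, this forces $c_0\equiv k_i\pmod{p_i}$ for every $i$, so $\kappa := c_0$ is an integer with the asserted congruences, and $g(\tilde\phi(r_0)) = \kappa\,m_0 + \sum_{i\ge1}\tfrac{k_{\sigma^{-1}(i)}-\kappa}{p_i}\,m_i$. Substituting these values into $[\sum_j g(\tilde\phi(r_j))\mathbf{r}_j^*]$ and using the relation $\mathbf{r}_0^* + p_i\mathbf{r}_i^* = 0$ of~(\ref{eq:H2_explicit}) to rewrite $k_i m_{\sigma(i)}\mathbf{r}_i^* = \kappa m_{\sigma(i)}\mathbf{r}_i^* - \tfrac{k_i-\kappa}{p_i}m_{\sigma(i)}\mathbf{r}_0^*$, the correction terms in the $\mathbf{r}_0^*$-slot cancel after the substitution $j=\sigma(i)$, leaving $\phi^{-1}\cdot[\zeta] = \kappa\,[m_0\mathbf{r}_0^* + \sum_i m_{\sigma(i)}\mathbf{r}_i^*]$, which is the asserted formula.

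Finally, suppose $M\cong\Z^r\oplus T$ with $r\ge 1$. Then $H^2(\Delta;M)$ is finitely generated abelian and $H^2(\Delta;M)\otimes\Q\cong\Q^r\ne 0$ (clearing denominators via the relations, every class is of the form $[v\,\mathbf{r}_0^*]$), and the formula just proved shows $\phi^{-1}\cdot$ acts on $H^2(\Delta;M)\otimes\Q$ as the scalar $\kappa$. Since $\phi^{-1}\cdot$ is an automorphism of $H^2(\Delta;M)$, it acts on the free part $H^2(\Delta;M)/\mathrm{torsion}\cong\Z^r$ by an invertible integer matrix, which must therefore be $\kappa\cdot\mathrm{Id}_r$; hence $\kappa^r = \pm1$ and so $\kappa\in\{\pm1\}$. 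The one step that needs genuine care is the $\mathbf{r}_0^*$-coordinate: pinning down the class of $\tilde\phi(r_0)$ in $R/[F,R]$ without information on $\tilde\phi$ restricted to the $x_i,y_i$, and then verifying that the correction terms cancel in $H^2(\Delta;M)$.
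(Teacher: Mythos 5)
Your proof is correct and follows essentially the same route as the paper's: naturality of the Hopf formula applied to a lift $\tilde\phi$ with $\tilde\phi(a_i)={}^{\hat g_i}a_{\sigma(i)}^{k_i}$, the congruence $\kappa\equiv k_i \bmod p_i$ extracted by comparing against the abelianization (the paper obtains the same constraint $k_i=\kappa+p_i\mu_i$ by commuting the induced chain map with the boundary $D_2\to D_1$, which is the same bookkeeping as your $R/[F,R]\to F^{\mathrm{ab}}$), and then cancellation of the correction terms against the relations $\mathbf{r}_0^*+p_i\mathbf{r}_i^*$ in $H^2(\Delta;M)$. The only variations are minor: your final formula reads $\kappa[m_0\mathbf{r}_0^*+\sum_i m_{\sigma(i)}\mathbf{r}_i^*]$ where the paper writes $\kappa[m_0\mathbf{r}_0^*+\sum_i m_i\mathbf{r}_{\sigma(i)}^*]$ (a $\sigma$ versus $\sigma^{-1}$ indexing discrepancy, in fact on your side of the ledger, and immaterial for how the proposition is used later), and for $\kappa=\pm1$ you pass to $H^2(\Delta;M)\otimes\Q$ and use invertibility on the free quotient instead of the paper's $\phi^{m!}$ trick---both arguments are valid.
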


The proof of this theorem follows the proof in~\cite{wilkes2017profinite},
with the exception of changing $\Z$ to $M$.

\begin{proof}
	The action of $\Phi$ is
	the usual action of the automorphism group of the coefficient module.
	
	For computing $\phi^*$,
	let's denote by $F$ the free group
	generated by the letters $x_1, \ldots, y_g, a_1, \ldots, a_m$
	and by $R$ the kernel of $v\colon F \to \Delta$,
	so that the sequence
	$$1 \to R \to F \xrightarrow{v} \Delta \to 1$$
	is exact.
	To compute the effect that $\phi$ induces on $H^2(\Delta; M)$,
	we want to use the naturality of Cohomological Hopf's Formula
	as in \cref{prop:Hopfs_formula}
	and compute it on the set
	$$\frac{\{F-\text{invariant homomorphisms}\ f\colon R \to M\}}%
	{\{\text{restrictions of homomorphisms}\ g\colon F \to M\}}.$$
	
	We need to choose a map $\tilde{\phi}\colon  F \to F$
	such that the following diagram commutes.
	\begin{equation*}
		\begin{tikzcd}
			F \arrow{r}{\tilde{\phi}}\arrow{d}{v} & F \arrow{d}{v} \\
			\Delta \arrow{r}{\phi} & \Delta
		\end{tikzcd}
	\end{equation*}
	For easy calculation we choose
	$\tilde\phi({a}_i) =\ ^{g_i}({a}_{\sigma(i)}^{k_i})$
	on the generators $a_i$
	and extend it to the other generators
	in any way which makes the diagram commute.
	This is a valid choice as
	$v\big(^{g_i}({a}_{\sigma(i)}^{k_i})\big) =\
	^{g_i}({a}_{\sigma(i)}^{k_i})$
	as elements of $\Delta$.
	
	Now, for $i = 1, \ldots, m$ we can easily compute the effect on the $r_i$.
	\begin{equation*}
		r_i = a_i^{p_i} \xmapsto{\tilde\phi} (\ ^{g_i}a_{\sigma(i)}^{k_i})^{p_i} =\ ^{g_i}(a_{\sigma(i)}^{p_i})^{k_i} =\ ^{g_i}\big(r_{\sigma(i)}^{k_i}\big).
	\end{equation*}
	
	This, perhaps surprisingly,
	gives us enough information to compute the action on $H^2(\Delta; M)$.
	On the level of $\Z \Delta$ chain complex $C_*$ we have
	\begin{align*}
		\phi_*(\textsc{\textbf{a}}_i) & = k_ig_i \cdot \textsc{\textbf{a}}_{\sigma(i)} \\
		\phi_*(\textsc{\textbf{r}}_i) & = k_ig_i \cdot \textsc{\textbf{r}}_{\sigma(i)}.
	\end{align*}
	Since $g_i$ is a group element,
	it acts the same way as $1$ on $D_*$,
	so after applying functor $\Z \otimes_{\Z \Delta}(-)$
	we get
	\begin{align*}
		\phi_*(\textbf{a}_i) & = k_i \cdot \textbf{a}_{\sigma(i)} \\
		\phi_*(\textbf{r}_i) & = p_i \cdot \textbf{r}_{\sigma(i)} \text{ for } i = 1, 2, 3 \\
		\phi_*(\textbf{r}_0) & = \kappa\cdot \textbf{r}_0 +
		\sum_{i = 1}^m \mu_i \cdot \textbf{r}_{\sigma(i)}
		\text{ for some } \kappa, \mu_i \in \Z.
	\end{align*}
	Since $\phi_*$ and the boundary maps $d$ commute,
	we can use it to understand something about the $\kappa$ and $\mu_i$.
	\begin{align*}
		(\phi_*\circ d)\ (\textbf{r}_0)
		& = \phi_*(\textbf{a}_1 + \ldots + \textbf{a}_m) 
		= \sum_{i = 1}^m k_i \cdot \textbf{a}_{\sigma(i)} \\
		(d \circ \phi_*)\ (\textbf{r}_0)
		& = d(\kappa\cdot \textbf{r}_0 + \sum_{i = 1}^m \mu_i \cdot \textbf{r}_{\sigma(i)}) 
		= \sum_{i = 1}^m (\kappa + p_i\mu_i) \cdot \textbf{a}_{\sigma(i)}
	\end{align*}
	so we get $k_i = \kappa + p_i\mu_i$ for $i=1, \ldots, m$.
	
	To see what happens at the cohomology level,
	let's compute what happens after applying $\hom(-, M)$.
	
	We will write $m\cdot \mathbf{r}_i^*$
	for the unique map $C_2 \to M$
	which sends $\mathbf{r}_i$ to $m$
	and all other $\mathbf{r}_j$ to $0$.
	\begin{align*}
		\phi^*(m \cdot \textbf{r}_0^*) &= \kappa\cdot m \cdot \textbf{r}_0^* \\
		\phi^*(m \cdot \textbf{r}_i^*) &= \mu_i \cdot m \cdot \textbf{r}_0^* + k_i \cdot m \cdot \textbf{r}_{\sigma(i)}^* \\
		d^*(m \cdot \textbf{a}_i^*) & = m \cdot \textbf{r}_0^* + p_i \cdot m \cdot \textbf{r}_i^*
	\end{align*}
	
	Finally, let's compute the action of $\phi^*$ on cohomology.
	\begin{align*}
		\phi^*([m \cdot \textbf{r}_0^*]) 	& = \kappa \cdot [m \cdot \textbf{r}_0^*] \\
		\phi^*([m \cdot \textbf{r}_i^*]) 	& = \mu_i \cdot [m \cdot \textbf{r}_0^*] + k_i \cdot [m \cdot \textbf{r}_{\sigma(i)}^*] \\
		& = \mu_i \cdot [m \cdot \textbf{r}_0^*] + (\kappa + p_i\mu_i) \cdot [m \cdot \textbf{r}_{\sigma(i)}^*] \\
		& = \mu_i \cdot [m \cdot \textbf{r}_0^*] + \kappa \cdot [m \cdot \textbf{r}_{\sigma(i)}^*] + \mu_i\cdot [p_i\cdot m \cdot \textbf{r}_{\sigma(i)}^*] \\
		& = \kappa \cdot [m \cdot \textbf{r}_{\sigma(i)}^*] + \mu_i\cdot [m\cdot \textbf{r}_0^* + p_i \cdot m \cdot \textbf{r}_{\sigma(i)}^*] \\
		& = \kappa \cdot [m \cdot \textbf{r}_{\sigma(i)}^*] + \mu_i\cdot [m\cdot \textbf{r}_0^* + p_{\sigma(i)} \cdot m \cdot \textbf{r}_{\sigma(i)}^*] \\
		& = \kappa \cdot [m \cdot \textbf{r}_{\sigma(i)}^*] + \mu_i \cdot [d^*(m\cdot \textbf{a}_{\sigma(i)})] = \kappa \cdot [m \cdot \textbf{r}_{\sigma(i)}^*]
	\end{align*}
	
	Thus, the action is
	by multiplication by $\kappa \in \Z$
	together with permutation of the generators $\textbf{r}_i^*$
	for the cocycles $Z^2(B; M)$.
	
	Taking the $(m!)$-th power of $\phi$
	to get rid of the permutation $\sigma$,
	we get
	$$\big(\phi^{m!}\big)^* [\zeta] = \kappa^{m!} \cdot[\zeta]$$
	and if $M \cong \Z^r \oplus\ T$,
	then $H^2(\Delta; M)$ contains a copy of $\Z^r$ too
	and so ${\kappa = \pm 1}$
	since $(\phi^{m!})^*$ is an automorphism.
\end{proof}

Now, we can do the very similar calculations
for the profinite cohomology defined in \cref{sssec:profinite_cohomology}.

\begin{proposition}\label{prop:H2_hat_of_nice_2-orbifolds}
	Given a nice $2$-orbifold group $\Delta$
	with presentation~(\ref{eq:presentation})
	and a finite trivial $\widehat \Delta$-module $M$,
	the second profinite cohomology group is
	\begin{equation}\label{eq:H2_profinite_explicit}
		\widehat H^2(\widehat \Delta; M)
		= \bigoplus_{i=1}^{m}M \cdot \mathbf{r}_i^*\
		/ (\mathbf{r}_0^* + p_i \cdot \mathbf{r}_i^*\ \text{for}\ i = 1, \ldots, m).
	\end{equation}
\end{proposition}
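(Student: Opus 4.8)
The plan is to reduce the profinite computation to the discrete one already carried out in \cref{prop:H2_of_nice_orbifolds}, exploiting goodness and the $\mathscr{F}$-group structure of $\Delta$. First I would invoke \cref{prop:2-orbs_good} to know that $\Delta$ is good, and then feed the length-$3$ partial free resolution of \cref{prop:presentation_complex_extended} into \cref{lem:hat_resolution_from_discrete}: this produces a partial resolution $\widehat C_3 \to \widehat C_2 \to \widehat C_1 \to \widehat C_0$ of $\widehat\Z$ by finitely generated free $\widehat\Z[[\widehat\Delta]]$-modules, with the same generators $\textsc{\textbf{z}}_i, \textsc{\textbf{r}}_i, \textsc{\textbf{x}}_i, \textsc{\textbf{y}}_i, \textsc{\textbf{a}}_i$ and the same boundary formulae (now read in the completed group algebra). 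Since the resolution has length $\ge 3$, \cref{def:profinite_cohomology} together with the remark following it lets us compute $\widehat H^2(\widehat\Delta; M)$ as the cohomology at the middle spot of $\hom_{\widehat\Z[[\widehat\Delta]]}(\widehat C_\bullet, M)$.

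Next I would run the exact analogue of the argument in \cref{prop:presentation_complex_after_trivial_action} and \cref{prop:H2_of_nice_orbifolds}. Because $M$ is a trivial $\widehat\Delta$-module, the functor $\hom_{\widehat\Z[[\widehat\Delta]]}(-, M)$ applied to a free module $\bigoplus \widehat\Z[[\widehat\Delta]]\cdot \mathbf{x}$ just returns $\bigoplus M\cdot \mathbf{x}^*$, and every group element acts trivially, so the induced maps are governed by the images of the boundary maps under the augmentation $\widehat\Z[[\widehat\Delta]] \to \widehat\Z$. As in the discrete case, $d(\textsc{\textbf{z}}_i) = (a_i-1)\cdot\textsc{\textbf{r}}_i$ augments to $0$, the degree-$1$ boundary augments to $0$, and the degree-$2$ boundary augments to the matrix~(\ref{eq:matrix_chain}): $\mathbf{r}_0 \mapsto \sum_i \mathbf{a}_i$ and $\mathbf{r}_i \mapsto p_i\cdot\mathbf{a}_i$. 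Dualizing, $d^*\colon \hom(D_1,M)\to\hom(D_2,M)$ sends $\mathbf{a}_i^* \mapsto \mathbf{r}_0^* + p_i\cdot\mathbf{r}_i^*$, the map out of $\hom(D_2,M)$ is zero, so $\widehat H^2$ is exactly the cokernel $\bigoplus_{i=1}^m M\cdot\mathbf{r}_i^* / (\mathbf{r}_0^* + p_i\cdot\mathbf{r}_i^*)$, which is the claimed formula~(\ref{eq:H2_profinite_explicit}).

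The only genuinely delicate point — and the one I would be most careful about — is justifying that $\hom_{\widehat\Z[[\widehat\Delta]]}(\widehat C_i, M)$ really is $\bigoplus M\cdot\mathbf{x}_{ij}^*$ with the boundary maps induced exactly as in the discrete setting. This needs that continuous $\widehat\Z[[\widehat\Delta]]$-module homomorphisms out of a free module of finite rank are determined by the images of the basis elements (true by freeness), that a trivial finite module $M$ is a legitimate discrete $\widehat\Z[[\widehat\Delta]]$-module (immediate from \cref{def:profinite_cohomology}'s hypotheses), and that the boundary maps in $\widehat C_\bullet$ are literally the images of those in $C_\bullet$ under the ring map $\Z\Delta \to \widehat\Z[[\widehat\Delta]]$ — which is precisely the content of \cref{lem:hat_resolution_from_discrete}. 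Once this bookkeeping is in place the computation is formally identical to \cref{prop:H2_of_nice_orbifolds}, so I would phrase the proof as ``the same calculation as in \cref{prop:H2_of_nice_orbifolds} goes through verbatim after replacing $\Z\Delta$ by $\widehat\Z[[\widehat\Delta]]$ and appealing to \cref{prop:2-orbs_good} and \cref{lem:hat_resolution_from_discrete}.''
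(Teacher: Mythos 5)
Your proposal is correct and follows essentially the same route as the paper: invoke goodness of $\Delta$ together with \cref{lem:hat_resolution_from_discrete} to complete the length-$3$ resolution of \cref{prop:presentation_complex_extended}, observe that $\hom_{\widehat\Z[[\widehat\Delta]]}(-,M)$ factors through the trivial-coefficient (augmentation) complex exactly as in \cref{prop:presentation_complex_after_trivial_action}, and read off $\widehat H^2(\widehat\Delta;M)$ as the cokernel of the dualized boundary map. The bookkeeping point you flag is the same one the paper handles by this factorisation, so no gap.
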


\begin{proof}
	The partial free resolution
	from \cref{prop:presentation_complex_extended}
	gives rise to a partial free resolution~\ref{eq:complex_after_hats} of $\widehat \Z$
	by $\widehat \Z[[\widehat\Delta]]$-modules
	by \cref{lem:hat_resolution_from_discrete}.
	We can use it to compute the $\widehat H^2(\widehat \Delta; \widehat \Z^n)$.
	\begin{equation}\label{eq:complex_after_hats}
		\begin{tikzcd}
			\widehat C_3 \arrow{r}{\widehat d_3}	& \widehat C_2 \arrow{r}{\widehat d_2}	& \widehat C_1 \arrow{r}{\widehat d_1}	& \widehat C_0 \arrow{r}{\widehat d_0}	& \widehat\Z
		\end{tikzcd}
	\end{equation}
	
	Applying the functor $\hom_{\widehat\Z[[\widehat \Delta]]}(-, M)$
	to the resolution~\ref{eq:complex_after_hats}
	factors through the functor $\widehat \Z \otimes_{\widehat \Z[[\widehat \Delta]]}(-)$,
	similarly to \cref{prop:presentation_complex_after_trivial_action}.
	
	Thus, we only need to apply the functor $\hom_{\widehat \Z}(-, M)$
	to the chain complex $\widehat D_3 \to \widehat D_2 \to \widehat D_1 \to \widehat D_0$
	of $\widehat D_i = \widehat \Z \otimes_{\widehat \Z[[\widehat \Delta]]}\widehat C_i$,
	where incidently $\widehat D_i$ is the profinite completion of the abelian group $D_i$
	from \cref{prop:presentation_complex_after_trivial_action}
	treated as a trivial $\widehat\Z[[\widehat \Delta]]$-module.
	
	In particular, we get the commutative diagram~\ref{eq:profinite_after_trivial_action},
	where we write $R$ for $\widehat \Z[[\widehat \Delta]]$.
	\begin{equation}\label{eq:profinite_after_trivial_action}
		\begin{tikzcd}[column sep=1em]
			\underbrace{R\{\textsc{\textbf{z}}_1, \ldots, \textsc{\textbf{z}}_m\}}_{\widehat C_3}
				\arrow{r}{\widehat d}
				\arrow[swap]{d}{1\otimes (-)}
			& \underbrace{R\{\textsc{\textbf{r}}_0, \ldots, \textsc{\textbf{r}}_m\}}_{\widehat C_2}
				\arrow{r}{\widehat d}
				\arrow[swap]{d}{1\otimes (-)}
			& \underbrace{R\{\textsc{\textbf{x}}_1, \ldots, \textsc{\textbf{a}}_m\}}_{\widehat C_1}
				\arrow{r}{\widehat d}
				\arrow[swap]{d}{1\otimes (-)}
			& \underbrace{R}_{\widehat C_0}
				\arrow[swap]{d}{1\otimes (-)}
			\\
			\underbrace{\widehat \Z\{\textbf{z}_1, \ldots, \textbf{z}_m\}}_{\widehat D_3} 
				\arrow{r}{0}
			& \underbrace{\widehat \Z\{\textbf{r}_0, \ldots, \textbf{r}_m\}}_{\widehat D_2}
				\arrow{r}{\widehat d}
			& \underbrace{\widehat \Z\{\textbf{x}_1, \ldots, \textbf{a}_m\}}_{\widehat D_1}
				\arrow{r}{0}
			& \underbrace{\widehat \Z}_{\widehat D_0}
		\end{tikzcd}
	\end{equation}
	
	Applying the functor $\hom_{\widehat \Z}(-, M)$ to the bottom row of diagram~(\ref{eq:profinite_after_trivial_action}) gives us chain complex~(\ref{eq:profinite_after_hom}).
	\begin{equation}\label{eq:profinite_after_hom}
		\begin{tikzcd}[column sep=1em]
			\underbrace{M\{\textbf{z}_1^*, \ldots, \textbf{z}_m^*\}}_{\hom(\widehat D_3, M)} & \underbrace{M\{\textbf{r}_0^*, \ldots, \textbf{r}_m^*\}}_{\hom(\widehat D_2, M)}\arrow{l}{0} & \underbrace{M\{\textbf{x}_1^*, \ldots, \textbf{a}_m^*\}}_{\hom(\widehat D_1, M)}\arrow{l}{\widehat d^*} & \underbrace{M}_{\hom(\widehat D_0, M)} \arrow{l}{0}
		\end{tikzcd}
	\end{equation}
	
	Now, the kernel of the map $\hom(\widehat D_2, M) \to \hom(\widehat D_3, M)$
	is the whole $\hom(\widehat D_2, M)$
	and so $\widehat H^2(\widehat \Delta; M)$ is the cokernel of $\widehat d^*$.
	Since all of $\mathbf{x}_i^*$ and $\mathbf{y}_i^*$
	are mapped to zero by $\widehat d^*$
	this cokernel is precisely
	$$\bigoplus_{i=1}^{m}M \cdot \mathbf{r}_i^*
	/ (\mathbf{r}_0^* + p_i \cdot \mathbf{r}_i^*\ \text{for}\ i = 1, \ldots, m).$$
\end{proof}

\begin{proposition}\label{prop:action_on_profinite_H2}
	Let $\Phi \in \aut(M)$ and $\phi\in \aut(\widehat \Delta)$ be automorphisms,
	and $[\zeta] = [\sum_{i=0}^{m} m_i \cdot \mathbf{r}_i^*]$
	be an element of $\widehat H^2(\widehat \Delta; M)$.
	Then
	$$\Phi \cdot [\zeta] = [\sum_{i=0}^{m} \Phi (m_i) \cdot \mathbf{r}_i^*]$$
	and
	$$\phi^{-1} \cdot [\zeta] = \phi^*([\zeta]) 
	= \kappa \cdot [m_0 \cdot \mathbf{r}_0^*
	+ \sum m_i \cdot \mathbf{r}_{\sigma(i)}^*],$$
	where $\phi(a_i) =\ ^{g_i}\big(a_{\sigma(i)}^{k_i}\big)$
	and $\kappa \in \widehat \Z^\times$ is such that $\kappa \equiv k_i \mod p_i$.
\end{proposition}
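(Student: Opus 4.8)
The plan is to repeat the proof of \cref{prop:action_on_discrete_H2} essentially verbatim, with $\Z$ replaced by $\widehat\Z$ throughout, with the presentation resolution $C_*$ replaced by the profinite resolution $\widehat C_*$ supplied by \cref{lem:hat_resolution_from_discrete} (whose cohomology was computed in \cref{prop:H2_hat_of_nice_2-orbifolds}), and with the discrete cohomological inputs (Inflation--Restriction, \cref{prop:H1_classifies_homs}, Hopf's Formula) replaced by their profinite analogues, which are available by the discussion in \cref{sssec:profinite_cohomology}. The description of $\Phi\cdot[\zeta]$ is immediate from the definition of the $\aut(M)$-action on cohomology, so all the content lies in computing $\phi^*$ for $\phi\in\aut(\widehat\Delta)$.

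First I would record, using \cref{thm:automorphism_cone_points_Fuchsian}, \cref{prop:automorphism_cone_points_Euclidean} and the remarks following them, that $\phi(a_i) = {}^{g_i}(a_{\sigma(i)}^{k_i})$ for some $g_i\in\widehat\Delta$ and a permutation $\sigma$ with $p_i=p_{\sigma(i)}$, where $k_i$ is well-defined in $\widehat\Z/p_i\widehat\Z$ and is coprime to $p_i$ (since $\phi$ is injective on $\gen{a_i}$). Then I would lift $\phi$ to a continuous endomorphism $\tilde\phi$ of the free profinite group $\widehat F$ by setting $\tilde\phi(a_i) = {}^{\tilde g_i}(a_{\sigma(i)}^{k_i})$ — here $\tilde g_i\in\widehat F$ lifts $g_i$, and $a_{\sigma(i)}^{k_i}$ makes sense because $k_i\in\widehat\Z$ — and choosing arbitrary lifts of $\phi(x_j),\phi(y_j)$ for the remaining generators; this gives $\tilde\phi(r_i) = {}^{\tilde g_i}(r_{\sigma(i)}^{k_i})$ for $i=1,\dots,m$. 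By the comparison theorem for resolutions, $\tilde\phi$ induces a $\phi$-semilinear chain endomorphism $\phi_*$ of $\widehat C_*$ over $\widehat\Z[[\widehat F]]\to\widehat\Z[[\widehat\Delta]]$, and the Fox-calculus computation from the proof of \cref{prop:action_on_discrete_H2} carries over word for word: after applying $\widehat\Z\otimes_{\widehat\Z[[\widehat\Delta]]}(-)$ (which collapses the group-element factors), one gets $\phi_*(\mathbf{a}_i) = k_i\mathbf{a}_{\sigma(i)}$, while $\phi_*(\mathbf{r}_i)$ is a $\widehat\Z$-multiple of $\mathbf{r}_{\sigma(i)}$ for $i\ge 1$, and $\phi_*(\mathbf{r}_0) = \kappa\mathbf{r}_0 + \sum_i\mu_i\mathbf{r}_{\sigma(i)}$ with $\kappa,\mu_i\in\widehat\Z$; commuting $\phi_*$ with the differential \eqref{eq:matrix_chain} forces $k_i = \kappa + p_i\mu_i$, i.e. $\kappa\equiv k_i\bmod p_i$. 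Dualising by $\hom_{\widehat\Z}(-,M)$ and reading off the induced map on $\coker\widehat d^* = \widehat H^2(\widehat\Delta;M)$ exactly as at the end of that proof yields $\phi^*\big([\textstyle\sum_i m_i\mathbf{r}_i^*]\big) = \kappa\big[m_0\mathbf{r}_0^* + \sum_{i\ge1}m_i\mathbf{r}_{\sigma(i)}^*\big]$, and $\phi^{-1}\cdot[\zeta] = \phi^*([\zeta])$ by the definition of the action in \cref{prop:actions_commute}.

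It remains to see that $\kappa\in\widehat\Z^\times$. The discrete argument — that the free summand of $H^2(\Delta;M)$ when $M$ has infinite order forces $\kappa=\pm1$ — is unavailable since here $M$ is finite. Instead, I would fix a prime $\ell$: by \cref{prop:H2_hat_of_nice_2-orbifolds}, $\widehat H^2(\widehat\Delta;\Z/\ell)$ is the quotient of the $(m+1)$-dimensional $\F_\ell$-vector space on $\mathbf{r}_0^*,\dots,\mathbf{r}_m^*$ by $m$ relations, hence is nonzero; by the formula just proved $\phi^*$ acts on it as $\kappa$ followed by a permutation of the $\mathbf{r}_i^*$, so if $\ell\mid\kappa$ then $\phi^*$ vanishes on a nonzero group, contradicting that $\phi^*$ is an automorphism. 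Hence $\ell\nmid\kappa$ for every prime $\ell$, that is, $\kappa\in\widehat\Z^\times$.

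The transfer of everything to profinite coefficients is mechanical; the places needing genuine care in the write-up are (i) checking that a continuous automorphism of $\widehat\Delta$ is indeed covered by a continuous endomorphism of $\widehat F$ of the stated explicit shape and that the Fox-derivative identities used to build the chain map remain valid over $\widehat\Z[[\widehat\Delta]]$ — this relies on freeness of $\widehat F$, surjectivity of $\widehat F\to\widehat\Delta$, and the fact that $k_i$-th powers make sense for $k_i\in\widehat\Z$ because the $a_i$ have finite order — and (ii) the replacement of the conclusion ``$\kappa=\pm1$'' by the argument of the previous paragraph. I expect (i) to be the point requiring the most attention.
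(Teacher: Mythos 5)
Correct, and essentially the paper's approach: the paper's own proof consists precisely of the remark that one repeats the computation of \cref{prop:action_on_discrete_H2} verbatim over the completed resolution of \cref{lem:hat_resolution_from_discrete}, which is what you do. Your final paragraph establishing $\kappa\in\widehat\Z^\times$ by testing the induced map on the nonzero groups $\widehat H^2(\widehat\Delta;\Z/\ell)$ for each prime $\ell$ is a genuine (and needed) supplement: the discrete proof's last step uses the free summand of $M$ and is unavailable for finite coefficients, and the paper's one-sentence proof does not address how the unit claim is obtained.
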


The proof of \cref{prop:action_on_profinite_H2}
is the very same calculation
as the proof of \cref{prop:action_on_discrete_H2}
with the only difference being
that $\widehat \Z^\times$ is now a much larger group
than $\Z^\times = \{\pm 1\}$.

\section{Matrix correspondence}\label{sec:matrix_correspondence}

In this section we introduce the `Matrix correspondence'
-- a conglomerate of a few minor results
and our main tool for studying
the central extensions of $\Z^n$ by nice $2$-orbifold groups.
Essentially, it translates the problem of profinite rigidity among these extensions
to a question regarding multiplying matrices and permuting columns.

\begin{proposition}[Matrix correspondence]\label{thm:matrix_correspondence}
	Let $\Delta$ be a nice $2$-orbifold group with $m$ cone points of order $p_1, \ldots, p_m$.
	\begin{enumerate}
		\item There is a $\GL_n(\Z) \times \aut(\Delta)$-equivariant canonical injective homomorphism
		\begin{equation*}
			\Psi\colon H^2(\Delta; \Z^n) \to \varprojlim_{N \in \N} \widehat H^2\big(\widehat \Delta; (\Z/N)^n\big)
		\end{equation*}
		given by sending a cohomology class $[\zeta]$
		representing an equivalence class of central $\Z^n$-by-$\Delta$ extensions
		$$[0 \to \Z^n \to G \to \Delta \to 1]$$
		to the consistent system $([\zeta_N])_N$ of cohomology classes
		representing the equivalence classes of corresponding extensions
		$$[0 \to (\Z/N)^n \to \widehat G/N\widehat \Z^n \to \widehat \Delta \to 1].$$
		
		\item Cohomology classes $[\zeta_1]$ and $[\zeta_2]$ in $H^2(\Delta; \Z^n)$
		represent extension classes $[0 \to \Z^n \to G_i \to \Delta \to 1]$
		with $G_1 \cong G_2$
		\Iff\ they lie in the same orbit of
		$\GL_n(\Z) \times \aut(\Delta) \actson H^2(\Delta; \Z^n)$.
		
		Furthermore, $\widehat G_1 \cong \widehat G_2$
		\Iff\ $\Psi([\zeta_1])$ and $\Psi([\zeta_2])$ lie in the same orbit of
		$\GL_n(\widehat \Z) \times \aut(\widehat \Delta)
		\actson \varprojlim \widehat H^2\big(\widehat\Delta; (\Z/N)^n\big)$.
		
		\item We can identify $H^2(\Delta; \Z^n)$
		with a quotient of $\Z^{n\times(m+1)}$ as follows.
		$$A_\Delta^{(n)} :=
		\Z^{n \times (m+1)} / (\mathbf{r}_0^* + p_i\mathbf{r}_i^*\ \text{for}\ i=1,\ldots, m)$$
		where $\mathbf{r}_0^*, \mathbf{r}_1^*, \ldots, \mathbf{r}_m^*$
		denote generators of $\Z^{n\times(m+1)}$
		treated as a free $\Z^n$-~module,
		where $x\cdot \mathbf{r}_i^*$ will denote
		the matrix having vector $x$ as the $i$-th column
		and zeros everywhere else.
		
		Similarly, we can identify $\varprojlim \widehat H^2\big(\widehat\Delta; (\Z/N)^n\big)$
		with $\widehat A_\Delta^{(n)}$.
		
		Furthermore, the orbits of the actions
		$$\GL_n(\Z) \times \aut(\Delta) \actson A_\Delta^{(n)}
		\quad \text{and} \quad
		\GL_n(\widehat \Z) \times \aut(\widehat \Delta) \actson \widehat A_\Delta^{(n)}$$
		are the same as those of  $\GL_n(\Z) \times \Sigma \actson A_\Delta^{(n)}$
		and those of $\GL_n(\widehat\Z) \times \Sigma \actson \widehat A_\Delta^{(n)}$,
		where $$\Sigma = \{\sigma \in \Sym(m)\ |\ p_{\sigma(i)} = p_i\ \text{for}\ i=1, \ldots, m\}$$
		acts by permuting the columns by $\sigma(\mathbf{r}_i^*) = \mathbf{r}_{\sigma(i)}^*$.
	\end{enumerate}
\end{proposition}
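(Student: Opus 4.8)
The plan is to establish the linear-algebraic identifications of (3) first, then construct the comparison map $\Psi$ of (1) and recognise it as the profinite-completion homomorphism of a finitely generated abelian group, and finally read off the orbit criteria of (2) from the classification of similarity classes of central extensions by $H^2$ together with the centrelessness results. For (3): applying $\hom_{\Z\Delta}(-,\Z^n)$ to the partial resolution of \cref{prop:presentation_complex_extended} and using \cref{prop:presentation_complex_after_trivial_action} exactly as in \cref{prop:H2_of_nice_orbifolds}, one gets that $H^2(\Delta;\Z^n)$ is the cokernel of the map sending $\mathbf{a}_i^*\mapsto \mathbf{r}_0^*+p_i\mathbf{r}_i^*$ and $\mathbf{x}_i^*,\mathbf{y}_i^*\mapsto 0$, which is precisely $A_\Delta^{(n)}=\Z^{n\times(m+1)}/(\mathbf{r}_0^*+p_i\mathbf{r}_i^*)$, with $x\cdot\mathbf{r}_i^*$ the matrix carrying $x$ in column $i$; the same computation via \cref{prop:H2_hat_of_nice_2-orbifolds} identifies $\widehat H^2(\widehat\Delta;(\Z/N)^n)$ with $A_\Delta^{(n)}/NA_\Delta^{(n)}$ compatibly with the reduction maps, so the inverse limit is the profinite completion $\widehat{A_\Delta^{(n)}}$. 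The reduction from $\aut(\Delta)$ to $\Sigma$ is then immediate from \cref{prop:action_on_discrete_H2}: any $\phi\in\aut(\Delta)$ acts on $A_\Delta^{(n)}$ as $\kappa$ times a column permutation $\sigma\in\Sigma$ with $\kappa=\pm1$, and $\kappa I\in\GL_n(\Z)$, while every $\sigma\in\Sigma$ is itself induced by an automorphism (\cref{prop:any_permutation_is_induced}, with all $k_i=1$, hence $\kappa=1$); so the orbit partitions of $\GL_n(\Z)\times\aut(\Delta)$ and $\GL_n(\Z)\times\Sigma$ on $A_\Delta^{(n)}$ agree, and likewise for $\GL_n(\widehat\Z)\times\aut(\widehat\Delta)$ versus $\GL_n(\widehat\Z)\times\Sigma$ on $\widehat{A_\Delta^{(n)}}$ using \cref{prop:action_on_profinite_H2}, now with $\kappa\in\widehat\Z^\times$ and $\kappa I\in\GL_n(\widehat\Z)$.

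For (1), start from $0\to\Z^n\to G\to\Delta\to1$: by \cref{cor:central_exts_are_rf}, $G$ is residually finite and $\overline{\Z^n}=\widehat\Z^n$ inside $\widehat G$, so \cref{prop:SES_completion} gives a short exact sequence $0\to\widehat\Z^n\to\widehat G\to\widehat\Delta\to1$ with $\widehat\Z^n$ central; dividing out the central subgroup $N\widehat\Z^n$ yields a profinite central extension $0\to(\Z/N)^n\to\widehat G/N\widehat\Z^n\to\widehat\Delta\to1$, and for $N\mid M$ the quotient $\widehat G/M\widehat\Z^n\twoheadrightarrow\widehat G/N\widehat\Z^n$ makes these a consistent system, which is the value $\Psi([\zeta])\in\varprojlim_N\widehat H^2(\widehat\Delta;(\Z/N)^n)$. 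To see that $\Psi$ is the canonical completion homomorphism $A_\Delta^{(n)}\to\widehat{A_\Delta^{(n)}}$ under the identifications of (3), note that the displayed morphism of extensions, lying over the reduction $\Z^n\to(\Z/N)^n$ and $\mathsf{h}_\Delta\colon\Delta\to\widehat\Delta$, together with \cref{prop:from_Loh_discrete} and its profinite analogue, forces $\mathsf{h}_\Delta^*(\Psi([\zeta])_N)$ to equal the mod-$N$ reduction of $[\zeta]$; since $\Delta$ is good (\cref{prop:2-orbs_good}) the canonical map $\mathsf{h}_\Delta^*$ is an isomorphism (and it is the identity under the matrix descriptions, as the $\widehat C_i$ come from the $C_i$ of \cref{lem:hat_resolution_from_discrete}), so $\Psi([\zeta])_N$ is exactly $[\zeta]$ reduced mod $N$. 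Injectivity is then residual finiteness of the finitely generated abelian group $A_\Delta^{(n)}$, and $\GL_n(\Z)\times\aut(\Delta)$-equivariance (for the $\GL_n(\Z)\subset\GL_n(\widehat\Z)$ action on coefficients and the $\aut(\Delta)\to\aut(\widehat\Delta)$ action) follows because every ingredient in the construction is natural.

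For (2), the discrete case runs as follows: $\Delta$ is centreless (\cref{prop:nice_orbifolds_centerless}), and for a central $\Z^n$-by-$\Delta$ extension $G$ the copy of $\Z^n$ is central while $\mathcal{Z}(G)$ maps into $\mathcal{Z}(\Delta)=1$, so $\mathcal{Z}(G)=\Z^n$ and $G/\mathcal{Z}(G)=\Delta$; thus \cref{prop:iso_centerless} turns ``$G_1\cong G_2$'' into ``same similarity class'', which \cref{prop:actions_commute} turns into ``same $\GL_n(\Z)\times\aut(\Delta)$-orbit on $H^2(\Delta;\Z^n)=A_\Delta^{(n)}$''. For the profinite case the same skeleton applies to $\widehat\Delta$, which is centreless by \cref{thm:completions_nice_orbifolds_centerless}: any topological (hence, as $\widehat G_i$ is topologically finitely generated, any abstract) isomorphism $\widehat G_1\cong\widehat G_2$ restricts to an isomorphism of centres $\widehat\Z^n\to\widehat\Z^n$ and descends to an automorphism of $\widehat\Delta$, so $\widehat G_i$ is determined up to isomorphism by the $\GL_n(\widehat\Z)\times\aut(\widehat\Delta)$-orbit of its extension data; since $\overline{\Z^n}=\widehat\Z^n$ is compact with the $N\widehat\Z^n$ cofinal among its open subgroups we have $\widehat G_i=\varprojlim_N\widehat G_i/N\widehat\Z^n$, so this data is precisely $\Psi([\zeta_i])\in\widehat{A_\Delta^{(n)}}$, and the profinite versions of the $\widehat H^2$-classification of extensions, of \cref{prop:iso_centerless}, and of \cref{prop:actions_commute} recorded in \cref{sssec:profinite_cohomology} yield the claim.

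The main obstacle is the middle paragraph: identifying $\Psi$ with the completion homomorphism requires carefully threading \cref{prop:SES_completion}, goodness of $\Delta$, and the naturality of the extension--cohomology dictionary in both the discrete and profinite settings, and checking that the operation sending $\widehat G$ to $\widehat G/N\widehat\Z^n$ genuinely realises reduction of coefficients. The centreless bookkeeping on the profinite side --- in particular continuity of abstract isomorphisms of the topologically finitely generated groups $\widehat G_i$, and the passage through $\varprojlim_N\widehat G_i/N\widehat\Z^n$ --- is the other place demanding care, though it is routine once the cited profinite analogues are in hand.
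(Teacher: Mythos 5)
Your proposal is correct and follows essentially the same route as the paper: the same identification of $H^2(\Delta;\Z^n)$ and $\varprojlim\widehat H^2(\widehat\Delta;(\Z/N)^n)$ with $A_\Delta^{(n)}$ and $\widehat A_\Delta^{(n)}$ via \cref{prop:H2_of_nice_orbifolds,prop:H2_hat_of_nice_2-orbifolds}, the same construction of $\Psi$ via goodness and reduction of coefficients (realised by $\widehat G/N\widehat\Z^n$ as in \cref{prop:SES_completion,cor:central_exts_are_rf}), the same centrelessness/\cref{prop:iso_centerless}/\cref{prop:actions_commute} argument for the orbit criteria, and the same $\kappa$-times-permutation reduction to $\Sigma$ via \cref{prop:action_on_discrete_H2,prop:action_on_profinite_H2}. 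The only differences are cosmetic: you prove the parts in a different order and are slightly more explicit than the paper in citing \cref{prop:any_permutation_is_induced} for the reverse containment of orbits and in noting continuity of abstract isomorphisms of the completions.
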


\begin{proof}[Proof of part 1.]
	The map $m_N: H^2(\Delta; \Z^n) \to H^2\big(\Delta; (\Z/N)^n\big)$
	coming from reduction of the coefficients modulo $N$
	is canonical and thus equivariant under the action of
	$\GL_n(\Z) \times \aut(\Delta)$.
	Now, by goodness of $\Delta$, the canonical map
	$$\iota_N: \widehat H^2\big(\widehat \Delta; (\Z/N)^n\big) \to H^2\big(\Delta; (\Z/N)^n\big)$$
	is an isomorphism, so we can form the composition
	$$\iota_N^{-1}\circ m_N: H^2(\Delta; \Z^n) \to \widehat H^2\big(\widehat\Delta; (\Z/N)^n\big).$$
	These maps are canonical and thus consistent with the transition maps
	$$m_{N_1N_2}: \widehat H\big(\widehat \Delta; (\Z/N_1)^n\big)
	\to \widehat H^2\big(\widehat \Delta; (\Z/N_2)^n\big),$$
	thus giving a canonical map to $\varprojlim \widehat H^2\big(\widehat\Delta; (\Z/N)^n\big)$.
	
	The maps $m_N$ and $\iota_N$ correspond to the following maps of extensions
	and thus the composition $\iota_N^{-1}\circ m_N$ maps $[\zeta]$ to $[\zeta_N]$.
	$$\begin{tikzcd}
		0 \arrow{r}
			& \Z^n \arrow{r} \arrow{d}{\mod N}
				& G \arrow{r} \arrow{d}
					& \Delta \arrow{r} \arrow[equal]{d}
						& 1 \\
		0 \arrow{r}
			& (\Z/N)^n \arrow{r} \arrow[equal]{d}
				& G / N\Z^n \arrow{r} \arrow{d}{\mathsf{h}_{G / N\Z^n}}
					& \Delta \arrow{r} \arrow{d}{\mathsf{h}_\Delta}
						& 1 \\
		0 \arrow{r}
		& (\Z/N)^n \arrow{r}
			& \widehat G / N\widehat \Z^n \arrow{r}
				& \widehat \Delta \arrow{r}
					& 1 \\
	\end{tikzcd}$$
\end{proof}

\begin{proof}[Proof of part 2.]
	By \cref{prop:iso_centerless},
	the isomorphism classes of groups $G$ with centre $\mathcal{Z}(G) \cong\Z^n$
	and quotient $G/\mathcal{Z}(G) \cong \Delta$
	are in bijection with the set $\overline{\mathcal{E}}(\Delta; \Z^n)$
	of similarity classes of central extensions of $\Z^n$ by $\Delta$.
	Also, this set is in bijection with the set of orbits
	of the action $\GL_n(\Z) \times \aut(\Delta) \actson H^2(\Delta; \Z^n)$.
	Thus $G_1 \cong G_2$ \Iff\ the cocycles $[\zeta_1]$ and $[\zeta_2]$
	lie in the same orbit.
	
	Furthermore, since $\widehat G_i = \varprojlim \widehat G_i / N\widehat \Z^n$,
	a consistent set of isomorphisms
	${\tilde\phi_N: \widehat G_1 / N\widehat \Z^n \to \widehat G_2 / N\widehat \Z^n}$
	gives an isomorphism $\tilde\phi: \widehat G_1 \to \widehat G_2$.
	If $(\widehat\Phi, \phi) \in \GL_n(\widehat\Z) \times \aut(\Delta)$
	is such that $(\widehat\Phi, \phi) \cdot [\zeta_{1,N}] = [\zeta_{2,N}]$
	then this gives a consistent set of isomorphisms $(\tilde\phi_N)$, so $\widehat G_1 \cong \widehat G_2$.
	
	Conversely, if $\tilde\phi: \widehat G_1 \to \widehat G_2$ is an isomorphism
	then it must give an automorphism $\widehat\Phi$ of the kernel $\widehat\Z^n$
	and an automorphism $\phi$ of the quotient $\widehat \Delta$.
	Then clearly $(\widehat\Phi, \phi)\cdot [\zeta_{1,N}] = [\zeta_{2,N}]$ for all $N$
	and so indeed $\Psi([\zeta_1])$ and $\Psi([\zeta_2])$
	lie in the same orbit of the action
	$$\GL_n(\widehat \Z) \times \aut(\widehat \Delta)
	\actson \varprojlim \widehat H^2\big(\widehat\Delta; (\Z/N)^n\big).$$
\end{proof}

\begin{proof}[Proof of part 3.]
	\Cref{prop:H2_of_nice_orbifolds,prop:H2_hat_of_nice_2-orbifolds}
	computed the second cohomology group $H^2(\Delta; M)$
	for trivial $\Delta$-modules $M$
	and $\widehat H^2(\widehat \Delta; M)$
	for finite trivial $\widehat \Delta$-modules $M$.
	Thus we indeed have $H^2(\Delta; \Z^n) \cong A_\Delta^{(n)}$
	via a natural isomorphism.
	Similar computation gives a canonical isomorphism
	$\varprojlim \widehat H^2\big(\widehat \Delta; (\Z/N)^n\big) \cong \widehat A_\Delta^{(n)}$
	and the map $\Psi$ can be identified with
	$\mathsf{h}: A_\Delta^{(n)} \to \widehat A_\Delta^{(n)}$.
	
	Finally, we need to use the explicit formulas from
	\cref{prop:action_on_discrete_H2,prop:action_on_profinite_H2}
	for the actions of $\aut(\Delta)$ and $\aut(\widehat\Delta)$
	to show that the orbits don't change when we restrict the actions
	to smaller subgroups---respectively
	$\GL_n(\Z) \times \Sigma$ and $\GL_n(\widehat \Z) \times \Sigma$.
	An element $\phi^{-1} \in \aut(\Delta)$ acts by
	$$\phi^{-1} \cdot [\zeta] = \phi^*([\zeta])
	= \kappa \cdot [m_0 \cdot \mathbf{r}_0^*
	+ \sum_{i=1}^m m_i \cdot \mathbf{r}_{\sigma(i)}^*],$$
	where $\phi(a_i) =\ ^{g_i}\big(a_{\sigma(i)}^{k_i}\big)$
	and $\kappa \in \Z^\times$ is such that $\kappa \equiv k_i \mod p_i$.
	Thus, for all $[\zeta]\in H^2(\Delta; \Z^n)$ we have
	$(\id, \phi^{-1}) \cdot [\zeta] = (\kappa\cdot\id, \sigma) \cdot [\zeta]$,
	and thus the orbits of $\GL_n(\Z) \times \aut(\Delta)$
	and of $\GL_n(\widehat\Z)\times \Sigma$
	are indeed the same.
	Similarly, an automorphism $\phi^{-1} \in\aut(\widehat \Delta)$ acts by the very same formula
	with the exception that $\kappa$ is now allowed to be in a much bigger group $\widehat\Z^\times$.
	Finally, $\kappa \cdot \id$ is an element of $\GL_n(\widehat \Z)$
	and thus the orbits of $\GL_n(\widehat\Z) \times \aut(\widehat \Delta)$
	are the same as those of $\GL_n(\widehat \Z) \times \Sigma$.
\end{proof}

\section{Using the matrix correspondence}\label{sec:matrix_correspondence_implications}

This section aims at understanding the group $A_\Delta^{(n)}$
introduced in \cref{thm:matrix_correspondence}
together with the actions of $\GL_n(\Z)$ and $\Sigma$ on it.
The question we investigate is:
how are the orbits of ${\GL_n(\Z) \times \Sigma \actson A_\Delta^{(n)}}$
related to the orbits of ${\GL_n(\widehat\Z) \times \Sigma \actson \widehat A_\Delta^{(n)}}$?

Throughout this section $\Delta$ will be any nice $2$-orbifold group with cone points of orders $p_1, \ldots, p_m$. Let $A_\Delta^{(n)}$ be as defined in \cref{thm:matrix_correspondence}, i.e. a quotient of the additive group $\Z^{n\times (m+1)}$ of $n\times(m+1)$ matrices with integer coefficients by relations $\mathbf{r}_0^* + p_i\mathbf{r}_i^*$ for $i=1, \ldots, m$ on columns.

\subsection{General structure of $A_\Delta^{(n)}$}

The next proposition enables us to understand better
the structure of $A_\Delta^{(n)}$
and the action of $\GL_n(\Z) \times \Sigma$ on it.

\begin{proposition}\label{prop:quotient_of_A}
	The projection $\Z^{n\times(m+1)} \to \bigoplus_{i=1}^m (\Z/p_i)^n$ descends to a short exact sequence
	$$0 \to \Z^n \to A_\Delta^{(n)} \xrightarrow{q} \bigoplus (\Z/p_i)^n \to 0,$$ which is equivariant under the action of $\GL_n(\Z) \times \Sigma$. Also, $\Sigma$ acts trivially on the kernel $\Z^n = \ker\big(q\colon A_\Delta^{(n)} \to \bigoplus(\Z/p_i)^n\big)$.
\end{proposition}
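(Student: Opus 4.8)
The plan is to work directly with the free abelian group $\Z^{n\times(m+1)}=\bigoplus_{i=0}^{m}\Z^n\cdot\mathbf{r}_i^*$ and with the subgroup $R$ generated by the columnwise relators $x\cdot\mathbf{r}_0^*+p_i x\cdot\mathbf{r}_i^*$ (for $x\in\Z^n$ and $i=1,\ldots,m$), so that $A_\Delta^{(n)}=\Z^{n\times(m+1)}/R$, and then to check the three assertions — descent, exactness, equivariance — by elementary column bookkeeping.

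First I would establish the short exact sequence. The projection $\Z^{n\times(m+1)}\to\bigoplus_{i=1}^{m}(\Z/p_i)^n$ that forgets column $0$ and reduces column $i$ modulo $p_i$ sends each relator $x\cdot\mathbf{r}_0^*+p_i x\cdot\mathbf{r}_i^*$ to a tuple whose only possibly non-zero entry is $p_i x\bmod p_i=0$; hence $R$ lies in its kernel and it descends to a surjection $q\colon A_\Delta^{(n)}\to\bigoplus(\Z/p_i)^n$. For $\ker q$, note that an element of $\ker q$ is represented by a matrix whose columns $1,\ldots,m$ are $p_i$-divisible; using the relation $p_i z\cdot\mathbf{r}_i^*\equiv -z\cdot\mathbf{r}_0^*\pmod R$ one clears these columns one at a time, so every element of $\ker q$ has a representative of the form $x\cdot\mathbf{r}_0^*$, and $x\mapsto[x\cdot\mathbf{r}_0^*]$ defines a surjection $\Z^n\twoheadrightarrow\ker q$. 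It is injective because $x\cdot\mathbf{r}_0^*\in R$ forces, upon comparing columns $1,\ldots,m$ and using that $\Z^n$ is torsion-free, all relator coefficients to vanish, whence $x=0$. This identifies $\ker q$ with $\Z^n$ and yields the exact sequence $0\to\Z^n\to A_\Delta^{(n)}\xrightarrow{q}\bigoplus(\Z/p_i)^n\to0$.

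Next I would check $\GL_n(\Z)\times\Sigma$-equivariance. The $\GL_n(\Z)$-factor acts by a $\Z$-linear map applied to every column, which commutes with reduction modulo $p_i$, preserves $R$, and therefore acts on all three terms making $q$ equivariant; under the identification above it is the standard action on $\Z^n$. An element $\sigma\in\Sigma$ permutes $\mathbf{r}_1^*,\ldots,\mathbf{r}_m^*$ and fixes $\mathbf{r}_0^*$, and the hypothesis $p_{\sigma(i)}=p_i$ is precisely what makes it carry the relator $x\cdot\mathbf{r}_0^*+p_i x\cdot\mathbf{r}_i^*$ into $R$ again and makes the induced permutation of the summands $(\Z/p_i)^n$ of the target well-defined; so $q$ is $\Sigma$-equivariant as well. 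Finally, since $\Sigma$ fixes the column $\mathbf{r}_0^*$ and $\ker q$ consists exactly of the classes $[x\cdot\mathbf{r}_0^*]$, the $\Sigma$-action on $\ker q\cong\Z^n$ is trivial.

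There is no genuine obstacle here; the one point worth a moment's care is to identify $\ker q$ with $\Z^n$ through the column-$0$ generators rather than through $\mathbf{r}_1^*,\ldots,\mathbf{r}_m^*$, since it is exactly that choice of generators which is manifestly fixed by $\Sigma$ and which makes the triviality of the $\Sigma$-action on the kernel transparent.
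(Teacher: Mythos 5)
Your proof is correct and follows essentially the same route as the paper: both work with $A_\Delta^{(n)}$ as the quotient of $\Z^{n\times(m+1)}$ by the relator subgroup, observe the relators die under the projection, and identify the kernel of $q$ with $\Z^n$. The only cosmetic difference is that the paper packages the kernel computation as a $3\times 3$ commutative diagram, identifying $\ker q$ with $\bigl(\Z^n\oplus\bigoplus_i p_i\Z^n\bigr)$ modulo the relators, whereas you clear columns explicitly to land on $\mathbf{r}_0^*$-representatives, which also makes the equivariance and the triviality of the $\Sigma$-action on the kernel (left implicit in the paper) completely transparent.
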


\begin{proof}
	To make following the proof easier, all mentioned groups and maps between them are shown in diagram~(\ref{eq:huge_diagram}).
	
	All of the relations $\mathbf{r}_0^*+p_i\mathbf{r}_i^*$ are sent to $0$ by the projection map, so we indeed get a map $A_\Delta^{(n)} \twoheadrightarrow \bigoplus(\Z/p_i)^n$. The kernel of this map is the quotient of $\Z^n\oplus \bigoplus p_i\Z^n$ by the kernel
	$$\Z^n\{\mathbf{r}_0^*+p_i\mathbf{r}_i^*\ |\ i=1, \ldots, m\}
	= \ker\big(\Z^{n\times(m+1)} \twoheadrightarrow A_\Delta^{(n)}\big),$$
	which is isomorphic to $\Z^n$.
	
	\begin{equation}\label{eq:huge_diagram}
		\begin{tikzcd}[column sep=1.5em]
			& 0 \arrow{d}	& 0 \arrow{d} \\
			& \Z^n\{\mathbf{r}_0^* + p_i\mathbf{r}_i^*\} \arrow[equal]{r} \arrow{d}	& \Z^n\{\mathbf{r}_0^* + p_i\mathbf{r}_i^*\} \arrow{d}	&  \\
			0 \arrow{r}	& \Z^n \oplus \bigoplus_{i=1}^m p_i \Z^n \arrow{r} \arrow{d}	& \Z^{n\times(m+1)} \arrow{r} \arrow{d}	& \bigoplus_{i=1}^{m} (\Z/p_i)^n \arrow{r} \arrow[equal]{d}	& 0
			\\ 
			0 \arrow{r}
			& \Z^n \arrow{r} \arrow{d}
			& A_\Delta^{(n)} \arrow{r}{q} \arrow{d}
			& \bigoplus_{i=1}^{m} (\Z/p_i)^n \arrow{r}							& 0
			\\
			& 0	& 0	& &
		\end{tikzcd}
	\end{equation}
\end{proof}


The next result actually computes the group $A_\Delta^{(n)}$.

\begin{proposition}\label{prop:Smith_form_for_A}
	There is an isomorphism
	$$\bigoplus_{i=1}^m \big(\Z/p_i\big)^n
	\xrightarrow{\alpha} \bigoplus_{j=1}^m \big(\Z/d_j\big)^n$$
	where $d_j$ are such that
	$$\begin{pmatrix}
		d_1	& 0		& \ldots & 0 \\
		0	& d_2	& \ldots & 0 \\
		\vdots	& \vdots	& \ddots	& \vdots \\
		0	& 0		& \ldots & d_m
	\end{pmatrix}\
	\text{is the Smith normal form of}\
	\begin{pmatrix}
		p_1	& 0		& \ldots & 0 \\
		0	& p_2	& \ldots & 0 \\
		\vdots	& \vdots	& \ddots	& \vdots \\
		0	& 0		& \ldots & p_m
	\end{pmatrix}$$
	i.e.
	\begin{equation}\label{eq:Smith_exact_form}
		d_j = \frac{\gcd(p_{i_1}p_{i_2}\ldots p_{i_j}\ \text{for}\ 1\le i_1 < i_2 < \ldots < i_j \le m)}{\gcd(p_{i_1}p_{i_2}\ldots p_{i_{j-1}}\ \text{for}\ 1\le i_1 < i_2 < \ldots < i_{j-1} \le m)}.
	\end{equation}
	
	Then there is also an isomorphism
	$$A_\Delta^{(n)} \xrightarrow{\beta} \Z^n \oplus \bigoplus_{j=1}^{m-1} \big(\Z/d_j)^n.$$
	Furthermore, $\alpha$ and $\beta$ are equivariant
	with respect to the action of $\GL_n(\Z)$
	and this identification gives the following commutative diagram.
	\begin{equation*}
		\begin{tikzcd}[column sep=1.5em]
			0 \arrow{r}
			& \Z^n \arrow{r} \arrow[equal]{d}
			& A_\Delta^{(n)} \arrow{r}{q} \arrow{d}{\beta}
			& \bigoplus_{i=1}^m \big(\Z/p_i\big)^n \arrow{r} \arrow{d}{\alpha}	& 0
			\\
			0 \arrow{r}
			& \Z^n \arrow{r}{\cdot d_m}
			& \Z^n \oplus \bigoplus_{j=1}^{m-1} \big(\Z/d_j)^n \arrow{r}
			& \bigoplus_{j=1}^m \big(\Z/d_j\big)^n \arrow{r}
			& 0
		\end{tikzcd}
	\end{equation*}
\end{proposition}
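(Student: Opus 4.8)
The plan is to reduce everything to the case $n=1$ and then compute with Smith normal forms, the one delicate point being the exact location of the canonical copy of $\Z$ inside $A_\Delta^{(n)}$. First I would note that $A_\Delta^{(n)}\cong\Z^n\otimes_\Z A_\Delta^{(1)}$ as $\GL_n(\Z)$-modules, with $\GL_n(\Z)$ acting on the first tensor factor, since the defining relations $\mathbf r_0^*+p_i\mathbf r_i^*$ are applied coordinatewise; as $\Z^n$ is free, $\Z^n\otimes_\Z(-)$ is exact, and the short exact sequence of \cref{prop:quotient_of_A} is precisely $\Z^n\otimes_\Z(-)$ applied to the $n=1$ sequence $0\to\Z\to A_\Delta^{(1)}\xrightarrow{q}\bigoplus_i\Z/p_i\to0$. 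So it is enough to build $\alpha$, $\beta$ and the commutative square for $n=1$ and then tensor with $\Z^n$, which automatically gives $\GL_n(\Z)$-equivariance. For $n=1$ the existence of $\alpha$ with the $d_j$ of \eqref{eq:Smith_exact_form} is just the Smith normal form of $\mathrm{diag}(p_1,\dots,p_m)$: its $j$-th determinantal divisor is the gcd of the products of $j$ of the $p_i$, which gives \eqref{eq:Smith_exact_form}.

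To compute $A_\Delta^{(1)}$ itself, write it as $\coker(V_0)$, where $V_0$ is the $(m+1)\times m$ integer matrix whose $i$-th column is $\mathbf r_0^*+p_i\mathbf r_i^*$ (rows indexed $0,1,\dots,m$), and examine its minors. A $k\times k$ submatrix of $V_0$ has determinant $\pm\prod_{i\in I}p_i$ with $|I|=k$ if it avoids the top row, $\pm\prod_{i\in I}p_i$ with $|I|=k-1$ if it contains the top row, and $0$ otherwise; since every product of $k$ of the $p_i$ is divisible by a product of $k-1$ of them, the $k$-th determinantal divisor of $V_0$ equals the $(k-1)$-st determinantal divisor of $\mathrm{diag}(p_1,\dots,p_m)$ for $1\le k\le m$. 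As $V_0$ has rank $m$, its invariant factors are therefore $1,d_1,d_2,\dots,d_{m-1}$, so $A_\Delta^{(1)}\cong\Z\oplus\bigoplus_{j=1}^{m-1}\Z/d_j$; in particular the torsion subgroup $T$ of $A_\Delta^{(1)}$ is $\bigoplus_{j=1}^{m-1}\Z/d_j$ and $A_\Delta^{(1)}/T\cong\Z$.

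Write $\iota\colon\Z\hookrightarrow A_\Delta^{(1)}$ for the inclusion of $\ker q$ from \cref{prop:quotient_of_A}, so $\iota(1)=\mathbf r_0^*$. Since $\mathbf r_0^*=p_i\cdot(-\mathbf r_i^*)$ in $A_\Delta^{(1)}$ for every $i$, we have $\iota(\Z)\subseteq\bigcap_{i=1}^m p_iA_\Delta^{(1)}$. I would then prove the arithmetic lemma $\bigcap_{i=1}^m p_iT=0$ — equivalently $d_j\mid\lcm_i\gcd(p_i,d_j)$ for each $j\le m-1$ — prime by prime: for a prime $\ell$, the valuation $v_\ell(d_j)$ is the $(m-j+1)$-st largest of $v_\ell(p_1),\dots,v_\ell(p_m)$, hence is attained by some $v_\ell(p_i)$; the same computation gives $d_m=\lcm(p_1,\dots,p_m)$. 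Fixing a free complement $F\cong\Z$ of $T$ in $A_\Delta^{(1)}$, this yields $\bigcap_i p_iA_\Delta^{(1)}=(\bigcap_i p_iF)\oplus(\bigcap_i p_iT)=\lcm(p_i)\,F=d_mF$, and then a count of orders, $|A_\Delta^{(1)}/\iota(\Z)|=\prod_i p_i=d_1\cdots d_m=|A_\Delta^{(1)}/d_mF|$, forces $\iota(\Z)=d_mF$. Consequently $\gamma\colon F\to\Z$, $\gamma(f):=\iota^{-1}(d_mf)$, is a well-defined isomorphism with $\gamma\circ\iota=(\cdot d_m)$, so $\beta:=\gamma\oplus\mathrm{id}_T\colon A_\Delta^{(1)}=F\oplus T\xrightarrow{\sim}\Z\oplus\bigoplus_{j=1}^{m-1}\Z/d_j$ makes the left-hand square commute. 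Passing to the cokernels of $\iota$ and of $\cdot d_m$ defines an isomorphism $\alpha\colon\bigoplus_i\Z/p_i=A_\Delta^{(1)}/\iota(\Z)\xrightarrow{\sim}(\Z/d_m)\oplus\bigoplus_{j=1}^{m-1}\Z/d_j\cong\bigoplus_{j=1}^m\Z/d_j$, of the required form (the $d_j$ agreeing with \eqref{eq:Smith_exact_form}, consistently with the first paragraph), for which the right-hand square commutes by construction; applying $\Z^n\otimes_\Z(-)$ to this diagram gives the statement for general $n$.

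The step I expect to be the main obstacle is the one carried out in the third paragraph: showing that the canonical copy of $\Z$ sits inside $d_m$ times an honest direct summand of $A_\Delta^{(n)}$, with no torsion part, so that the prescribed map $\cdot d_m$ is exactly the left-hand vertical of the diagram. This rests entirely on the two $\ell$-adic valuation computations ($\bigcap_i p_iT=0$ and $d_m=\lcm(p_i)$); everything else is routine integral linear algebra.
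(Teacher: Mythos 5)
Your proposal is correct, and it shares the paper's skeleton but handles the crucial step differently. Like the paper, you reduce to $n=1$ by writing $A_\Delta^{(n)}\cong \Z^n\otimes_\Z\coker$ of the relation matrix (whose columns are $\mathbf r_0^*+p_i\mathbf r_i^*$), so that $\GL_n(\Z)$-equivariance is automatic, and you obtain $\alpha$ and the values \eqref{eq:Smith_exact_form} from determinantal divisors, exactly as in the paper. The divergence is in how the left-hand square (the map $\cdot d_m$) is established: the paper performs explicit row and column operations on the $(m+1)\times m$ presentation matrix, arranging that the final eliminations touch only the last target basis vector, so that the image of $\mathbf r_0^*$ is visibly $d_m$ times a generator of the free summand and commutativity can be read off from the bookkeeping of bases. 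You instead determine the abstract isomorphism type of $A_\Delta^{(1)}$ from the invariant factors $1,d_1,\dots,d_{m-1}$ of the relation matrix, and then locate the canonical copy of $\Z$ intrinsically: $\iota(\Z)\subseteq\bigcap_i p_iA_\Delta^{(1)}=d_mF$ (via the valuation lemma $\bigcap_i p_iT=0$ and $d_m=\lcm(p_i)$), with equality forced by comparing the index $\prod_i p_i=d_1\cdots d_m$; commutativity of both squares then follows by defining $\beta=\gamma\oplus\mathrm{id}_T$ and taking $\alpha$ to be the induced map on cokernels. Both arguments are complete; the paper's is more constructive (it exhibits the change of basis explicitly, which is in the spirit of the later matrix manipulations in \cref{prop:profinite_matrix_multiplication_large_n}), while yours trades the basis-tracking for a short arithmetic lemma and an index count, and has the conceptual advantage of characterizing the kernel copy of $\Z$ in a basis-free way as $\bigcap_i p_iA_\Delta^{(1)}$, which makes the appearance of $d_m=\lcm(p_1,\dots,p_m)$ transparent.
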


\begin{proof}
	The fact that $\alpha$ is an isomorphism
	and that $d_j$ are given by the formula~\ref{eq:Smith_exact_form}
	comes from computation of the Smith normal form using matrix minors.
	Now,
	$A_\Delta^{(n)}$ is isomorphic to
	$\Z^n \otimes_{\Z} \coker\big(\Z^m \to \Z^{m+1}\big)$
	where the map is given by the matrix
	\begin{equation*}
		\begin{pmatrix}
			1	& 1		& \ldots	& 1 \\
			p_1	& 0		& \ldots	& 0 \\
			0	& p_2	& \ldots	& 0 \\
			\vdots	& \vdots		& \ddots	& \vdots \\
			0	& 0		& \ldots	& p_m
		\end{pmatrix}.
	\end{equation*}
	We can first use the row and column operations
	which gave us $\alpha$ to get the diagonal matrix
	with $d_j$ for $j=1, 2, \ldots, m$
	and later do the following column operations:
	\begin{equation*}
		\begin{pmatrix}
			1		& 1			& \ldots	& 1			& 1 \\
			d_1		& 0			& \ldots	& 0			& 0 \\
			0		& d_2		& \ldots	& 0			& 0 \\
			\vdots	& \vdots	& \ddots 	& \vdots	& \vdots \\
			0		& 0			& \ldots	& d_{m-1}	& 0 \\
			0		& 0			& \ldots	& 0 		& d_m
		\end{pmatrix}
		\mapsto
		\begin{pmatrix}
			0		& 0			& \ldots	& 0			& 1 \\
			d_1		& 0			& \ldots	& 0			& 0 \\
			0		& d_2		& \ldots	& 0			& 0 \\
			\vdots	& \vdots	& \ddots 	& \vdots	& \vdots \\
			0		& 0			& \ldots	& d_{m-1}	& 0 \\
			-d_m	& -d_m		& \ldots	& -d_m 		& d_m
		\end{pmatrix}.
	\end{equation*}
	Now, since $d_j\ |\ d_m$ for all $j=1, 2, \ldots, m$
	we can fully eliminate the bottom row
	with row operations affecting only the bottom row,
	which means that the first $m$ basis elements for $\Z^{m+1}$
	weren't changed,
	whence that the diagram in the statement
	actually commutes.
	
	Finally, because we only manipulated the $\Z^n$-bases,
	the maps are indeed $\GL_n(\Z)$-equivariant.
\end{proof}

\Cref{prop:quotient_of_A} allowed us to understand better
the torsion part of $A_\Delta^{(n)}$,
but it will be necessary to understand
the torsion-free part too.
\Cref{prop:Euler_class} helps with this.
It comes from the notion of \emph{Euler class}
defined for Seifert Fibered Spaces.

\begin{proposition}\label{prop:Euler_class}
	Let $\tilde E\colon  \Z^{n\times(m+1)} \to \Z^n$ be defined by
	$$\tilde E\colon  \sum_{i=0}^m x_i \cdot \mathbf{r}_i^*
	\mapsto -d_m \cdot x_0 + \sum_{i=1}^m \frac{d_m}{p_i} \cdot x_i.$$
	Then $\tilde E$ factors through
	$\Z^{n\times(m+1)} \twoheadrightarrow A_\Delta^{(n)}$
	giving $E\colon A_\Delta^{(n)} \to \Z^n$.
	Furthermore, the map is surjective and equivariant
	with respect to the action of $\GL_n(\Z) \times \Sigma$
	(with $\Sigma$ acting trivially).
\end{proposition}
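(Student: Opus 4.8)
The plan is to verify the three assertions by direct computation; the one slightly non-formal input is the identity $d_m=\lcm(p_1,\dots,p_m)$, equivalently $v_\ell(d_m)=\max_i v_\ell(p_i)$ for every prime $\ell$, which is exactly the description of the largest invariant factor of $\mathrm{diag}(p_1,\dots,p_m)$ and hence is already recorded in \cref{prop:Smith_form_for_A}. In particular each $d_m/p_i$ is an integer, so $\tilde E$ is a well-defined homomorphism $\Z^{n\times(m+1)}\to\Z^n$ to begin with. Throughout I will assume $m\ge 1$; if $m=0$ then $\Delta$ is a surface group of genus $\ge 2$, $A_\Delta^{(n)}=\Z^n\cdot\mathbf{r}_0^*$, and with the convention $d_0=1$ the map $\tilde E$ is just $-\id$, so all the assertions are immediate.

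First I would check that $\tilde E$ annihilates the relation submodule $\Z^n\{\mathbf{r}_0^*+p_i\mathbf{r}_i^*:i=1,\dots,m\}$: a general element is $\sum_{i=1}^m v_i(\mathbf{r}_0^*+p_i\mathbf{r}_i^*)=(\sum_i v_i)\mathbf{r}_0^*+\sum_i(p_iv_i)\mathbf{r}_i^*$, and $\tilde E$ sends it to $-d_m\sum_i v_i+\sum_i\frac{d_m}{p_i}(p_iv_i)=0$. Hence $\tilde E$ descends to $E\colon A_\Delta^{(n)}\to\Z^n$. Next, the image of $E$ equals the image of $\tilde E$, which is the subgroup of $\Z^n$ generated by $\frac{d_m}{p_i}v$ (and $d_mv$) for $v\in\Z^n$ and $i=1,\dots,m$, i.e.\ it is $g\Z^n$ with $g=\gcd(d_m/p_1,\dots,d_m/p_m)$. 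For any prime $\ell$ choose $i$ with $v_\ell(p_i)=\max_j v_\ell(p_j)=v_\ell(d_m)$; then $\ell\nmid d_m/p_i$, so $\ell\nmid g$, and since every prime dividing a generator of the image divides $d_m$, we conclude $g=1$. Concretely, a Bézout relation $\sum_i c_i(d_m/p_i)=1$ gives $\tilde E(\sum_i c_iv\,\mathbf{r}_i^*)=v$, so $E$ is surjective. This is just the standard identity $\gcd_i(N/p_i)=N/\lcm_i(p_i)$ applied with $N=d_m=\lcm(p_i)$.

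Finally, equivariance. The $\GL_n(\Z)$-action on $A_\Delta^{(n)}$ is column-wise application of $A\in\GL_n(\Z)$, so $\tilde E(A\cdot\sum_i x_i\mathbf{r}_i^*)=-d_m(Ax_0)+\sum_i\frac{d_m}{p_i}(Ax_i)=A\,\tilde E(\sum_i x_i\mathbf{r}_i^*)$, i.e.\ $E$ is $\GL_n(\Z)$-equivariant for the standard action on the target $\Z^n$. For $\sigma\in\Sigma$, which fixes $\mathbf{r}_0^*$, sends $\mathbf{r}_i^*\mapsto\mathbf{r}_{\sigma(i)}^*$, and satisfies $p_{\sigma(i)}=p_i$, a one-line reindexing gives $\tilde E(\sigma\cdot\sum_i x_i\mathbf{r}_i^*)=-d_mx_0+\sum_i\frac{d_m}{p_{\sigma(i)}}x_i=-d_mx_0+\sum_i\frac{d_m}{p_i}x_i=\tilde E(\sum_i x_i\mathbf{r}_i^*)$, so $\Sigma$ acts trivially on $\Z^n$ compatibly with $E$. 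There is no genuine obstacle here: each step is a short computation, and the only point that needs (minor) care is the coprimality $\gcd_i(d_m/p_i)=1$ behind surjectivity, which, as noted, follows at once from $d_m=\lcm(p_1,\dots,p_m)$.
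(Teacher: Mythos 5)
Your proof is correct and follows essentially the same route as the paper's: verify $d_m=\lcm(p_1,\dots,p_m)$ so that $\tilde E$ is well-defined, check that the relations $\mathbf{r}_0^*+p_i\mathbf{r}_i^*$ are annihilated, obtain equivariance by direct column-wise computation (with $\Sigma$ acting trivially because $p_{\sigma(i)}=p_i$), and deduce surjectivity from $\gcd(d_m/p_1,\dots,d_m/p_m)=1$ via a B\'ezout relation. Your only additions --- the prime-valuation justification of that gcd identity and the $m=0$ edge case --- are details the paper leaves implicit.
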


\begin{proof}
	First of all, notice that
	\begin{align*}
		d_m  & = {p_1\ldots p_m}\
		/\ {\gcd(p_1\ldots p_{m-1},\ p_1\ldots p_{m-2}p_m,\ \ldots,\ p_2p_3\ldots p_m)} \\
		& = \lcm(p_1, \ldots, p_m),
	\end{align*}
	so all of the $\frac{d_m}{p_i}$ are integers
	and the map $\tilde E\colon  \Z^{n\times(m+1)} \to \Z^n$ is actually well-defined.
	All of the relations are sent to zero by the map
	$$E(x\cdot \mathbf{r}_0^* + p_i \cdot x \cdot \mathbf{r}_i^*)
	= - d_m \cdot x + \frac{d_m}{p_i}\cdot p_i\cdot x = 0,$$
	so $\tilde E$ indeed factors through $\Z^{n(m+1)} \twoheadrightarrow A_\Delta^{(n)}$
	giving $E\colon A_\Delta^{(n)} \to \Z^n$.
	$\tilde E$ is clearly equivariant \wrt\ the action of $\GL_n(\Z)$ and also \wrt\ the action of $\Sigma$ since elements of $\Sigma$ permute only the columns which have the same $p_i$. Since the quotient $\Z^{n\times(m+1)} \twoheadrightarrow A_\Delta^{(n)}$ is also equivariant, the induced map $E$ is equivariant too.
	Now, to get surjectivity let's notice that $\gcd(d_m/p_1, \ldots, d_m/p_m) = 1$ so there are integers $k_i$ such that $\sum k_i \cdot \frac{d_m}{p_i} = 1$. Now this means that
	$$E\big(\sum_{i=1}^m k_i\cdot v\cdot \mathbf{r}_i^*\big) = v$$
	for any $v \in \Z^n$, so the map is indeed surjective.
\end{proof}

\subsection{Orbits collapsing}

Using the quotient map shown in \cref{prop:quotient_of_A}
we can already show that some orbits of
${\GL_n(\Z) \times \Sigma \actson A_\Delta^{(n)}}$
become identified in $\GL_n(\widehat\Z) \times \Sigma \actson \widehat A_\Delta^{(n)}$.
This is the content of \cref{prop:examples_for_lack_of_rigidity},
but before we can prove it,
we need \cref{lem:profinite_matrix_works}
to know when the effect of acting
by a profinite matrix $\hat\Phi \in \GL_n(\widehat\Z)$
on some $[A] \in A_\Delta^{(n)} < \widehat A_\Delta^{(n)}$
gives a matrix $\hat\Phi\cdot [A] \in \widehat A_\Delta^{(n)}$
which still lies in the copy of $A_\Delta^{(n)}$.

\begin{lemma}\label{lem:profinite_matrix_works}
	Let $[A] \in A_\Delta^{(n)}$
	and let $\hat\Phi$ be a profinite matrix in $\GL_n(\widehat\Z)$.
	Then $\hat\Phi \cdot [A]$ is in $A_\Delta^{(n)}$
	\Iff\ $\hat\Phi \cdot E([A]) \in \Z^n$
	for the map $E \colon  A_\Delta^{(n)} \to \Z^n$ defined in \cref{prop:Euler_class}.
\end{lemma}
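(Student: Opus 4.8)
The plan is to reduce the statement to the torsion-free part of $A_\Delta^{(n)}$ using the Euler class $E$ of \cref{prop:Euler_class}. By \cref{prop:Smith_form_for_A} we have $A_\Delta^{(n)} \cong \Z^n \oplus T$ with $T = \bigoplus_{j=1}^{m-1}(\Z/d_j)^n$ finite, so its profinite completion is $\widehat A_\Delta^{(n)} \cong \widehat\Z^n \oplus T$ and the torsion subgroup of $\widehat A_\Delta^{(n)}$ is exactly $T$. Since $\GL_n(\widehat\Z)$ acts on $\widehat A_\Delta^{(n)}$ by automorphisms, it preserves this torsion subgroup, so $\hat\Phi\cdot t \in T \subseteq A_\Delta^{(n)}$ for every $t\in T$; this disposes of the torsion directions, and the whole content of the lemma lies in the free directions, which is what $E$ detects.

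First I would record the relevant properties of $E$. By \cref{prop:Euler_class}, the map $E\colon A_\Delta^{(n)} \to \Z^n$ is surjective and $\GL_n(\Z)$-equivariant. It kills the torsion subgroup $T$ (its image being torsion-free), so it factors through an isomorphism $A_\Delta^{(n)}/T \cong \Z^n$ (a surjective endomorphism of $\Z^n$ is an isomorphism), whence $\ker E = T$ and there is a short exact sequence $0 \to T \to A_\Delta^{(n)} \xrightarrow{E} \Z^n \to 0$. Let $\widehat E\colon \widehat A_\Delta^{(n)} \to \widehat\Z^n$ be the profinite completion of $E$; since completion is exact on finitely generated abelian groups and $T$ is finite, the completed sequence $0 \to T \to \widehat A_\Delta^{(n)} \xrightarrow{\widehat E} \widehat\Z^n \to 0$ is exact, so $\ker\widehat E = T$. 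Moreover $\widehat E$ restricts to $E$ on $A_\Delta^{(n)}$ and, being the continuous extension of the $\GL_n(\Z)$-equivariant map $E$ with $\GL_n(\Z)$ dense in $\GL_n(\widehat\Z)$ and $A_\Delta^{(n)}$ dense in $\widehat A_\Delta^{(n)}$, it is $\GL_n(\widehat\Z)$-equivariant. Finally, fix a homomorphic section $s\colon \Z^n \to A_\Delta^{(n)}$ of $E$, which exists because $\Z^n$ is free.

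The forward implication is then immediate: if $\hat\Phi\cdot[A] \in A_\Delta^{(n)}$, then
$$E(\hat\Phi\cdot[A]) = \widehat E(\hat\Phi\cdot[A]) = \hat\Phi\cdot\widehat E([A]) = \hat\Phi\cdot E([A]) \in \Z^n.$$
For the converse, assume $\hat\Phi\cdot E([A]) \in \Z^n$ and put $v := E([A]) \in \Z^n$. Then $[A] - s(v) \in \ker E = T$, so $[A] = s(v) + t$ with $t\in T$, and therefore $\hat\Phi\cdot[A] = \hat\Phi\cdot s(v) + \hat\Phi\cdot t$ with $\hat\Phi\cdot t \in T \subseteq A_\Delta^{(n)}$. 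For the first summand, equivariance of $\widehat E$ gives $\widehat E(\hat\Phi\cdot s(v)) = \hat\Phi\cdot v =: w$, which lies in $\Z^n$ by hypothesis, so $\hat\Phi\cdot s(v) - s(w) \in \ker\widehat E = T \subseteq A_\Delta^{(n)}$, whence $\hat\Phi\cdot s(v) \in A_\Delta^{(n)}$ and hence $\hat\Phi\cdot[A] \in A_\Delta^{(n)}$.

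I expect the only point needing care to be the bookkeeping around $\widehat E$: that it genuinely is the profinite completion of $E$ (so that it restricts to $E$ on $A_\Delta^{(n)}$ and is $\GL_n(\widehat\Z)$-equivariant), and that completing $0\to T\to A_\Delta^{(n)}\to\Z^n\to 0$ keeps $\ker\widehat E = T$. Both follow from exactness of profinite completion on finitely generated abelian groups together with a density argument, but they should be spelled out; everything else is formal.
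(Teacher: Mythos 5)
Your argument is structurally the same as the paper's: both rest on the facts that $\ker E$ is exactly the torsion subgroup $T$ of $A_\Delta^{(n)}$ (the paper phrases this via the isomorphism $\beta$ of \cref{prop:Smith_form_for_A}, you via a splitting $s$ of $E$), and that an element of $\widehat A_\Delta^{(n)}$ lies in $A_\Delta^{(n)}$ precisely when its image under $\widehat E$ lies in $\Z^n$. Both directions, the handling of torsion, and the use of $\ker\widehat E = T$ are fine.

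The one step that does not work as stated is your justification of the $\GL_n(\widehat\Z)$-equivariance of $\widehat E$: $\GL_n(\Z)$ is \emph{not} dense in $\GL_n(\widehat\Z)$. Indeed $\det$ is continuous, $\det(\GL_n(\Z)) = \{\pm 1\}$ is closed in $\widehat\Z^\times$, so the closure of $\GL_n(\Z)$ is contained in (in fact equals) the group of matrices of determinant $\pm 1$, and a density argument from $\GL_n(\Z)$-equivariance only yields equivariance under that closed subgroup. The conclusion you need is nevertheless true and has a direct proof: by \cref{prop:Euler_class}, $E$ is induced by $\tilde E\colon \sum_i x_i\cdot\mathbf{r}_i^* \mapsto -d_m\, x_0 + \sum_i \tfrac{d_m}{p_i}\, x_i$, i.e.\ it is a fixed $\Z$-linear (hence, after completion, $\widehat\Z$-linear) combination of the columns with scalar coefficients, while $\hat\Phi$ acts simultaneously on all columns; since left multiplication by $\hat\Phi$ commutes with scalar multiples and sums of column vectors, $\widehat E(\hat\Phi\cdot x) = \hat\Phi\cdot\widehat E(x)$ for every $\hat\Phi\in\GL_n(\widehat\Z)$, no approximation needed. (Equivalently: $A_\Delta^{(n)} \cong \Z^n\otimes_\Z C$ with $\GL_n$ acting on the first factor and $E = \id_{\Z^n}\otimes\, e$, and this description persists after completion.) With that replacement your proof is complete and matches the paper's.
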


\begin{proof}
	Consider the composition
	$$(E \circ \beta^{-1}) \colon  \Z^n \oplus
	\bigoplus_{j=1}^{m-1} \big(\Z/d_j\big)^n \twoheadrightarrow \Z^n$$
	as shown in diagram~(\ref{eq:profinite_matrix_works}). Its surjectivity implies that the kernel must be precisely the torsion subgroup $\bigoplus_{j=1}^{m-1}\big(\Z/d_j\big)^n$. Thus we can regard $E \circ \beta^{-1}$ as a projection onto the $\Z^n$ direct factor with possibly some automorphism of $\Z^n$.
	
	Now, the bottom row is equivariant with respect to the action of $\GL_n(\widehat\Z)$,
	so $$\hat\Phi \cdot E([A]) \in \Z^n \Longrightarrow \widehat E(\hat\Phi\cdot[A]) \in \Z^n,$$
	which means that
	the $\widehat\Z^n$-coordinate of $\hat\Phi\cdot \beta^{-1}([A])$
	is actually in $\Z^n$,
	but the set of all such elements is precisely the $\beta(A_\Delta^{(n)})$
	inside $\widehat\beta(\widehat A_\Delta^{(n)})$.
	\begin{equation}\label{eq:profinite_matrix_works}
		\begin{tikzcd}
			\Z^n \oplus \bigoplus_{j=1}^{m-1} \big(\Z/d_j\big)^n \arrow{r}{\beta^{-1}}[swap]{\cong} \arrow{d}
			& A_\Delta^{(n)} \arrow[two heads]{r}{E} \arrow{d}
			& \Z^n \arrow{d} \\
			\widehat\Z^n \oplus \bigoplus_{j=1}^{m-1} \big(\Z/d_j\big)^n \arrow{r}{\widehat\beta^{-1}}[swap]{\cong}
			& \widehat A_\Delta^{(n)} \arrow[two heads]{r}{\widehat E}
			& \widehat\Z^n
		\end{tikzcd}
	\end{equation}
\end{proof}

\begin{proposition}\label{prop:examples_for_lack_of_rigidity}
	Let's assume that $n > 1$
	and that there exists a natural number $d$ different from $1, 2, 3, 4$ and $6$,
	such that at least $n$ of the numbers $p_i$ are divisible by $d$.
	Then there exist $[A], [B] \in A_\Delta^{(n)}$ and a matrix $\hat\Phi \in \GL_n(\widehat\Z)$
	such that $\hat\Phi \cdot [A] = [B]$,
	but there is no integer matrix $\Phi \in \GL_n(\Z)$ and $\sigma \in \Sigma$
	such that $\Phi\cdot[A] = \sigma \cdot [B]$.
\end{proposition}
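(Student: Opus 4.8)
The plan is to realise a Baumslag-type non-rigidity explicitly inside $A_\Delta^{(n)}$, exploiting that $\GL_n(\Z)$ sees fewer determinants modulo $d$ than $\GL_n(\widehat\Z)$ does. Since the hypothesis is invariant under relabelling the cone points, I would first relabel so that $d\mid p_1,\dots,p_n$; write $e_1,\dots,e_n$ for the standard basis of $\Z^n$ and $\mathbf 1=(1,\dots,1)\in\Z^n$. The candidate class is
\[
  [A]\;:=\;\sum_{i=1}^{n}\tfrac{p_i}{d}\,e_i\cdot\mathbf r_i^{*}\;\in\;A_\Delta^{(n)} .
\]
A short computation with the Euler map of \cref{prop:Euler_class} gives $E([A])=\tfrac{d_m}{d}\,\mathbf 1$ (here $d\mid p_1\mid d_m$), and under the quotient of \cref{prop:quotient_of_A} the element $q([A])$ is supported on the first $n$ columns, with $i$-th component the order-$d$ vector $\tfrac{p_i}{d}e_i\in(\Z/p_i)^n$.

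Next I would choose the profinite matrix. The arithmetic point is that reduction $\GL_n(\Z)\to\GL_n(\Z/d)$ is onto $\SL_n(\Z/d)$ but has full image only $\{\bar M:\det\bar M=\pm1\}$, whereas $\GL_n(\widehat\Z)\to\GL_n(\Z/d)$ is onto; and since $n\ge 2$ and $d\notin\{1,2,3,4,6\}$ the group $(\Z/d)^\times$ strictly contains $\{\pm1\}$. So I fix $u\in(\Z/d)^\times\setminus\{\pm1\}$ and, working in a $\Z$-basis of $\Z^n$ whose first vector is $\mathbf 1$, take $\bar\Phi=\operatorname{diag}(1,u,1,\dots,1)$, which lies in $\operatorname{Stab}_{\GL_n(\Z/d)}(\mathbf 1)$ and has determinant $u$. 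Because $\operatorname{Stab}_{\GL_n(\widehat\Z)}(\mathbf 1)\to\operatorname{Stab}_{\GL_n(\Z/d)}(\mathbf 1)$ is surjective (in that basis it amounts to $\widehat\Z^{n-1}\twoheadrightarrow(\Z/d)^{n-1}$ and $\GL_{n-1}(\widehat\Z)\twoheadrightarrow\GL_{n-1}(\Z/d)$), I lift $\bar\Phi$ to $\hat\Phi\in\GL_n(\widehat\Z)$ with $\hat\Phi\mathbf 1=\mathbf 1$ and $\hat\Phi\equiv\bar\Phi\pmod d$. Since $\hat\Phi\,E([A])=\tfrac{d_m}{d}\mathbf 1\in\Z^n$, \cref{lem:profinite_matrix_works} shows $[B]:=\hat\Phi\cdot[A]$ lies in $A_\Delta^{(n)}$, so $\hat\Phi\cdot[A]=[B]$ with both classes in $A_\Delta^{(n)}$.

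It then remains to show there is no $\Phi\in\GL_n(\Z)$, $\sigma\in\Sigma$ with $\Phi\cdot[A]=\sigma\cdot[B]$. Assuming there were, applying $E$ (which is $\GL_n(\Z)$-equivariant and $\Sigma$-invariant by \cref{prop:Euler_class}) together with $E([B])=\hat\Phi\,E([A])=\tfrac{d_m}{d}\mathbf 1$ forces $\Phi\mathbf 1=\mathbf 1$. Applying the $\GL_n(\Z)\times\Sigma$-equivariant map $q$ of \cref{prop:quotient_of_A}: $q([B])$ is supported on the first $n$ columns with $i$-th component $\tfrac{p_i}{d}\bar\Phi e_i$, so $q(\sigma\cdot[B])$ is supported on $\sigma(\{1,\dots,n\})$, while $q(\Phi\cdot[A])=\Phi\cdot q([A])$ is supported on $\{1,\dots,n\}$ (using that $\Phi$ and $\bar\Phi$ act invertibly on each $(\Z/p_i)^n$ and $d>1$). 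Hence $\sigma$ preserves $\{1,\dots,n\}$, inducing $\sigma'\in\Sym(n)$; comparing $i$-th components for $i\le n$ and using $p_i=p_{\sigma'^{-1}(i)}$ gives $\tfrac{p_i}{d}(\Phi e_i-\bar\Phi e_{\sigma'^{-1}(i)})=0$ in $(\Z/p_i)^n$, i.e.\ $\Phi\equiv\bar\Phi P\pmod d$ where $P$ is the permutation matrix with $Pe_i=e_{\sigma'^{-1}(i)}$. Taking determinants modulo $d$ yields $\pm1\equiv\det\Phi\equiv u\cdot(\pm1)$, contradicting $u\notin\{\pm1\}$.

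The conceptual heart is the identification of the obstruction: $\GL_n(\Z)$ misses precisely the non-$\pm1$ determinants modulo $d$, which is exactly why $\{1,2,3,4,6\}$ — the moduli with $(\Z/d)^\times=\{\pm1\}$ — is the exceptional set. The step I expect to need the most care is the last one, namely verifying that the column permutation $\sigma$ is forced to preserve the block of $n$ distinguished cone points (so it contributes only a sign to the determinant comparison) and cannot be used to rearrange them against other cone points of the same order and thereby absorb the discrepancy; everything else is routine functoriality of $E$ and $q$ together with \cref{lem:profinite_matrix_works}.
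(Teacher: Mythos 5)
Your proof is correct and follows essentially the same strategy as the paper's: place a suitable class supported on the $n$ cone-point columns divisible by $d$, choose $\hat\Phi$ preserving the Euler class (so the translate stays in $A_\Delta^{(n)}$ by \cref{lem:profinite_matrix_works}) with $\det\hat\Phi\not\equiv\pm1\bmod d$, and rule out $(\Phi,\sigma)$ by comparing supports and determinants modulo $d$. The only differences are cosmetic: you use columns $\tfrac{p_i}{d}e_i$ and the quotient $q$ directly (rather than the further reduction to $(\Z/d)^n$-blocks) and you construct the lift $\hat\Phi$ explicitly, where the paper merely asserts its existence.
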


\begin{proof}
	In \cref{prop:quotient_of_A} we defined the map
	$q\colon A_\Delta^{(n)} \twoheadrightarrow \bigoplus_{i=1}\big(\Z/p_i\big)^n$.
	Setting $l$ to be the number of $p_i$ divisible by $d$,
	we can quotient further to get
	\begin{equation*}
		\begin{tikzcd}
			\Z^{n\times(m+1)} \arrow[two heads]{r}{p}
			& A_\Delta^{(n)} \arrow[two heads]{r}{q}
			& \bigoplus_{i=1}\big(\Z/p_i\big)^n \arrow[two heads]{r}{q'}
			& \bigoplus_{i=1}^l (\Z/d)^n
		\end{tikzcd}.
	\end{equation*}
	The map $q'$ is clearly equivariant with respect to the action of $\GL_n(\Z)$
	and since $\Sigma$ can permute columns without changing their respective $p_i$,
	we get a quotient action of $\Sigma$ on $\bigoplus_{i=1}^l \big(\Z/d\big)^n$
	by column permutation.
	
	We can assume that the $p_i$ which are divisible by $d$ are $p_1, \ldots, p_l$.
	Now let $a$ be an integer coprime to $d$ and such that $a \not \equiv \pm 1 \mod p_n$%
	---here is where we use that $d \ne 1, 2, 3, 4, 6$ and that $d\ |\ p_n$---%
	and consider the following $n\times (m+1)$ matrix $A \in \Z^{n\times(m+1)}$.
	\begin{equation*}
		A = \begin{pmatrix}
			0	& 1		& \dots		& 0 		& 0 		& 0 		& \dots 	& 0 \\
			\vdots &\vdots 	& \ddots 	& \vdots 	& \vdots 	& \vdots 	& \ddots 	& \vdots \\
			0	& 0		& \dots 	& 1 		& 0 		& 0 		& \dots 	& 0 \\
			0	& 0		& \dots 	& 0 		& 1 		& 0 		& \dots 	& 0 
		\end{pmatrix}
	\end{equation*}
	
	Now, take any matrix $\hat\Phi \in \GL_n(\widehat\Z)$ such that
	\begin{itemize}
		\item $\hat\Phi \cdot E([A]) = E([A])$ -- in particular, $\hat\Phi \cdot [A] \in A_\Delta^{(n)}$;
		\item $\det \hat\Phi \not\equiv \pm 1 \mod d$.
	\end{itemize}
	We set $[B] := \hat\Phi \cdot [A]$
	and show that there is no $\Phi \in \GL_n(\Z)$ and $\sigma \in \Sigma$
	such that
	$$\Phi \cdot (q'\circ q)([A]) = \sigma \cdot (q'\circ q)([\hat\Phi\cdot [A]]) = \sigma \cdot (q'\circ q)([B]).$$
	
	$(q'\circ q)([A])$ is the $n\times n$ identity matrix
	with $(n-l)$ zero columns appended on the right.
	Thus $\Phi \cdot (q'\circ q)([A])$ is the matrix $(\Phi \mod d)$
	appended by $(n-l)$ zero columns.
	Note that $(\Phi \mod d)$ can't have any zero columns.
	In a similar spirit $(q'\circ q)(\hat\Phi\cdot[A])$
	is the matrix $(\hat\Phi \mod d)$%
	---whose columns are all non-zero---%
	followed by $(n-l)$ zero columns.
	This implies that $\sigma$ sends the last $(n-l)$ columns to themselves
	and possibly permutes the first $n$ ones.
	
	Thus we get that $\Phi \equiv \sigma \cdot \hat\Phi \mod d$,
	which in turn implies that ${\det \Phi \equiv \det \sigma \cdot \det \hat \Phi}$,
	but $\det \Phi \equiv \pm 1$ and $\det \sigma \equiv \pm 1$,
	while $\hat\Phi$ was chosen specifically so that ${\det \hat\Phi \not \equiv \pm 1}$.
\end{proof}

\subsection{Orbits separated}

Analysing the action of $\GL_n(\widehat\Z)$ closely
we will show that sometimes orbits don't change.

\begin{lemma}\label{lem:upper_triangular}
	Given any matrix $A \in \Z^{n\times(m+1)}$,
	we can find a matrix $\Phi \in \GL_n(\Z)$
	such that $A'= \Phi A$ is an upper-triangular matrix,
	i.e. that $A'_{ij} = 0$ for $i > j$.
\end{lemma}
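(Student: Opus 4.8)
The plan is to argue by induction on the number of rows $n$, using at each stage the elementary fact that a single integer column can be cleared below its top entry by an invertible integer row operation. The base case $n=1$ is vacuous: the condition ``$A'_{ij}=0$ for $i>j$'' can never be triggered when the only row index is $i=1$ and the column indices run over $1,\ldots,m+1$, so every $1\times(m+1)$ matrix is already upper-triangular and $\Phi=\id$ works.

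For the inductive step I would first treat the first column $v=(A_{11},\ldots,A_{n1})^{T}\in\Z^{n}$. Writing $d=\gcd(A_{11},\ldots,A_{n1})$ (with $d=0$ if $v=0$), we have $v=dw$ for a primitive vector $w\in\Z^{n}$ when $v\neq 0$, and since $\GL_{n}(\Z)$ acts transitively on primitive vectors there is $\Phi_{1}\in\GL_{n}(\Z)$ with $\Phi_{1}v=d\,e_{1}$; concretely this is just the Euclidean algorithm performed by row operations, and I would either cite it as the existence of Hermite normal form for an $n\times 1$ matrix or dispatch it by a one-line induction on $\sum_{i}|A_{i1}|$. Replacing $A$ by $\Phi_{1}A$, we may assume $A_{i1}=0$ for all $i>1$. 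Next, apply the inductive hypothesis to the $(n-1)\times m$ submatrix $B$ obtained by deleting the first row and first column of $A$: there is $\Phi_{2}'\in\GL_{n-1}(\Z)$ with $\Phi_{2}'B$ upper-triangular. Set $\Phi_{2}=\left(\begin{smallmatrix}1&0\\0&\Phi_{2}'\end{smallmatrix}\right)\in\GL_{n}(\Z)$. Since rows $2,\ldots,n$ of $A$ already vanish in the first column, left multiplication by $\Phi_{2}$ fixes the first row, leaves the first column equal to $(d,0,\ldots,0)^{T}$, and sends $B$ to $\Phi_{2}'B$. Unwinding indices, $(\Phi_{2}'B)_{p,q}=(\Phi_{2}A)_{p+1,q+1}$, so upper-triangularity of $\Phi_{2}'B$ (its $(p,q)$ entry vanishes for $p>q$) says exactly that $(\Phi_{2}A)_{ij}=0$ for $i>j\ge 2$; together with the cleared first column this shows $\Phi:=\Phi_{2}\Phi_{1}\in\GL_{n}(\Z)$ makes $\Phi A$ upper-triangular, closing the induction.

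The only genuinely substantive ingredient is the column-clearing step, and even that is standard — it is the case $\Z^{n\times 1}$ of the existence of Hermite (or Smith) normal form, equivalently the assertion that every column vector of $\Z^{n}$ lies in the $\GL_{n}(\Z)$-orbit of $(\gcd,0,\ldots,0)^{T}$. No complication arises from $A$ being non-square: when $m+1<n$ the recursion simply forces the bottom rows to become zero, and when $m+1>n$ the surplus columns are merely carried along. Thus I expect no real obstacle; the write-up should be short.
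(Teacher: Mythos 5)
Your proposal is correct and is essentially the paper's own argument: both reduce the problem to clearing a column to $(\gcd,0,\ldots,0)^T$ via integer row operations (Euclid's algorithm), and then proceed inductively on the remaining rows/columns without disturbing what has been fixed — you phrase this as induction on $n$ with a block matrix, the paper as a column-by-column sweep, which is the same computation. No gaps; the non-square case is handled exactly as you say.
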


\begin{proof}
	We can multiply by permutation matrices from $\GL_n(\Z)$
	to permute rows of $A$
	and multiply by elementary matrices to perform elementary row operations
	(adding an integer multiple of a row to another one).
	Following Euclid's algorithm,
	we can in this way make the first column have at most one non-zero entry%
	---equal to the $\gcd$ of its entries---in the top place.
	We can do the same for the next column,
	not doing any operations that would change the first row.
	Proceeding inductively, we get an upper-triangular matrix.
\end{proof}

In the following $\SL_n^{\pm 1}(R)$ denotes
the group of $n\times n$ matrices with coefficients in $R$
and determinant equal to $\pm 1$.

\begin{lemma}\label{lem:image_of_GLZ}
	In the following commuting diagram,
	with $v_1$ and $v_2$ being reductions modulo $d$
	\begin{equation*}
		\begin{tikzcd}
			\GL_n(\Z) \arrow{r}\arrow[swap]{rd}{v_1} & \SL_n^{\pm 1} (\widehat{\Z}) \arrow{d}{v_2} \\
			& \GL_n(\Z/d)
		\end{tikzcd}
	\end{equation*}
	the image of $v_1$ is the same as the image of $v_2$.
	In particular, given a matrix $\hat \Phi$ with profinite entries
	whose determinant is $\pm 1$,
	we can find an integer matrix $\Phi$ with determinant $\pm 1$
	\st\ $\hat \Phi \equiv \Phi \mod d$.
	
	Furthermore, if the first column of $\hat\Phi$ is $(\pm 1,0,\ldots, 0)^T$,
	then we can require that the first column of $\Phi$
	is also $(\pm 1,0,\ldots, 0)^T$ (with the same choice of sign).
\end{lemma}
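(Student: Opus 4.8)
The plan is to identify both $\im v_1$ and $\im v_2$ with the single subset
$$S := \{\, M \in \GL_n(\Z/d) \ :\ \det M = \pm 1 \,\}$$
and then read off the ``in particular'' and ``furthermore'' clauses. First, the cheap inclusions: every $\Phi\in\GL_n(\Z)$ and every $\hat\Phi\in\SL_n^{\pm1}(\widehat\Z)$ has determinant $\pm1$, so its reduction mod $d$ lands in $S$; hence $\im v_1\subseteq S$ and $\im v_2\subseteq S$. Since the triangle in the statement commutes, $v_1$ factors through $v_2$, so $\im v_1\subseteq\im v_2$. Hence it is enough to prove the one nontrivial inclusion $S\subseteq\im v_1$; together with the above this forces $S\subseteq\im v_1\subseteq\im v_2\subseteq S$, whence $\im v_1=\im v_2=S$. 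The ``in particular'' statement then drops out at once: for $\hat\Phi\in\SL_n^{\pm1}(\widehat\Z)$ we have $v_2(\hat\Phi)\in\im v_2=\im v_1$, so there is $\Phi\in\GL_n(\Z)$ with $v_1(\Phi)=v_2(\hat\Phi)$, i.e. $\Phi\equiv\hat\Phi\pmod d$.

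To prove $S\subseteq\im v_1$, take $M\in S$. If $\det M=-1$ I would left-multiply by $D:=\mathrm{diag}(-1,1,\dots,1)\in\GL_n(\Z)$ to reduce to $\det M=1$, recovering the general case by composing the final lift with $D$ once more ($D^2=I$). When $\det M=1$, so $M\in\SL_n(\Z/d)$, I would lift $M$ along the reduction map $\SL_n(\Z)\to\SL_n(\Z/d)$. The hard part is precisely the surjectivity of this map, which is the one genuine input: it is standard, and can be seen by writing $\Z/d\cong\prod_{p\mid d}\Z/p^{e_p}$, noting that $\SL_n$ of each local ring $\Z/p^{e_p}$ is generated by elementary matrices (Gaussian elimination works over a local ring), and patching lifts of these elementary matrices over $\Z$ together via the Chinese Remainder Theorem on the entries; alternatively one simply cites surjectivity of $\SL_n(\Z)\twoheadrightarrow\SL_n(\Z/d)$.

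For the ``furthermore'' clause I would peel off the first column, inducting on $n$ with the trivial base case $n=1$. If the first column of $\hat\Phi$ is $(-1,0,\dots,0)^T$, replace $\hat\Phi$ by $D\hat\Phi$, whose first column is $(1,0,\dots,0)^T$; a solution $\Phi'$ for $D\hat\Phi$ with first column $(1,0,\dots,0)^T$ then gives $\Phi:=D\Phi'$, which lies in $\GL_n(\Z)$, has first column $(-1,0,\dots,0)^T$, and reduces to $D(D\hat\Phi)=\hat\Phi$ mod $d$. So assume the first column of $\hat\Phi$ is $(1,0,\dots,0)^T$, and write $\hat\Phi$ in block form with a $1$ in the top-left entry, zeros below it, a row vector $w\in\widehat\Z^{1\times(n-1)}$ to its right, and an $(n-1)\times(n-1)$ block $\hat\Psi$ over $\widehat\Z$ in the bottom-right; expanding the determinant along the first column gives $\det\hat\Psi=\det\hat\Phi=\pm1$, so $\hat\Psi\in\SL_{n-1}^{\pm1}(\widehat\Z)$. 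Now lift $w$ entrywise to $v\in\Z^{1\times(n-1)}$ with $v\equiv w\pmod d$, and apply the already-established first part of the lemma in dimension $n-1$ to get $\Psi\in\GL_{n-1}(\Z)$ with $\Psi\equiv\hat\Psi\pmod d$; then the block matrix $\Phi$ with top row $(1,\,v)$ and remaining rows $(0,\,\Psi)$ has determinant $\det\Psi=\pm1$, hence lies in $\GL_n(\Z)$, has first column $(1,0,\dots,0)^T$, and satisfies $\Phi\equiv\hat\Phi\pmod d$. The only points to take care over are the surjectivity input above and the small bookkeeping that the block $\hat\Psi$ really has determinant $\pm1$.
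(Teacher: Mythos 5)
Your proposal is correct and follows essentially the same route as the paper: both parts rest on identifying the common image with $\SL_n^{\pm 1}(\Z/d)$ via the standard surjectivity of $\SL_n(\Z)\to\SL_n(\Z/d)$, and the ``furthermore'' clause is handled in both by the same block decomposition, lifting the row vector entrywise and the $(n-1)\times(n-1)$ block by the first part. Your extra details (the $\mathrm{diag}(-1,1,\dots,1)$ reduction and the sketch of surjectivity via elementary matrices over the local factors) are fine and only make explicit what the paper asserts without proof.
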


\begin{proof}
	The image of $v_2\colon \SL_n^{\pm 1}(\widehat\Z) \to \GL_n(\Z/d)$
	is inside $\SL_n^{\pm 1}(\Z / d)$,
	and the image of $v_1\colon \GL_n(\Z) \to \GL_n(\Z/d)$ is $\SL_n^{\pm 1}(\Z / d)$,
	so two images must be the same.
	
	For the second part, let's notice that
	if $\hat\Phi_{1*} = (\pm 1, 0, \ldots, 0)^T$,
	then $\hat\Phi $ is a block matrix
	$\begin{pmatrix}
		\pm 1 & (\hat{\textbf{v}}')^T \\
		\textbf{0} & \hat\Phi'
	\end{pmatrix}$
	where $\det \hat\Phi' = \pm 1$.
	Then we take $\Phi' \equiv \hat\Phi'$
	and any $\textbf{v}' \equiv \hat{\textbf{v}}'$ modulo $d$ and build
	$$\Phi = \begin{pmatrix}
		\pm 1 & (\textbf{v}')^T \\
		\textbf{0} & \Phi'
	\end{pmatrix}.$$
\end{proof}

Having proved \cref{lem:upper_triangular} and \cref{lem:image_of_GLZ},
we can prove our main result about orbits remaining distinct.

The reason that the statement counts $j$ such that $d_j \not \in \{1, 2, 3, 4, 6\}$
is that $1, 2, 3, 4, 6$ are the only numbers $t$
such that $(\Z/t)^\times$ has at most two elements.

\begin{proposition}\label{prop:profinite_matrix_multiplication_large_n}
	Let $\Delta$ be a nice $2$-orbifold with $m \ge 0$ cone points
	of order $p_1, \ldots, p_m$.
	Let $A_\Delta^{(n)} \cong \Z^n \oplus \bigoplus_{j=1}^{m-1} \big(\Z/d_j)^n$ as in \cref{prop:Smith_form_for_A}
	and let $$k = 1 + |\big\{1 \le j \le m-1\ |\ d_j \not \in \{1, 2, 3, 4, 6\}\big\}|.$$
	Let $[A], [B] \in A_\Delta^{(n)}$ be related by $\hat\Phi \cdot [A] = [B]$ for some $\hat\Phi\in\GL_n(\widehat\Z)$.
	
	Then, if $n > k$, we can find a matrix $\Phi\in \GL_n(\Z)$ with integer coefficients
	such that $\Phi\cdot [A] = \hat\Phi \cdot [A] = [B]$.
\end{proposition}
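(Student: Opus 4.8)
The plan is to reformulate the statement, via the structure theory of \cref{prop:Smith_form_for_A}, as a question about realising a prescribed automorphism of a finite abelian group by an integer matrix, and then to use the hypothesis $n>k$ to produce enough ``room'' to do so. Throughout, I identify $A_\Delta^{(n)}$ with $\Z^n\oplus\bigoplus_{j=1}^{m-1}(\Z/d_j)^n$ as in \cref{prop:Smith_form_for_A}, the first summand being $E(A_\Delta^{(n)})$, with $\GL_n(\Z)$ and $\GL_n(\widehat\Z)$ acting diagonally on all summands, and I write $[A]=(a_0;a_1,\dots,a_{m-1})$, $[B]=(b_0;\dots)$.

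\emph{Step 1 (normalising the Euler summand).} Acting by $\GL_n(\Z)$ on $[A]$ and, independently, on $[B]$ (absorbing the changes into $\hat\Phi$) and using \cref{lem:upper_triangular}, I first bring $a_0$ and $b_0$ into Hermite position; since $\hat\Phi a_0=b_0$ with $\hat\Phi\in\GL_n(\widehat\Z)$ and $\widehat\Z\cap\Q=\Z$, the vectors $a_0$ and $b_0$ have the same content, so I may take $a_0=b_0=(c,0,\dots,0)^\top$ with $c\ge 0$. If $c\neq 0$ this forces the first column of $\hat\Phi$ to be $\mathbf e_1$, and I will then only look for $\Phi$ whose first column is also $\mathbf e_1$ (by \cref{lem:profinite_matrix_works}/\cref{prop:Euler_class} this is exactly what guarantees $\Phi a_0=b_0$); if $c=0$ the Euler summand imposes no constraint. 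Either way the problem is reduced to the torsion summands in an ``effective dimension'' $n'\in\{n,n-1\}$ satisfying $n'>k-1=|\{j:d_j\notin\{1,2,3,4,6\}\}|$.

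\emph{Step 2 (reduction to an automorphism).} Since $A_\Delta^{(n)}\cong M_0\otimes_\Z\Z^n$ with $M_0:=A_\Delta^{(1)}=\Z\oplus\bigoplus_j\Z/d_j$, regard $[A],[B]$ as homomorphisms $\xi_A,\xi_B\colon\Z^n\to M_0$, with $\GL_n$ acting by precomposition. The relation $\hat\Phi\cdot[A]=[B]$ says $\hat\xi_B=\hat\xi_A\circ\hat h$ over $\widehat\Z$, whence the closures of $\im\xi_A$ and $\im\xi_B$ in $\widehat{M_0}$ coincide; as finitely generated abelian groups are residually finite, subgroups of $M_0$ are closed, so $\im\xi_A=\im\xi_B$ inside $M_0$. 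This forces $\Z^n/\ker\xi_A\cong\Z^n/\ker\xi_B$, so by Smith normal form (i.e. again \cref{lem:upper_triangular}) a further action of $\GL_n(\Z)$ puts $\ker\xi_A=\ker\xi_B=:K$. Now $\xi_A$ and $\xi_B$ are surjections $\Z^n\to\im\xi_A$ with the same kernel, hence differ by a unique $\alpha\in\aut(\Z^n/K)=\aut(\im\xi_A)$, and everything reduces to showing that this $\alpha$ lies in the image of $\rho\colon\mathrm{Stab}_{\GL_n(\Z)}(K)\to\aut(\Z^n/K)$.

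\emph{Step 3 (the dimension count).} By construction $\alpha$ is realised over $\widehat\Z$, so it suffices to prove that when $n>k$ the image of $\rho$ equals the image of its profinite analogue $\mathrm{Stab}_{\GL_n(\widehat\Z)}(\widehat K)\to\aut(\Z^n/K)$. Write $\Z^n/K\cong\Z^r\oplus\bigoplus_i\Z/t_i$ with $t_1\mid\cdots\mid t_s$; then $r\le 1$ and, since $\Z^n/K$ embeds in $M_0$, one has $|\{i:t_i\notin\{1,2,3,4,6\}\}|\le k-1$, so $n\ge k+1\ge 1+r+|\{i:t_i\notin\{1,2,3,4,6\}\}|$. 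The only obstruction to descending an automorphism from $\widehat\Z$ to $\Z$ is the determinant: choosing $d$ with all $t_i\mid d$, \cref{lem:image_of_GLZ} lifts any element of $\SL^{\pm1}_n(\Z/d)$ (with first column $\mathbf e_1$, when $c\ne 0$) to $\GL_n(\Z)$, so it is enough to exhibit, among the mod-$d$ matrices inducing $\alpha$ on $\Z^n/K$ (and fixing $\mathbf e_1$ when $c\neq 0$), one of determinant $\pm1$. Starting from the profinite solution, the correction amounts to finding a dilation $I+(u-1)\,v\,w^\top$ with $w^\top v\equiv 1$, $w$ annihilating $\im\xi_A$ and orthogonal to $\mathbf e_1$ in the $c\ne 0$ case, and $w$ unimodular modulo $d$; the existence of such $w$ is precisely where $n>k$ enters, since the ``small'' $t_i$ contribute no determinant restriction (as $(\Z/t)^\times=\{\pm1\}$ exactly for $t\in\{1,2,3,4,6\}$) and the at most $k-1$ ``big'' ones, together with the Euler condition, cut out fewer congruence conditions on $w$ than there are spare coordinates.

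I expect Step 3 to be the main obstacle: Steps 1 and 2 are essentially formal given \cref{prop:Smith_form_for_A}, \cref{lem:upper_triangular} and the goodness/residual-finiteness input, whereas Step 3 requires the careful bookkeeping of exactly which congruences the stabiliser of $K$ imposes on the determinant and the explicit construction of the compensating integer matrix, and it is there that the numerical hypothesis $n>k$ is used in full force.
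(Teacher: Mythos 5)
Your Steps 1 and 2 are essentially sound and give a clean repackaging of the problem: equality of images follows from separability of subgroups of the finitely generated abelian group $M_0$, equality of kernels can then be arranged via Smith normal form (\cref{lem:upper_triangular}), and the question becomes whether a fixed $\alpha\in\aut(\Z^n/K)$, known to be induced by an element of $\GL_n(\widehat\Z)$ stabilising $\widehat K$, is induced by an integer matrix stabilising $K$. This is a mild variant of the paper's setup, which never equalises images or kernels but simply upper-triangularises $A$ and $B$ with the free summand and the largest $d_j$'s listed first. Two points you leave unchecked are minor but should be addressed: that the base change equalising the kernels can be made compatible with the Step 1 normalisation $a_0=b_0=c\,\mathbf e_1$ (otherwise the ``first column $\mathbf e_1$'' bookkeeping in Step 3 is not available), and the assertion that an embedding $\Z^n/K\hookrightarrow M_0$ forces at most $k-1$ invariant factors outside $\{1,2,3,4,6\}$ (true, but it rests on the classical divisibility criterion for embeddings of finite abelian groups and deserves a line).

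The genuine gap is Step 3, which is the heart of the proposition and is only a sketch; moreover the recipe as literally stated fails. You ask for a single $w$, unimodular modulo $d$, annihilating \emph{all} of $\im\xi_A$. Such a $w$ need not exist under the hypothesis $n>k$: for instance if all cone points have order $2$ and $m>n$, so that every $d_j=2$ and $k=1$, one may take $[A]$ whose torsion columns span $(\Z/2)^n$; then any $w$ killing the image modulo $2$ is even, hence not unimodular, although the conclusion of the proposition is of course still true there (no correction is needed, every unit mod $2$ being $1$). What is actually required --- and what the paper's proof does --- is a hybrid condition: after upper-triangularising with the free summand and the $d_j\notin\{1,2,3,4,6\}$ listed first, the covector $w=\mathbf e_n$ kills exactly those at most $1+(k-1)=k<n$ ``large'' columns, no attempt is made to kill the remaining columns, and instead the dilation factor is required to satisfy $u\equiv 1\bmod d_{m-(n-1)}$ so that the small moduli are untouched. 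The existence of a unit $u$ with $u\equiv 1\bmod d_{m-(n-1)}$ and $u\cdot\det\hat\Phi'=\pm1$ is precisely where the set $\{1,2,3,4,6\}$ enters (units modulo such numbers are $\pm1$, and one must choose the global sign accordingly); this sign compatibility is absent from your sketch, as is the verification that after lifting modulo $d_{m-1}$ via \cref{lem:image_of_GLZ} with prescribed first column the \emph{exact} free-part equation $\Phi a_0=b_0$ still holds. Until these points are carried out, Step 3 is a plan rather than a proof --- and once they are, your argument coincides with the paper's proof of \cref{prop:profinite_matrix_multiplication_large_n}, whose rescaling of the last column by $\kappa$ is exactly your dilation with $v=w=\mathbf e_n$.
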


\begin{proof}
	First, let's take $A$ and $B$ to their upper-triangular form
	with respect to the decomposition
	$A_\Delta^{(n)} \cong \Z^n \oplus \bigoplus_{j=1}^{m-1} \big(\Z/d_{m-j})^n$
	-- note that here we wrote $A_\Delta^{(n)}$ as a quotient of
	$\Z^n\cdot \mathbf{s}_0^* \oplus \Z^n \cdot \mathbf{s}_{m-1}^* \oplus \ldots
	\oplus  \Z^n \cdot \mathbf{s}_1^*$
	and the matrices $A'$ and $B'$ were chosen to be upper-triangular
	with respect to this basis
	$(\mathbf{s}_0^*, \mathbf{s}_{m-1}^*, \ldots, \mathbf{s}_1^*)$.
	Let $\Phi_A, \Phi_B \in \GL_n(\Z)$ be the matrices
	used to perform this upper-triangulisation,
	i.e. such that $\Phi_A \cdot A$ and $\Phi_B \cdot B$ are upper-triangular.
	With that we get
	\begin{align*}
		\hat\Phi \cdot [A] = [B]
		& \Longleftrightarrow \big(\Phi_B\ \hat\Phi\ \Phi_A^{-1}\big)
		\cdot [\Phi_A \cdot A]= [\Phi_B \cdot B] \\
		& \Longleftrightarrow \hat\Phi' \cdot [A'] = [B'],
	\end{align*}
	where $\hat\Phi' = \big(\Phi_B^{-1} \hat\Phi\ \Phi_A^{-1}\big),\ A' = \Phi_A\cdot A$ and $B' = \Phi_B \cdot B$.
	
	Now, let
	$A' = x_0 \cdot \mathbf{s}_0^* + x_{m-1}\cdot \mathbf{s}_{m-1}^*
	+ \ldots + x_1 \cdot \mathbf{s}_1^*$
	and similarly
	$B' = y_0 \cdot \mathbf{s}_0^* + y_{m-1}\cdot \mathbf{s}_{m-1}^*
	+ \ldots + y_1 \cdot \mathbf{s}_1^*$.
	Then,
	\begin{equation*}
		\hat\Phi' \cdot [A'] = [B'] \Longleftrightarrow \hat\Phi' x_0 = y_0,
		\hat\Phi' x_j \equiv y_j\mod d_j\ \text{for}\ j=1, \ldots, m-1.
	\end{equation*}
	
	Now comes the crucial bit of the proof.
	
	Consider $\hat\Phi'' \in \GL_n(\widehat\Z)$ obtained from $\hat\Phi'$
	by multiplying the last column by $\kappa \in \widehat\Z^\times$.
	We claim that if $\kappa \equiv 1 \mod d_{m-(n-1)}$,
	then still $\hat\Phi'' \cdot [A'] = [B']$.
	
	There are two cases to check here.
	\begin{enumerate}
		\item For $j = 0,\ m-1, m-2, \ldots, m-(n-2)$,
		the bottom coordinate of $x_j$ is zero
		(as $A'$ is upper-triangular).
		Then, changing the last column of $\hat\Phi'$
		doesn't affect the result of multiplication;
		in fact $\hat\Phi'' \cdot x_j = y_j$
		with genuine equality, not just congruence.
		\item For $j = m-(n-1), m-n, \ldots, 1$,
		we have $d_j\ |\ d_{m-(n-1)}$ and so
		${\hat\Phi'' \equiv \hat\Phi' \mod d_j}$
		as $\kappa \equiv 1\mod d_{m-(n-1)}$.
		In particular, $\hat\Phi'' \cdot x_j
		\equiv \hat\Phi' \cdot x_j = y_j \mod d_j$.
	\end{enumerate}
	Thus we know that indeed $\hat\Phi''\cdot[A'] = [B']$.
	
	Now, notice that we have some freedom in the choice of $\kappa$.
	We only assumed that $\kappa \in \widehat\Z^\times$
	and that $\kappa \equiv 1\mod d_{m-(n-1)}$.
	Since $d_{m-(n-1)}$ is in the set $\{1, 2, 3, 4, 6\}$,
	the integers coprime to $d_{m-(n-1)}$ are $\pm 1 \mod d_{m-(n-1)}$,
	so in particular
	$\det \hat\Phi' \equiv \pm 1 \mod d_{m-(n-1)}$.
	Since $\det \hat\Phi'' = \kappa \cdot \det\hat\Phi'$,
	we can choose $\kappa = \pm(\det\hat\Phi')^{-1}$
	so that $\det \hat\Phi'' = \pm 1$,
	i.e. $\hat\Phi'' \in \SL_n^{\pm 1}(\widehat\Z)$.
	
	At this point we know that $\hat\Phi'' \cdot [A'] = [B']$
	for some $\hat\Phi'' \in \SL_n^{\pm 1}(\widehat\Z)$.
	By \cref{lem:image_of_GLZ},
	we can choose $\Phi \in \GL_n(\Z)$ such that
	$\Phi \equiv \hat\Phi'' \mod d_{m-1}$
	and additionally
	if the first column of $\hat\Phi''$ is $\pm (1,0,\ldots, 0)^T$,
	then we can require the first column of $\Phi$ to be the same.
	
	Now, since $d_j\ |\ d_{m-1}$ for all $j = 1, \ldots, m-1$,
	we have
	$$\Phi \cdot x_j \equiv \hat\Phi'' \cdot x_j = y_j \mod d_j$$
	for all $j \ne 0$.
	For $j=0$ notice that if $x_0\ne 0$,
	then the first column of $\hat\Phi''$
	must be $\pm (1,0,\ldots, 0)^T$
	and it is the same as the first column of $\Phi$,
	so $\Phi \cdot x_0 = y_0$.
	If $x_0=y_0 = 0$,
	then we get $\Phi \cdot x_0 = y_0$ trivially.
	
	Finally, $\Phi \cdot [A'] = [B']$
	gives $\big(\Phi_B^{-1}\Phi\ \Phi_A\big) \cdot [A] = [B]$.
\end{proof}

We can use the same idea of modifying the last column of $\hat\Phi$
which was used in \cref{prop:profinite_matrix_multiplication_large_n}
to show that in fact
the elements of $A_\Delta^{(n)}$
lying in the same orbit of $\GL_n(\widehat\Z)$
actually become members of the same orbit of
$\GL_{n+1}(\Z) \actson A_\Delta^{(n+1)}$
after appending with a row of zeros on the bottom.

\begin{proposition}\label{prop:adding_another_dimension}
	The map $f\colon  \Z^{n\times(m+1)} \to \Z^{(n+1)\times(m+1)}$
	which appends a row of zeros to a matrix from $\Z^{n\times(m+1)}$
	thereby making it an $(n+1)\times(m+1)$ matrix
	induces a map $\overline f\colon A_\Delta^{(n)} \to A_\Delta^{(n+1)}$.
	Furthermore, if $[A], [B] \in A_\Delta^{(n)}$ are such that
	$\hat\Phi\cdot [A] = [B]$ for some $\hat\Phi \in \GL_n(\widehat\Z)$,
	then there exists $\Phi \in \GL_{n+1}(\Z)$ such that $\Phi\cdot \overline f([A]) = f([B])$.
\end{proposition}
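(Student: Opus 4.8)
The plan is to reuse the ``modify the last column of the profinite matrix'' device from the proof of \cref{prop:profinite_matrix_multiplication_large_n}, noting that appending a row of zeros makes that device available unconditionally: since $\overline f([A])$ has an identically zero bottom row, the last column of any $(n+1)\times(n+1)$ matrix is irrelevant to its action on $\overline f([A])$, so we may use that column solely to correct the determinant, and no congruence constraint on $n$ (hence no analogue of the hypothesis comparing $n$ to the number of ``bad'' $d_j$) is required.

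Well-definedness of $\overline f$ is immediate, as $f$ sends each relator $\mathbf r_0^*+p_i\mathbf r_i^*$ of $\Z^{n\times(m+1)}$ to the corresponding relator of $\Z^{(n+1)\times(m+1)}$. Following the proof of \cref{prop:profinite_matrix_multiplication_large_n}, I would first reduce to the case where $[A]$ and $[B]$ are upper-triangular in $A_\Delta^{(n)}$ with respect to the ordered basis $(\mathbf s_0^*,\mathbf s_{m-1}^*,\ldots,\mathbf s_1^*)$ of \cref{prop:Smith_form_for_A}: apply \cref{lem:upper_triangular} to get $\Phi_A,\Phi_B\in\GL_n(\Z)$ making $\Phi_A\cdot[A]$ and $\Phi_B\cdot[B]$ upper-triangular, replace $\hat\Phi$ by $\Phi_B\hat\Phi\Phi_A^{-1}$, and observe that $\overline f$ intertwines $\GL_n(\Z)$ with $\GL_{n+1}(\Z)$ via the block embedding $\Phi\mapsto\bigl(\begin{smallmatrix}\Phi&0\\0&1\end{smallmatrix}\bigr)$, so that a solution in the triangular case conjugates back to the general one.

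Assume then $[A]=x_0\mathbf s_0^*+\sum_{j=1}^{m-1}x_j\mathbf s_j^*$ and $[B]=y_0\mathbf s_0^*+\sum_{j=1}^{m-1}y_j\mathbf s_j^*$ are upper-triangular, so that $x_0=(c,0,\ldots,0)^T$, $y_0=(c',0,\ldots,0)^T$, and $\hat\Phi\cdot[A]=[B]$ is equivalent to $\hat\Phi x_0=y_0$ together with $\hat\Phi x_j\equiv y_j\bmod d_j$ for $j=1,\ldots,m-1$. Set $\hat\Psi:=\bigl(\begin{smallmatrix}\hat\Phi&0\\0&\kappa\end{smallmatrix}\bigr)$ with $\kappa:=(\det\hat\Phi)^{-1}\in\widehat\Z^\times$, so that $\det\hat\Psi=1$ and $\hat\Psi\in\SL_{n+1}^{\pm1}(\widehat\Z)$. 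Because $\overline f([A])$ has zero bottom row, $\hat\Psi\cdot\overline f([A])=\overline f(\hat\Phi\cdot[A])=\overline f([B])$, and this equality does not depend on the last column of $\hat\Psi$. Moreover, if $c\neq 0$ then $c\cdot(\text{first column of }\hat\Phi)=(c',0,\ldots,0)^T$; since a nonzero integer is a non-zero-divisor in $\widehat\Z$ while the only integer units of $\widehat\Z$ are $\pm1$ (expand $\det\hat\Phi$ along that column), the first column of $\hat\Phi$, hence of $\hat\Psi$, is $(\pm1,0,\ldots,0)^T$.

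Finally, I would apply \cref{lem:image_of_GLZ} with $d:=d_{m-1}$ (taking $d=1$ if there are no torsion slots) to $\hat\Psi$, obtaining $\Phi\in\GL_{n+1}(\Z)$ with $\Phi\equiv\hat\Psi\bmod d_{m-1}$ and, when $c\neq 0$, with the same first column $(\pm1,0,\ldots,0)^T$ as $\hat\Psi$. Since $d_j\mid d_{m-1}$ for every $j\le m-1$, this gives $\Phi x_j\equiv\hat\Psi x_j\equiv y_j\bmod d_j$ on each torsion slot, and $\Phi x_0=c\cdot(\text{first column of }\Phi)=y_0$ — trivially when $c=0$, since then $c'=0$ too, and by the matching first column when $c\neq 0$. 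Hence $\Phi\cdot\overline f([A])=\overline f([B])$, and conjugating back by the block embeddings of $\Phi_A,\Phi_B$ completes the proof. The only delicate point is this last equality on the free $\Z^{n+1}$-summand: congruence modulo $d_{m-1}$ is not enough there, and it is the upper-triangularization together with the ``first column'' clause of \cref{lem:image_of_GLZ} that upgrades congruence to genuine equality — the same mechanism as in \cref{prop:profinite_matrix_multiplication_large_n}, except that here the zero row leaves the determinant-correcting column entirely free, so no analogue of the hypothesis $n>k$ is needed.
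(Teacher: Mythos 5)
Your proposal is correct and follows essentially the same route as the paper: upper-triangularize $[A]$ and $[B]$ via \cref{lem:upper_triangular}, enlarge the profinite matrix to a block matrix with bottom-right entry $(\det\hat\Phi)^{-1}$ (harmless because the appended row is zero), and then invoke \cref{lem:image_of_GLZ} modulo $d_{m-1}$ with the first-column clause before conjugating back by the block embeddings of $\Phi_A,\Phi_B$. The extra details you supply (well-definedness of $\overline f$, the non-zero-divisor argument showing the first column of $\hat\Phi$ is $\pm(1,0,\ldots,0)^T$ when $x_0\ne 0$, and the degenerate case with no torsion slots) are correct refinements of the paper's argument rather than a different method.
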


\begin{proof}
	Take $\Phi_A, \Phi_B$ \st\
	$[A'] = \Phi_A\cdot [A]$ and $[B'] = \Phi_B \cdot [B]$
	are upper-triangular as matrices
	in $\Z^n \oplus \bigoplus_{j=1}^{m-1}\big(\Z / d_j\big)^n$.
	Then let ${\hat\Phi' = \Phi_B\ \hat\Phi\ \Phi_A^{-1}}$
	so that $\hat\Phi' \cdot [A'] = [B']$.
	Take $\hat\Phi''$ to be the block diagonal matrix
	$$\hat\Phi'' =
	\begin{pmatrix}
		\hat\Phi		& \mathbf{0} \\
		\mathbf{0}^T	& (\det \hat\Phi)^{-1}
	\end{pmatrix}.$$
	Since the bottom rows of
	$\overline{f}([A'])$ and $\overline{f}([B'])$
	are both $\mathbf{0}^T$,
	we get the equality
	${\hat\Phi'' \cdot \overline{f}([A']) = \overline{f}([B'])}$.
	Also, $\det \hat\Phi'' = 1$,
	so we can choose a matrix ${\Phi \in \GL_{n+1}(\Z)}$
	such that ${\Phi \equiv \hat\Phi'' \mod d_{m-1}}$ 
	and such that
	if the first column of $\hat\Phi''$ is $\pm (1,0,\ldots, 0)^T$,
	then so is the first column of $\Phi$.
	As before, this implies that
	$\Phi \cdot \overline{f}([A']) = \overline{f}([B'])$ and so
	$$
	\begin{pmatrix}
		\Phi_B^{-1}	& \mathbf{0} \\
		\mathbf{0}^T	& 1
	\end{pmatrix}
	\cdot \Phi \cdot
	\begin{pmatrix}
		\Phi_A	& \mathbf{0} \\
		\mathbf{0}^T	& 1
	\end{pmatrix}\cdot \overline{f}([A]) = \overline{f}([B]).
	$$
\end{proof}

\subsection{Summary of results}

Now we are ready to summarise our state of knowledge.

\begin{proposition}\label{prop:summary}
	Let $n > 1$.
	Let $\Delta$ be a nice $2$-orbifold
	with cone points of orders $p_1, \ldots, p_m$.
	Let $d_1, \ldots, d_m$ be as in \cref{prop:Smith_form_for_A}
	-- in particular, $d_j\ |\ d_{j+1}$.
	Then, exactly one of the following statements holds.
	\begin{enumerate}
		\item $n > m$
		or $d_{m-(n-1)} \in \{1, 2, 3, 4, 6\}$,
		and
		$\hat\Phi\cdot [A] = [B]$ for some ${\hat\Phi \in \GL_n(\widehat\Z)}$
		implies that $\Phi \cdot [A] = [B]$ for some $\Phi \in \GL_n(\Z)$.
		\item $n \le m$
		and $d_{m-(n-1)} \not\in \{1, 2, 3, 4, 6, 12\}$,
		and there exist elements $[A], [B] \in A_\Delta^{(n)}$ such that
		$\hat\Phi\cdot [A] = [B]$ for some $\hat\Phi \in \GL_n(\widehat\Z)$,
		but there are no $\Phi \in \GL_n(\Z)$ and $\sigma\in\Sigma$
		such that $\Phi\cdot [A] = \sigma\cdot [B]$.
		\item $n \le m$ and $d_{m-(n-1)} = 12$.
	\end{enumerate}
\end{proposition}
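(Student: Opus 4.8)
The plan is to read off the trichotomy directly from the arithmetic of the invariant factors $d_1\mid d_2\mid\cdots\mid d_m$, and then to deduce statements (1) and (2) by checking that the hypotheses of \cref{prop:profinite_matrix_multiplication_large_n} and of \cref{prop:examples_for_lack_of_rigidity} hold in the respective ranges. First I would record two elementary facts: $\{1,2,3,4,6,12\}$ is exactly the set of divisors of $12$, whereas $\{1,2,3,4,6\}$ — the set of $t$ with $|(\Z/t)^\times|\le 2$ used in \cref{prop:profinite_matrix_multiplication_large_n} — is closed under passing to divisors. Given these, the trichotomy is immediate: if $n>m$ we are in case (1), and since cases (2) and (3) both require $n\le m$ there is no overlap; if $n\le m$ then $d_{m-(n-1)}$ is a well-defined positive integer, which lies in exactly one of the three disjoint sets $\{1,2,3,4,6\}$, $\{12\}$, and the complement of $\{1,2,3,4,6,12\}$ in the positive integers, placing us in case (1), (3), or (2) respectively.

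\textbf{Case (1) implies rigidity.} Here I would show the hypotheses force $n>k$, where $k=1+|\{1\le j\le m-1: d_j\notin\{1,2,3,4,6\}\}|$ is the constant of \cref{prop:profinite_matrix_multiplication_large_n}. Since $d_1\mid d_2\mid\cdots\mid d_m$ and $\{1,2,3,4,6\}$ is divisor-closed, the set $\{1\le j\le m: d_j\in\{1,2,3,4,6\}\}$ is an initial segment $\{1,\dots,j_0\}$, so $k=1+\max(0,m-1-j_0)=\max(1,m-j_0)$. If $n>m$, then $n>m\ge m-j_0$ and $n>1$, hence $n>k$; if $n\le m$ and $d_{m-(n-1)}\in\{1,2,3,4,6\}$, then $m-(n-1)\le j_0$, i.e. $m-j_0\le n-1<n$, so again $n>k$. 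In either event \cref{prop:profinite_matrix_multiplication_large_n} applies and yields exactly statement (1).

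\textbf{Case (2) implies non-rigidity.} Here I would verify the hypothesis of \cref{prop:examples_for_lack_of_rigidity}: that some $d\notin\{1,2,3,4,6\}$ divides at least $n$ of the $p_i$. Write $j^\ast=m-(n-1)$, so $1\le j^\ast\le m$ and $n=m-j^\ast+1$. Since $d_{j^\ast}\notin\{1,2,3,4,6,12\}$ it does not divide $12$, so there is a prime $\ell$ with $\ell^{c}\nmid 12$, where $c:=v_\ell(d_{j^\ast})\ge 1$ (otherwise every maximal prime-power divisor of $d_{j^\ast}$, hence $d_{j^\ast}$ itself, would divide $12$). Put $d:=\ell^{c}$; as the prime powers $>1$ dividing $12$ are exactly $2,3,4$, and $6$ is not a prime power, we get $d\notin\{1,2,3,4,6\}$. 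On the other hand, formula~\eqref{eq:Smith_exact_form} shows that $v_\ell(d_j)$ equals the $j$-th smallest of $v_\ell(p_1),\dots,v_\ell(p_m)$ (the $\ell$-adic valuation of a product of $j$ of the $p_i$ is minimised by picking the $j$ smallest valuations), so at least $m-j^\ast+1=n$ of the $p_i$ satisfy $v_\ell(p_i)\ge c$, i.e. are divisible by $d$. Then \cref{prop:examples_for_lack_of_rigidity} produces $[A],[B]\in A_\Delta^{(n)}$ and $\hat\Phi\in\GL_n(\widehat\Z)$ with $\hat\Phi\cdot[A]=[B]$ but no $\Phi\in\GL_n(\Z)$, $\sigma\in\Sigma$ with $\Phi\cdot[A]=\sigma\cdot[B]$, which is statement (2); case (3) asserts nothing, so the proof ends. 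I expect the only step that is not pure formality to be this last one: recognising that $12$ is the exceptional value precisely because $\{1,2,3,4,6,12\}$ is its divisor set, and converting the Smith-form datum ``$d_{j^\ast}\nmid 12$'' into a concrete prime power dividing enough of the cone-point orders.
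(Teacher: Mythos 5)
Your proposal is correct and follows essentially the same route as the paper: verify $n>k$ via the divisor-closedness of $\{1,2,3,4,6\}$ to invoke \cref{prop:profinite_matrix_multiplication_large_n} in case (1), extract a prime power not dividing $12$ from $d_{m-(n-1)}$ that divides at least $n$ of the $p_i$ to invoke \cref{prop:examples_for_lack_of_rigidity} in case (2), and observe that $12$ is the only remaining value. Your valuation argument via formula~(\ref{eq:Smith_exact_form}) just spells out a step the paper states tersely; no changes needed.
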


\begin{proof}
		Following \cref{prop:profinite_matrix_multiplication_large_n}
		let $$k = 1 + |\big\{j \le m-1\ |\ d_j \not \in \{1, 2, 3, 4, 6\}\big\}|.$$
		If $n > m$, then clearly $k < n$.
		If $d_{m-(n-1)} \in \{1, 2, 3, 4, 6\}$, then
		$${d_1, \ldots, d_{m-(n-1)} \in \{1, 2, 3, 4, 6\}}$$
		and so
		$$\{j \le m-1\ |\ d_j \not \in \{1, 2, 3, 4, 6\}\} \sub \{m-(n-2), \ldots, m-1\},$$
		so $k \le |\{m-(n-2), \ldots, m-1\}| + 1 = n-1 < n$.
		Thus, the hypotheses of \cref{prop:profinite_matrix_multiplication_large_n} are satisfied
		so there indeed does exist $\Phi \in \GL_n(\Z)$ such that $\Phi\cdot[A] = [B]$.
		
		Alternatively, if $n\le m$ and $d_{m-(n-1)}\not\in\{1, 2, 3, 4, 6, 12\}$, then
		there is some prime power $p^\alpha \not \in \{2, 3, 4\}$ such that $p^\alpha\ |\ d_{m-(n-1)}$.
		Since $$d_{m-(n-1)}\ |\ d_{m-(n-2)}\ |\ \ldots\ |\ d_{m-1}\ |\ d_m,$$
		this implies that $p^\alpha$ divides at least $n$ of $p_1, \ldots, p_m$.
		By \cref{prop:examples_for_lack_of_rigidity},
		this implies that there are some $[A], [B] \in A_\Delta^{(n)}$ and $\hat\Phi \in \GL_n(\widehat\Z)$
		such that $[B] = \hat\Phi\cdot[A]$, but for no $\Phi\in\GL_n(\Z)$ and $\sigma\in\Sigma$
		do we have $\Phi\cdot [A] = \sigma \cdot [B]$.
		
		The only number which isn't in $\{1, 2, 3, 4, 6\}$
		and isn't divisible by a prime power different to $2, 3$ and $4$
		is $12$,
		so if conditions $(1)$ and $(2)$ don't hold,
		we must have $d_{m-(n-1)} = 12$.
\end{proof}

\begin{remark}
	The attentive reader certainly noticed that
	the `classification' isn't conclusive if ${d_{m-(n-1)} = 12}$.
	The reason for this is that in this case
	it is possible that
	there is no $\Phi \in \GL_n(\Z)$ such that
	$\Phi \cdot [A] = \hat \Phi \cdot [A] = [B]$,
	but there is a \emph{pair}
	$(\Phi, \sigma) \in \GL_n(\Z) \times \aut(\Delta)$
	such that $\Phi \cdot [A] = \sigma \cdot [B]$.
	The author spent considerable time
	trying to understand this case,
	but wasn't able to arrive at a full classification.
\end{remark}

\section{Distinguishing central extensions $\Z^n$-by-$\Delta$
	by their finite quotients}\label{sec:results}

Finally, having done the `calculations',
we can go on to proving our main result
-- deciding which central extensions
of \fg\ free abelian groups by infinite $2$-orbifold groups
are distinguished from each other by their finite quotients.

\Cref{ssec:kernel_and_quotient} reduces the question
to distinguishing between central extensions of the same $\Z^n$
by the same $2$-orbifold group $\Delta$. \Cref{ssec:final_distinguishing_extensions} solves this reduced question.

\subsection{Kernel and quotient must agree}\label{ssec:kernel_and_quotient}

\begingroup
\def\themainthm{\ref{thm:kernel_quotient_agree}}
\addtocounter{thmx}{-3}
\begin{thmx}
	Let $n_1, n_2$ be two natural numbers
	and $\Delta_1, \Delta_2$ be infinite
	fundamental groups
	of closed orientable $2$-orbifolds.
	Let $G_1$ and $G_2$ be central extensions
	$\Z^{n_1}$-by-$\Delta_1$
	and $\Z^{n_2}$-by-$\Delta_2$ respectively.
	If ${\widehat G_1 \cong \widehat G_2}$
	then $n_1 = n_2$ and $\Delta_1 \cong \Delta_2$.
\end{thmx}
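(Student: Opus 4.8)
The plan is to reconstruct both the kernel $\Z^{n_i}$ and the quotient $\Delta_i$ from the topological group $\widehat{G_i}$ alone, essentially as the centre and the central quotient. First I would note that, by \cref{cor:central_exts_are_rf}, the inclusion $\Z^{n_i}\hookrightarrow G_i$ induces $\overline{\Z^{n_i}}=\widehat\Z^{n_i}$ inside $\widehat{G_i}$, so \cref{prop:SES_completion} gives a short exact sequence $1\to\widehat\Z^{n_i}\to\widehat{G_i}\to\widehat{\Delta_i}\to1$; since $\mathsf{h}(G_i)$ is dense in $\widehat{G_i}$ and acts trivially on $\Z^{n_i}$, continuity forces all of $\widehat{G_i}$ to act trivially on $\widehat\Z^{n_i}$, so this sequence is central and $\widehat\Z^{n_i}\subseteq\mathcal Z(\widehat{G_i})$. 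By \cref{def:nice_group} each $\Delta_i$ is either nice or isomorphic to $\Z^2$, and I would treat these two cases in parallel.

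Next I would identify $\mathcal Z(\widehat{G_i})$ and the central quotient in each case. If $\Delta_i$ is nice, then $\widehat{\Delta_i}$ is centreless by \cref{thm:completions_nice_orbifolds_centerless}, so every central element of $\widehat{G_i}$ lies in the kernel of $\widehat{G_i}\to\widehat{\Delta_i}$; hence $\mathcal Z(\widehat{G_i})=\widehat\Z^{n_i}$, the central quotient is $\widehat{\Delta_i}$, and $\widehat{G_i}$ is nonabelian (its central quotient $\widehat{\Delta_i}$ is infinite, hence a nontrivial centreless group, which cannot be abelian). If $\Delta_i\cong\Z^2$, I would invoke \cref{prop:Z2}: then $G_i\cong H_k\times\Z^{n_i-1}$ for some $k\ge0$, and either $k=0$, so $G_i\cong\Z^{n_i+2}$ and $\widehat{G_i}\cong\widehat\Z^{n_i+2}$ is abelian, or $k\ge1$, so $\widehat{G_i}$ is nonabelian (the multiplication formula in \cref{prop:Z2} gives $[(1,0,0),(0,1,0)]=(0,0,k)\ne0$), with $\mathcal Z(\widehat{G_i})=\overline{\gen c}\times\widehat\Z^{n_i-1}\cong\widehat\Z^{n_i}$ and central quotient $\widehat\Z^2$. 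Summarising: $\widehat{G_i}$ is abelian exactly when $\Delta_i\cong\Z^2$ and $G_i\cong\Z^{n_i+2}$; and whenever $\widehat{G_i}$ is nonabelian, $\mathcal Z(\widehat{G_i})\cong\widehat\Z^{n_i}$ and the central quotient is $\widehat\Z^2$ (if $\Delta_i\cong\Z^2$) or $\widehat{\Delta_i}$ with $\Delta_i$ nice.

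To finish, I would use that an isomorphism $\widehat{G_1}\cong\widehat{G_2}$ preserves abelianness, sends $\mathcal Z(\widehat{G_1})$ onto $\mathcal Z(\widehat{G_2})$, and induces an isomorphism of central quotients. In the abelian case $\widehat\Z^{n_1+2}\cong\widehat\Z^{n_2+2}$, so $n_1=n_2$ (reduce modulo a prime) and $\Delta_1\cong\Z^2\cong\Delta_2$. In the nonabelian case $\widehat\Z^{n_1}\cong\widehat\Z^{n_2}$ gives $n_1=n_2$, and the central quotients are isomorphic; each is $\widehat\Z^2$ or $\widehat{\Delta_i}$ for $\Delta_i$ nice, and these two cannot be isomorphic to one another since $\widehat\Z^2$ is nontrivial abelian whereas a nice $\widehat{\Delta_i}$ is nontrivial centreless, so either $\Delta_1\cong\Z^2\cong\Delta_2$, or both are nice with $\widehat{\Delta_1}\cong\widehat{\Delta_2}$, whence $\Delta_1\cong\Delta_2$ by \cref{thm:orbifolds_rigidity}. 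The hard part will be this bookkeeping around the degenerate case $\Delta_i\cong\Z^2$: one must verify that abelianness, the centre and the central quotient genuinely separate the three shapes $\widehat\Z^{\bullet}$, $\widehat{H_k}\times\widehat\Z^{\bullet}$ ($k\ge1$) and $\widehat\Z^{n}$-by-(nice), ruling out in particular any coincidence between a Heisenberg-type completion and a nice-orbifold completion — which is exactly where centrelessness of $\widehat\Delta$ from \cref{thm:completions_nice_orbifolds_centerless} is essential; everything else is formal.
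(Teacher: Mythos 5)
Your proposal is correct and follows essentially the same route as the paper: recover the kernel as $\mathcal Z(\widehat G_i)\cong\widehat\Z^{n_i}$ using \cref{thm:completions_nice_orbifolds_centerless} (nice case) and \cref{prop:Z2} ($\Z^2$ case), treat the fully abelian possibility $G_i\cong\Z^{n_i+2}$ separately, and conclude with profinite rigidity of finitely generated abelian groups and \cref{thm:orbifolds_rigidity}. Your extra bookkeeping separating the Heisenberg-type and nice-orbifold central quotients is a fine (slightly more explicit) way of organising the same case analysis the paper performs.
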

\endgroup

\begin{proof}
	In \cref{cor:central_exts_are_rf} we showed
	that such central extensions are residually finite
	and that we get commutative diagrams of short exact sequences
	\begin{equation*}
		\begin{tikzcd}
			0 \arrow{r}	& \Z^{n_i} \arrow{r} \arrow{d}	& G_i \arrow{r}\arrow{d} & \Delta_i \arrow{r}\arrow{d} 	& 1 \\
			0 \arrow{r}	& \widehat\Z^{n_i} \arrow{r}	& \widehat G_i \arrow{r}	& \widehat \Delta_i \arrow{r}	& 1
		\end{tikzcd}
	\end{equation*}
	where all of the vertical maps are the natural inclusions.
	
	Firstly, let's exclude the case that $\Delta \cong \Z^2$
	and that $G_1$ is isomorphic to $\Z^{n_1+2}$.
	Then also $G_2 \cong \Z^{n_1+2}$
	and so $\Delta$ is abelian, and thus it is $\Z^2$.
	The kernel of a surjection from $\Z^{n_1+2}$ to $\Z^2$
	is isomorphic to $\Z^{n_1}$,
	so the theorem statement holds.
	
	In \cref{sssec:centres} we studied the centres
	of $2$-orbifold groups and their profinite completions,
	showing in \cref{thm:completions_nice_orbifolds_centerless}
	that for any nice $2$-orbifold group $\Delta$ we have $\mathcal{Z}(\widehat \Delta) = 1$.
	Since the extensions
	${0 \to \widehat\Z^{n_i} \to \widehat G_i \to \widehat \Delta_i \to 1}$ are also central,
	we get that $\mathcal{Z}(\widehat G_i) = \widehat\Z^{n_i}$.
	In \cref{prop:Z2} we showed that
	in the case of $\Delta \cong \Z^2$
	also $\mathcal{Z}(\widehat G_i) = \widehat \Z^{n_i}$
	unless $G_i \cong \Z^{n_i +2}$,
	which was covered in the previous paragraph.
	This gives
	\begin{align*}
		\widehat\Z^{n_1} \cong \mathcal{Z}(\widehat G_1)	& \cong \mathcal{Z}(\widehat G_2) \cong \widehat\Z^{n_2}, \\
		\widehat\Delta_1 \cong G_1 / \mathcal{Z}(\widehat G_1)	& \cong G_2 / \mathcal{Z}(\widehat G_2) \cong \widehat\Delta_2.
	\end{align*}
	Now, \fg\ abelian \gps\ are distinguished by their profinite completions, so we get $n_1 = n_2$.
	
	The isomorphism of $\Delta_1$ and $\Delta_2$
	follows directly from \cref{thm:orbifolds_rigidity}.
\end{proof}

\subsection{Distinguishing central extensions $\Z^n$-by-$\Delta$ fixing $n$ and $\Delta$}\label{ssec:final_distinguishing_extensions}

In this section, we state our main results
for distinguishing central extensions
of a fixed $\Z^n$
by a fixed infinite closed $2$-orbifold group $\Delta$.

\begingroup
\def\themainthm{\ref{thm:main}}
\begin{thmx}
	Let $\Delta$ be an infinite fundamental group
	of a closed orientable {$2$-orbifold}
	with $m \ge 0$ cone points
	of orders $p_1, \ldots, p_m$.
	Furthermore, for $j = 1, \ldots, m$ set
	$$d_j = \frac{\gcd(p_{i_1}p_{i_2}\ldots p_{i_j}\
		\text{for}\ 1\le i_1 < i_2 < \ldots < i_j \le m)}
	{\gcd(p_{i_1}p_{i_2}\ldots p_{i_{j-1}}\
		\text{for}\ 1\le i_1 < i_2 < \ldots < i_{j-1} \le m)}.$$
	Then, for $n > 1$ the following hold.
	\begin{enumerate}
		\item If $n > m$,
		or $d_{m-(n-1)} \in \{1, 2, 3, 4, 6\}$,
		then the non-isomorphic central extensions
		of $\Z^n$ by $\Delta$
		are distinguished from each other by their profinite completions.
		\item If $n \le m$
		and $d_{m-(n-1)} \not\in \{1, 2, 3, 4, 6, 12\}$,
		then there exist non-isomorphic central extensions $G_1, G_2$
		of $\Z^n$ by $\Delta$
		with $\widehat G_1 \cong \widehat G_2$.
	\end{enumerate}
\end{thmx}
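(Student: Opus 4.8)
The plan is to reduce everything to the Matrix correspondence of \cref{thm:matrix_correspondence} together with the trichotomy of \cref{prop:summary}; with those in hand the argument is essentially bookkeeping, the only genuinely separate ingredient being the degenerate case $\Delta \cong \Z^2$, which the Matrix correspondence machinery does not address.

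First I would split off the case in which $\Delta$ is \emph{not} nice, that is $\Delta \cong \Z^2$ (cf. \cref{def:nice_group,prop:which_orbs_are_nice}). Here $m = 0$, so only part $(1)$ can be in force and part $(2)$ is vacuous. By \cref{prop:Z2} every central extension of $\Z^n$ by $\Z^2$ is isomorphic to $H_k \times \Z^{n-1}$ for some $k \ge 0$, and $H_k^{\mathrm{ab}} \cong \Z^2 \oplus \Z/k$, so $(H_k \times \Z^{n-1})^{\mathrm{ab}} \cong \Z^{n+1} \oplus \Z/k$ (which is torsion-free of rank $n+2$ when $k = 0$). Since the finite abelian quotients of a group are exactly the finite quotients of its abelianisation, and finitely generated abelian groups are distinguished from one another by their profinite completions, an isomorphism $\widehat{G_1} \cong \widehat{G_2}$ forces $\Z^{n+1}\oplus\Z/k_1 \cong \Z^{n+1}\oplus\Z/k_2$, hence $k_1 = k_2$, hence $G_1 \cong G_2$. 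This settles part $(1)$ in the non-nice case.

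From now on $\Delta$ is nice, so \cref{thm:matrix_correspondence} applies; I identify $H^2(\Delta;\Z^n)$ with $A_\Delta^{(n)}$ and the image of $\Psi$ with the canonical copy of $A_\Delta^{(n)}$ inside $\widehat A_\Delta^{(n)}$. For part $(1)$ I argue by contraposition. Suppose $\widehat{G_1}\cong\widehat{G_2}$, where $G_i$ corresponds to $[\zeta_i]\in A_\Delta^{(n)}$. By \cref{thm:matrix_correspondence} the elements $[\zeta_1],[\zeta_2]$ lie in one orbit of $\GL_n(\widehat\Z)\times\Sigma$ acting on $\widehat A_\Delta^{(n)}$; since $\Sigma$ preserves the copy of $A_\Delta^{(n)}$, after replacing $[\zeta_2]$ by a $\Sigma$-translate $[B]\in A_\Delta^{(n)}$ we obtain $\hat\Phi\cdot[\zeta_1] = [B]$ for some $\hat\Phi\in\GL_n(\widehat\Z)$, with both sides inside $A_\Delta^{(n)}$. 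The hypothesis of part $(1)$ ($n>m$ or $d_{m-(n-1)}\in\{1,2,3,4,6\}$) is precisely the hypothesis of \cref{prop:summary}$(1)$, which then provides $\Phi\in\GL_n(\Z)$ with $\Phi\cdot[\zeta_1] = [B]$. Undoing the $\Sigma$-translation, $[\zeta_1]$ and $[\zeta_2]$ lie in a single orbit of $\GL_n(\Z)\times\Sigma$, so $G_1\cong G_2$ by \cref{thm:matrix_correspondence}.

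For part $(2)$ the roles are reversed. Its hypothesis is exactly that of \cref{prop:summary}$(2)$, which directly yields $[A],[B]\in A_\Delta^{(n)}$ and $\hat\Phi\in\GL_n(\widehat\Z)$ with $\hat\Phi\cdot[A] = [B]$ but with $[A]$ and $[B]$ in distinct orbits of $\GL_n(\Z)\times\Sigma$; letting $G_1,G_2$ be the central extensions of $\Z^n$ by $\Delta$ classified by $[A],[B]$, \cref{thm:matrix_correspondence} gives $G_1\not\cong G_2$ while $\hat\Phi\cdot[A]=[B]$ places them in one $\GL_n(\widehat\Z)\times\Sigma$-orbit, so $\widehat{G_1}\cong\widehat{G_2}$. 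Given how much has already been proved, there is no deep obstacle left; the two points that need care are the by-hand treatment of the excluded group $\Z^2$, and the consistent pairing of $\Sigma$ with $\GL_n(\Z)$ and $\GL_n(\widehat\Z)$ throughout — \cref{prop:summary} is stated for the bare $\GL_n$-actions whereas \cref{thm:matrix_correspondence} detects group isomorphism only through the $\GL_n\times\Sigma$-actions, so one must keep the relevant $\Sigma$-translates inside the integral lattice $A_\Delta^{(n)}$ at each step.
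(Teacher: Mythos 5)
Your proposal is correct and follows essentially the same route as the paper: the $\Delta \cong \Z^2$ case via \cref{prop:Z2} and abelianisations, and the nice case via \cref{thm:matrix_correspondence} combined with \cref{prop:summary}. Your explicit handling of the $\Sigma$-translate (so that \cref{prop:summary}, stated for the bare $\GL_n$-actions, can be applied inside $A_\Delta^{(n)}$) is a slightly more careful spelling-out of a step the paper leaves implicit, but it is not a different argument.
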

\endgroup

\begin{proof}
	First, let's solve the case of $\Delta \cong \Z^2$.
	By \cref{prop:Z2}, the central extensions of $\Z^n$ by $\Z^2$
	are isomorphic to $H_k \times \Z^{n-1}$,
	where $H_k$ has presentation $\pres{a, b, c}{[a, c] = [b, c] = 1, [a, b] = c^k}$.
	The abelianisation of $H_k$ is
	${\Z^{n+1} \oplus (\Z/k)}$,
	so it is different for different values of $k$.
	Abelianisation is detected by profinite completions,
	so central extensions of $\Z^n$ by $\Z^2$
	are distinguished by their profinite completions.
	
	Now, let $\Delta \not \cong \Z^2$.
	For $j = 1, 2$ let
	$0 \to \Z^n \xrightarrow{\iota_j} G_j \xrightarrow{\pi_j} \Delta \to 1$
	be central extensions.
	From \cref{thm:matrix_correspondence}
	we know that $G_1 \cong G_2$ \Iff\
	the representatives $[A_j] \in A_{\Delta}^{(n)}$ lie
	in the same orbit of $\GL_n(\Z) \times \Sigma \actson A_\Delta^{(n)}$;
	furthermore, $\widehat G_1 \cong \widehat G_2$ \Iff\
	$\hat\Phi \cdot [A_1] = \sigma \cdot [A_2]$
	for some $\hat\Phi \in \GL_n(\widehat\Z)$ and $\sigma \in \Sigma$.
	
	\Cref{prop:summary} tells us when these conditions coincide
	given the hypotheses $(1)$ and $(2)$.
\end{proof}

The second main result shows
that all of examples of lack of rigidity that we produced
in fact come from being isomorphic
after a direct product with $\Z$.

\begingroup
\def\themainthm{\ref{thm:main_appendix}}
\begin{thmx}
	Let $\Delta$ be an infinite fundamental group
	of a closed orientable $2$-orbifold,
	with $m \ge 0$ cone points
	of orders $p_1, \ldots, p_m$.
	Let $n>1$ and $G_1, G_2$ be central extensions
	of $\Z^n$ by $\Delta$
	such that $\widehat G_1 \cong \widehat G_2$.
	Then $G_1 \times \Z \cong G_2 \times \Z$.
\end{thmx}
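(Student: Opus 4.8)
The plan is to run everything through the matrix correspondence of \cref{thm:matrix_correspondence} and then quote \cref{prop:adding_another_dimension}, which already contains the genuine content. First I would dispose of the case $\Delta \cong \Z^2$: by \cref{prop:Z2} every central extension of $\Z^n$ by $\Z^2$ is isomorphic to $H_k \times \Z^{n-1}$ for some $k \ge 0$, and as noted in the proof of \cref{thm:main} these groups have pairwise non-isomorphic abelianisations $\Z^{n+1} \oplus (\Z/k)$, which are detected by the profinite completion; hence $\widehat G_1 \cong \widehat G_2$ already forces $G_1 \cong G_2$, and in particular $G_1 \times \Z \cong G_2 \times \Z$. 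So we may assume $\Delta$ is nice.

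Next I would set up the matrix translation. Let $[A_1], [A_2] \in A_\Delta^{(n)}$ be the classes representing $G_1$ and $G_2$ as in \cref{thm:matrix_correspondence}. The key observation is that $G_i \times \Z$, regarded as the central extension $0 \to \Z^{n+1} \to G_i \times \Z \to \Delta \to 1$, is classified by the pushforward of the class of $G_i$ along the inclusion $\Z^n \hookrightarrow \Z^{n+1}$ onto the first $n$ coordinates; in the matrix model of \cref{thm:matrix_correspondence} this pushforward is exactly the map $\overline{f}\colon A_\Delta^{(n)} \to A_\Delta^{(n+1)}$ of \cref{prop:adding_another_dimension}, which appends a zero row. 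Thus $G_i \times \Z$ corresponds to $\overline{f}([A_i]) \in A_\Delta^{(n+1)}$, and by parts (2) and (3) of \cref{thm:matrix_correspondence} it suffices to show that $\overline{f}([A_1])$ and $\overline{f}([A_2])$ lie in a common orbit of $\GL_{n+1}(\Z) \times \Sigma \actson A_\Delta^{(n+1)}$.

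Then I would feed in the hypothesis. Since $\widehat G_1 \cong \widehat G_2$, parts (2) and (3) of \cref{thm:matrix_correspondence} yield $\hat\Phi \in \GL_n(\widehat\Z)$ and $\sigma \in \Sigma$ with $\hat\Phi \cdot [A_1] = \sigma \cdot [A_2]$ in $\widehat A_\Delta^{(n)}$. Because the $\GL_n$-action on rows and the $\Sigma$-action on columns commute, this rewrites as $\hat\Phi \cdot (\sigma^{-1} \cdot [A_1]) = [A_2]$, an equality whose right-hand side lies in $A_\Delta^{(n)}$, so it is exactly of the form required by \cref{prop:adding_another_dimension} with $[A] := \sigma^{-1} \cdot [A_1]$ and $[B] := [A_2]$. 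That proposition then produces $\Phi \in \GL_{n+1}(\Z)$ with $\Phi \cdot \overline{f}(\sigma^{-1} \cdot [A_1]) = \overline{f}([A_2])$. Finally, since appending a zero row commutes with permuting columns, $\overline{f}(\sigma^{-1} \cdot [A_1]) = \sigma^{-1} \cdot \overline{f}([A_1])$, so $\overline{f}([A_1])$ and $\overline{f}([A_2])$ are in the same $\GL_{n+1}(\Z) \times \Sigma$-orbit, as needed; applying \cref{thm:matrix_correspondence} once more gives $G_1 \times \Z \cong G_2 \times \Z$.

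The only substantive ingredient --- turning the profinite matrix $\hat\Phi$ into an integral one at the cost of one extra dimension --- is already established in \cref{prop:adding_another_dimension}, so I do not anticipate a serious obstacle. The point requiring the most care is the bookkeeping of the previous paragraph: verifying that $G_i \times \Z$ really corresponds to $\overline{f}([A_i])$ under the identifications of \cref{thm:matrix_correspondence}, and checking that the column permutation $\sigma$ can be moved across $\hat\Phi$ and absorbed by $\overline{f}$.
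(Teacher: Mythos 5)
Your proposal is correct and follows essentially the same route as the paper: dispose of $\Delta \cong \Z^2$ separately, identify $G_i \times \Z$ with $\overline{f}([A_i])$ via the matrix correspondence, and invoke \cref{prop:adding_another_dimension} to replace the profinite matrix by an integral one in dimension $n+1$. The only cosmetic difference is that you absorb the permutation into $[A_1]$ (using $\hat\Phi\cdot(\sigma^{-1}\cdot[A_1]) = [A_2]$) whereas the paper absorbs it into $[A_2]$ (taking $[B] = \sigma\cdot[A_2]$); both rest on the same observation that $\overline{f}$ commutes with the column-permutation action.
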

\endgroup

\begin{proof}
	Firstly, in the case of $\Delta \cong \Z^2$
	we have $\widehat G_1 \cong \widehat G_2 \iff G_1 \cong G_2$, so the statement holds.
	
	The short exact sequences
	$0 \to \Z^n \xrightarrow{\iota_j} G_j \xrightarrow{\pi_j} \Delta \to 1$
	stabilise to the sequences
	$$\begin{tikzcd}
		0 \arrow{r}
		& \Z^n \times \Z \arrow{r}{\iota_j \times \id_{\Z}}
		& G_j \times \Z \arrow{r}{\pi_j\times 1}
		& \Delta \arrow{r}
		& 1
	\end{tikzcd}$$
	represented by $\overline{f}\big([A_j]\big)$
	where $[A_j]\in A_\Delta^{(n)}$ represents the original extension,
	and where ${\overline{f}\colon  A_\Delta^{(n)} \to A_\Delta^{(n+1)}}$ is
	the map defined in \cref{prop:adding_another_dimension}.
	It is induced from ${f\colon  \Z^{n\times(m+1)} \to \Z^{(n+1)\times(m+1)}}$
	by appending an $n\times (m+1)$ matrix with a zero row at the bottom.
	
	Now, $\widehat G_1 \cong \widehat G_2$,
	so $\hat\Phi\cdot [A_1] = \sigma \cdot [A_2] = [\sigma \cdot A_2]$
	for some $\hat\Phi \in \GL_n(\widehat\Z)$ and $\sigma \in \Sigma$.
	In \cref{prop:adding_another_dimension} it was shown
	that $\hat\Phi\cdot [A] = [B]$ implies that
	$\Phi \cdot \overline{f}\big([A]\big) = \overline{f}\big([B]\big)$
	for some $\Phi \in \GL_{n+1}(\Z)$.
	In the present case, it means that
	$\Phi \cdot \overline f\big([A_1]\big)
	= \overline{f}\big([\sigma \cdot A_2]\big)
	= \sigma \cdot \overline{f}\big([A_2]\big)$
	and hence that $G_1 \times \Z \cong G_2 \times \Z$.
\end{proof}


\bibliographystyle{siam}
\bibliography{Bibliography}

\end{document}